\documentclass[12pt,oneside,english]{amsart}
\usepackage[T1]{fontenc}
\usepackage[latin9]{inputenc}
\usepackage{geometry}
\usepackage{amsbsy,verbatim}
\usepackage{amstext,txfonts}
\usepackage{amsthm}
\usepackage{amssymb}
\usepackage{scalerel,stackengine}
\usepackage{enumitem}

\DeclareMathOperator{\E}{\mathbb{E}}

\DeclareMathOperator{\LL}{\mathcal{L}}
\DeclareMathOperator{\g}{\gamma}
\DeclareMathOperator{\codim}{\text{codim}}

\usepackage{stmaryrd}
\usepackage{comment}

\stackMath
\newcommand\reallywidehat[1]{%
\savestack{\tmpbox}{\stretchto{%
  \scaleto{%
    \scalerel*[\widthof{\ensuremath{#1}}]{\kern-.6pt\bigwedge\kern-.6pt}%
    {\rule[-\textheight/2]{1ex}{\textheight}}
  }{\textheight}%
}{0.5ex}}%
\stackon[1pt]{#1}{\tmpbox}%
}

\makeatletter
\numberwithin{equation}{section}
\numberwithin{figure}{section}
\theoremstyle{plain}
\newtheorem{thm}{\protect\theoremname}
\theoremstyle{plain}
\newtheorem{cor}[thm]{\protect\corollaryname}
\theoremstyle{plain}
\newtheorem{lem}[thm]{\protect\lemmaname}
\ifx\proof\undefined
\newenvironment{proof}[1][\protect\proofname]{\par
	\normalfont\topsep6\p@\@plus6\p@\relax
	\trivlist
	\itemindent\parindent
	\item[\hskip\labelsep\scshape #1]\ignorespaces
}{%
	\endtrivlist\@endpefalse
}
\providecommand{\proofname}{Proof}
\fi
\theoremstyle{plain}
\newtheorem{prop}[thm]{\protect\propositionname}
\theoremstyle{plain}

\theoremstyle{plain}
\newtheorem*{prop*}{\protect\propositionname}
\theoremstyle{definition}

\theoremstyle{plain}

\theoremstyle{remark}
\newtheorem*{lem*}{\protect\lemmaname}
\theoremstyle{remark}

\theoremstyle{plain}

\newtheorem{definition}[thm]{Definition}

\usepackage{babel}

\makeatother

\usepackage{babel}
\providecommand{\claimname}{Claim}
\providecommand{\corollaryname}{Corollary}
\providecommand{\definitionname}{Definition}
\providecommand{\remarkname}{Remark}
\providecommand{\lemmaname}{Lemma}
\providecommand{\propositionname}{Proposition}
\providecommand{\theoremname}{Theorem}
\providecommand{\hypothesisname}{Hypothesis}
\newcommand{\remove}[1]{}

\usepackage[hidelinks]{hyperref}

\begin{document}

\global\long\def\connected{\text{highly connected}}%
 
\global\long\def\f{\mathcal{F}}%
 
\global\long\def\a{\mathcal{A}}%
 
\global\long\def\pn{\mathcal{P}\left(\left[n\right]\right)}%
 

\global\long\def\Hom{\mathrm{Hom}}%
 
\global\long\def\l{\mathcal{L}}%
 
\global\long\def\s{\mathcal{S}}%
 
\global\long\def\j{\mathcal{J}}%
 
\global\long\def\d{\mathcal{D}}%
 
\global\long\def\Cay{\mathrm{Cay}}%

\global\long\def\OPT{\mathrm{OPT}}
 
\global\long\def\Image{\mathrm{Im}}%

\global\long\def\supp{\mathrm{supp}}
 
\global\long\def\GL{\mathrm{GL}}%
 
\global\long\def\SL{\mathrm{SL}}%
 
\global\long\def\Inf{}%
 
\global\long\def\Id{\textrm{Id}}%
 
\global\long\def\Tr{\mathrm{Tr}}%
 
\global\long\def\sgn{\textrm{sgn}}%
 
\global\long\def\p{\mathcal{P}}%
 
\global\long\def\h{\mathcal{H}}%
 
\global\long\def\N{\mathbb{N}}%
 
\global\long\def\a{\mathcal{A}}%
 
\global\long\def\b{\mathcal{B}}%
 
\global\long\def\c{\mathcal{C}}%
 
\global\long\def\E{\mathbb{E}}%
 
\global\long\def\x{\mathbf{x}}%
 
\global\long\def\y{\mathbf{y}}%
 
\global\long\def\z{\mathbf{z}}%
 
\global\long\def\c{\mathcal{C}}%
 
\global\long\def\av{\mathsf{A}}%
 
\global\long\def\chop{\mathrm{Chop}}%
 
\global\long\def\stab{\mathrm{Stab}}%
 
\global\long\def\Span{\mathrm{Span}}%
 
\global\long\def\Domain{\mathrm{Domain}}%
 
\global\long\def\codim{\mathrm{codim}}%
 
\global\long\def\Var{\mathrm{Var}}%
 
\global\long\def\rank{\mathrm{rank}}%
 
\global\long\def\t{\mathsf{T}}%

\newcommand{\bE}{\mathbb{E}}

\providecommand{\remarkname}{Remark}
\providecommand{\lemmaname}{Lemma}
\providecommand{\theoremname}{Theorem}
\providecommand{\theoremname}{Corollary}
\providecommand{\theoremname}{Claim}
\providecommand{\theoremname}{Proposition}

\newcommand{\spn}[1]{\langle \! \langle #1 \rangle \! \rangle}
\newcommand{\card}[1]{\left| #1 \right|}

\newcommand{\SO}{\textup{SO}}
\newcommand{\Spin}{\textup{Spin}}
\newcommand{\Sp}{\textup{Sp}}
\newcommand{\SU}{\textup{SU}}
\newcommand{\U}{\textup{U}}
\newcommand{\HS}{\textrm{HS}}
\newcommand{\sign}{\textrm{sign}}
\newcommand{\parenth}[1]{{\left( #1 \right)}}
\renewcommand{\O}{\textup{O}}

\newcommand*\coef[4]{{#1}_{#2}({#3},{#4})}
\newcommand{\IC}{\mathrm{IC}}
\newcommand{\q}{\tilde{q}}
 
\newcommand{\F}{\mathbb{F}}

\global\long\def\sqbinom#1#2{\left[\begin{array}{c}
#1\\
#2
\end{array}\right]}%

\newcommand{\IB}[4]{k_{{#1}}({#2},{#3},{#4})}

\newcommand{\Mt}{M_t(n,q)}

\title[Forbidden Intersection Theorems for Matrix Spaces]{Forbidden Intersection Theorems for Matrix Spaces}
\author[1]{Esty Kelman}
\address[1]{Massachusetts Institute of Technology, Cambridge, MA., U.S.A. \& Boston University, Boston, MA., U.S.A. } 
\email[1]{ekelman@mit.edu}

\author[2]{Nathan Lindzey}
\address[2]{Department of Mathematics, University of Memphis, Memphis, TN., U.S.A.}
\email[2]{nathan.lindzey@memphis.edu}

\author[3]{Ohad Sheinfeld}
\address[3]{Einstein Institute of Mathematics, Hebrew University, Jerusalem, Israel.}
\email[3]{oshenfeld@gmail.com}

\thanks{This project has received funding from the European Union's Horizon 2020 research and innovation programme under grant agreement No 802020-ERC-HARMONIC. The work of the third author is partially supported by the European Research Council (StG no.~101163794)}

\begin{abstract}
A family of $m \times n$ matrices $\mathcal{F} \subseteq \mathbb{F}_q^{m \times n}$ is \emph{$(t-1)$-intersection-free} if $\dim \ker(A-B) \neq t-1$ for all $A,B \in \mathcal{F}$. A \emph{forbidden $(t-1)$-intersection problem} for a collection of matrices asks for the size and structure of extremal $(t-1)$-intersection-free families within that collection.

We solve this problem in $\GL(n,q)$ for all pairs $(n,t)$ such that $t<c\cdot n$ where $c$ is a universal constant. We show that the $t$-umvirates and their duals, are the only maximal $(t-1)$-intersection-free families $\mathcal{F} \subset \GL(n,q)$. Here, a $t$-umvirate is defined as the family of all matrices that agree on a fixed $t$-dimensional subspace, and its dual as those whose transposes agree on it. The best previously known result, due to Ellis, Kindler, and Lifshitz, established this bound under the assumption $n \geq e^{Ct\log t}$ for some constant $C>0$. We also give Frankl--R\"odl-type constructions showing that this range of $t$ is almost the best possible: we show that for values of $t>n/2$ the extremal behavior changes and no clean analogue is expected.

Our proof builds upon recent global hypercontractivity results for matrix spaces due to Evra, Kindler, and Lifshitz, and broadly applies to any sufficiently dense class of matrices.

\end{abstract}

\maketitle
\section{Introduction}\label{sec:new_intro}

Let $\binom{[n]}{k}$ be the collection of $k$-element subsets of $[n] := \{1,2,\ldots,n\}$. A family $\mathcal{F} \subseteq \binom{[n]}{k}$ is \emph{$t$-intersecting} if $|A \cap B| \geq t$ for any two $A,B \in \mathcal{F}$. A \emph{$t$-umvirate} of $\binom{[n]}{k}$ is a family of the form
\[
    \mathcal{F}_T = \left \{A \in \binom{[n]}{k} : T \subseteq A \right \}
\]
for a fixed subset $T \subseteq [n]$ of size $t$.
In their seminal 1961 paper, Erd\H{o}s, Ko, and Rado  proved that the $t$-intersecting families of the largest size are the $t$-umvirates, provided that $n$ is sufficiently large with respect to $k$ and $t$. 
\noindent This theorem, now known as \emph{the Erd\H{o}s--Ko--Rado theorem}, is a fundamental result in extremal combinatorics, and initiated the research area of intersection theorems. We refer the reader to
\cite{GodsilMeagher,EllisSurvey} for a comprehensive overview of the many
generalizations surrounding the Erd\H{o}s--Ko--Rado theorem. 
Following this work, Frankl~\cite{F78} and Wilson~\cite{Wilson84}, obtained partial results when $t$ grows with respect to $k,n$,until 1997, when Ahlswede and Khachatrian obtained the extremal families for every $(n,k,t)$ in their celebrated \emph{complete intersection theorem}~\cite{AK97}. The theorem states for each $(n,k,t)$, that one of the \emph{Frankl families} $F_{n,k,t,r}=\{S \in \binom{[n]}{k}:|S\cap [t+2r]| \geq t+r\}$ has the maximum size.

In a similar vein, Erd\H{o}s and S\'os asked for the maximum size of a
\emph{$(t-1)$-intersection-free} family $\mathcal{F}$ of $k$-element subsets
of $[n]$, that is, a family $\mathcal{F}$ such that $|A \cap B| \neq t-1$
for all $A,B \in \mathcal{F}$. Problems of this type are commonly referred
to as \emph{forbidden intersection problems}. While superficially similar
to the Erd\H{o}s--Ko--Rado problem, they have proven to be significantly
more difficult to resolve.

Although the problem has been solved in various parameter regimes in the
asymptotic setting (e.g., ~\cite{FranklF85,keller2021junta,EllisKL24,KZ24}), it remains open in full generality for arbitrary
$t$, $k$, and $n$ (see~\cite{EllisKL24} for a detailed discussion).
In all regimes where the problem has been resolved, the extremal family
is actually $t$-intersecting, making the forbidden intersection problem a
natural strengthening of the $t$-intersection problem.

The known solutions employ a diverse range of techniques, including
combinatorial, algebraic, probabilistic, and Fourier-analytic methods,
many of which have found important applications elsewhere. For example,
the work of Frankl and F\"uredi~\cite{FranklF85} contains one of the early
applications of the powerful delta-system method.
Keller and Lifshitz~\cite{keller2021junta} developed a broad extension of the
junta method that has since been widely used, while the work of Kupavskii and Zakharov~\cite{KZ24} introduced the spread approximation method, building on the breakthrough result of Alweiss et al.~\cite{AlweissLWZ19} on the sunflower conjecture. This method has since found numerous further applications (see, e.g.,~\cite{FranklK25,Kupavskii23partitions,Kupavskii23hereditary,KupavskiiN24}).

Like the Erd\H{o}s--Ko--Rado theorem, the Erd\H{o}s--S\'os problem and other forbidden intersection problems have since been studied in other domains, for example permutations~\cite{DN22, KZ24, KLS25} and codes~\cite{iarovikova2025forbiddingjustintersectionshort, keevash23}. In the following sections, we give an  overview of such results for $m \times n$ matrices $\mathbb{F}_q^{m \times n}$ over a finite field $\mathbb{F}_q$. 

\subsection{Intersection Problems for Matrices}
A family $\mathcal{F}\subseteq \mathbb{F}_q^{m\times n}$ is \emph{$t$-intersecting} if for all $A,B\in\mathcal{F}$, we have 
$\dim\ker(A-B)\ge t $. In contrast to the classical subsets setting, there are two natural $t$-umvirate families.

A \emph{$t$-umvirate} of $\mathbb{F}_q^{m \times n}$ is obtained by fixing a list $v$ of linearly independent vectors $v= v_1,\ldots,v_t\in\mathbb{F}_q^{n}$ and a list $w$ of arbitrary vectors $w = w_1,\ldots,w_t\in\mathbb{F}_q^{m}$, and taking
\[
\mathcal{U}(v,w)\;=\;\{A\in\mathbb{F}_q^{m\times n}:\ Av_i=w_i\ \text{for all }1\le i\le t\}.
\]
Dually, fix a list of linearly independent vectors $w = w_1,\ldots,w_t\in\mathbb{F}_q^{m}$ and a list of arbitrary vectors $v = v_1,\ldots,v_t\in\mathbb{F}_q^{n}$, and define the \emph{dual $t$-umvirate}
\[
\mathcal{U}^\ast(w,v)\;=\;\{A\in\mathbb{F}_q^{m\times n}:\ A^\top w_i=v_i\ \text{for all }1\le i\le t\}.
\]
When referring to generic $t$-umvirates, we often suppress the parameters, denoting them by $\mathcal{U}$ or $\mathcal{U}^*$.
Note that when $m=n$, the dual $t$-umvirates are also $t$-intersecting, giving a second type of extremal family (a phenomenon absent for $\binom{[n]}{k}$, permutations, or codes).

Huang~\cite{Huang87} (for $n<m$ and $(m,q)\neq(n+1,2)$) and Tanaka~\cite{Tanaka06} (for all $t\le n\le m$ and $q\ge 2$) proved a complete intersection theorem in this setting: for all $t\le n\le m$, the largest $t$-intersecting families are precisely the $t$-umvirates, and when $m=n$ also the dual $t$-umvirates. Their arguments are geometric and design-theoretic, tailored to $\mathbb{F}_q^{m\times n}$. Interestingly, in the matrix setting, the (dual) $t$-umvirates are always maximum. In particular, no analogue of the aforementioned Frankl families arises.

From here, there are two natural directions. One is to study intersection theorems in structured matrix groups such as $\GL(n,q)$ and $\SL(n,q)$. The other is to consider the harder Erd\H{o}s--S\'os \emph{forbidden intersection} variant, where one forbids intersection of exactly dimension $t-1$.

Ellis, Kindler, and Lifshitz~\cite{EllisKL23} recently did both. They investigated Erd\H{o}s--S\'os problems over $\mathbb{F}_q^{m \times n}$ for $n \ge n_0(t)$ and $m$ such that $|m-n|=O(1)$.\footnote{We note that $n_0(t)$ is unspecified in~\cite{EllisKL23} but inspection of the proof shows $n_0(t)\ge e^{Ct \log(t)}$ for some constant $C>0$~\cite{EllisPC}.} Here a family of $\mathcal{F}\subseteq\mathbb{F}_q^{m \times n}$ matrices is \emph{$(t-1)$-intersection-free} if for every $A,B\in \mathcal{F}$, $\dim \ker(A-B) \neq t-1$. Their main result is an Erd\H{o}s--S\'os theorem for the general linear group $\GL(n,q) \subseteq \mathbb{F}_q^{n \times n}$, namely, that the $t$-umvirates (and their duals) are the extremal $(t-1)$-intersection-free families of $\GL(n,q)$. They also state that the same proof works for $\SL(n,q)$ \emph{mutatis mutandis}. Here, the $t$-umvirates are the families obtained by picking linearly independent $v_1,\ldots,v_t \in \mathbb{F}_q^n$, linearly independent $w_1,\ldots,w_t \in \mathbb{F}_q^n$, and then taking all $A \in \GL(n,q)$ such that $Av_i = w_i$ for all $1 \leq i \leq t$. The dual families are constructed in a similar manner, only instead we insist that $A^\top v_i = w_i$ for all  $1 \leq i \leq t$. It is a routine calculation to see that a (dual) $t$-umvirate of $\GL(n,q)$ has size $ \prod_{i=1}^{n-t} (q^n-q^{i+t-1})$. 
\begin{thm}\cite{EllisKL23}\label{thm:GL(n,q)}
    For any $t \in \mathbb{N}$, there exists $n_0 = n_0(t) \in \mathbb{N}$ such that the following holds. If $n \in \mathbb{N}$ with
$n \geq n_0(t)$, $q$ is a prime power, and $\mathcal{F} \subseteq \GL(n,q)$ is $(t - 1)$-intersection-free, then
\[
|\mathcal{F}| \leq \prod_{i=1}^{n-t} (q^n-q^{i+t-1}).
\]
Moreover, equality holds if and only if $\mathcal{F}$ is a $t$-umvirate or a dual $t$-umvirate.
\end{thm}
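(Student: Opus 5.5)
We plan to prove Theorem~\ref{thm:GL(n,q)} in the Ellis--Kindler--Lifshitz way: a junta (or spread) approximation step followed by a stability analysis. We view $\GL(n,q)$ as a domain on which the analogues of restrictions are the $t$-umvirates $\mathcal{U}(v,w)$ and their duals. For a family $\mathcal{F}$, the restriction $\mathcal{F}_{v\to w}$ consists of the members of $\mathcal{F}$ sending $v_i \mapsto w_i$. We call $\mathcal{F}$ \emph{$r$-global} if no restriction of codimension $j$ has relative density exceeding $r^j$ times the density of $\mathcal{F}$.

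\textbf{Step 1 (global families contain all intersections).} The main analytic input is that two $r$-global families $\mathcal{A},\mathcal{B}\subseteq \GL(n,q)$ of density at least $\epsilon$ must contain a pair $A,B$ with $\dim\ker(A-B)=t-1$. To prove this we take the Cayley-type operator $T$ on $\GL(n,q)$ with $T f(A) = \E_{B}[f(B)]$, where $B$ is uniform subject to $\dim\ker(A-B)=t-1$. We then show $\langle 1_\mathcal{A}, T 1_\mathcal{B}\rangle>0$ by decomposing $1_\mathcal{A},1_\mathcal{B}$ into degree levels via the representation theory of $\GL(n,q)$. The main term, the product of densities, comes from the constant part. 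The low-degree levels are controlled because $T$'s eigenvalues on degree $d$ are at most about $q^{-d}$ times a factor depending on $t$. The level-$d$ weight of a global function is bounded by a global hypercontractivity (level-$d$) inequality of Evra--Kindler--Lifshitz type, roughly $\|f^{=d}\|_2^2 \le (Cr)^{O(d)} \E[f]^2\log^{d}(1/\E f)$. The high-degree tail is handled by the spectral gap. This eigenvalue/level-$d$ trade-off is the step I expect to be the \textbf{main obstacle}: we need explicit eigenvalue estimates for the "kernel dimension exactly $t-1$" operator, which is not a single conjugacy class, so we would write it as a combination of $q$-analog Krawtchouk-type polynomials in rank.

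\textbf{Step 2 (approximation by umvirates).} Starting from a $(t-1)$-intersection-free $\mathcal{F}$, we repeatedly restrict to the densest non-global restrictions. This gives a small collection $\mathcal{J}$ of (dual) umvirate-type restrictions of bounded codimension such that $\mathcal{F}$ is covered, up to a negligible proportion, by $\bigcup_{J\in\mathcal{J}}\mathcal{F}_J$ with each $\mathcal{F}_J$ global inside its restriction. Applying Step 1 to pairs of such pieces, with the intersection shifted by the codimensions, shows that the generating restrictions must be pairwise $t$-intersecting in the appropriate sense. The Huang--Tanaka-type classification for $\GL$ then forces either every $J$ to contain a common $t$-umvirate or a dual one. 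Otherwise $|\mathcal{F}|$ is at most $(1-c)$ times the umvirate bound, since families with no common $t$-umvirate cover substantially less.

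\textbf{Step 3 (exact result and uniqueness).} Now suppose $\mathcal{F}$ is essentially contained in a $t$-umvirate $\mathcal{U}$. Let $\mathcal{F}' = \mathcal{F}\setminus\mathcal{U}$ and $\mathcal{F}\cap\mathcal{U}$. Each $B\in\mathcal{F}'$ has a positive fraction of $A\in\mathcal{U}$ with $\dim\ker(A-B)=t-1$, by a counting argument fixing $B$'s behavior on the $t$ chosen vectors. Thus every element outside $\mathcal{U}$ kills many elements inside, and $|\mathcal{F}|\le|\mathcal{U}|$ with equality only if $\mathcal{F}'=\emptyset$. The dual case follows by transposition, and $n\ge n_0(t)$ absorbs all error terms.
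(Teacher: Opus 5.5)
Your architecture is essentially the original Ellis--Kindler--Lifshitz one: junta approximation, hypercontractivity on the global pieces, then stability. The paper does not reprove this cited theorem directly. It obtains it from Theorem~\ref{thm:main2} with $n_0(t)=Ct$, using density increments inside dictators and induction on $m$ in $\LL(V,W)$ with no regularity lemma, followed by a determinant-slice argument.

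As written, however, your Step~3 does not close. From ``every $B\in\mathcal{F}\setminus\mathcal{U}$ forbids a positive fraction of $\mathcal{U}$'' you get $|\mathcal{F}|<|\mathcal{U}|$ only if $|\mathcal{F}\setminus\mathcal{U}|$ is smaller than that fraction of $|\mathcal{U}|$. That fraction is not an absolute constant. With $r=\dim\spn{w_1-Bv_1,\ldots,w_t-Bv_t}$, the condition $\dim\ker(A-B)=t-1$ forces the $n-t$ free columns of $A-B$, taken modulo an $r$-dimensional space, to lose exactly $r-1$ in rank. This has probability only $\Theta(q^{-(r-1)(t-1)})$, already about $1/q$ when $t=r=2$. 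Since $n_0$ may depend only on $t$, the unspecified ``negligible proportion'' from your Step~2 must be shown to be $\ll q^{-(t-1)^2}$ uniformly in $q$. In practice that means an error of the form $q^{-\Omega(n)}$, which a generic regularity lemma does not provide.

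The paper instead bounds $\mathcal{F}\setminus\mathcal{U}$ stratified by this distance $r$:
\begin{itemize}
\item An element at distance $r$ lies in one of at most $4q^{r(n+t-r)}$ other umvirates $\mathcal{U}'$ on the same $t$-space.
\item $\mathcal{F}\cap\mathcal{U}'$ must be cross-$(t-1)$-intersection-free with the large family $\mathcal{F}\cap\mathcal{U}$. This reduces to a cross-$(r-1)$-intersection-free pair in an $(n-r)\times(n-t)$ matrix space, and the inductive bound gives $|\mathcal{F}\cap\mathcal{U}'|\le q^{-2r(n-\kappa r)}|\mathcal{U}|$.
\item Summing yields $|\mathcal{F}\setminus\mathcal{U}|\le Cq^{-r_0(n-Kt)}|\mathcal{U}|$, which the killed fraction $c\,q^{-r_0 t}$ beats once $n\ge C't$.
\end{itemize}

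Step~1 also needs repair. Your $T$ on $\GL(n,q)$ is a Cayley operator whose eigenvalues are normalized character sums of $\GL(n,q)$ over the normal set $\{C:\dim\ker(C-I)=t-1\}$, not $q$-Krawtchouk values. The explicit Krawtchouk eigenvalues, and the Evra--Kindler--Lifshitz level-$d$ inequality, live on the bilinear forms scheme of the abelian group $\mathbb{F}_q^{n\times n}$. The workable version extends $1_{\mathcal{A}},1_{\mathcal{B}}$ by zero to $\mathbb{F}_q^{n\times n}$, where $\GL(n,q)$ has density at least $1/4$, so densities and globalness change only by constants. One then shows $\langle 1_{\mathcal{A}},\tilde{\mathbf{A}}_{n-t+1}1_{\mathcal{B}}\rangle>0$.

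There, an eigenvalue bound of order $q^{-d}$ is far too weak. The pieces in Step~2 can have relative density $q^{-\Theta(n)}$ and still carry a constant share of $\mathcal{F}$, for example a global piece of density $q^{-n(t-1)}$ inside a $1$-restriction. Then $\log^d(1/\mu)\approx(nt\log q)^d$, and you need the actual decay of Proposition~\ref{prop:eigs}: $|\eta_d|=O(n\,q^{-d(n-t+1)})$ for $d<t$ and $|\eta_d|=O(q^{-(d(2n-d)-t^2)/2})$ for $d\ge t$.

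Two points in Step~2 are also not automatic:
\begin{itemize}
\item With a constant globalness parameter $r$ and $\mu(\mathcal{F})\approx q^{-nt}$, passing to the densest restriction only bounds the codimension by $nt\log_r q$, not by a constant.
\item For two different restrictions, $\dim\ker(A-B)$ is not simply shifted by the codimension, because it depends on the fixed blocks. The paper passes to the quotient map $\Gamma$ onto $\LL(\ker X',\mathbb{F}_q^{m-s}/\mathrm{colspace}\,X)$, discards the restrictions where $X$ or $X'$ is rank-deficient, and uses fiber-averaged $[0,1]$-valued functions. This is what preserves both the intersection dimension and globalness.
\end{itemize}
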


Independently, Ernst and Schmidt~\cite{ernst2023intersection} study the Erd\H{o}s--Ko--Rado variant, which is implied by Theorem~\ref{thm:GL(n,q)}. A family $\mathcal{F} \subseteq \GL(n,q)$ is \emph{$t$-intersecting} if $\dim \ker(A-B) \geq t$ for any $A,B \in \mathcal{F}$. When $t=1$, we simply say that the family is \emph{intersecting}. Using the representation theory of $\GL(n,q)$, they gave a tight upper bound of $\prod_{i=1}^{n-t} (q^n-q^{i+t-1})$ on the size of a largest $t$-intersecting family for sufficiently large $n$ with respect to $t$, but they did not characterize the extremal families. Their dependency on $n$ is comparable to Ellis et al., but their proof does not readily carry over to $\SL(n,q)$~\cite{KaiPC}. We refer the reader to Appendix~\ref{sec:app} for a brief comparison of the two approaches.

Both Ellis et al.~and Ernst and Schmidt~also obtained `decoupled', that is, \emph{cross-intersecting} versions of their main results (see~\cite{EllisKL23,ernst2023intersection} for their formal statements). Here, we say that two families $\mathcal{F},\mathcal{G} \subseteq \mathbb{F}_q^{m \times n}$ are \emph{cross-$t$-intersecting} if for all $A \in \mathcal{F}$ and $B \in \mathcal{G}$, we have  $\dim \ker(A-B) \geq t$. Note that setting $\mathcal{F} = \mathcal{G}$ recovers the definition of $t$-intersecting. Similarly, a pair of families $\mathcal{F},\mathcal{G} \subseteq \mathbb{F}_q^{m \times n}$ is \emph{cross-$(t-1)$-intersection-free} if $\dim \ker(A-B) \neq  t-1$ for all $A\in \mathcal{F}$ and $B \in \mathcal{G}$. Note that setting $\mathcal{F} = \mathcal{G}$ recovers the definition of $(t-1)$-intersection-free. We will work with this decoupled notion of $(t-1)$-intersection-free throughout this paper. 

\medskip

\noindent Our main result is a substantial improvement on the results of Ellis et al.

\begin{thm}[Main Result]\label{thm:main2}
There exists a constant $C > 0$ such that for all $t,n \in \mathbb{N}$ such that $n \ge Ct$, if $\mathcal{F} \subseteq \GL(n,q)$ is $(t - 1)$-intersection-free, then
\[
|\mathcal{F}| \leq \prod_{i=1}^{n-t} (q^n-q^{i+t-1}).
\]
Moreover, equality holds if and only if $\mathcal{F}$ is a $t$-umvirate or a dual $t$-umvirate.
\end{thm}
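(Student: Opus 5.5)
We will follow the spread-approximation / junta strategy, with the global hypercontractivity of Evra, Kindler and Lifshitz for matrix spaces as the analytic engine. Throughout we work with the decoupled version: $\mathcal{F},\mathcal{G}\subseteq\GL(n,q)$ are cross-$(t-1)$-intersection-free. A \emph{restriction} (or \emph{dictatorship}) $\mathcal{U}(v,w)$ with $\dim v=r$ is the set of matrices with $Av_i=w_i$ for $i\le r$; its dual form uses $A^\top$. We call a family \emph{$r$-spread} relative to a restriction if no further restriction of codimension $s$ increases its density by more than a factor $q^{sr}$ or so, compared with the density of that restriction.

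\textbf{Step 1 (spread approximation).} Iteratively pass to restrictions on which the family becomes denser. We aim to show that $\mathcal{F}$ is covered, up to a small error, by a union of restrictions $\mathcal{U}(v^{(j)},w^{(j)})$ and dual restrictions of dimension at most $O(t)$. On each of these the restricted family is spread. The error here, the part of $\mathcal{F}$ not captured, should be of size $q^{-\Omega(n)}$ times the size of a $t$-umvirate. The counting is the standard Kupavskii--Zakharov argument, with the relevant inequality for matrices supplied by the fact that restrictions of codimension $s$ in $\GL(n,q)$ have density about $q^{-sn}$.

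\textbf{Step 2 (spread families do not avoid intersection $t-1$).} Let $\mathcal{F}',\mathcal{G}'$ be spread within restrictions of dimension $a,b\le t-1$ whose defining data are compatible, so that some matrix pairs can agree on a subspace of dimension $<t$. We want to show that $\mathcal{F}'$ and $\mathcal{G}'$ contain $A,B$ with $\dim\ker(A-B)=t-1$. Write $A=BM$. It suffices to show that the bilinear count
\[
\langle 1_{\mathcal{F}'} * 1_{\mathcal{G}'}^{-1}, 1_{K_{t-1}}\rangle
\]
is positive, where $K_{t-1}$ is the set of $M\in\GL(n,q)$ with $\dim\ker(M-I)=t-1$. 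We expand $1_{K_{t-1}}$ via the noise/level decomposition on $\GL(n,q)$. Global hypercontractivity bounds the contribution of high-degree parts for spread functions: $\|f^{=d}\|_2^2\le (Cr)^{O(d)}q^{-\ldots}\mu^2$-type level inequalities. Meanwhile, the low-degree part of $1_{K_{t-1}}$ gives the main term $\approx \mu(\mathcal{F}')\mu(\mathcal{G}')\mu(K_{t-1})$. We expect this to be the main obstacle. The measure of $K_{t-1}$ is roughly $q^{-(t-1)^2}$, and the error terms must be beaten by this. That is exactly where the constraint $n\ge Ct$ enters: we need $q^{-\Omega(n)}$ savings from hypercontractivity to dominate $q^{-t^2}$-type losses. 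Linear-in-$n$ decay for spread families at level $d\ge1$ suffices only if we use a smoothed version of $1_{K_{t-1}}$, namely its pushforward under the random walk on kernels, instead of its raw spectrum. We would try first a Markov-operator replacement: write $1_{K_{t-1}}$ as dominated by $T^{k}$ of a nicer indicator for $O(1)$ steps.

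\textbf{Step 3 (structure and stability).} Steps 1--2 imply that every restriction in the approximation for $\mathcal{F}$ has dimension $\ge t$. The same holds for $\mathcal{G}$. Moreover, the restrictions for $\mathcal{F}$ and $\mathcal{G}$ must pairwise ``$t$-intersect'' in the sense of the Huang--Tanaka theorem lifted to $\GL(n,q)$. Hence $|\mathcal{F}|$ is at most (the number of $t$-restrictions in a $t$-intersecting configuration) $\times$ (umvirate size) $+$ error. This forces $\mathcal{F}$ to be essentially contained in a single $t$-umvirate or dual $t$-umvirate, since mixing two of them is suboptimal by a factor $1-q^{-\Omega(n)}$. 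Finally, a bootstrapping step handles exact equality. Suppose $|\mathcal{F}|$ equals the umvirate size and $\mathcal{F}$ is mostly inside $\mathcal{U}$, but some $A\in\mathcal{F}\setminus\mathcal{U}$. We count matrices $B\in\mathcal{U}$ with $\dim\ker(A-B)=t-1$, reusing the Step 2 argument for the spread family $\mathcal{U}\cap\mathcal{F}$ against the single point $A$. We show they are a constant fraction of $\mathcal{U}$, contradicting the size. Hence $\mathcal{F}=\mathcal{U}$ or $\mathcal{U}^*$.
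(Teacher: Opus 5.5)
There is a genuine gap, and it sits where you expect it. Your Step 2 is not proved: it is flagged as the main obstacle and given only a speculative smoothing fix, and the route you sketch for it lacks its key ingredient.

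\textbf{Step 2.} Writing $A=BM$ turns cross-$(t-1)$-intersection into a nonabelian convolution on $\GL(n,q)$. You would then need character-ratio bounds for the class function $1_{K_{t-1}}$, plus a global level-$d$ inequality for the representation-theoretic levels of $\GL(n,q)$. Neither is supplied. The Evra--Kindler--Lifshitz inequality lives on the additive space $\mathcal{L}(V,W)$, whose levels are different objects.

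The missing idea is to stay additive. $\GL(n,q)$ has density bounded below by an absolute constant in $\mathbb{F}_q^{n\times n}$, so $\mathcal{F},\mathcal{G}$ can be treated as families in the bilinear forms scheme. The relevant operator is the Cayley graph $\mathbf{A}_{n-t+1}$ joining $A,B$ when $\operatorname{rank}(A-B)=n-t+1$; it acts as a scalar $\eta_d$ on each level $V^{=d}$, which is exactly where the hypercontractive inequality applies. The paper then needs essentially sharp eigenvalue bounds, $|\eta_d|=O(q^{-d(n-t+1)})$ for $d<t$ and $O(q^{-(d(2m-d)-t(2m-2n+t))/2})$ for $d\ge t$. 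It combines these with H\"older at $\ell\approx\sqrt{mt}/d$, so that for small $d$ the $d$-th error term is at most $q^{O(d\sqrt{mt}+dt)}|\eta_d|\,\E[f]\E[g]$. This is $o(\E[f]\E[g])$ precisely when $t\le\alpha n$.

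Measuring errors against $\mu(K_{t-1})\approx q^{-(t-1)^2}$, as you suggest, is not the right comparison. After normalizing, the main term is $\E[f]\E[g]$, and what must be beaten are the $\eta_d$.

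Your phrase ``whose defining data are compatible'' also hides a second real difficulty: passing to a restriction does not preserve $\dim\ker(A-B)$. The paper handles this with several pieces:
\begin{itemize}
\item a change of basis;
\item a quotient map $\Gamma$ that provably preserves kernel dimension;
\item fiber-average functions, so that $L_1$-globalness survives the projection;
\item discarding restrictions whose fixed blocks are rank-deficient, which is exactly where the no-density-bump hypothesis on $\mathcal{G}$ is used.
\end{itemize}

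\textbf{Step 3: the junta step.} You assert that the approximating restrictions must pairwise ``$t$-intersect'' and that any such configuration is dominated by a single (dual) umvirate up to a factor $1-q^{-\Omega(n)}$. This is not an available theorem for collections of mixed row/column restrictions with arbitrary fixed data. Huang--Tanaka concerns families of matrices, not of restrictions, so this junta-level EKR statement would itself have to be proved.

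The paper avoids it with a density increment and induction on $m$. A bump of size $q^{-(m-\xi t)}$ inside a column dictator is upgraded to $(1-\varepsilon)$-containment of both families, using the inductive bound on $k_q(n-1,m-1,t)$. Row bumps are excluded when $m>n$ and dualized when $m=n$. This is then iterated $t$ times.

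\textbf{Step 3: bootstrapping.} Your bootstrapping claim is false as stated. Take $A\notin\mathcal{U}$ and let $r=\operatorname{rank}((A-B)|_{V'})$ for $B\in\mathcal{U}$. The proportion of $B\in\mathcal{U}$ with $\dim\ker(A-B)=t-1$ is of order $q^{-(r-1)(t-1)}$, not a constant, and it is $q^{-\Theta(t^2)}$ when $r\approx t$. An unstratified error of $q^{-\Omega(n)}|\mathcal{U}|$ outside $\mathcal{U}$ cannot pay for that loss once $t\gg\sqrt n$, which is the regime of the theorem.

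The paper instead needs three stratified ingredients:
\begin{itemize}
\item the bound $|\mathcal{F}\cap\mathcal{U}'|\le q^{-2r(n-O(r))}|\mathcal{U}|$ for each umvirate $\mathcal{U}'$ on $V'$ agreeing with $\mathcal{U}$ only on a $(t-r)$-dimensional subspace, itself derived from the inductive bound;
\item the count of at most $4q^{r(n+t-r)}$ such $\mathcal{U}'$;
\item the choice of the minimal rank $r'$ among outside elements.
\end{itemize}
Together these make the outside mass at most $q^{-r'(n-O(t))}|\mathcal{U}|$, which the loss $q^{-r't}$ then beats. Inside $\GL(n,q)$ this counting must also respect the determinant, which the paper handles via $\SL(n,q)$ slices and a fixed-determinant derangement lemma.
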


Note that since every $t$-intersecting family is automatically $(t-1)$-intersection-free, Theorem~\ref{thm:main2} immediately yields the sharp Erd\H{o}s--Ko--Rado-type bound for $t$-intersecting families in $\GL(n,q)$ whenever $n\ge Ct$, thereby substantially extending the previously known range of parameters. 

To the best of our knowledge, all existing exact results for forbidden $(t-1)$-intersection problems in various settings (such as permutations and codes, see Section~\ref{sec:recent_results}) do not address the regime where $t$ can be large as $c\cdot n$ when $n$ is the ambient dimension (which is $n$ for $S_n,[m]^n,{[n] \choose k}, \GL(n,q)$), making our work the first to obtain exact results in this harder regime. For larger values of $t$, the problem has a different character, as discussed in Section~\ref{sec:recent_results}.

We shall deduce Theorem~\ref{thm:main2} among others from a technical stability result (Theorem~\ref{thm:main}) for cross-$(t-1)$-intersection-free families of $\mathbb{F}_q^{m \times n}$. Our stability result leverages many well-known facts about the \emph{bilinear form scheme} $\LL(V,W)$, a commutative association scheme defined over $\mathbb{F}_q^{m \times n}$, which we review in the next sections. Here, $\LL(V,W)$ denotes the set of all linear transformations from $V = \mathbb{F}_q^n$ to $W = \mathbb{F}_q^m$. At a high level, our methods exploit the fact that several natural sets and groups of matrices with non-Abelian structure (e.g., full-rank matrices and the special linear group) have high density within a commutative structure, allowing us to transfer stability results in the friendlier abelian domain $\LL(V,W)$ to the non-Abelian space without incurring too much error. This philosophy was already observed in Ellis et al.~\cite{EllisKL23}, but our proof is different -- the new crucial ingredient being the new \emph{global hypercontractivity} for matrix spaces, of Evra, Kindler and Lifshitz~\cite{EKL23} (see Section~\ref{sec:global}). 

We note that global hypercontractivity for the symmetric group~\cite{keevash2023sharp} has proven to be quite useful in a range of applications, including extremal combinatorics~\cite{keevash2024largest,KLS25}, character bounds~\cite{lifshitz2023bounds}, and product-growth/covering phenomena in finite groups~\cite{keller2024improved}.
The aforementioned global hypercontractivity result for matrix spaces has only been used beyond the original work for hardness of approximation results~\cite{minzer2024near,minzer2025near}. We hope that our approach will find use in further applications of matrix-space hypercontractivity.

\subsection{The Stability Result and a Proof Sketch}\label{sec:main}

We now present the statement of our stability result and an outline of the proof.

\begin{thm}[Stability]\label{thm:main} 
There exist constants $\alpha, c_0>0$ and $\beta> 1$ such that the following holds.
For every $m,n,t \in \mathbb{N}$ that satisfy $t \le \alpha n$ and $n \le m \le \beta n$, if $\mathcal{F}\subseteq \mathbb{F}_q^{m\times n}$ is a $(t-1)$-intersection-free family with
\[
|\mathcal{F}| \ge q^{-100}q^{m(n-t)},
\]
then $\mathcal{F}$ is almost contained in a $t$-umvirate $\mathcal{U}$ (or in a dual $t$-umvirate when $m=n$), in the sense that
\[
|\mathcal{F}\cap \mathcal{U}| \ge \bigl(1-q^{-c_0 n}\bigr)\,|\mathcal{F}|.
\]

\end{thm}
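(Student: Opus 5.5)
The plan is a spread-approximation/junta argument in the bilinear forms scheme $\LL(V,W)$, driven by the global hypercontractivity of Evra, Kindler and Lifshitz. First, a density increment on restrictions. For $v=(v_1,\dots,v_s)$ linearly independent in $\mathbb{F}_q^n$ and $w=(w_1,\dots,w_s)$ in $\mathbb{F}_q^m$, let $\mathcal{F}_{v\to w}$ denote $\mathcal{F}\cap\mathcal{U}(v,w)$, normalized inside the affine subspace $\mathcal{U}(v,w)$, and define the dual restrictions similarly. Call $\mathcal{F}$ \emph{$(r,\epsilon)$-global} if every restriction of codimension $s\le r$ has relative density at most $q^{\epsilon s}\mu(\mathcal{F})$.

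A family of density $q^{-100}q^{-mt}$ is far from global. Iterating restrictions where the density jumps, we obtain a restriction of some codimension $s$ (mixed primal/dual) on which $\mathcal{F}$ is global with density at least $q^{-100}q^{-m(t-s)}\cdot q^{\Omega(\cdot)}$. We then show that $s\ge t$ must hold, essentially with $s=t$ and pure type (all primal, or all dual when $m=n$).

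The key step is a cross-intersection lemma for global families. Suppose $\mathcal{G}_1,\mathcal{G}_2$ are obtained by restricting $\mathcal{F}$ on compatible data. For example, one fixes the same $v$ and values $w$ and $w'$ that agree on exactly $s$ coordinates, with the remaining discrepancy arranged so that $\dim\ker(A-B)=t-1$ reduces to a rank condition on the restricted difference in a smaller matrix space of dimensions roughly $(m-s)\times(n-s)$. Suppose also that both families are global with density at least $q^{-C(t-s)m}$. Then the pair contains $A,B$ with $\dim\ker(A-B)$ exactly the forbidden value.

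To prove this, we write the count of bad pairs as $\langle 1_{\mathcal{G}_1}, T 1_{\mathcal{G}_2}\rangle$, where $T$ is the adjacency operator of the relation ``$\dim\ker=k$'' in $\LL(V,W)$, with $k=t-1-s$. We expand in the characters of $\mathbb{F}_q^{m\times n}$, grouped by rank; these are the eigenspaces of the bilinear forms scheme. The eigenvalues of $T$ are generalized Krawtchouk values. The trivial character contributes the main term, of order $\mu_1\mu_2$ times the measure of rank-$(n-k)$ matrices, which is $\Theta(1)\cdot q^{-k m}$ up to constants. For the rank-$r$ levels with $r\ge 1$, we bound $|\lambda_r|/\lambda_0\le q^{-\Omega(r n)}$ for $k\le\alpha n$. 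We control the level-$r$ weights $\|f^{=r}\|_2^2$ of global functions using the EKL level-$d$ inequality, which gives $\|f^{=r}\|_2^2\le q^{O(r\cdot\text{stuff})}\mu^2\log(1/\mu)^{r}$-type bounds.

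The constraints $t\le\alpha n$ and $m\le\beta n$ are exactly what makes the eigenvalue decay beat the level-$r$ weight growth, whose exponent is $O(r\cdot m\cdot(t-s)\epsilon)$-ish. Summing over $r$ then gives error $\ll$ main term. This contradicts intersection-freeness, so the junta found must have $s\ge t$.

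I expect the main obstacle to be this spectral/hypercontractive estimate in the regime where $t$ is linear in $n$. The density $q^{-mt}$ is tiny, exponentially small in $n^2$, so the level inequality must be applied with globalness measured at scale $q^{\epsilon s}$ per restricted coordinate rather than at constant scale. One also has to handle mixed primal/dual restrictions, which only coexist cleanly when $m=n$. My first attempt would be to run the increment argument separately in the primal and dual directions and show that a mixed junta of total size $t$ cannot carry density $q^{-mt}$. The reason is that after fixing $a$ primal and $b$ dual constraints, the free part has codimension $am+bn-ab>m(a+b)-ab$, which loses $q^{ab}$; for $ab\ge1$ this exceeds the $q^{-100}$ slack only if $a,b$ are both large, so small mixed cases need a direct check.

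Finally, we establish stability. Having located $\mathcal{U}=\mathcal{U}(v,w)$ with $\mu_{\mathcal{U}}(\mathcal{F})\ge q^{-O(1)}$, we consider the part $\mathcal{F}\setminus\mathcal{U}$. Any $B$ outside $\mathcal{U}$ has some restriction $\mathcal{U}(v,w')$ with $w'\ne w$. We apply the cross lemma to $\mathcal{F}_{v\to w}$, which is global after a further cleaning step, against the sparse family $\{B\}$-neighbourhood. Using an expander-mixing bound in the same scheme, the number of $B$ such that $\mathcal{F}_{v\to w}$ avoids all matrices at ``distance'' $t-1$ from $B$ is at most $q^{-c_0 n}|\mathcal{F}|$. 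This yields $|\mathcal{F}\cap\mathcal{U}|\ge(1-q^{-c_0n})|\mathcal{F}|$.
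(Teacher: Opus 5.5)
Your proposal has a genuine gap at the step you treat as routine: going from a global restriction to ``$s\ge t$, pure type, with $\mathcal{F}$ essentially inside the junta.''

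\textbf{The density claim after restriction is unjustified.} Maximizing or iterating over restrictions only guarantees $\mu(\mathcal{F}_R)\ge\gamma^{s}\mu(\mathcal{F})$, with $\gamma=q^{O(t)}$ (or your $q^{\epsilon}$). It does not give a gain of $q^{m}$ per fixed column. The density $q^{-100}q^{-m(t-s)}$ you claim on a codimension-$s$ column restriction says that a $q^{-O(1)}$-fraction of $\mathcal{F}$ already lies in that single restriction. That is the structural statement you are trying to prove.

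With the honest gain $\gamma^s$, your global lemma (in effect the paper's Step~1, which bounds a global $(t-s-1)$-intersection-free family by $q^{-cm(t-s)}$) only rules out $t-s\gtrsim t/c$. For $s=t-1$ and $t\gg c$, the lower bound $\gamma^{t-1}q^{-100-mt}$ lies far below $q^{-cm}$, so nothing is contradicted.

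\textbf{The remaining mass is uncontrolled.} What is left is the part of $\mathcal{F}$ lying in the other translates $\mathcal{U}(v,w')$. These pieces may be tiny or empty, so a cross lemma that needs both sides global with density $q^{-C(t-s)m}$ never applies to them. The mixed primal/dual case, which you defer to ``a direct check,'' has the same problem.

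\textbf{The closing expander-mixing step is too weak.} There are about $q^{r(m+t-r)}$ translates $\mathcal{U}'$ at distance $r$. For each, the spectral gap of the $(t-1)$-relation between $\mathcal{U}$ and $\mathcal{U}'$ gives only $\mu_{\mathcal{U}'}(\mathcal{F})\lesssim q^{-2(n-t-r+1)+O(1)}$. To beat the count you need roughly $q^{-2r(m-O(r))}$ per translate, and the spectral bound falls short already for $r\ge 2$.

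\textbf{The idea you are missing} is the paper's strong induction on $m$ for the decoupled (cross) problem, with the explicit bound
$k_q(n,m,t)\le q^{2m(n-t)+\max\{2t(Ct-m),0\}+\max\{2t(2m-4(n-t)),0\}}$ (Lemma~\ref{lem:inductive_statment}).
\begin{itemize}
\item Hypercontractivity (Steps~1--2, where only one of the two families has to be global) is used only to find a modest bump $q^{\xi t}$ inside a single dictator.
\item Lemma~\ref{lem:density_increment} upgrades this bump to $(1-q^{\xi t-m})$-containment of both families. For $w'\neq w$, the pieces $\mathcal{F}_{v\mapsto w}$ and $\mathcal{G}_{v\mapsto w'}$ project to a cross-$(t-1)$-free pair of $(m-1)\times(n-1)$ matrices (Lemma~\ref{lem:projection_exact_intersection}). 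Their product is therefore at most $q^{2(n-1)}k_q(n-1,m-1,t)$, and no globalness of the pieces is needed.
\item The same inductive bound rules out row bumps when $m>n$ (Lemma~\ref{lem:density_bump_on_smaller_side_imply_too_large}). When $m=n$ one dualizes once, after which the domain dimension is below $m$. Iterating $t$ times produces the umvirate.
\item The per-translate bound $q^{-2r(m-O(r))}$ comes from treating $(\mathcal{F}\cap\mathcal{U}',\mathcal{F}\cap\mathcal{U})$ as a cross-$(r-1)$-free pair of $(m-r)\times(n-t)$ matrices and applying the induction hypothesis (Lemma~\ref{lem:families_almost_contained_in_U_small_intersection_other_umvirates}). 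One could presumably also obtain it from Step~1, since the dense side $\mathcal{F}\cap\mathcal{U}$ is automatically global. A plain spectral gap does not suffice.
\end{itemize}
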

In the proof, we actually establish a slightly stronger cross-stability statement 
(Lemma~\ref{lem:iterative_argument}), but we state the above consequence in the introduction for simplicity.

The proof of Theorem~\ref{thm:main} follows the `structure versus pseudorandomness' paradigm. The main ingredient of the pseudorandomness component is a sharp global hypercontractivity result of Evra, Kindler, and Lifshitz~\cite{EKL23} for $\LL(V,W)$ stated in Section~\ref{sec:step1}. It is useful to think of `globalness' as our strong pseudorandomness property, where global means no significant density increase occurs after fixing a small number of rows/columns in some basis (see Section~\ref{sec:global} for a formal definition). 

\medskip

\noindent Let $\mathcal{F},\mathcal{G} \subseteq \LL(V,W) = \mathbb{F}_q^{m \times n}$ be  cross-$(t-1)$-intersection-free families. We define $\E[f]$ to be the \emph{measure} of $\mathcal{F}$ where $f\in L^2(\mathcal{L}(V,W))$ are their respective characteristic functions (see Section~\ref{sec:bilinear_forms} for more details). The proof consists of three steps, defined below:
\begin{enumerate}[label=Step \arabic*]
    \item \emph{(Global Hypercontractivity):} if either $\mathcal{F}$ or $\mathcal{G}$ is global, then we show that $|\mathcal{F}| \cdot |\mathcal{G}|$ has to be `small' by looking at the eigenvalues and eigenspaces of the `$t-1$ intersection free' Cayley graph, and using global hypercontractivity results of~\cite{EKL23}. 
    \item \emph{(No Density Bump):} assuming both $\mathcal{F}$ and $\mathcal{G}$ are not global, but one satisfies the weaker pseudorandomness condition of having no `small' density bump within any dictator, i.e., no $q^{\xi t}$-density increase (for some appropriately chosen constant $\xi > 1$) inside $\mathbb{F}_q^{(m-1) \times n}$ or $\mathbb{F}_q^{m \times (n-1)}$ after fixing one row or one column. In this case, we show there exist restrictions $\mathcal{F}' \subseteq \mathcal{F}$ and $\mathcal{G}' \subseteq \mathcal{G}$ such that 
    \begin{enumerate}
        \item  the cross-$(t-1)$-intersection-free pair $\mathcal{F}',\mathcal{G}'$ is isomorphic to a pair of cross-$(t-1)$-intersection-free families $\widetilde{\mathcal{F}}',\widetilde{\mathcal{G}}' \subseteq \LL(\widetilde{V},\widetilde{W})$ in a smaller space of bilinear forms $\LL(\widetilde{V},\widetilde{W})$,
        \item the restrictions do not lose too much measure as families of $\LL(V,W)$, and  
        \item the family $\widetilde{\mathcal{F}}' \subseteq \LL(\widetilde{V},\widetilde{W})$ is global. 
    \end{enumerate}
    If these conditions are met, then the pair $\widetilde{\mathcal{F}}',\widetilde{\mathcal{G}}'$ satisfies the hypotheses of Step 1 and $|\widetilde{\mathcal{F}}'| \cdot |\widetilde{\mathcal{G}}'|$ is roughly $|\mathcal{F}| \cdot |\mathcal{G}|$; therefore, their measure has to be `small' by Step~1.
    
    However, a key technical difficulty is that, unlike in product spaces, naive restrictions or projections of matrices
need not preserve $\dim\ker(A-B)$ or globalness, and thus may destroy the forbidden-intersection and globalness properties.
We address this by passing to suitable quotient spaces and, instead of working with $\{0,1\}$-valued indicator functions, we pass to a class of $[0,1]$-valued functions that we call \emph{fiber-average functions}, so that cross-$(t-1)$-intersection-freeness and `globalness' are preserved (see Section~\ref{sec:step2} for definitions).
    \item \emph{(Density Bump Within a Dictatorship):} If Steps 1 and 2 do not hold, i.e., one family admits a `small' density bump within some dictator, then we upgrade this density bump to almost containment in this dictator via an inductive argument. Iterating the density increment identifies $t$ independent constraints $Av_i=w_i$, forcing the family to be almost contained in a $t$-umvirate. 
\end{enumerate}
With this stability theorem in hand, we are then able to prove our main result via a bootstrapping argument.

\section{Related Work}

\subsection{Recent Results on Forbidden Intersection Problems}\label{sec:recent_results}

In recent years, there has been substantial progress in determining the maximum size and structure of families with forbidden $(t-1)$-intersection for growing values of $t$, in a variety of settings beyond linear maps.
Notable examples include forbidden intersection results for the slice~\cite{KZ24}, codes~\cite{iarovikova2025forbiddingjustintersectionshort, keevash23}, and permutations~\cite{KLS25, KZ24}.

At the same time, all known \emph{exact} results for forbidden $(t-1)$-intersection problems, including results for the original Erd\H{o}s-S\'os forbidden intersection problem such as~\cite{EllisKL24}, show a recurring pattern: a maximum family free of $(t-1)$-intersection is in fact $t$-intersecting.
The forbidden intersection problem has also been studied in parameter regimes where the maximum family is not $t$-intersecting, most notably in the Frankl-R{\"o}dl theorem~\cite{frankl1987forbidden}.
However, in these regimes the known results provide only upper bounds on the size of the extremal family and do not yield structural characterizations.

A representative illustration appears in the Erd\H{o}s-S\'os problem for permutations.
When $t$ grows linearly with $n$, the only known result is due to Keevash and Long~\cite{KL17}, who showed that for $\epsilon n < t \leq (1-\epsilon)n$, any $(t-1)$-intersection free family $F \subset S_n$ satisfies $|F| \leq (n!)^{1-\delta}$, where $\delta=\delta(\epsilon)$.

This qualitative shift has a natural conceptual explanation.
When $(t-1)$-intersection is forbidden and $t$ is very large, the constraint becomes much weaker: two random elements satisfy it with high probability.
Consequently, probabilistic constructions can produce very large families, and many non-isomorphic structures may attain comparable sizes (as demonstrated in Section~\ref{sec:FR}).
This makes the extremal problem fundamentally different from the small-$t$ regime, where extremal families are small juntas, in particular, variants of the Frankl families.

Against this backdrop, to the best of our knowledge, all existing exact results for forbidden $(t-1)$-intersection problems remain outside the Frankl-R{\"o}dl regime.
The closest result in this direction is due to~\cite{KLS25}, which for permutations establishes an exact theorem up to $t \le cn/\log(n)$, whereas the Frankl-R{\"o}dl regime corresponds to $t=\Theta(n)$.

In this sense, our result is the first to establish the implication that "forbidden $(t-1)$-intersection forces $t$-intersection" for $t$ linear in $n$, thereby matching the maximum parameter range (up to the linear constant) suggested by existing obstructions.

Regarding the proof techniques, a common ingredient of the recent works mentioned above is a reduction step. 
In this step, a forbidden $(t-1)$-intersection condition is refined, after suitable restrictions, to a $1$-intersection condition for families that exhibit some form of pseudorandomness, formalized via notions of globalness or spreadness.

This reduction typically takes place inside a product space (or the slice $\binom{[n]}{k}$), or alternatively the families are embedded into such a space after the reduction.
The subsequent argument shows that a $1$-intersecting pseudorandom family cannot be large.
It relies crucially on the product structure and applies either a sharp threshold argument or a random gluing argument, which is conceptually similar but technically distinct.

At a high level, these arguments show that a monotone pseudorandom family on $\{0,1\}^n$, with a $p$-biased distribution for small $p$, must have measure close to one when going up to density $p=1/2$, and thus cannot satisfy a nontrivial intersection constraint (see~\cite[Lemma~6]{KellerLMS24} or~\cite[Theorem 4]{KZ24}, for example).
The proofs of these sharp threshold results rely either on  hypercontractivity or on the spreadness lemma introduced in \cite{AlweissLWZ19}.

In the setting of matrix spaces, however, this approach does not apply.
Natural embeddings of matrices into product spaces do not, in general, preserve or increase the intersection parameter
\[
I(A,B)=\dim\ker(A-B),
\]
and consequently the forbidden intersection property is not compatible with the sharp threshold framework in product spaces or slices.

The general strategy of our proof is similar to the proof of ~\cite{KLS25} for permutations, but the details are fundamentally different, since our space is not embedded in a product space. 
As a result, our approach diverges from the above works in two fundamental respects:
\begin{itemize}
    \item Rather than reducing the forbidden $(t-1)$-intersection condition to a $1$-intersection condition, we work directly with the Cayley graph whose edges correspond to pairs of matrices whose difference has kernel dimension exactly $t-1$.
    \item Our arguments do not rely on embedding the family into a product space and applying sharp threshold results for pseudorandom families in that setting.
\end{itemize}

Regarding intersection problems in matrix spaces, the proof of Ernst and Schmidt~\cite{ernst2023intersection} uses radically  different techniques, relying heavily on the representation theory of $\GL(n,q)$. The proof of Ellis et al.~\cite{EllisKL23} is more closely related to our approach, and we compare the two in the following subsection.

\subsection{Comparison with~\cite{EllisKL23}}
The proof of Ellis et al.\ begins by establishing a regularity lemma, which yields a junta $\mathcal{J}$ covering most of the $(t-1)$-intersection-free family $\mathcal{F}$. Moreover, it also has the property that each intersection of the constituent subfamilies of $\mathcal{J}$ with $\mathcal{F}$ is an \emph{uncapturable family}, a weak form of pseudorandomness. They then prove that $\mathcal{J}$ must in fact be $t$-intersecting by strengthening this weak notion of uncapturability to a much stronger notion of \emph{quasiregularity}. At this point, they apply a hypercontractive inequality they developed, together with an argument involving the eigenvalues of the $(t-1)$-intersection Cayley graph.

Our overall approach diverges structurally by avoiding the machinery of the regularity lemma entirely. Instead of relying on junta approximation, we work directly with the $(t-1)$-intersection-free families. This direct approach (together with other technical ingredients) allows us to extend the range of $t$ up to $t=\Theta(n)$, whereas the regularity-lemma approach seems to be limited to substantially smaller $t$. We also obtain the extremal families in Step~3 of our proof (see Section~\ref{sec:main}) via a different argument, and in Step~1 we use a new global hypercontractivity for matrix spaces from~\cite{EKL23}.

However, our proof does share several technical ingredients with Ellis et al. In Step~1, as in their work, we make use of the $(t-1)$-intersection graph, showing that a global family cannot be cross-$(t-1)$-intersection free with another family, using a similar strategy to their proof that two quasiregular families are not free from cross-$(t-1)$-intersection. We also combine a hypercontractive inequality with bounds on the eigenvalues of the adjacency matrix of the $(t-1)$-intersection Cayley graph. However, the bounds on the magnitudes of the eigenvalues obtained by Ellis et al.\ are too crude for our purposes. Accordingly, in Proposition~\ref{prop:eigs} we derive asymptotic bounds on these eigenvalues that are essentially optimal.

Step~2 of our proof is also in the same spirit as Ellis et al. In particular, we upgrade the `no density bump to any dictator' property to globalness, in a manner similar, though not identical, to their upgrade from uncapturable families to quasiregular families.

\section{Preliminaries}

\subsection{The Bilinear Forms Scheme}\label{sec:bilinear_forms}

Because the notion of matrix intersection is defined in terms of rank, it turns out to be more natural to work with a well-known quotient of the group $(\mathbb{F}_q^{m \times n},+)$ that carries the structure of a symmetric association scheme. Henceforth, we let $V$ and $W$ be subspaces of dimension $n$ and $m$ of a vector space over $\mathbb{F}_q$ such that $n \leq m$ and $q$ is a prime power. We let $\spn{ \cdot }$ denote the $\mathbb{F}_q$-span of a collection of vectors. Define $\mathcal{L}(V,W)$ to be the set of linear transformations from $V$ to $W$, which is isomorphic to the space $\mathbb{F}^{m \times n}_q$ of $m \times n$ matrices over the field $\mathbb{F}_q$ upon picking a basis. We will frequently make use of this bijection without mention. Let $\mathcal{B}_{m,n} = \{\mathbf{A}_0,\mathbf{A}_1,\cdots,\mathbf{A}_n\}$ be the set of $q^{mn} \times q^{mn}$ binary matrices indexed by $\LL(V,W)$, defined such that $(\mathbf{A}_i)_{A,B} = 1$ if $\text{rank}(A-B)=i$; otherwise, 0. Each $\mathbf{A}_i$ can be identified with a $v_i$-regular graph, where $v_i$ is the number of rank $i$ matrices of $\LL(V,W)$. It is well-known that $\mathcal{B}_{m,n}$ forms a symmetric association scheme, namely, the \emph{bilinear forms scheme} (see~\cite{Delsarte78,GodsilAssoc}, for example). Its name is due to the fact that any matrix $A \in \mathbb{F}_q^{m \times n}$ can be seen as a bilinear form  $u^\top A v$, $u \in \mathbb{F}_q^m$, $v \in \mathbb{F}_q^n$. The non-identity $\mathbf{A}_i$'s are collectively referred to as the \emph{associates} of the association scheme. We now recall some well-known facts about $\mathcal{B}_{m,n}$, establishing some notation along the way that will be used throughout this work.

Any real-valued function $f$ of the inner-product space $L^2(\mathcal{L}(V,W))$ where 
$$\langle f,g \rangle := \frac{1}{q^{mn}} \sum_{A \in \LL(V,W)} f(A){g(A)}$$ 
admits a \emph{degree decomposition} 
\[
    f = f^{=0} + f^{=1} + \cdots + f^{=n}
\]
where $f^{=d} \in V^{=d} \leq L^2(\mathcal{L}(V,W))$ is the orthogonal projection onto the space of so-called pure degree-$d$ real-valued functions on $\LL(V,W)$, so that $\langle f^{=i}, f^{=j} \rangle = 0$ for all $0 \leq i , j \leq n$ such that $i \neq j$. Formally, we define $f^{=d} := E_df$ where $E_d$ is the \emph{$d$th primitive idempotent} of $\mathcal{B}_{m,n}$, i.e., the orthogonal projection onto the space $V^{=d}$ of pure degree-$d$ functions. For any matrix $A$ that lies in the real span of $\mathcal{B}_{m,n}$, we have $AE_i = \eta_i E_i$ for some $\eta_i \in \mathbb{R}$. It follows that each $V^{=d}$ is contained in an eigenspace of $A$, and in particular, that $V^{=d}$ is the $\eta_d$-eigenspace of $A$ provided that $\eta_d \neq \eta_i$ for all $i \neq d$. Moreover, it is well-known that the dimension of $V^{=d}$ equals the number of rank $d$ matrices of $\mathcal{L}(V,W)$. 

Define $\E[f] := \langle 1, f \rangle = \frac{1}{q^{mn}} \sum_{A \in \LL(V,W)} f(A)$. 
For a family $\mathcal{F} \subseteq \LL(V,W)$, we call $\E[f]$ the \emph{measure} of $\mathcal{F}$ where $f$ is the characteristic function (indicator function) of $\mathcal{F}$.

The eigenvalues of the associates $\mathbf{A}_i \in \mathcal{B}_{m,n}$ are well-known and play an important role in this work.  In Proposition~\ref{prop:eigs}, we give upper bounds on the magnitudes of the eigenvalues of each normalized associate $\tilde{\mathbf{A}}_i := v_i^{-1}\mathbf{A}_i$. For more details about the eigenvalues of $\mathcal{B}_{m,n}$, we refer the reader to~\cite{CioabaH22,Delsarte78}. 

To see why $\mathcal{B}_{m,n}$ is relevant to the study of intersecting families, simply observe that the independent sets of the graph $\mathbf{A}_n$ are precisely the intersecting families of $\LL(V,W)$. 
More generally, the maximum independent sets of the graph $\mathbf{A}_n + \mathbf{A}_{n-1} + \cdots + \mathbf{A}_{n-t+1}$ are the largest $t$-intersecting families of $\LL(V,W)$, and the largest $(t-1)$-intersection-free families of $\LL(V,W)$ are the maximum independent sets of $\mathbf{A}_{n-t+1}$.

\subsection{Global Restrictions}\label{sec:restriction}\label{sec:global}

In this section we give formal definitions for restriction, globalness, and global restrictions of $\LL(V,W)$. Along the way, we recall some standard results concerning $L_p$ norms of inner-product spaces and we state several notational conventions that will be used throughout this work.

Let us first recollect some examples of restriction that are undoubtedly more familiar to the reader. Recall that a $d$-restriction of the hypercube $\mathbb{Z}_2^n$ is one of $\binom{n}{d}$ subcubes $\mathbb{Z}_2^{n-d}$ obtained by fixing $d$ of its $n$ coordinates. Similarly, a $d$-restriction in the symmetric group $S_n$ is a \emph{double-translate} of $S_{n-d}$, i.e., $\sigma S_{n-d} \pi \subset S_n$ for some $\sigma,\pi \in S_n$. Intuitively, the notion of restriction for bilinear forms is similar to these combinatorial examples; however, a rigorous definition requires  some linear-algebraic formalities.

\begin{definition} [Restriction~\cite{EKL23}]\label{def:restriction}
Let $T \in \LL(V, W)$, $V' \leq V$, $W' \leq W$.  For any $f : \LL(V,W) \rightarrow \mathbb{C}$, we define its \emph{$(V',W',T)$-restriction} 
$f_{(V',W')\rightarrow T} : \LL(V/V', W') \rightarrow \mathbb{C}$ such that 
$$f_{(V',W') \rightarrow T}(A)= f(A'+T) \quad \text{ for all } A \in \LL(V/V', W')$$ 
where $A' \in \LL(V,W)$ is the unique linear map with kernel containing $V'$ satisfying $A':= A \circ Q_{V'}$, 
 and $Q_{V'}$ is the natural quotient map, i.e., $Q_{V'} : V \rightarrow V/V'$ such that $v \mapsto v + V'$. 
\end{definition}
\noindent In this way, we identify $\LL(V/V' ,W')$ with the space of linear maps of $\LL(V,W)$ that annihilate $V'$ and whose image lies within $W'$. The linear maps of the form $A' + T \in \LL(V,W)$ in the definition above are precisely the
linear maps $B \in \LL(V,W)$ such that $B$ agrees with $T$ on $V'$ and $B^\top$
agrees with $T^\top$ on the annihilator of $W'$.

The \emph{size} of a $(V',W',T)$-restriction is  $\dim(V') + \text{codim}(W')$.
If the size of a $(V',W',T)$-restriction equals $d$, then we call $f_{(V' ,W') \rightarrow T}$ a \emph{$d$-restriction} of $f$. We say a $(V',W',T)$-restriction is \emph{strong} if $\dim(V') = \mathrm{codim}(W')$. In terms of matrices, the $d$-restriction of a function $f \in L^2(\LL(V,W))$ corresponds to
restricting $f$ to those matrices where $r$ specific rows and $c$ specific columns take fixed values specified by $T$ such that $r + c = d$
(and translating the domain by a fixed matrix if the matrix of $T$ has non-zero entries outside the $r$ fixed
rows and the $c$ fixed columns).

The definition of restriction for set families follows from the case $f = 1_{\mathcal{F}}$ where $1_{\mathcal{F}} : \LL(V,W) \rightarrow \{0,1\}$ is the \emph{characteristic (indicator) function} of $\mathcal{F}$, i.e., $1_{\mathcal{F}}(A) = 1$ if $A \in \mathcal{F}$; otherwise, $1_{\mathcal{F}}(A) = 0$. In particular, for any $\mathcal{F} \subseteq \LL(V,W)$, $T \in \LL(V, W)$, $V' \leq V$, $W' \leq W$, the $(V',W',T)$-restriction $\mathcal{F}_{(V',W') \rightarrow T} \subseteq \mathcal{F} \subseteq \LL(V,W)$ of $\mathcal{F}$ is the set of matrices $A' + T \in \LL(V,W)$ as in Definition~\ref{def:restriction} such that $A' + T \in \mathcal{F}$. 

For any family $\mathcal{F} \subseteq \LL(V,W)$, $v \in V$, and $w \in W$, define the \emph{one-restrictions} or \emph{dictators}
\[
   \mathcal{F}_{v \mapsto w} := \left\{ B \in \mathcal{F} : Bv = w   \right\},
\]
dually,
\[
\mathcal{F}_{w \uparrow v} := \left\{ B \in \mathcal{F} : B^\top w = v   \right\}.
\]
We call the first one a column restriction, and the second one a row restriction.

For all $\mathcal{F} \subseteq \LL(V,W)$ with characteristic function $f \in L^2(\LL(V,W))$, define $\mu(\mathcal{F}) := \E[f] = \langle 1,f \rangle = \langle f, f \rangle$ to be the \emph{measure} of $\mathcal{F}$. Since we identify $\LL(V/V' ,W')$ with the set of linear maps of $\LL(V,W)$ that annihilate $V'$ and whose image lies within $W'$, for a family $\mathcal{F} \subseteq \LL(V/V' ,W')$, we use the notation $\mu^{(V',W')}(\mathcal{F})$ to indicate the measure is taken with respect to $\LL(V/V' ,W')$ in order to avoid confusion.

\begin{definition}[Global]
Fix $\g >0$. We say that $f: \LL(V,W) \rightarrow [0,1]$ is \emph{$\g$-global} if for all $d$, the measure of every $d$-restriction $(V',W',T)$ jumps by at most $\g^d$, formally,
$$\E[f_{(V',W')\rightarrow T}] \le  \g^d \E[f]$$
for every $d$-restriction $(V',W')\rightarrow T$. We call $\g$ the \emph{global constant}.
We say $\mathcal{F} \subseteq \LL(V,W)$ is \emph{$\g$-global} if its indicator function is \emph{$\g$-global}.
\end{definition}

We also require a more refined notion of globalness that is well-defined for arbitrary functions $f \in L^2(\LL(V,W))$ and $L_p$ norms. For all $p \in (0,\infty)$, define the \emph{p-norm} of $f \in L^2(\LL(V,W))$ to be
\[
\| f \|_p := \left(\frac{1}{q^{mn}} \sum_{A \in \LL(V,W)} |f(A)|^p \right)^{1/p}.
\]
Note that $\|f\|_p \leq \|f\|_q$ for all $1 \leq p < q$. Let $fg := f(A)g(A)$ for all $A \in \LL(V,W)$ be the pointwise product of two functions $f,g : \LL(V,W) \rightarrow \mathbb{R}$. Recall the following well-known result due to H\"{o}lder.
\begin{thm}[H\"{o}lder's Inequality]\label{thm:holder} Let $f,g : \LL(V,W) \rightarrow \mathbb{R}$ and $1 \leq p < \infty$. Let $q$ be the H\"{o}lder conjugate of $p$, i.e., $1/p + 1/q = 1$ (setting $q = \infty$ if $p=1$). Then 
\[
    \|fg\|_1 \leq \|f\|_p \|g\|_q.
 \]
\end{thm}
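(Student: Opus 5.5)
The plan is to reduce to the weighted AM--GM (Young) inequality and then normalize, handling the counting measure by absorbing the factor $q^{-mn}$. First dispose of the degenerate cases:
\begin{itemize}
\item If $\|f\|_p = 0$ or $\|g\|_q = 0$, then $f$ or $g$ vanishes identically. This holds because every point of $\LL(V,W)$ carries positive mass $q^{-mn}$. Then $fg \equiv 0$ and both sides are zero.
\item If $p=1$ and $q=\infty$, the bound is immediate. Here $\|g\|_\infty$ is interpreted as $\max_A |g(A)|$, since the paper only defines $\|\cdot\|_p$ for finite $p$. Pointwise we have $|f(A)g(A)| \le |f(A)|\,\|g\|_\infty$, and averaging gives $\|fg\|_1 \le \|f\|_1\|g\|_\infty$.
\end{itemize}

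For the main case $1<p<\infty$, by homogeneity of both sides we may replace $f$ by $f/\|f\|_p$ and $g$ by $g/\|g\|_q$. It then suffices to show $\|fg\|_1 \le 1$ whenever $\|f\|_p=\|g\|_q=1$.

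The key step is Young's inequality: for $a,b\ge 0$,
\[
ab \le \frac{a^p}{p}+\frac{b^q}{q}.
\]
I would derive it from concavity of $\log$. For $a,b>0$,
\[
\log\!\Big(\tfrac1p a^p+\tfrac1q b^q\Big)\ \ge\ \tfrac1p\log a^p+\tfrac1q\log b^q=\log(ab),
\]
and the case $a=0$ or $b=0$ is trivial. Applying this pointwise with $a=|f(A)|$ and $b=|g(A)|$, then averaging over $A\in\LL(V,W)$ with the uniform weight $q^{-mn}$, gives
\[
\|fg\|_1\le \tfrac1p\|f\|_p^p+\tfrac1q\|g\|_q^q=\tfrac1p+\tfrac1q=1.
\]
Undoing the normalization yields the claim.

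There is no serious obstacle. The only points needing care are the endpoint case $p=1$, which requires defining $\|\cdot\|_\infty$, and the zero-norm case. Both are handled above.

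One small hazard is notational: the letter $q$ denotes both the field size and the H\"older conjugate. In the averaging step, the normalizing factor $q^{-mn}$ refers to the field size, and it plays no role beyond making the measure a probability measure. The argument is therefore really Hölder's inequality for a finite probability space.
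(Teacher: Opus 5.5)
Your proof is correct. It is the standard argument via Young's inequality $ab\le a^p/p+b^q/q$, and it handles the zero-norm case and the $p=1$ endpoint properly (there you rightly supply $\|g\|_\infty:=\max_A|g(A)|$, which the paper leaves implicit). The paper does not prove this statement---it simply invokes H\"older's inequality as a classical fact---so your argument is the textbook proof it relies on, specialized to the uniform probability measure on $\LL(V,W)$.
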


\begin{definition}[$(r,\varepsilon)$-global~\cite{EKL23}] A function $f : \LL(V,W) \rightarrow \mathbb{R}$ is \emph{$(r,\varepsilon)$-restriction global} if $$\|f_{(V',W') \mapsto T}\|_2^2 \leq \varepsilon$$ 
for every $r$-restriction $(V',W',T)$. 
\end{definition}

\begin{definition}[$(r,\varepsilon,L_p)$-global~\cite{EKL23}] For all non-negative $p \neq 2$, we say a function $f : \LL(V,W) \rightarrow \mathbb{R}$ is \emph{$(r,\varepsilon,L_p)$-restriction global} if $$\|f_{(V',W') \mapsto T}\|_p \leq \varepsilon$$ 
for every $r$-restriction $(V',W',T)$. 
\end{definition} 

Note that the two definitions are slightly inconsistent, and we keep them in order to be consistent with~\cite{EKL23}.

Evra, Kindler, and Lifshitz~\cite{EKL23} give several level-$d$ inequalities for $\LL(V,W)$ with respect to $(r,\varepsilon)$-globalness and $(r,\varepsilon,L_p)$-globalness that we shall invoke in Section~\ref{sec:step1}. While the last two definitions of globalness are similar in spirit, they differ from the first, as a $\g$-global function insists that every restriction does not give a substantial jump in measure. We use Evra et al.'s level-$d$ inequalities by taking  $\varepsilon = \g^d \|f\|_2^2$, 
because any $\g$-global function family $\mathcal{F}$ with indicator function $f$ satisfies 
$$\mu^{(V',W')}(\mathcal{F}_{(V',W') \mapsto T}) = \|f_{(V',W') \mapsto T} \|_2^2 \leq \g^d \|f\|_2^2 = \gamma^d \mu(\mathcal{F})$$ 
for all $d$-restrictions $(V',W',T)$. 

If a function is $d$-restriction global with respect to both the $L_p$ and $L_q$ norms, then we can interpolate via the 
 \emph{log-convexity of $L_p$ norms}, as demonstrated in Proposition~\ref{prop:1-2-log-convexity}. 

\begin{thm}[log-convexity of $L_p$-norms]\label{thm:log-convex}
For all $0 \le \theta \le 1$ and positive $p_\theta,p_0, p_1 \in \mathbb{R}$ satisfying $\frac{1}{p_\theta} = \frac{1-\theta}{p_0}+\frac{\theta}{p_1}$, we have
    $$\| f \|_{p_\theta} \le \| f \|_{p_0}^{1-\theta}\| f \|_{p_1}^{\theta}.$$
\end{thm}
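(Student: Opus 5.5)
The plan is to reduce the statement to Hölder's inequality (Theorem~\ref{thm:holder}) applied to a suitable pointwise product. First I dispose of the degenerate cases. If $\theta=0$ or $\theta=1$, then $p_\theta=p_0$ or $p_\theta=p_1$ respectively, and the inequality is an equality. Likewise, if $f\equiv 0$ both sides vanish. So from now on I assume $0<\theta<1$, and all three exponents $p_0,p_1,p_\theta$ are positive and finite.

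Next I write $|f|^{p_\theta}$ as the product $g\cdot h$, where
\[
g=|f|^{(1-\theta)p_\theta},\qquad h=|f|^{\theta p_\theta}.
\]
I then apply Hölder with the exponents
\[
r=\frac{p_0}{(1-\theta)p_\theta},\qquad s=\frac{p_1}{\theta p_\theta}.
\]
The defining relation $\frac{1}{p_\theta}=\frac{1-\theta}{p_0}+\frac{\theta}{p_1}$ is exactly the statement $\frac1r+\frac1s=1$. Since both summands are positive, it also gives $r,s>1$, so $r$ and $s$ form a valid Hölder conjugate pair.

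Hölder's inequality then yields
\[
\|f\|_{p_\theta}^{p_\theta}=\||f|^{p_\theta}\|_1\le \|g\|_r\,\|h\|_s.
\]
Note that Hölder applies here even when $p_\theta<1$. The reason is that it is applied to $|f|^{p_\theta}$ with the conjugate exponents $r,s\ge 1$, never directly to $f$.

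It remains to evaluate the two factors, which is routine. We have
\[
\|g\|_r=\Big(\E|f|^{p_0}\Big)^{(1-\theta)p_\theta/p_0}=\|f\|_{p_0}^{(1-\theta)p_\theta},
\]
and similarly
\[
\|h\|_s=\|f\|_{p_1}^{\theta p_\theta}.
\]
Taking $p_\theta$-th roots of the Hölder bound gives the claim. Here I use that $x\mapsto x^{1/p_\theta}$ is monotone on $[0,\infty)$.

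There is no real obstacle in this argument. The only points needing care are checking that $r,s\in(1,\infty)$ via the relation among the exponents, and correctly matching the normalized expectation $\E$ in the definition of $\|\cdot\|_p$ with the normalization used in Theorem~\ref{thm:holder}. Both use the same uniform average over $\LL(V,W)$, so the two are consistent.
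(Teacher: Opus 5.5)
Your proof is correct. It is the standard derivation of log-convexity from H\"older's inequality (Theorem~\ref{thm:holder}), applied to $|f|^{p_\theta}=|f|^{(1-\theta)p_\theta}\cdot|f|^{\theta p_\theta}$ with the conjugate pair $r=p_0/((1-\theta)p_\theta)$ and $s=p_1/(\theta p_\theta)$; your exponent bookkeeping and the check $r,s\in(1,\infty)$ are right, including when $p_\theta<1$. The paper gives no proof of this statement---it quotes it as a well-known fact right after stating H\"older's inequality---so your argument supplies the standard proof the paper implicitly relies on.
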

\begin{prop}\label{prop:1-2-log-convexity}
Let $1 < \ell' < 2$ and let $\ell$ be its H\"older conjugate. If $f  \in L^2(\LL(V,W))$ is both $(d,\varepsilon_1,L_1)$-restriction global and $(d,\varepsilon_2)$-restriction global, then $f$ is $(d,\varepsilon_{\ell'},L_{\ell'})$-restriction global where 
$$\varepsilon_{\ell'} = \varepsilon_{2}^{\frac{1}{\ell}} \varepsilon_{1}^{\frac{\ell -2 }{\ell}}.$$
\end{prop}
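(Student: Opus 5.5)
The plan is to apply the log-convexity of $L_p$-norms (Theorem~\ref{thm:log-convex}) to each restriction separately, with endpoints $p_0 = 1$ and $p_1 = 2$. Fix an arbitrary $d$-restriction $(V',W',T)$ and write $g := f_{(V',W')\mapsto T}$. The hypotheses then read $\|g\|_1 \le \varepsilon_1$ and $\|g\|_2^2 \le \varepsilon_2$, so $\|g\|_2 \le \varepsilon_2^{1/2}$. (Recall the convention of~\cite{EKL23}: the $L_2$ condition bounds the squared norm.)

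Next, solve for $\theta$ in $\frac{1}{\ell'} = \frac{1-\theta}{1} + \frac{\theta}{2}$. This gives $\theta = 2 - \frac{2}{\ell'}$. Since $\frac{1}{\ell'} = 1 - \frac{1}{\ell}$, this simplifies to $\theta = \frac{2}{\ell}$ and $1-\theta = \frac{\ell-2}{\ell}$. Because $1<\ell'<2$, we have $\ell > 2$, so $\theta \in (0,1)$ as Theorem~\ref{thm:log-convex} requires.

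Theorem~\ref{thm:log-convex} then gives
\[
\|g\|_{\ell'} \le \|g\|_1^{\frac{\ell-2}{\ell}} \, \|g\|_2^{\frac{2}{\ell}} \le \varepsilon_1^{\frac{\ell-2}{\ell}} \bigl(\varepsilon_2^{1/2}\bigr)^{\frac{2}{\ell}} = \varepsilon_1^{\frac{\ell-2}{\ell}}\varepsilon_2^{\frac{1}{\ell}} .
\]
The second inequality uses that both exponents are nonnegative, so the map $x\mapsto x^{a}$ is monotone for $x \ge 0$. The resulting bound is exactly $\varepsilon_{\ell'}$. Since the restriction was arbitrary, $f$ is $(d,\varepsilon_{\ell'},L_{\ell'})$-restriction global.

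There is no real obstacle here; the only care needed is the bookkeeping. The $L_2$ condition is stated for the squared norm, which is why $\varepsilon_2$ appears with exponent $1/\ell$ rather than $2/\ell$. We also need the conjugacy identity $1/\ell' = 1 - 1/\ell$ to express $\theta$ in terms of $\ell$.
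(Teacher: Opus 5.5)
Your proposal is correct and is the same argument as the paper's: interpolate between the $L_1$ and $L_2$ bounds via Theorem~\ref{thm:log-convex}, then use the squared-norm convention to get $\|g\|_2^{2/\ell}\le \varepsilon_2^{1/\ell}$. The paper labels the endpoints the other way round ($p_0=2$, $p_1=1$, so its $\theta=(\ell-2)/\ell$ is your $1-\theta$), and it states the inequality for $f$, leaving implicit the restriction-by-restriction application that you carry out explicitly.
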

\begin{proof}
Setting $p_0 = 2$, $p_1 = 1$, and $p_\theta = \ell'$ in Theorem~\ref{thm:log-convex} gives $\theta = (\ell-2)/\ell$, thus
\begin{align*}
    \| f \|_{\ell'} \le \| f \|_{2}^{\frac{2}{\ell}}\| f \|_{1}^{\frac{\ell-2}{\ell}}  \leq  \varepsilon_{2}^{\frac{1}{\ell}} \varepsilon_{1}^{\frac{\ell -2 }{\ell}},
\end{align*}
as desired.
\end{proof}

We conclude this section with a special type of restriction called a \emph{global restriction} that will ensure that condition (c) of Step 2 is met. 
\begin{definition}[Global Restriction]\label{def:global_restriction}
For any $\{0,1\}$-valued 
function $f \in L^2(\LL(V,W))$ and any $\g > 0$, we construct a \emph{$\g$-global  restriction} of $f$ by picking a restriction $(V',W',T)$ such that 
\[
\E[f_{(V',W') \rightarrow T}] / \g^{d}
\]
is maximum over all the choices of $d \in \mathbb{Z}_{\ge 0}$ and $d$-restrictions $(V',W',T)$. (If there are several maxima, we pick one where $d$ is maximum). It is easy to see that $f_{(V',W') \rightarrow T}$ is indeed $\g$-global, and also satisfies
\[\E[f_{(V',W') \rightarrow T}] \ge \g^{d} \E[f].\]
\end{definition}

\begin{definition}[Strong Restriction]
For any $f : \LL(V,W) \rightarrow \mathbb{C}$, a $2d$-restriction $(V',W',T)$ is \emph{strong} if $\mathrm{dim}(V') = \mathrm{codim}(W') = d$
\end{definition}

For Step 2, we require our global restriction to be strong. In Definition~\ref{def:global_restriction}, one cannot guarantee that a global restriction will be strong; however, the same procedure, ranging over all strong restrictions, gives a strong $2d$-restriction of $f$ bounded from below by $\g^{d} \E[f]$.

\begin{lem}[Strong Global Restriction]\label{lem:global_restriction}
For any $\{0,1\}$-valued function $f \in L^2(\LL(V,W))$  and any $\g > 0$, we construct a \emph{strong $\g$-global restriction} of $f$ by picking a strong $2d$-restriction $(V',W',T)$ such that
\[
\E[f_{(V',W') \rightarrow T}]/ \g^{d}
\]
is maximum over all the choices of $d \in \mathbb{Z}_{\ge 0}$ and strong $2d$-restrictions $(V',W',T)$. (If there are several maxima, we pick one where $d$ is maximum). It is easy to verify that this restriction is indeed $\g$-global and satisfies 
    \[
        \E[f_{(V',W') \rightarrow T}] \ge \g^{d} \E[f].
    \]
\end{lem}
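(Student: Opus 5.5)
The plan is to prove the two claimed properties of $g := f_{(V',W')\rightarrow T}$ separately, each by comparing against competitors in the maximization. Since there are only finitely many strong restrictions, a maximizer exists. The argument uses two structural facts about restrictions:
\begin{enumerate}
\item[(a)] \emph{Composition.} A restriction of a restriction is again a restriction of $f$.
\item[(b)] \emph{Averaging.} The measure of a restriction is the average of the measures of its further restrictions.
\end{enumerate}
I will assume $\g \ge 1$, which is the only regime used later.

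\textbf{The lower bound.} Take $d=0$ with $V'=0$, $W'=W$ and $T=0$. This is a strong $0$-restriction, and it returns $f$ itself. By maximality,
\[
\E[g]/\g^{d} \ge \E[f]/\g^{0},
\]
which is exactly $\E[g] \ge \g^{d}\E[f]$.

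\textbf{Globalness.} Let $(V'',W'',S)$ be an arbitrary $d'$-restriction of $g$, viewed as a function on $\LL(V/V',W')$. Write $r=\dim(V'')$ and $c=\codim_{W'}(W'')$, so that $r+c=d'$.

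First I check (a) in this setting. The subspace $V''\le V/V'$ pulls back to some $\hat V \ge V'$ in $V$ with $\dim \hat V = d+r$. We have $W''\le W'$ with $\codim_W W'' = d+c$. Finally $T + S\circ Q_{V'}$ lifts to an element of $\LL(V,W)$. Together these give a restriction $(\hat V, W'', \hat T)$ of $f$ satisfying
\[
g_{(V'',W'')\rightarrow S} = f_{(\hat V,W'')\rightarrow \hat T}.
\]
This step is a routine unwinding of Definition~\ref{def:restriction}.

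This composite restriction is generally not strong, since $d+r\ne d+c$ unless $r=c$. Let $s=\max(r,c)$. I enlarge the restriction to a strong one of size $2(d+s)$:
\begin{itemize}
\item If $r<c$, add $c-r$ further vectors to $\hat V$, and choose the values of the map on them.
\item If $c<r$, shrink $W''$ by $r-c$ dimensions, and choose the corresponding transpose values.
\end{itemize}
By (b), $\E[f_{(\hat V,W'')\rightarrow\hat T}]$ is the average of the measures of all these further restrictions. Hence some choice gives a strong $2(d+s)$-restriction $h$ of $f$ with
\[
\E[h] \ge \E[g_{(V'',W'')\rightarrow S}].
\]

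\textbf{Conclusion.} By maximality of $(V',W',T)$,
\[
\E[h]/\g^{d+s} \le \E[g]/\g^{d}.
\]
Combining this with the previous inequality gives
\[
\E[g_{(V'',W'')\rightarrow S}] \le \g^{s}\E[g] \le \g^{d'}\E[g],
\]
where the last step uses $s\le r+c=d'$ and $\g\ge 1$. This is exactly $\g$-globalness of $g$.

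\textbf{Main obstacle.} The only delicate point is the mismatch between the two notions involved. Globalness is tested against arbitrary, possibly non-strong, restrictions, while the maximization only ranges over strong ones. The padding-and-averaging step is what bridges this gap. The cost of this bridge is that the jump is controlled by $\g^{\max(r,c)}$ rather than $\g^{d'}$, which is still sufficient because $\g\ge 1$.
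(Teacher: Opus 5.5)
Your argument is correct in substance and is the natural way to make precise the paper's ``easy to verify''; the paper gives no proof. The lower bound comes from comparing with the trivial strong $0$-restriction. Globalness comes from composing restrictions and then using maximality.

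You also handle two points the paper glosses over:
\begin{enumerate}
\item Globalness is tested against arbitrary restrictions, while the maximization ranges only over strong ones. Your padding-by-averaging bridges this.
\item Your hypothesis $\gamma\ge 1$ is not just convenient but necessary. For $\gamma<1$ and $f\not\equiv 0$, the maximizer is a point evaluation $(V,W',T)$ with $f(T)=1$ and $\mathrm{codim}\,W'=n$. If $m>n$, any row restriction of it of size $c\ge1$ has measure $1>\gamma^{c}$, so it is not $\gamma$-global.
\end{enumerate}

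There is, however, one step that fails as written. In the case $r<c$ you add $c-r$ vectors to $\hat V$, which requires $d+c\le n$. But $c$ can be as large as $m-d$. So when $m>n$ (allowed in the uses of this lemma, e.g.\ Step~2), you may have $d+c>n$ and there is no room to pad. For example, take $d=r=0$ and $W''=0$, a restriction of size $m$ that would need $\dim\hat V=m>n$.

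The repair is short:
\begin{enumerate}
\item Pad $\hat V$ all the way to $V$, using the same averaging so the measure does not drop. The result is a point evaluation $f(\hat T')\ge \mathbb{E}[g_{(V'',W'')\rightarrow S}]$.
\item This point evaluation coincides with the strong $2n$-restriction $(V,W_*,\hat T')$ for any $W_*$ of codimension $n$. Once the column subspace is all of $V$, the domain $\mathcal{L}(V/V,W_*)$ is a single point, so the row data is vacuous.
\item Maximality then gives $\mathbb{E}[g_{(V'',W'')\rightarrow S}]\le \gamma^{n-d}\,\mathbb{E}[g]\le \gamma^{c}\,\mathbb{E}[g]\le\gamma^{d'}\,\mathbb{E}[g]$, using $n-d<c\le d'$ and $\gamma\ge 1$.
\end{enumerate}

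The other padding direction, shrinking $W''$ to codimension $d+r$, is always possible because $d+r\le n\le m$.
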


\subsection*{Notation}
For the proofs of Steps 1-3, we list the constants.
\begin{itemize}
    \item $a,\alpha$ are scalars associated with $t$;
    \item $b,\beta, \beta'$ are scalars associated with $m$;
    \item $\gamma > 0$ is the global constant;
    \item $\xi \in \mathbb{N}$ is a constant associated with the `density bump' chosen in Step 3;
     \item $\tilde{c} > 0$ is a constant chosen in Step 2; and
    \item $c_3 > 0$ is a constant chosen in Step 3.
\end{itemize}

\section{Step 1: Global Hypercontractivity}\label{sec:step1}

In this section, we combine the global hypercontractivity results of~\cite{EKL23} with bounds on the eigenvalues of the associates of $\mathcal{B}_{m,n}$ to prove Step~1 (Theorem~\ref{thm:step1}). This theorem says that if $\mathcal{F},\mathcal{G} \subseteq \LL(V,W)$ are cross-$(t-1)$-intersection-free and one of the families is global, then their measure must be small.

We actually prove a more general version (needed later) in which 
$f,g : \LL(V,W) \to [0,1]$ instead of $\{0,1\}$ (here, we think of $\mathcal{F},\mathcal{G} \subseteq \LL(V,W)$ as $\{0,1\}$-valued functions). In this setting, cross-$(t-1)$-intersection-free means $\langle f, \mathbf{A}_{n-t+1} g \rangle = 0$. A key observation from Section~\ref{sec:bilinear_forms} is that the eigenspaces of $\mathbf{A}_{n-t+1}$ are $V^{=d}$, so after normalization we get
\[
\langle f, \tilde{\mathbf{A}}_{n-t+1} g \rangle
= \mathbb{E}[f]\mathbb{E}[g] + \sum_{d=1}^n \eta_d \, \langle f^{=d}, g^{=d}\rangle,
\]
where $\eta_d$ are the normalized eigenvalues. We show that this inner product is positive by proving that $\mathbb{E}[f]\mathbb{E}[g]$ dominates the sum (when the expectations are large enough). This is done by bounding each term $|\eta_d \langle f^{=d}, g^{=d}\rangle|$ separately.

We bound $|\eta_d|$ in Proposition~\ref{prop:eigs}. Then, in the proof of Theorem~\ref{thm:step1}, we apply H\"older's inequality (for large $\ell$) to obtain
\[
|\langle f^{=d}, g^{=d}\rangle|
\le \|f^{=d}\|_\ell \, \|g^{=d}\|_{\ell'}
\approx \|f^{=d}\|_\ell \, \mathbb{E}[g]
\]
where $\ell'$ is the H\"older conjugate of $\ell$.
Since $f$ is global, we can apply the global hypercontractivity results to prove Lemma~\ref{lem:lvld}, which shows that
\[
\|f^{=d}\|_\ell \le \mathbb{E}[f]\cdot o(1/|\eta_d|).
\]
Putting these bounds together completes the argument.

Throughout this section we use several results from~\cite{EKL23}. Some of these results are stated in terms of \emph{generalized influences}, which we do not define here and treat as a black box; see~\cite[\S 2.2]{EKL23} for details.

\begin{thm}\cite[Theorem 5.5]{EKL23}\label{thm:small-influence}
    Let $\ell \geq  4$ be a power of $2$ and let $\ell'$ be its H\"older conjugate. Suppose that $f \in L^2(\LL(V,W))$  is $(d, \varepsilon, L_{\ell'} )$-restriction global, and let $\varepsilon'  = q^{500d^2\ell}\varepsilon^2$. Then $f^{=d}$ has $(d,\varepsilon')$-small generalized influences.
\end{thm}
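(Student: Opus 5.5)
The plan is to prove this by duality: one application of H\"older's inequality against a low-degree test function, combined with a crude (non-global) hypercontractive estimate for degree-$d$ functions that costs a factor $q^{O(d^2\ell)}$. The squared form $\varepsilon' = q^{500d^2\ell}\varepsilon^2$ suggests exactly this: a quantity of the shape $\|g\|_2^2$ gets bounded by $\varepsilon\cdot q^{O(d^2\ell)}\|g\|_2$, and dividing through squares $\varepsilon$. Since generalized influences are taken as a black box from \cite[\S 2.2]{EKL23}, I would use only the following structural facts from there.

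\begin{enumerate}[label=(\roman*)]
\item For each $d$-restriction $(V',W',T)$ there is a generalized influence $I_{(V',W',T)}(h)$ of a function $h$. It has the form $\|D h\|_2^2$, where $D h$ is obtained from the restriction $h_{(V',W')\to T}$ by a fixed linear "derivative'' operator.
\item This operator commutes with the degree decomposition up to a shift. Consequently $D(f^{=d})$ equals the image of $f_{(V',W')\to T}$ under a linear operator $P$ built from projections onto the pure parts $V^{=j}$ of $\LL(V/V',W')$.
\item $P$ is self-adjoint, or at least its adjoint maps into functions of degree at most $d$.
\end{enumerate}

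Once (i)--(iii) are in place, the argument is short. Fix a $d$-restriction and write $g := D(f^{=d}) = P f_{(V',W')\to T}$. Then
\[
\|g\|_2^2 = \langle P f_{(V',W')\to T}, g\rangle = \langle f_{(V',W')\to T}, P^\ast g\rangle \le \|f_{(V',W')\to T}\|_{\ell'}\,\|P^\ast g\|_\ell ,
\]
where the inequality is H\"older's inequality (Theorem~\ref{thm:holder}). The $(d,\varepsilon,L_{\ell'})$-globalness hypothesis bounds the first factor by $\varepsilon$. For the second factor, $P^\ast g$ has degree at most $d$ on $\LL(V/V',W')$. I would then apply a standard non-global hypercontractive inequality for degree-$d$ functions, obtained for example by iterating Bonami-type estimates with $\ell$ a power of $2$, or directly from the $(1,\cdot)$ hypercontractivity of \cite{EKL23} applied with trivial globalness. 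This should give
\[
\|P^\ast g\|_\ell \le q^{C d^2\ell}\|P^\ast g\|_2 \le q^{Cd^2\ell}\|P\|\,\|g\|_2 .
\]
The operator norm $\|P\|$ should be at most $q^{O(d^2)}$, since $P$ is a combination of at most $q^{O(d^2)}$ averaging operators with coefficients of that size. Combining the two displays and dividing by $\|g\|_2$ gives $\|g\|_2 \le q^{C'd^2\ell}\varepsilon$. Squaring yields $I_{(V',W',T)}(f^{=d}) \le q^{2C'd^2\ell}\varepsilon^2$, and the constant $500$ only needs to absorb $2C'$.

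The main obstacle is the bookkeeping in (ii). One must check that the derivative operator defining generalized influences really commutes with taking $f^{=d}$, so that restricting commutes with degree projection up to a controlled operator. In $\LL(V,W)$ this is delicate because a restriction mixes levels through the quotient by $V'$. I would first try to express both the restriction and the derivative as averages over the affine subspaces $\{B : B|_{V'}=T|_{V'}\}$, and then use that the $E_j$ are polynomials in the scheme matrices. The other step that needs care is the degree-$d$ hypercontractive constant: getting $q^{O(d^2\ell)}$ rather than something worse requires the same $\ell$-power-of-$2$ iteration used in \cite{EKL23}.
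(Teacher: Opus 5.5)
The paper gives no proof of this statement; it is quoted from \cite{EKL23} and used as a black box. Judged on its own, your proposal has a reasonable outer shape: H\"older against the $(d,\varepsilon,L_{\ell'})$-globalness, then an $L_\ell$ bound, then dividing and squaring. The $L_\ell$ step, however, fails.

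There is no non-global hypercontractive inequality $\|h\|_\ell\le q^{Cd^2\ell}\|h\|_2$ for functions of degree at most $d$ on $\LL(\widetilde V,\widetilde W)$, not even for $d=1$. Write $\chi_C(A)=\psi(\mathrm{tr}(C^\top A))$ for a nontrivial additive character $\psi$ of $\mathbb{F}_q$. The level-$d$ space is spanned by the $\chi_C$ with $\mathrm{rank}\,C=d$. The dictator $h=1_{\{A:\,Av=w\}}=q^{-\tilde m}\sum_{u}\psi(-u^\top w)\chi_{uv^\top}$ is therefore of degree at most $1$. Yet $\|h\|_\ell/\|h\|_2=q^{\tilde m(1/2-1/\ell)}$, where $\tilde m=\dim\widetilde W\ge m-d$ is unbounded.

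Invoking Theorem~\ref{thm:lvld} ``with trivial globalness'' does not rescue this. Its $\varepsilon$ is the largest restricted $L_2$-mass, which for this $h$ equals $1$ while $\|h\|_2^2=q^{-\tilde m}$. Assuming that $\varepsilon$ is $q^{O(d^2)}\|h\|_2^2$ is assuming exactly the globalness you are trying to prove. So knowing only that $P^\ast g$ has degree at most $d$ gives you nothing dimension-free.

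The missing idea is that the particular function $D(f^{=d})$ is hypercontractive because of where it comes from. It is an order-$d$ derivative of a degree-$d$ function, so all of its further generalized derivatives vanish. A global hypercontractive inequality phrased through generalized influences then converts this into $\|D(f^{=d})\|_\ell\le q^{O(d^2\ell)}\|D(f^{=d})\|_2$. This is the kind of input that the $\ell$-dependence in $q^{500d^2\ell}$ reflects. Also, if the conclusion constrains influences of order below $d$ (presumably needed for Lemma~\ref{lem:small-influence} to yield restriction-globalness), then $D(f^{=d})$ has positive degree for those orders, and you additionally need an induction on the order that feeds in the higher-order bounds.

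Your step (ii) is also not mere bookkeeping, and for the natural derivative it is false. Restriction does not preserve levels in this space. In a basis with $v=e_1$, the restriction of $\chi_C$ to $\{Av=w\}$ is $\psi(\langle c_1,w\rangle)\chi_{C'}$, where $c_1$ is the first column of $C$ and $C'$ is the rest, and $\mathrm{rank}\,C'\in\{\mathrm{rank}\,C-1,\mathrm{rank}\,C\}$. So distinct levels of $f$ collide in a single level of the restriction. For $Df:=f_{v\mapsto w}-f_{v\mapsto w'}$, the rank-one part of $Df$ mixes contributions from $f^{=1}$ and $f^{=2}$.

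Concretely, take $f=1_{\{A:\,Av=w'',\,Au=y\}}$ with $u$ independent of $v$ and $w''\notin\{w,w'\}$. Both $f_{v\mapsto w}$ and $f_{v\mapsto w'}$ vanish identically. A direct Fourier computation nevertheless gives
$D(f^{=1})(A)=q^{-m}\bigl(\mathbf{1}[w-w''\in\mathrm{span}(z)]-\mathbf{1}[w'-w''\in\mathrm{span}(z)]\bigr)$ for $z=Au-y\neq 0$. This is nonzero whenever, for example, $z=w-w''$ and $w'-w''\notin\mathrm{span}(w-w'')$.

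Hence $D(f^{=d})$ need not be determined by restrictions of $f$ at all, let alone equal $P f_{(V',W')\to T}$ for an operator $P$ acting on the restricted space. The cube identity $D_S f^{=|S|}=\E[D_Sf]$ has no direct analogue here. Whether any identity of this type holds depends entirely on the precise definition of generalized derivatives in \cite{EKL23}. Your facts (i)--(iii) are assumptions that carry the whole argument, not properties you can read off the black box.
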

\begin{lem}\cite[Lemma 3.6]{EKL23}\label{lem:small-influence} 
    If $f \in L^2(\LL(V,W))$ is of degree $d$ and has $(d,\varepsilon')$-small generalized influences, then $f$ is $(r,q^{10dr}\varepsilon')$-restriction global for any $r \geq d$.
\end{lem}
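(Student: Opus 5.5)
The plan is to reduce the restriction $f_{(V',W')\rightarrow T}$ of size $r \ge d$ to a bounded-length linear combination of ``derivatives'' of $f$ along restrictions of size at most $d$. These are exactly the objects controlled by the $(d,\varepsilon')$-small generalized influences hypothesis. Following \cite[\S 2.2]{EKL23}, I will use that a generalized influence of $f$ along $(V'',W'',T'')$ is the squared $2$-norm of a generalized derivative $D_{(V'',W''),T''} f$. This derivative is a signed, $q$-weighted combination of restrictions of $f$ indexed by the subspace lattice below $V''$ (and above $W''$). Small influences mean that all such derivatives of size at most $d$ have squared norm at most $\varepsilon'$.

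First, I would write the restriction as an inverse of the derivatives. By Möbius inversion on the lattice of subspaces (whose Möbius function is $\pm q^{\binom{k}{2}}$), the restriction $f_{(V',W')\rightarrow T}$ equals a sum, over pairs $U\le V'$ and $W'\le X$, of derivatives $D_{(U,X),T}f$ restricted further. Second, I would use that $f$ has degree $d$. Every pure-degree component $V^{=e}$ with $e\le d$ is killed by derivatives of total size exceeding $d$; this should follow from the description of $V^{=e}$ via rank-$e$ characters. So only pairs with $\dim U+\codim X\le d$ survive. The number of such pairs inside an $r$-restriction is at most a sum of Gaussian binomials $\sqbinom{r}{k}_q$ with $k\le d$, which is $q^{O(dr)}$. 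The Möbius coefficients contribute at most $q^{O(d^2)}\le q^{O(dr)}$.

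Third, I would bound each surviving term. A further restriction of a derivative of size at most $d$ is again controlled by the influence bound, after averaging over translates and using that restriction commutes with derivatives up to relabeling of $T$. So each term has squared $2$-norm at most $\varepsilon'$. Applying Cauchy--Schwarz (or the triangle inequality, then squaring) to a sum of $N=q^{O(dr)}$ terms gives $\|f_{(V',W')\rightarrow T}\|_2^2\le N^2\max_k(\text{coef}_k^2)\,\varepsilon'$. Tracking constants should give $q^{10dr}\varepsilon'$.

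I expect the main obstacle to be the second step: proving cleanly that degree-$d$ functions have vanishing derivatives of order greater than $d$ in the bilinear-forms setting, and making the Möbius inversion exact when row and column restrictions are mixed. I would first try to verify both statements on characters $A\mapsto \chi(\Tr(MA))$ with $\rank M=e$, where derivatives factor explicitly, and then extend by linearity.
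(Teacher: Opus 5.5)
The paper gives no proof of this lemma; it imports it from \cite{EKL23} as a black box. Your Step~2 and your count of pairs are fine, reading derivatives in the natural sense: project onto the characters whose rank drops by the full size of the pair, then restrict. The gap is Step~3, together with the identity of Step~1 it relies on. That step carries the whole content of the lemma, and the justification you give does not transfer to $\LL(V,W)$.

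On the Boolean cube the order-$|S'|$ term is $E_{S\setminus S'}\big((L_{S'}f)_{S'\to x_{S'}}\big)$. It does not depend on the remaining fixed coordinates, so the further restriction acts as an average and is a contraction. In $\LL(V,W)$ there is no such independence. Distinct characters that the restriction does not ``see'' can collapse onto the same character of $\LL(V/V',W')$. So a further restriction of a derivative can have much larger norm than the derivative itself, by a factor that grows with $r$.

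The sub-pairs that fully differentiate a character also do not form an interval. For example, $A\mapsto\psi(\phi(Av))$ is frozen both by $Av=w$ and by $A^\top\phi=\lambda$. Hence the M\"obius inversion you need is not the Boolean one.

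Here is a degree-$1$ example where your termwise bound fails by a factor $q^r$. Write $\chi_M(A)=\psi(\Tr(MA))$ and let $u\otimes\phi\in\LL(W,V)$ denote the map $w\mapsto\phi(w)u$, so that $\chi_{u\otimes\phi}(A)=\psi(\phi(Au))$. Take $W'=W$ and $\dim V'=r$, with $n\ge 2r+1$. Choose pairwise non-proportional $\phi_1,\dots,\phi_{q^r}\in W^*$, and vectors $u_j$ such that the $(r+1)$-spaces $\spn{u_j}+V'$ are pairwise distinct. Set
\[
f=\sum_{j}\sum_{x\in V'}\overline{\chi_{M_{j,x}}(T)}\,\chi_{M_{j,x}},\qquad M_{j,x}=(u_j+x)\otimes\phi_j .
\]
Then the following hold.
\begin{itemize}
\item $\|f\|_2^2=q^{2r}$.
\item The lines $\spn{u_j+x}$ are pairwise distinct and meet $V'$ trivially. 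So every derivative along a nonzero sub-pair of $(V',W)$ vanishes, and every order-$1$ column derivative sees at most one character.
\item The only nonzero order-$1$ row derivatives are those along $\ker\phi_j$, and these are constants of modulus at most $q^r$.
\end{itemize}
Hence $\varepsilon'=q^{2r}$ is admissible. Yet for $A'$ vanishing on $V'$ we get $f(T+A')=q^r\sum_j\psi(\phi_j(A'u_j))$, so
\[
\|f_{(V',W)\to T}\|_2^2=q^{3r}=q^{r}\varepsilon'.
\]
Every term of your expansion except the order-$0$ one vanishes here. So no count of pairs, and no $q^{O(d^2)}$ M\"obius weight, rescues a termwise bound of $\varepsilon'$. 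The loss comes from the collapse of characters under the compression $M\mapsto Q_{V'}\circ M\circ\iota_{W'}$.

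What is needed is a Fourier-side argument:
\begin{enumerate}
\item Write $\widehat{f_{(V',W')\to T}}(M')=\sum_{M\mapsto M'}\hat f(M)\chi_M(T)$.
\item Group the maps $M$ of rank at most $d$ according to how their rank drops.
\item Match these groups with Fourier coefficients of derivatives of order at most $d$, using a non-Boolean inclusion--exclusion.
\item Bound, via Cauchy--Schwarz, how many such coefficients feed a single $M'$.
\end{enumerate}
For maps of rank at most $d$ this multiplicity is $q^{O(dr)}$. For instance, under a column restriction a zero-loss rank-$e$ map $M'$ has exactly $q^{er}$ lifts. This multiplicity, alongside the count of pairs, is what the factor $q^{10dr}$ has to pay for.
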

\begin{cor}\label{cor:global}
     Let $\ell \geq  4$ be a power of $2$ and let $\ell'$ be its H\"older conjugate. Suppose that $f \in L^2(\LL(V,W))$ is $(d, \varepsilon_{\ell'}, L_{\ell'} )$-restriction global. Then $f^{=d}$ is $(d,q^{510d^2\ell}\varepsilon_{\ell'}^2)$-restriction global.
\end{cor}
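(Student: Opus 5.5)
This is a direct composition of Theorem~\ref{thm:small-influence} and Lemma~\ref{lem:small-influence}, applied with the same $\ell$ and with $r=d$.

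First, apply Theorem~\ref{thm:small-influence} with $\varepsilon = \varepsilon_{\ell'}$. Its hypotheses are exactly ours: $\ell\ge 4$ is a power of $2$, $\ell'$ is its H\"older conjugate, and $f$ is $(d,\varepsilon_{\ell'},L_{\ell'})$-restriction global. It yields that $f^{=d}$ has $(d,\varepsilon')$-small generalized influences, where $\varepsilon' = q^{500d^2\ell}\varepsilon_{\ell'}^2$.

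Next, apply Lemma~\ref{lem:small-influence} to $f^{=d}$ with $r=d$. Two hypotheses need checking.
\begin{itemize}
\item $f^{=d}$ is of degree $d$. This holds because $f^{=d}=E_d f$ lies in $V^{=d}$, which is contained in the span of pure degree-$\le d$ functions.
\item $r\ge d$. This is trivial since $r=d$.
\end{itemize}
The lemma then shows that $f^{=d}$ is $(d, q^{10d\cdot d}\varepsilon')$-restriction global.

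Substituting $\varepsilon'$ gives the global parameter
\[
q^{10d^2}\cdot q^{500d^2\ell}\varepsilon_{\ell'}^2 = q^{10d^2+500d^2\ell}\varepsilon_{\ell'}^2 .
\]
Since $\ell\ge 4\ge 1$, we have $10d^2\le 10d^2\ell$, so this is at most $q^{510d^2\ell}\varepsilon_{\ell'}^2$. Restriction globalness is monotone in $\varepsilon$: the condition is only that every $\|\cdot\|_2^2$ is $\le\varepsilon$. Hence $f^{=d}$ is $(d,q^{510d^2\ell}\varepsilon_{\ell'}^2)$-restriction global.

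There is no real obstacle here. The only points needing care are that the degree-$d$ hypothesis in Lemma~\ref{lem:small-influence} is satisfied by $f^{=d}$, and that the exponent bookkeeping $10d^2+500d^2\ell\le 510d^2\ell$ holds because $\ell\ge1$.
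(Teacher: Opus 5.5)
Your proof is correct and follows the paper exactly: it applies Theorem~\ref{thm:small-influence} and then Lemma~\ref{lem:small-influence} with $r=d$. You also check the exponent bound $10d^2+500d^2\ell\le 510d^2\ell$ and the degree-$d$ hypothesis, which the paper leaves implicit.
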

\begin{proof}
Apply Theorem~\ref{thm:small-influence} and then Lemma~\ref{lem:small-influence} with $r = d$.
\end{proof}

\noindent Theorem~\ref{thm:lvld} below is a global \emph{level-$d$ inequality} for the bilinear forms scheme $\LL(V,W)$, which will be instrumental for carrying out the first step of the proof.  
\begin{thm}[Level-$d$ Inequality {\cite[Theorem 1.13]{EKL23}}]\label{thm:lvld}
    Suppose that $f \in L^2(\LL(V,W))$ is a $(d,\varepsilon)$-restriction global function of degree $d$. Let $\ell \geq 4$ be a power of 2. Then
    \[
     \|f\|_\ell^\ell \leq q^{200d^2\ell^2} \|f\|_2^2 ~\varepsilon^{\ell/2 - 1}
    \]
\end{thm}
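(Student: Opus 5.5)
The natural route is the one used for global hypercontractivity on the symmetric group and in~\cite{EKL23}: prove a hypercontractive inequality for a suitable noise operator acting on \emph{arbitrary} functions, with the usual $\|f\|_2$ on the right replaced by restriction norms. Then specialize to degree-$d$ functions, on which the noise operator is invertible with controlled loss.

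\textbf{Step 1: the noise operator.} I would define $\mathrm{T}$ as the Markov operator that, from $A\in\LL(V,W)$, passes to a random restriction of $A$ and re-randomizes the complement. Concretely, one picks a random subspace $V'\le V$ and a random $W'\le W$ of controlled (co)dimension, keeps $A$ on $V'$ and on the annihilator of $W'$, and resamples the rest uniformly. This operator is in the Bose--Mesner algebra of $\mathcal{B}_{m,n}$, so it acts on $V^{=d}$ by a scalar $\lambda_d$. I would aim to show, by counting how a rank-$d$ "character" survives the restriction, that $\lambda_d\ge q^{-O(d\ell)}$ for the chosen parameters.

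\textbf{Step 2: hypercontractivity for global functions.} I would prove
\[
\|\mathrm{T}f\|_\ell^\ell\le \sum_{r}\ \E_{(V',W',T)\ r\text{-restr.}}\, q^{O(r\ell)}\,\|f_{(V',W')\rightarrow T}\|_2^{\ell}
\]
for arbitrary $f$, treating $\ell$ as a power of $2$. The plan is induction on $\ell$ via $\|g\|_{2k}^{2k}=\|g^2\|_k^k$: at each doubling, expand $(\mathrm{T}f)^2$ through the restrictions defining $\mathrm{T}$. The terms where the two copies interact are exactly the ones controlled by restrictions of $f$, and this is where the generalized influences enter (Theorem~\ref{thm:small-influence} and Lemma~\ref{lem:small-influence} as black boxes). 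The base case $\ell=2$ is trivial.

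\textbf{Step 3: specialize to degree $d$.} For $f$ of degree $d$, write $f=\mathrm{T}g$ with $g=\sum_{i\le d}\lambda_i^{-1}f^{=i}$, so that $\|g_{R}\|_2\le q^{O(d\ell)}\|f_{R}\|_2$ for restrictions $R$. Here one uses that restrictions of degree-$\le d$ functions stay of degree $\le d$, and that the projections $E_i$ roughly commute with restriction. I would then insert the $(d,\varepsilon)$-globalness: for $r\ge d$, use Lemma~\ref{lem:small-influence} to get $\|g_R\|_2^2\le q^{O(dr)}\varepsilon$. For $r<d$, bound $\|g_R\|_2^2$ by averaging larger restrictions.

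\textbf{Step 4: extract the stated form.} Split $\|g_R\|_2^\ell=\|g_R\|_2^2\cdot\|g_R\|_2^{\ell-2}$. Bound the second factor by $\varepsilon^{\ell/2-1}$ (up to $q^{O(d^2\ell)}$), and average the first factor over restrictions to recover $\|f\|_2^2$. The accumulated factors $q^{O(d\ell)}$ per level and $\ell$ levels of doubling multiply to $q^{200d^2\ell^2}$.

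\textbf{Main obstacle.} I expect the hardest part to be the inductive Step 2 inequality in this non-product space. Restrictions do not commute cleanly with squaring, and the cross terms must be organized through quotient spaces $V/V'$. The eigenvalue estimate $\lambda_d\ge q^{-O(d\ell)}$ in Step 1 is routine by comparison.
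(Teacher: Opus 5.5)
There is a genuine gap. Note first that the paper gives no proof of this statement: it is quoted from \cite{EKL23}, as are Theorem~\ref{thm:small-influence} and Lemma~\ref{lem:small-influence}. So your outline has to stand on its own, and as written it does not.

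\textbf{Step 2 is not supplied.} It is essentially the whole theorem, and the tools you invoke for it do not do what you need. Theorem~\ref{thm:small-influence} turns $L_{\ell'}$-globalness of $f$ into small generalized influences of $f^{=d}$. Lemma~\ref{lem:small-influence} turns small generalized influences of a degree-$d$ function into restriction-globalness. Neither says anything about the cross terms of $(\mathrm{T}f)^2$ for an arbitrary $f$, and leaning on results from the same hypercontractive package of \cite{EKL23} risks circularity. In Step~3 you apply Lemma~\ref{lem:small-influence} for $r\ge d$. Its hypothesis, however, is small generalized influences, not $(d,\varepsilon)$-restriction-globalness, and you have not supplied the converse comparison.

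\textbf{Step 3's estimate is false.} The claim $\|g_R\|_2\le q^{O(d\ell)}\|f_R\|_2$ fails because restriction does not commute with the idempotents $E_i$. Take a basis $e_1,\dots,e_n$ of $V$ and independent $x,y\in W$. Set $B_1=ye_2^\top$ (rank $1$) and $B_2=xe_1^\top+ye_2^\top$ (rank $2$), with $\chi_B(A)=\omega^{\Tr\,\mathrm{tr}(B^\top A)}$, and let $f=\chi_{B_1}-\chi_{B_2}$ (take real parts if you want $f$ real). Consider the $1$-restriction $R=(\spn{e_1},W,0)$, i.e.\ $Ae_1=0$. There $\mathrm{tr}(B_2^\top A)=x^\top Ae_1+y^\top Ae_2=\mathrm{tr}(B_1^\top A)$, so $f_R\equiv 0$. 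On the other hand $g_R=(\lambda_1^{-1}-\lambda_2^{-1})(\chi_{B_1})_R\neq 0$ whenever $\lambda_1\neq\lambda_2$. Restriction norms of $g$ therefore cannot be controlled restriction by restriction.

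\textbf{The sum over $r$ does not close.} Your Step~2 right-hand side, with restrictions in it, cannot be summed to an $n$-independent bound. Restrictions of a degree-$d$ function do not vanish for $r>d$; indeed $\E_R\|f_R\|_2^\ell\ge\|f\|_2^\ell$ for every $r$, by Jensen. With weights $q^{O(r\ell)}$ and $\|g_R\|_2^2\le q^{O(dr)}\varepsilon$, the sum over $r$ up to $\dim V+\dim W$ gives a loss depending on $n$, not $q^{O(d^2\ell^2)}$.

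\textbf{What is missing.} You need a derivative-type quantity in place of restriction norms; this is the role of the generalized influences in \cite{EKL23}. It should be an operator $D$ (a Fourier multiplier followed by a restriction of size $r$) that maps $V^{=i}$ into level $i-r$ of the restricted scheme. Orthogonality then gives $\|Dg\|_2\le\max_{i\le d}\lambda_i^{-1}\|Df\|_2$, and $D$ annihilates degree-$d$ functions once $r>d$, which truncates the sum. You also need two-way comparisons between small generalized influences and restriction-globalness for low-degree functions. Without these ingredients, Steps~2--4 do not close.
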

\noindent The following technical lemma is a simple consequence of Theorem~\ref{thm:lvld}.
\begin{lem}\label{lem:lvld}
    Let $\ell \geq 4$ be a power of 2, let $\ell'$ be its H\"{o}lder conjugate, and $1 \leq d \leq n$. Let $f \in L^2(\LL(V,W))$ be a function with $\|f\|_1 \le 1$, such that  $f^{=d}$ is $(d,q^{510d^2\ell} (\varepsilon_2^{\frac{1}{\ell}} \varepsilon_1^{\frac{(\ell - 2)}{\ell}})^2)$-restriction global where $\varepsilon_1 = \g^d\|f\|_1$ and $\varepsilon_2 = \g^d$.
 Then
\[
 \|f^{=d}\|_\ell \leq q^{710d^2\ell}\g^{d}  \|f\|_2^{{2}/{\ell}} \|f\|_1^{1-4/\ell}. 
\]
\end{lem}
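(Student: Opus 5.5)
Apply the level-$d$ inequality (Theorem~\ref{thm:lvld}) to the degree-$d$ function $f^{=d}$, feeding in the hypothesised restriction globalness, and then simplify exponents. Set
\[
\varepsilon := q^{510d^2\ell}\bigl(\varepsilon_2^{1/\ell}\varepsilon_1^{(\ell-2)/\ell}\bigr)^2 = q^{510d^2\ell}\,\g^{2d}\,\|f\|_1^{2(\ell-2)/\ell},
\]
using $\varepsilon_1\varepsilon_2^{\ldots}$ with $\varepsilon_2=\g^d$, $\varepsilon_1=\g^d\|f\|_1$. The exponent of $\g^d$ is $2\bigl(\tfrac1\ell+\tfrac{\ell-2}{\ell}\bigr)=2(\ell-1)/\ell$, so more precisely $\varepsilon = q^{510d^2\ell}\g^{2d(\ell-1)/\ell}\|f\|_1^{2(\ell-2)/\ell}$. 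By hypothesis $f^{=d}$ is $(d,\varepsilon)$-restriction global and of degree $d$, so Theorem~\ref{thm:lvld} gives
\[
\|f^{=d}\|_\ell^\ell \le q^{200d^2\ell^2}\,\|f^{=d}\|_2^2\,\varepsilon^{\ell/2-1}.
\]
Since $f^{=d}$ is an orthogonal projection of $f$, we have $\|f^{=d}\|_2\le\|f\|_2$.

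Next I take $\ell$-th roots. The factor $\|f^{=d}\|_2^{2/\ell}\le\|f\|_2^{2/\ell}$ is exactly the desired one. The remaining factor is
\[
\varepsilon^{(\ell-2)/(2\ell)} = q^{510d^2(\ell-2)/2}\,\g^{d(\ell-1)(\ell-2)/\ell^2}\,\|f\|_1^{(\ell-2)^2/\ell^2}.
\]
Together with $q^{200d^2\ell}$ from the constant, the $q$-power is at most $q^{(200+255)d^2\ell}\le q^{710d^2\ell}$.

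For the $\g$ factor I use $\g\ge1$, which holds in every application since globalness with $\g<1$ is vacuous (take $d=0$ behaviour). This gives $\g^{d(\ell-1)(\ell-2)/\ell^2}\le\g^d$. If $\g<1$ were allowed, I would note that the statement is only used with $\g>1$ and add this assumption.

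For the $\|f\|_1$ factor, $(\ell-2)^2/\ell^2 = 1-4/\ell+4/\ell^2 \ge 1-4/\ell$. Since $\|f\|_1\le1$, raising to a larger exponent only decreases the value, so $\|f\|_1^{(\ell-2)^2/\ell^2}\le\|f\|_1^{1-4/\ell}$. Combining the pieces yields the claim.

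The only delicate point is this exponent bookkeeping: checking the direction of each monotonicity (for $\g\ge1$ and for $\|f\|_1\le1$) and confirming that the $q$-exponents sum below $710d^2\ell$. Nothing else is conceptually hard.
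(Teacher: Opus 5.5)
Your proof is correct and follows the same route as the paper. You apply Theorem~\ref{thm:lvld} to $f^{=d}$ with the hypothesised globalness parameter, then use $\|f^{=d}\|_2\le\|f\|_2$, $\|f\|_1\le 1$ and $\gamma\ge 1$ to simplify the exponents. The paper does these simplifications inside the $\ell$-th power and takes the root at the end. It also uses $\gamma\ge 1$ only implicitly, which holds in its application since $\gamma=q^{Dt}$. Your first displayed formula for $\varepsilon$, with $\gamma^{2d}$, is a slip, but you correct it immediately to $\gamma^{2d(\ell-1)/\ell}$.
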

\begin{proof}
    By Theorem~\ref{thm:lvld} we have 
\begin{align*}
    \|f^{=d}\|_\ell^\ell &\leq q^{200d^2\ell^2} \|f^{=d}\|_2^2 ~(q^{510d^2\ell} (\g^{d/\ell'} \|f\|_1^{(\ell - 2)/\ell})^2)^{\ell/2 - 1} \\
    &\leq q^{200d^2\ell^2} \|f^{=d}\|_2^2 ~q^{(510d^2\ell)(\ell/2-1)} (\g^{d} \|f\|_1^{(\ell - 2)/\ell})^{\ell - 2} \\
    &\leq q^{710d^2\ell^2}\g^{d\ell} \|f^{=d}\|_2^2 \cdot \|f\|_1^{(\ell - 2)^2/\ell} \\
    &\leq q^{710d^2\ell^2}\g^{d\ell}\|f^{=d}\|_2^2 \cdot \|f\|_1^{\ell - 4}\\
     &\leq q^{710d^2\ell^2}\g^{d\ell}  \|f\|_2^2\|f\|_1^{\ell-4}. 
\end{align*}
\end{proof}

\noindent To apply Lemma~\ref{lem:lvld}, we require sufficiently strong upper bounds on the magnitudes of the eigenvalues of $\tilde{\mathbf{A}}_{n-t+1}$. Recall from Section~\ref{sec:bilinear_forms} that the eigenspaces of $\tilde{\mathbf{A}}_{n-t+1}$ are precisely the subspaces $V^{=d}$ (or a union of those). The following proposition provides bounds on the corresponding eigenvalues.

\begin{prop}\label{prop:eigs}
Let $m \geq n$ and let $\tilde{\mathbf{A}}_{n-t+1}$ be the matrix $\mathbf{A}_{n-t+1} \in \mathcal{B}_{m,n}$ scaled so that its largest eigenvalue is 1, and let $\eta_d$ be the eigenvalue
corresponding to $V^{=d}$ for all $0 \leq d \leq n$.  

If $q=2$ and $n=m$, then we have 
\[
    |\eta_d| =  
    \begin{cases}
             O\left( 2^{-(d(2n-d)-t^2)/2} \right)   \quad &\text{ if $d \geq t$ }\\
           O\left( 2^{-d(n - t + 1) + \log_2 n}\right) \quad & \text{ if $d < t$.}
    \end{cases}
\]
Otherwise, we have 
\[
    |\eta_d| = 
        \begin{cases}
            O\left( q^{-(d(2m-d)-t(2m-2n+t))/2} \right) \quad &\text{ if $d \geq t$ }\\
            O\left( q^{-d (n - t + 1)} \right) \quad & \text{ if $d < t$.}
        \end{cases}
\]
\end{prop}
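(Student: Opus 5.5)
The plan is to compute $\eta_d$ exactly from Delsarte's formula for the eigenvalues of the bilinear forms scheme, and then bound the resulting alternating sum by a constant multiple of its largest term. Write $r := n-t+1$, so that $\tilde{\mathbf{A}}_r = v_r^{-1}\mathbf{A}_r$ and $\eta_d = P_r(d)/v_r$, where $P_r(d)$ is the first eigenmatrix of $\mathcal{B}_{m,n}$. The bilinear forms scheme is self-dual, so $P_r(d)/v_r = P_d(r)/v_d$. I will work with the dual expression
\[
\eta_d=\frac{1}{v_d}\sum_{k=0}^{d}(-1)^{d-k}\,q^{km+\binom{d-k}{2}}\sqbinom{n-k}{n-d}_q\sqbinom{n-r}{k}_q .
\]
Its advantage is that $n-r=t-1$, so only $0\le k\le \min(d,t-1)$ contribute. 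The two cases of the proposition are exactly $d<t$ (the sum runs up to $k=d$) and $d\ge t$ (the sum is truncated at $k=t-1$).

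Throughout I use the standard estimates $\sqbinom{a}{b}_q=\Theta\bigl(q^{b(a-b)}\bigr)$ and $v_d=\Theta\bigl(q^{d(m+n-d)}\bigr)$. The implied constants are uniform in all parameters, since they come from products $\prod_i(1-q^{-i})^{\pm1}$, which are bounded for $q\ge2$.

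Writing $T_k$ for the $k$th summand, the first step is to estimate the ratio $|T_{k}|/|T_{k-1}|$. Up to constants, its exponent is
\[
m-(d-k)+(n-d)\bigl(\text{change in }\sqbinom{n-k}{n-d}\bigr)+(t-1-2k+1),
\]
which simplifies to roughly $m-n+t-2k+\dots$. The aim is to show that this exponent is at least $1$ (or bounded below by a positive constant) throughout the relevant range. Then $|T_k|$ increases geometrically in $k$, and $|\sum_k T_k|=O(|T_{k_{\max}}|)$. Both cases then follow from the top term:
\begin{itemize}
\item For $d<t$ the top term is $k=d$, giving $q^{dm}\sqbinom{t-1}{d}_q\approx q^{dm+d(t-1-d)}$. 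Dividing by $v_d$ gives $q^{-d(n-t+1)}$, as claimed.
\item For $d\ge t$ the top term is $k=t-1$, giving $q^{(t-1)m+\binom{d-t+1}{2}}\sqbinom{n-t+1}{n-d}_q$. Dividing by $q^{d(m+n-d)}$ and expanding should yield the exponent $-(d(2m-d)-t(2m-2n+t))/2$, up to $O(d+t)$ linear corrections that must be checked to be absorbed or to cancel.
\end{itemize}

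The step I expect to be the main obstacle is the domination of the alternating sum by its top term. When $m>n$ or $q\ge3$, the consecutive-term ratio has exponent at least $1$ with base $q$, or the base $q\ge3$ makes the geometric series converge to a constant factor. However, for $q=2$, $m=n$ and $k$ near $t/2$ or near $d$, the ratio exponent $m-n+t-2k+\dots$ can drop to $0$ or become slightly negative. The terms then no longer increase geometrically, and the sum has to be handled differently. My first approach there is to pair consecutive terms, using the alternating signs to exploit cancellation where the terms are comparable. Failing that, I would bound the sum crudely by the number of terms times the maximal term. In the $d<t$ case this produces the extra $\log_2 n$ in the exponent (a factor of at most $n$). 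In the $d\ge t$ case the crude bound instead forces the halved exponent $(d(2n-d)-t^2)/2$. I would verify carefully that the crude bound is what yields that halved form. I would also confirm, by explicitly checking the quadratic-in-$k$ exponent of $T_k$, that its maximum over $0\le k\le\min(d,t-1)$ occurs at the endpoint used above in each regime.
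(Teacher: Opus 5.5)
Your route is sound. It differs from the paper's mainly in which Krawtchouk sum you bound.

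The paper works with the primal expression $\eta_d=v_{n-t+1}^{-1}P_{n-t+1}(d)$, a sum over $0\le i\le\min(n-t+1,n-d)$. It cites \cite{CioabaH22} for monotonicity of the summands, so the alternating sum is at most its last term. It treats $q=2$, $m=n$ separately: it cites \cite{EllisKL23} for $d\ge t$, and for $d<t$ it uses a crude count of $n$ terms, which is where the $\log_2 n$ comes from.

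You instead pass through self-duality to $\eta_d=P_d(n-t+1)/v_d$, which has at most $\min(d,t-1)+1\le t$ terms. Your endpoint $k=\min(d,t-1)$ gives the same normalized exponent as the paper's endpoint $i=n-\max(t-1,d)$, and your two top-term computations are correct. The payoff is a self-contained argument with no appeal to \cite{CioabaH22} or \cite{EllisKL23}.

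However, your ratio computation has a slip, and it manufactures an obstacle that does not exist. Let $E(k)=km+\binom{d-k}{2}+(n-d)(d-k)+k(t-1-k)$ be the main exponent of $|T_k|$. Then
\[
E(k)-E(k-1)=m-(d-k)-(n-d)+(t-2k)=m-n+t-k ;
\]
you dropped the $+k$ coming from $-(d-k)$. So $E(k)-E(k-1)\ge m-n+1\ge 1$ on the whole range $k\le t-1$, for every $q$ and every $m\ge n$, including $q=2$, $m=n$. Nothing degenerates near $k=t/2$.

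Because the Gaussian-binomial constants are uniform ($|T_k|\le 16q^{E(k)}$ and $v_d\ge \tfrac14 q^{d(m+n-d)}$), the triangle inequality alone gives, with $K=\min(d,t-1)$,
\[
|\eta_d|\le v_d^{-1}\sum_{k\le K}|T_k|\le C\,q^{E(K)-d(m+n-d)}\sum_{j\ge 0}q^{-j}.
\]
You therefore need neither pairing of alternating terms nor the crude count. You even obtain the exceptional case without the $\log_2 n$.

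Your reading of the ``halved'' exponent is also off. For $m=n$ the exceptional $d\ge t$ bound is literally the general one, so the crude bound does not force anything special. The endpoint actually yields the exponent
\[
-\tfrac12\bigl(d(2m-d)-t(2m-2n+t)\bigr)-\tfrac12(d+t)-(m-n).
\]
The extra $-\tfrac12(d+t)-(m-n)$ is slack, and it is what would absorb a factor of $t$ if you did use the crude count.
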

\begin{proof}
    Recall that $v_{n-t+1}$ is the number of matrices of $\LL(V,W)$ of rank $n-t+1$. The eigenvalues $\eta_d$ of $\tilde{\mathbf{A}}_{n-t+1}$ are
    \begin{align}\label{eq:eigs}
        \eta_d = \frac{1}{v_{n-t+1}} \sum_{i=0}^{\min (n-t+1,n-d)} (-1)^{n-t+1 - i} q^{mi + \binom{n-t+1-i}{2}} {n-i \brack t-1}{ n - d \brack i }
    \end{align}
    (see~\cite{CioabaH22,Delsarte78}, for example). We have $v_{n-t+1} = \Omega(q^{(m+t-1)(n-t+1)})$ by basic counting and the following well-known bounds on Gaussian binomial coefficients:
    $$q^{k(n-k)} \leq {n \brack k} \leq 4q^{k(n-k)},$$
    (see~\cite[Lemma 3.5]{NeumannP95}, for example).
    Unless $q=2$ and $m=n$, the magnitudes of each of the summands in the eigenvalue expression above are an increasing function in $i$ (see~\cite[\S 4]{CioabaH22}, for example). Because the summation is also alternating, an upper bound on $|\eta_d|$ is obtained by bounding the summand corresponding to 
    $$i = \min(n-(t-1),n-d) = n-\max(t-1,d) =: n-l.$$ 
    We have
    \begin{align*}
        |\eta_d| &= O\left( \frac{q^{m(n-l) + \binom{l-t+1}{2}} {l \brack t-1} {n-d \brack n-l} }{q^{(m+t-1)(n-t+1)}} \right)\\
        &= O\left( \frac{q^{m(n-l) + (l-t+1)(l-t)/2}q^{(t-1)(l-t+1)}q^{(n-l)(l-d)}}{q^{(m+t-1)(n-t+1)}} \right).
    \end{align*}
    Substituting $l = d$ for the case where $d \geq t$ gives 
    \begin{align*}
        |\eta_d| &= O\left( \frac{q^{m(n-d) + (d - t + 1) (d + t - 2)/2 }}{q^{(m+t-1)(n-t+1)}} \right) \\
        &= O\left( q^{(d^2 - d (2 m + 1) + (t - 1) (2 m - 2 n + t))/2} \right)\\
        &\leq  O\left( q^{-(d(2m-d)-t(2m-2n + t))/2} \right)\\
    \end{align*}
    Substituting $l = t-1$ for the case where $d < t$ gives
    \begin{align*}
        |\eta_d| &= O\left( \frac{q^{m(n-t+1) + (n-t+1)(t-1-d)}}{q^{(m+t-1)(n-t+1)}} \right) \\
        &= O\left( q^{-d (n - t + 1)} \right).
    \end{align*}
    
Now assume $q=2$ and $m=n$. The $d \geq t$ case is easily seen from~\cite[Lemma 7]{EllisKL23}. 
For the remaining case, first note for any fixed $d,t \in \mathbb{N}$ such that $d < t < n$, that the function 
\[
    f(i) := 2^{(n^2 + n (2 i - 1) - t^2 + 3 t - i^2 + i - 2)/2 - d i}
\]
is a non-decreasing function of $i$ in the range $0 \leq i \leq n-d$. By the standard Gaussian coefficient estimates above, the absolute value of the $i$th summand in Equation~(\ref{eq:eigs}) is $O(f(i))$. 
Taking $i = n-l=n-t+1$ as above, the largest term of the summation is again
\[
    O\left(\frac{2^{(n^2 + n (2 (n-t+1) - 1) - t^2 + 3 t - (n-t+1)^2 + (n-t+1) - 2)/2 - d (n-t+1)}}{2^{(n+t-1)(n-t+1)}} \right) \! = \! O\left(\frac{2^{(n + t - 1) (n - t+1) - d(n - t+1) } }{2^{(n+t-1)(n-t+1)}}\right) \! = \! O\left( 2^{-d(n - t + 1)} \right).
\]
Crudely, we now have $|\eta_d| = O\left( 2^{-d(n - t + 1) + \log_2 n} \right)$, as desired.
\end{proof}

If $\mathcal{F}$ and $\mathcal{G}$ are the same $t$-umvirate, then $\mathcal{F},\mathcal{G}$ is a `large' cross-$(t-1)$-intersection-free pair of families with measure
\[
    \mu(\mathcal{F})\mu(\mathcal{G}) = \left( \frac{q^{m(n-t)}}{q^{mn}} \right)^2 = q^{-2mt}.
\]
Henceforth, this will be our notion of `large' measure.

We are now in a position to prove Theorem~\ref{thm:step1}, the main result of Step 1. It implies that if $\mathcal{F}$ and $\mathcal{G}$ are cross-$(t-1)$-intersection-free and one of the families is $\gamma$-global, then their measure must be `small', i.e., if one of two functions $f,g : \mathcal{L}(V,W) \rightarrow \{0,1\}$ is global with respect to the $L_1$ norm, and $\langle f, \tilde{\mathbf{A}}_{n-t+1}g\rangle = 0$, then $\E[f]\E[g]$ is `small'. 
\begin{thm}\label{thm:step1}
    For each fixed $\beta \ge 1$ such that $m \le \beta n$. Let $\gamma = q^{\tilde{c}\beta' \xi t}$ for any $\xi \geq 8$ and for any $\tilde{c},\beta ' \geq 1$.  
    For every $c>1$, there exist $n_0 \in \mathbb{N}$ and $\alpha>0$ such that for every $ n>n_0$ and $\alpha n \geq t$, the following holds. 
    
    Let  $f,g : \LL(V, W) \rightarrow [0,1]$ such that 
    $\langle f, \tilde{\mathbf{A}}_{n-t+1} g \rangle = 0$. 
    For all $d$, assume that $f$ is $(d,\epsilon_1,L_1)$-restriction global with $\epsilon_1 = \g^d \|f\|_1$. Then we have
    $$\E[f]\E[g] \le q^{-2cmt}.$$
\end{thm}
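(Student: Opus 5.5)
We argue by contradiction. Assume $\E[f]\E[g] > q^{-2cmt}$ and show $\langle f,\tilde{\mathbf{A}}_{n-t+1}g\rangle>0$. By the spectral decomposition in Section~\ref{sec:bilinear_forms},
\[
0=\langle f,\tilde{\mathbf{A}}_{n-t+1}g\rangle = \E[f]\E[g]+\sum_{d=1}^{n}\eta_d\langle f^{=d},g^{=d}\rangle ,
\]
so it suffices to prove $\sum_{d\ge1}|\eta_d||\langle f^{=d},g^{=d}\rangle| < \E[f]\E[g]$. Note that $\E[f]\ge q^{-2cmt}$ and $\E[g]\ge q^{-2cmt}$, since both are at most $1$. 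We split the sum at a threshold $d_0 = Kt$, with $K$ a large constant depending on $c,\beta,\tilde c,\beta',\xi$.

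\emph{Low degrees ($1\le d\le d_0$).} We use H\"older (Theorem~\ref{thm:holder}) in the form $|\langle f^{=d},g^{=d}\rangle|\le \|f^{=d}\|_\ell\|g^{=d}\|_{\ell'}$, with $\ell$ a power of $2$ to be chosen. We then bound $\|g^{=d}\|_{\ell'}$ by roughly $\|g\|_{\ell'}$ up to a factor $q^{O(d\ell)}$-ish. A cleaner alternative is to move the projection onto $f$ via self-adjointness of $E_d$, writing $\langle f^{=d},g\rangle \le \|f^{=d}\|_\ell\|g\|_{\ell'}$. Since $g\in[0,1]$, we have $\|g\|_{\ell'}\le \E[g]^{1/\ell'}=\E[g]^{1-1/\ell}$.

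For $f$, the hypothesis gives $(d,\gamma^d\|f\|_1,L_1)$-globalness. Because $f\in[0,1]$, we also have $\|f_{(V',W')\to T}\|_2^2\le \|f_{(V',W')\to T}\|_1\le\gamma^d\|f\|_1\le \gamma^d$, which is $(d,\gamma^d)$-globalness. Proposition~\ref{prop:1-2-log-convexity} then gives $L_{\ell'}$-globalness, and Corollary~\ref{cor:global} feeds this into Lemma~\ref{lem:lvld}, yielding
\[
\|f^{=d}\|_\ell\le q^{710d^2\ell}\gamma^d\E[f]^{1/\ell}\E[f]^{1-4/\ell}.
\]
The total loss relative to $\E[f]\E[g]$ is therefore at most $q^{710d^2\ell}\gamma^d\,(\E[f]\E[g])^{-4/\ell}\le q^{710d^2\ell}\gamma^d q^{8cmt/\ell}$.

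We multiply by $|\eta_d|$ from Proposition~\ref{prop:eigs}. For $d<t$ we have $|\eta_d|\le q^{-d(n-t+1)+O(\log n)}$, so we need
\[
710d^2\ell+d\tilde c\beta'\xi t+8c\beta nt/\ell \ll d(n-t).
\]
We choose $\ell\approx \sqrt{n}$-free constant: take $\ell$ a large constant power of $2$ with $8c\beta/\ell$ small. Then the term $8c\beta nt/\ell$ is only $\le d\,n/4$ when $t/\ell$ is small. This fails for $d=1$ unless $\ell\gtrsim t$. So we must let $\ell$ depend on $d$: take $\ell_d\approx 32c\beta t/d$, rounded to a power of $2$ and at least $4$. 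Then $710d^2\ell_d=O(dt)$ and $d^2\ell_d^{}\le O(c\beta d t)$, which is $\ll dn$ once $t\le\alpha n$ with $\alpha$ small. Also $\gamma^d=q^{O(dt)}$ is fine for the same reason.

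For $t\le d\le d_0$ we use $|\eta_d|\le q^{-(d(2m-d)-t(2m-2n+t))/2}$. Here $2m-2n\le 2(\beta-1)n$, so we need $\beta$ close to $1$, or we absorb the $t(2m-2n)$ term using $d\ge t$: the exponent is at least $(d-t)m + d(m-d)/2$-ish, which is of order $dn$ for $d\le Kt\le K\alpha n$. The same choice of $\ell_d$ then works. Summing the geometric-type decay over $d$ gives a total of at most $\tfrac12\E[f]\E[g]$.

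\emph{High degrees ($d>d_0$).} Here we avoid hypercontractivity and use Cauchy--Schwarz with Parseval: $\sum_{d>d_0}|\eta_d||\langle f^{=d},g^{=d}\rangle|\le \max_{d>d_0}|\eta_d|\,\|f\|_2\|g\|_2\le \max_{d>d_0}|\eta_d|\sqrt{\E[f]\E[g]}$. For $d\ge Kt$ we get $|\eta_d|\le q^{-dm/2}\le q^{-Ktm/2}$, while $\sqrt{\E[f]\E[g]}\ge q^{-cmt}$. Choosing $K>4c$ makes this at most $\tfrac14\E[f]\E[g]$.

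Combining the two ranges gives strict positivity and hence a contradiction. The main obstacle is the low-degree bookkeeping: the factor $(\E[f]\E[g])^{-4/\ell}=q^{O(cmt/\ell)}$ competes with the eigenvalue decay $q^{-d(n-t)}$ at small $d$, forcing $\ell\sim t/d$. The $q^{710d^2\ell}$ loss must then stay $O(dt)$, and it is this balance, together with $q=2,m=n$ and the $\log n$ term, that determines $\alpha$ and $n_0$.
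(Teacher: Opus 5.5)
Your proposal is correct and follows essentially the paper's route: expand $\langle f,\tilde{\mathbf{A}}_{n-t+1}g\rangle$ spectrally, then at low degrees apply H\"older with a $d$-dependent exponent fed through Proposition~\ref{prop:1-2-log-convexity}, Corollary~\ref{cor:global} and Lemma~\ref{lem:lvld}, use the eigenvalue bounds of Proposition~\ref{prop:eigs}, and finish with a crude tail estimate; your choice $\ell_d\approx t/d$ (instead of the paper's $\ell\approx\sqrt{mt}/d$) and your Cauchy--Schwarz/Parseval tail (instead of the paper's trivial bound $|\langle f^{=d},g^{=d}\rangle|\le 1\le q^{2cmt}\E[f]\E[g]$) are harmless variations. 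One small correction: for $d\ge t$ the exponent in Proposition~\ref{prop:eigs} is exactly $(d-t)m+tn-(d^2+t^2)/2$, which gives $\ge dn-d^2$ in the middle range (your ``$(d-t)m+d(m-d)/2$'' is not a valid lower bound) and only $|\eta_d|=O(q^{-dm/2+t(m-n)+t^2/2})$ rather than $q^{-dm/2}$ in the tail, but the extra factor $q^{t(m-n)+t^2/2}$ is absorbed by taking $K$ a bit larger.
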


\begin{proof}
Suppose the contrary, i.e., there exists some $c > 1$ such that for all $n_0 \in \mathbb{N}$ and for all $\alpha > 0$, there exists some $n \in \mathbb{N}$ and $t \leq \alpha n$ such that $\E[f]\E[g] \geq q^{-2cmt}$.

Upon expanding $f$ and $g$ in the Fourier basis, we have 
    \[
        0 = \langle f, \tilde{\mathbf{A}}_{n-t+1} g \rangle  = \E[f]\E[g] + \sum_{d=1}^n \eta_d~ \langle f^{=d},g^{=d}\rangle 
    \]
    where $\eta_d$ is the eigenvalue of $\tilde{\mathbf{A}}_{n-t+1}$ corresponding to  $V^{=d}$. By H\"older's inequality, we have
    \[
         \left| \langle f^{=d},g\rangle  \right|  \leq \|f^{=d}\|_{\ell} \|g\|_{\ell'} 
    \]
    where we assume that $\ell \geq 4$ is a power of 2. Since $f$ is both $(d,\epsilon_1,L_1)$-restriction global and $(d,\g^d)$-restriction global (here, $L_2$-globalness is trivial since $\|f_{(V',W')\rightarrow T}\|_2 \le 1 \le \gamma^d$), by Proposition~\ref{prop:1-2-log-convexity} we have that $f$ is $(d,\epsilon_{\ell '},L_{\ell'})$-restriction global with 
    \begin{align*}
        \varepsilon_{\ell'} &= \g^{d/\ell'} \|f\|_1^{(\ell-2)/\ell}.
    \end{align*}  
    Corollary~\ref{cor:global} implies for all $d$ that $f^{=d}$ is 
    $(d,q^{510d^2\ell}\varepsilon_{\ell'}^2)$-restriction global.

    For $d \leq \frac{1}{4}\sqrt{mt}$, we claim that 
    $$\left| \langle f^{=d},g^{=d}\rangle  \right| \leq q^{1608cD\sqrt{mt}d}  \E[f]\E[g]$$
for some constant $D>1$. To see this, we may apply  Lemma~\ref{lem:lvld}, which gives 
    \begin{align*}
         \left| \langle f^{=d},g^{=d}\rangle  \right| = \left| \langle f^{=d},g \rangle  \right|   &\leq \|f^{=d}\|_{\ell} \|g\|_{\ell'}\\
         &\leq  q^{710d^2\ell}\g^{d} \|f\|_2^{2/\ell} \|f\|_1^{1-4/\ell} \|g\|_{\ell'}\\
        &\leq  q^{710d^2\ell}\g^{d} 
        \|f\|_1^{1-4/\ell} \|g\|_{\ell'}
        \intertext{where in the last inequality we used the fact that $\|f\|_2 \leq 1$. Setting $p_0 = 2$, $p_1 = 1$, and $p_\theta = \ell'$ in Theorem~\ref{thm:log-convex} applied to $g$ gives}
      &\leq  q^{710d^2\ell}\g^{d} 
        \|f\|_1^{1-4/\ell} \|g\|_2^{2/\ell} \|g\|_1^{1-2/\ell}\\
        &\leq  q^{710d^2\ell}\g^{d} 
        (\|f\|_1\|g_1\|)^{1-4/\ell},
        \intertext{where in the last inequality we used that $0 \le g\le 1$ and which implies $\|g\|_2, \|g\|_1\le 1$. Since $f$ and $g$ are non-negative, we have}
        &=  q^{710d^2\ell}\g^{d} 
        \left( \E[f]\E[g] \right)^{1-4/\ell}\\
        &\leq  q^{710d^2\ell}\g^{d} q^{8cmt/\ell}  \E[f]\E[g].
        \intertext{
        Recall that $\g = q^{\tilde{c}\beta '\xi t}$. Let $D :=\tilde{c}\beta '\xi > 1$, so that $\g = q^{Dt}$. For $d \le \frac{1}{4}\sqrt{mt}$, pick $1 \leq C < 2$ so that $\ell := C\sqrt{mt}/d$ is a power of 2. We have
        }
        &\leq  q^{((710C+\frac{8c}{C})\sqrt{mt} + Dt)d}  \E[f]\E[g]\\
        &\leq  q^{((1600 + 8c)\sqrt{mt} + D t)d}  \E[f]\E[g]\\
         &\leq  q^{1608cD\sqrt{mt}d}  \E[f]\E[g],
    \end{align*}
which proves the claim.

For sufficiently small $\alpha > 0$, we have $d \leq 10cDt \leq \frac{1}{4}\sqrt{mt}$, so applying the claim above to each $d \leq 10cDt$ along with 
Proposition~\ref{prop:eigs} gives us 
    \begin{align} \label{eq:EfEg}
    \E[f]\E[g] &\leq \sum_{d=1}^n |\eta_d|~   \left| \langle f^{=d},g^{=d}\rangle  \right|  \\
  &\leq  \sum_{d=1}^{10cDt}  q^{(1608cD\sqrt{mt})d}
 \cdot |\eta_d| \cdot \E[f]\E[g]  + q^{2cmt} \!\!\! \! \sum_{d = 10cDt+1}^n  O( q^{-(d(2m-d)-t(2m-2n+t))/2}) \E[f]\E[g].\\ 
      \intertext{Here, we have used the trivial bound $\left| \langle f^{=d},g^{=d}\rangle  \right| \le 1 \le q^{2cmt}\E[f]\E[g]$ for $d>10cDt$. Breaking the first summation above into two parts based on $t$, and then using the fact that $d \leq 10cDt$ in the second summation below gives us}
    \label{eq:EfEg1} &\leq  \left( \sum_{d=1}^{t-1} q^{(1608cD\sqrt{mt})d} O\left (q^{-d(n-t) - d + \log_q n} \right)\right) \E[f]\E[g]  \\ 
        \label{eq:EfEg2} &\quad \quad + \quad  \left( \sum_{d=t}^{10cDt} q^{(16080c^2D^2\sqrt{mt})t}  O\left (q^{-(d(2m-d)-t(2m-2n+t))/2} \right)\right) \E[f]\E[g] \\ 
       \label{eq:EfEg3} &\quad \quad + \quad  \left( \sum_{d=10cDt+1}^n q^{2cmt}  O\left( q^{-(d(2m-d)-t(2m-2n+t))/2} \right) \right) \E[f]\E[g].
    \end{align}
Recall that $t \leq \alpha n$. We now show that each of the three terms above are less than $\E[f]\E[g]/3$ for sufficiently small $\alpha > 0$ (depending on the constants $\beta$, $c$, and $D = \tilde{c}\beta'\xi$) and sufficiently large $n_0$.

\begin{itemize}
    \item For the term (\ref{eq:EfEg1}), we have 
    \[
     \sum_{d=1}^{t-1} q^{(1608cD\sqrt{mt})d} O\left (q^{-d(n-t) - d + \log_q n} \right) \leq \sum_{d=1}^{t-1} q^{\log_q n + (1608cD\sqrt{mt})d} O(q^{-(n/2)d - d}) < 1/3
    \]
for sufficiently small $1/2 \geq \alpha > 0$ and sufficiently large $n_0$.

\medskip

\item For the term~(\ref{eq:EfEg2}), we have 

\begin{align*}
\sum_{d=t}^{10cDt} q^{(16080c^2D^2\sqrt{mt})t}  O\left (q^{-(d(2m-d)-t(2m-2n+t))/2} \right) &\leq \sum_{d=t}^{10cDt} q^{(16080c^2D^2\sqrt{\beta t})t \sqrt{n}}  O\left (q^{-(t(2m-d)-t(2m-2n+t))/2} \right) \\ 
&\leq \sum_{d=t}^{10cDt} q^{(16080c^2D^2\sqrt{\beta t})t \sqrt{n}}  O\left (q^{-(-td+2tn-t^2)/2} \right)\\
&\leq \sum_{d=t}^{10cDt} q^{(16080c^2D^2\sqrt{\beta t})t \sqrt{n}}  O\left (q^{-tn + 11cDt^2/2} \right)\\
 & \leq q^{\log_q n + 11cDt^2/2 + (16080c^2D^2\sqrt{\beta})t \sqrt{nt} }  O\left (q^{-tn} \right) < 1/3\\
\end{align*}
for sufficiently small $\alpha > 0$ and sufficiently large $n_0$.

\medskip

\item For the term~(\ref{eq:EfEg3}), we have 

\begin{align*}
\sum_{d=10cDt+1}^n q^{2cmt}  O\left( q^{-(d(2m-d)-t(2m-2n+t))/2} \right)  &\leq q^{3cmt}  O\left( q^{-10cDtm + 5cDtd + tm -tn + t^2/2}\right) \\
        &\leq  q^{3cmt}  O\left( q^{-10cDtm + 5cDtd + tm}\right)\\
        &\leq  q^{3cmt}  O\left( q^{-4cDtm}\right) = O(q^{-cDtm}) < 1/3,
\end{align*}

for sufficiently small $\alpha > 0$ and sufficiently large $n_0$.

\end{itemize}

\medskip

The foregoing shows for sufficiently small $\alpha$ that the right-hand side of (\ref{eq:EfEg}) is less than $\mathbb{E}[f]\mathbb{E}[g]$, a contradiction.
\end{proof}

\section{Step 2: No Density Bump Within a Dictatorship}\label{sec:step2}
Step~2 has a simple conceptual goal: starting from cross-$(t-1)$-intersection-free families
$\mathcal{F},\mathcal{G}\subseteq\mathcal{L}(V,W)$ with large measure, we pass to a $d$-restriction for a \emph{small} $d$
after which $\mathcal{F}$ becomes global, while the cross-$(t-1)$-intersection-free
condition is retained. Once this is achieved, Step~1 (Theorem~\ref{thm:step1}) can be applied to show that $\mathcal{F},\mathcal{G}$ are small.

Throughout we assume a weak pseudorandomness hypothesis on $\mathcal{G}$: it does not exhibit a
substantial density increase under any $1$-restriction (i.e., inside any dictator). Under this assumption
we will show that if both $\mathcal{F}$ and $\mathcal{G}$ have sufficiently large measure, we want to find restrictions,
$(V',W',T'), (V',W',T)$, of size at most $n/2$
such that:
\begin{itemize}
    \item the restricted family $\mathcal{F}_{(V',W') \rightarrow T'}$ is global,
    \item the restricted families $\mathcal{F}_{(V',W')\to T'}$ and $\mathcal{G}_{(V',W')\to T}$ remain cross-$(t-1)$-intersection-free.
\end{itemize}
\noindent In its current form, this idea fails because the families do not remain cross-$(t-1)$-intersection-free. We resolve this issue by projecting onto a suitable subspace.
\noindent The technical work in this step involves finding the restrictions and projections that satisfy these conditions.

\medskip

\subsection*{Preserving forbidden intersections under linear restrictions}
At a high level, our strategy for finding this pair of restrictions is to `force' $\mathcal{F}$ to become global by passing to a suitable restriction via Lemma~\ref{lem:global_restriction}. In product spaces, such an argument is relatively straightforward: taking coordinate restrictions preserves the relevant forbidden-intersection structure (see, e.g.,~\cite{KLS25}). However, in the present linear-algebraic setting, restrictions are not simple coordinate projections. Passing to a restriction corresponds to imposing linear constraints, and although the families  $\mathcal{F}_{(V',W')\to T'}$ and $\mathcal{G}_{(V',W')\to T}$ can be viewed as subfamilies of $\LL(V,W)$ and be cross-$(t-1)$-intersection-free there, it is not true that the cross-$(t-1)$-intersection-free property is preserved when viewed as subfamilies of $\LL(V/V',W')$.

A significant part of the proof is devoted to choosing the restriction and the associated subspaces so that
the families stay cross-$(t-1)$-intersection-free. We analyze how forbidden intersection conditions behave under restrictions, and use new projections and restrictions to appropriate subspaces to ensure that the intersection parameter is preserved. This is one of the main technical sources of complexity in this section.

\medskip

\subsection*{Maintaining globalness after passing to a smaller space.} A second subtlety arises from globalness. Even if $\mathcal{F}_{(V',W') \rightarrow T}$ is global, it may not remain global when viewed as a family of a smaller space $\mathcal{L}(\widetilde{V},\widetilde{W})$ 
obtained after projection (required to preserve the intersection size). For this reason, rather than working directly with characteristic functions of projected families, we pass to their \emph{fiber averages}. That is, instead of considering the projection of the indicator functions of $\mathcal{F}_{(V',W')\to T'}, \mathcal{G}_{(V',W')\to T}$, we define averaged functions
\[
\widetilde f,\widetilde g:\mathcal{L}(\widetilde V,\widetilde W)\to[0,1]
\]
obtained by averaging $f,g$ over the fibers of a projection map $\Gamma$. This operation preserves measure and
(crucially) preserves $L_1$-globalness, which is exactly what Step~1 requires.

With $\widetilde f,\widetilde g$ in hand, we apply Theorem~\ref{thm:step1} inside
$\mathcal{L}(\widetilde V,\widetilde W)$, forcing their product measure
$\E[\widetilde f]\E[\widetilde g]$ to be small, which causes $\mathcal{F},\mathcal{G}$ to have small measure. 

More formally, for a contradiction, suppose that the indicator functions 
$f,g \in L^2(\mathcal{L}(V,W))$ of cross-$(t-1)$-intersection-free families have large measure, meaning:
\begin{align}\label{eq:contra}
\E[f]\E[g] \ge q^{-2\tilde{c}mt}.
\end{align}
We further assume that for all $1$-restrictions $(V',W',S)$
we have 
$$\E[g_{(V,W') \rightarrow S}] \leq q^{\xi t} \E[g].$$
Under this assumption, we construct spaces 
$\widetilde{V}$, $\widetilde{W}$ and restricted fiber-average functions 
$\widetilde{f},\widetilde{g}\in L^2(\mathcal{L}(\widetilde{V},\widetilde{W}))$ satisfying the hypotheses of Theorem~\ref{thm:step1} such that
\[
\E[\widetilde{f}]\,\E[\widetilde{g}]
\;\ge\;
\tfrac12\,\E[f]\,\E[g].
\]
Applying Theorem~\ref{thm:step1} then implies that for all $c>1$,
\[
\E[\widetilde{f}]\,\E[\widetilde{g}]
\;\le\;
2q^{-2c\tilde{m}t},
\]
where $\tilde{m}=\dim\widetilde{W}\ge n/2$. 
Thus, for $c>1$ sufficiently large (and $\alpha$ sufficiently small, $n$ sufficiently large), this contradicts~\eqref{eq:contra}.

Motivated by this discussion, the section is devoted to proving the following proposition.

\begin{prop}\label{prop:main step 2}
Fix $\beta' \ge 1$ such that $m \le \beta' n$. 
For all $\tilde{c}>0$, there exists $\alpha>0$ 
depending on $\xi \geq 8$, $\tilde c$, and $\beta'$ such that if $t \le \alpha n$, then the following holds. 

Let $\mathcal{F},\mathcal{G} \subseteq \mathcal{L}(V,W)$ be cross-$(t- 1)$-intersection-free with corresponding indicator functions $f,g \in L^2(\LL(V,W))$. Furthermore, assume for every $1$-restriction $(V',W',S)$ we have $$\E[g_{(V',W') \rightarrow S}] \leq q^{\xi t} \E[g].$$ 
Then $\E[f]\E[g] \leq q^{-2\tilde{c}mt}$. 
\end{prop}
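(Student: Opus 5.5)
We argue by contradiction. Suppose $\E[f]\E[g] > q^{-2\tilde{c}mt}$. First apply Lemma~\ref{lem:global_restriction} to $f$ with global constant $\g = q^{\tilde{c}\beta'\xi t}$. This produces a strong $2d$-restriction $(V',W',T')$, with $\dim V' = \codim W' = d$, such that $f' := f_{(V',W')\to T'}$ is $\g$-global and $\E[f'] \ge \g^d \E[f]$. Since $\E[f'] \le 1$, we get $\g^d \le 1/\E[f] \le q^{2\tilde{c}mt}$. Because $\log_q \g = \tilde{c}\beta'\xi t$ and $m\le\beta' n$, this gives $d \le 2m/(\beta'\xi) \le 2n/\xi \le n/4$ for $\xi\ge 8$. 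So the restriction is small, and $\widetilde V,\widetilde W$ will have dimensions at least about $n/2$.

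Next choose a matching restriction $(V',W',T)$ for $g$, averaging over $T$ (i.e.\ over the values of $g$ on $V'$ and modulo $W'$). Iterating the one-restriction hypothesis $d$ times would only give a density bump of $q^{\xi t d}$, which is too weak. What we actually need is a lower bound on $\E[g_{(V',W')\to T}]$. Since the average over $T$ equals $\E[g]$, some $T$ has $\E[g_{(V',W')\to T}]\ge\E[g]$.

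The key step, and the main obstacle, is preserving the forbidden intersection. For $A = A'+T'$ and $B=B'+T$, the kernel of $A-B$ depends on $T'-T$ on $V'$ and on the components outside $W'$, so $\dim\ker$ is not $\dim\ker(A'-B')$ in $\LL(V/V',W')$. My plan is to pass to a further quotient. Let $\Gamma$ project onto $\LL(\widetilde V,\widetilde W)$, where $\widetilde V\le V/V'$ is a complement chosen so that the map $(T'-T)|_{V'}$ and the part landing outside $W'$ are "absorbed", and $\widetilde W$ is $W'$ modulo the image of $(T'-T)(V')$. The aim is that $\dim\ker(A-B) = \dim\ker(\Gamma A - \Gamma B) + \text{const}$, where the constant $s\le 2d$ depends only on $T,T'$. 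Then $\mathcal{F}',\mathcal{G}'$ are cross-$(t'-1)$-intersection-free in the smaller space with $t' = t - s$, or a shifted parameter; we handle this by noting $t'\le t$ and that Theorem~\ref{thm:step1} applies to any $t'\le\alpha\tilde n$. I am unsure about this step; if the constant is not uniform, I would first fix $T$ to agree with $T'$ on $V'$ and modulo $W'$. That makes $s=0$, at the cost of using the one-restriction hypothesis to ensure $g$ retains density $q^{-O(\xi t d)}\E[g]$. This loss is affordable only if the final exponent is compared against $2c\tilde m t$ with $c$ large.

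We then define fiber-average functions $\widetilde f(X) = \E_{\Gamma(A)=X}[f'(A)]$ and $\widetilde g$ similarly. Since $\dim\ker$ is constant on fiber pairs up to the shift, $\langle \widetilde f, \tilde{\mathbf{A}}_{\tilde n-t'+1}\widetilde g\rangle=0$. Averaging preserves expectations. It also preserves $L_1$-globalness, because a $k$-restriction of $\widetilde f$ is an average of $k$-restrictions of $f'$, each of which has $L_1$ norm at most $\g^k\E[f']$. So $\widetilde f$ is $(k,\g^k\|\widetilde f\|_1,L_1)$-global. Applying Theorem~\ref{thm:step1} with $c$ large gives
\[\E[\widetilde f]\E[\widetilde g]\le q^{-2c\tilde m t'}.\]
Combined with $\E[\widetilde f]\E[\widetilde g]\ge \g^d q^{-O(\xi t d)}\E[f]\E[g]\ge q^{-O(\tilde{c} mt)}$ and $\tilde m\ge n/2$, this yields a contradiction once $c$ is taken large relative to $\tilde c\beta'$ and $\alpha$ is small.
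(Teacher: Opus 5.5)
Your architecture matches the paper's: a strong $\gamma$-global restriction of $f$ with $d\le 2n/\xi$, a quotient map $\Gamma$, fiber averages that preserve expectation and $L_1$-globalness, and then Theorem~\ref{thm:step1}. However, the step you flag as uncertain is a genuine gap, and neither of your two options closes it.

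In the normal form of Lemma~\ref{prop:change-of-basis} one has $\dim\ker(A-B)=\dim\ker X+\dim\{z\in\ker X' : (A'-B')z\in\mathrm{colspace}\,X\}$. So a uniform shift does exist, but it equals $\dim\ker X$, which can be as large as $d$, and $d$ may be of order $n/\xi\gg t$.
\begin{itemize}
\item Then $t'=t-\dim\ker X$ may be $\le 0$, in which case the restricted condition is vacuous.
\item Even when $t'>0$, Theorem~\ref{thm:step1} only gives $q^{-2c\tilde m t'}$. If $t'\ll t$ (say $t'=1$), this does not contradict $\mathbb{E}[f]\mathbb{E}[g]\ge q^{-2\tilde c mt}$ for any constant $c$.
\item Your fallback goes the wrong way. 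If $T$ agrees with $T'$ on $V'$ and modulo $W'$, then $V'\subseteq\ker(A-B)$ for every pair, so the shift is $d$, not $0$, which is the worst case.
\item The one-restriction hypothesis is an upper bound on dictator densities. It gives no lower bound on the density of $g$ on a prescribed restriction; $\mathcal{G}_{(V',W')\to T'}$ may well be empty.
\end{itemize}
Note also that your main line never uses the hypothesis on $g$. The statement is false without it: for $\tilde c>1$, take $\mathcal{F}=\mathcal{G}$ a $t$-umvirate.

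The missing idea is to use that hypothesis to choose a \emph{good} $T$, one for which the fixed part contributes nothing to the kernel ($X$ injective, $X'$ surjective). For such $T$, $\dim\ker(A-B)=\dim\ker(\Gamma(A')-\Gamma(B'))$ exactly, with $\widetilde V=\ker X'$ and $\widetilde W=W'/\mathrm{colspace}\,X$, so $t'=t$. The bad $T$ are easy to cover:
\begin{itemize}
\item If $X$ is not injective, some nonzero $v\in V'$ has $Tv=T'v$, so $\mathcal{G}_{(V',W')\to T}\subseteq \mathcal{G}_{v\mapsto T'v}$.
\item If $X'$ is not surjective, the piece lies in a row dictator $\mathcal{G}_{w\uparrow (T')^{\top}w}$ with $w\neq 0$ annihilating $W'$.
\end{itemize}
Hence all bad pieces of $\mathcal{G}$ are covered by at most $2q^{d}$ dictators. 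By the hypothesis each has relative size at most $q^{\xi t-n}$. Since $d\le 2n/\xi$ and $t\le\alpha n$, Lemma~\ref{prop:union bound} shows they carry at most half of $\mathcal{G}$. Averaging over good $T$ only then gives a good $T$ with $\mathbb{E}[g_{(V',W')\to T}]\ge \mathbb{E}[g]/2$. From there your fiber-average argument and the final comparison (with $\tilde m\ge n/2$) go through as in the paper.
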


Before proving the proposition, we first need to state and prove a few lemmas.

\noindent Recall that a strong $2d$-restriction is a restriction $(V',W',T)$ such that $\dim V' = \codim W' = d$, that is, we fix the entries of $d$ rows and $d$ columns in $T$, a total of $d(n + m - d)$ entries. 

\begin{lem}\label{prop:res_size}
Let $\beta'$ and $m$ be as in Proposition~\ref{prop:main step 2}.
Let $f,g$ be the indicator functions of $\mathcal{F},\mathcal{G} \subseteq \LL(V,W)$ and assume that $q^{-2\tilde{c}mt} \leq \E[f]\E[g]$ for some constant $\tilde{c}>0$. Let $\gamma = q^{\tilde{c}\beta' \xi t}$ such that $\xi \geq 8$, and let $\mathcal{F}_{(V',W')\rightarrow T'} \subseteq \LL(V,W)$ be a $\gamma$-global strong restriction of $\mathcal{F}$ of size $2s$. Then $s \leq 2n/\xi$.
\end{lem}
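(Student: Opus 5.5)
The plan is to compare the lower bound on the restricted measure that Lemma~\ref{lem:global_restriction} provides with the trivial upper bound of $1$. By Lemma~\ref{lem:global_restriction}, the strong $\gamma$-global restriction $(V',W',T')$ of size $2s$ satisfies
\[
1 \;\ge\; \E[f_{(V',W')\rightarrow T'}] \;\ge\; \gamma^{s}\,\E[f].
\]
Indeed, it maximizes $\E[f_{(V',W')\rightarrow T}]/\gamma^{d}$ over all strong restrictions, and the trivial $0$-restriction gives the value $\E[f]$. Since $f$ is $\{0,1\}$-valued, the restricted measure is at most $1$. Hence $\gamma^{s} \le 1/\E[f]$.

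Next I lower bound $\E[f]$ using the hypothesis. Since $\E[g]\le 1$, the assumption $q^{-2\tilde c mt}\le \E[f]\E[g]$ gives $\E[f]\ge q^{-2\tilde c mt}$. Combining this with the previous step and $\gamma = q^{\tilde c\beta'\xi t}$ yields
\[
q^{\tilde c\beta'\xi t s} \;\le\; q^{2\tilde c m t}.
\]
Taking logarithms base $q$ and dividing by $\tilde c t>0$ gives $\beta'\xi s\le 2m$. Finally, $m\le \beta' n$ (the standing assumption of Proposition~\ref{prop:main step 2}) gives $s\le 2m/(\beta'\xi)\le 2n/\xi$.

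There is essentially no obstacle here. The one point needing care is confirming that the maximality in Lemma~\ref{lem:global_restriction} yields $\E[f_{(V',W')\rightarrow T'}]\ge\gamma^{s}\E[f]$, with exponent $s$ (half the size $2s$) rather than $2s$. This is built into the normalization by $\gamma^{d}$ for strong $2d$-restrictions, and it is exactly why the bound comes out as $2n/\xi$ rather than $n/\xi$. Note also that $\xi\ge 8$ then ensures $2s\le n/2$, which is the use of this lemma in Step~2.
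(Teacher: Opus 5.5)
Your proposal is correct and follows the paper's proof. Both combine $\gamma^{s}\E[f]\le \E[f_{(V',W')\rightarrow T'}]\le 1$ from Lemma~\ref{lem:global_restriction} with $\E[f]\ge \E[f]\E[g]\ge q^{-2\tilde{c}mt}$, take logarithms base $q$, and use $m\le \beta' n$ to get $s\le 2m/(\beta'\xi)\le 2n/\xi$.
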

\begin{proof}
Since $\mathcal{F}_{(V',W')\rightarrow T'}$ is a global restriction, by Lemma~\ref{lem:global_restriction} we have 
\[
\gamma^{s}\E[f]  \leq \E[f_{(V',W')\rightarrow T'}] \leq 1.
\]
Since $q^{-2\tilde{c}mt} \leq \E[f]\E[g] \leq \E[f]$, taking logs shows that $s \leq 2m/\beta'\xi \leq 2n/\xi$, as desired. 
\end{proof}
For any two families $\mathcal{A},\mathcal{B} \subseteq \LL(V,W)$, define their \emph{difference} to be
\[
    \mathcal{A} - \mathcal{B} := \{ A-B : A \in \mathcal{A}, B \in \mathcal{B}\}.
\]
Under the proper isomorphism, we may assume the difference $\mathcal{F}_{(V',W')\rightarrow T'} - \mathcal{G}_{(V',W')\rightarrow T}$ has a simple form, as shown in the lemma below.

\begin{lem}\label{prop:change-of-basis}
    Let $\mathcal{F}_{(V',W')\rightarrow T'}$ and $\mathcal{G}_{(V',W')\rightarrow T}$ be strong $2s$-restrictions. Then there exists an isomorphism of $\LL(V,W)$  such that any element of $\mathcal{F}_{(V',W')\rightarrow T'} - \mathcal{G}_{(V',W')\rightarrow T}$ under this isomorphism is of the form
    \[ 
        \begin{bmatrix}
            I_u & 0 & 0  \\
            0 & 0_{s-u} & X' \\
            0 & X & * 
        \end{bmatrix}
    \]
    for some fixed matrices $X,X'$ that depend only on $T,T'$, and $*$ indicates a block matrix with arbitrary entries. Moreover, this isomorphism of $\LL(V,W)$ preserves globalness.
\end{lem}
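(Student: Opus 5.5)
We will pick bases of $V$ and $W$ adapted to the restriction and then apply explicit row and column operations. Both restrictions use the same pair $(V',W')$ with $\dim V' = \codim W' = s$. First choose a basis of $V$ whose first $s$ vectors span $V'$. Then choose a basis of $W$ whose last $m-s$ vectors span $W'$, so that $W'$ is the annihilator of the first $s$ dual coordinates.

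In these bases, a matrix $B$ lies in the $(V',W',T)$-restriction exactly when two conditions hold. Its first $s$ columns agree with those of $T$. Its first $s$ rows agree with those of $T$, since $B^\top$ agrees with $T^\top$ on the annihilator of $W'$. The remaining $(m-s)\times(n-s)$ block is free. The same description holds for $T'$.

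For $A$ in the $T'$-restriction and $B$ in the $T$-restriction, the difference $A-B$ therefore has its first $s$ rows and first $s$ columns equal to those of $D:=T'-T$. Its lower-right $(m-s)\times(n-s)$ block is arbitrary (a difference of two arbitrary blocks). Write the fixed part as
\[
D=\begin{bmatrix} D_{11} & D_{12}\\ D_{21} & *\end{bmatrix},
\]
where $D_{11}$ is $s\times s$.

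The next step is to normalize $D_{11}$. Let $u=\rank D_{11}$ and choose invertible $P\in\GL(s,q)$ and $Q\in\GL(s,q)$ with $PD_{11}Q=\begin{bmatrix} I_u&0\\0&0_{s-u}\end{bmatrix}$. We act by $A\mapsto \hat P A \hat Q$, where $\hat P=\mathrm{diag}(P,I_{m-s})$ and $\hat Q=\mathrm{diag}(Q,I_{n-s})$. This only changes the bases of $V'$ and of $W/W'$.

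We then clear the off-diagonal blocks in the first $u$ rows and columns. We use column operations that add multiples of the first $u$ columns to the last $n-s$ columns. We use row operations that add multiples of the first $u$ rows to the last $m-s$ rows. These kill the blocks of $D_{12}$ and $D_{21}$ sitting against $I_u$. They also alter the $*$ block, but that block stays arbitrary because the operations are bijections applied to a fully free block.

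These operations correspond to changing the basis of $V$ by adding vectors of $V'$ to the complementary vectors. They also change the complement of $W'$ modulo $W'$. Hence $V'$ and $W'$ themselves are preserved as subspaces. What remains in rows $u+1,\dots,s$ against the last columns is a fixed block $X'$, and in the last rows against columns $u+1,\dots,s$ a fixed block $X$. Both depend only on $T,T'$, which gives the stated form.

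The "moreover" part is the delicate point. All the maps used are of the form $A\mapsto MAN$ with $M\in\GL(W)$ and $N\in\GL(V)$. The key facts are as follows.

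First, such a map is a bijection of $\LL(V,W)$, so it preserves measure.

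Second, it maps a $(V_1,W_1,S)$-restriction to the $(N^{-1}V_1,\,MW_1,\,MSN)$-restriction. This holds because $B$ agrees with $S$ on $V_1$ iff $MBN$ agrees with $MSN$ on $N^{-1}V_1$, and the dual condition transforms correspondingly.

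Third, the size $\dim V_1+\codim W_1$ is unchanged, and strongness is preserved.

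Hence for every $d$ the family of $d$-restrictions is permuted, with the same measures. So $\gamma$-globalness, and equally $(d,\varepsilon,L_p)$-globalness, is invariant. The main care needed is to check that the row and column operations fix $V'$ and $W'$, so that the restrictions remain the same restrictions in the new coordinates.
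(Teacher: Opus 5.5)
Your proposal is correct and follows essentially the same route as the paper. You choose adapted bases, normalize the fixed $s\times s$ block to $\mathrm{diag}(I_u,0_{s-u})$, clear the blocks against $I_u$ by row and column operations that only shift the free lower-right block, and note that maps $A\mapsto MAN$ preserve globalness. Your explicit checks go further than the paper's proof, which only asserts this ``by symmetry'': you verify that these operations fix $V'$ and $W'$, and that they send a $(V_1,W_1,S)$-restriction to the $(N^{-1}V_1,MW_1,MSN)$-restriction of the same size and strongness.
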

\begin{proof}
    Each of the strong $2s$-restrictions are with respect to the same subspaces $V',W'$, so we may perform a change of basis so that the first $s$ columns and $s$ rows of every element of $\mathcal{F}_{(V',W')\rightarrow T'} - \mathcal{G}_{(V',W')\rightarrow T}$ is of the form
    \[
            \begin{bmatrix}
            X_1 & X_2 \\
            X_3 & *
        \end{bmatrix}
    \]
    where the entries of the $s \times s$ block $X_1$, $s \times (n-s)$ block $X_2$, and $(m-s) \times s$ block $X_3$ are fixed constants given by $T'-T$.
    We may apply elementary row and column operations to the first $s$ rows and first $s$ columns so that the elements of $\mathcal{F}_{(V',W')\rightarrow T'} - \mathcal{G}_{(V',W')\rightarrow T}$ have the form
    \[
            \begin{bmatrix}
            I_u & 0 & Y'  \\
            0 & 0_{s-u} & X' \\
            Y & X &  *
        \end{bmatrix}
    \]
     where $u$ is the rank of the $s \times s$ block $X_1$, and $X,Y,X',Y'$ are fixed blocks that depend only on $T$ and $T'$. We may apply further elementary row operations on the first $u$ rows and elementary column operations on the first $u$ columns so that every element of the difference can be written as 
    \[
            \begin{bmatrix}
            I_u & 0 & 0  \\
            0 & 0_{s-u} & X' \\
            0 & X &  * 
        \end{bmatrix} \quad + \quad 
        \begin{bmatrix}
            0_u & 0 & 0  \\
            0 & 0_{s-u} & 0 \\
            0 & 0 &  Z
        \end{bmatrix}
    \]
    where the $(m-s) \times (n-s)$ block $Z$ depends only on $Y,Y'$ and the elementary row operations performed on the first $u$ rows and on the elementary column operations performed on the first $u$ columns. 
    The second part now follows from the general fact that, by symmetry, a family $\mathcal{F} \subseteq \LL(V,W)$ is $\gamma$-global if and only if $L\mathcal{F}R+Z' = \{LAR + Z' : A \in \mathcal{F}\}$ is $\gamma$-global for any invertible linear transformations $L \in \mathrm{GL}_m(\mathbb{F}_q)$, $R \in \mathrm{GL}_n(\mathbb{F}_q)$, and arbitrary linear transformation $Z' \in \LL(V,W)$.
\end{proof}

\begin{lem}\label{prop:union bound}
Let $s,\xi \in \mathbb{N}$ such that $\xi \geq 8$, and $\mathcal{F} \subseteq \mathcal{L}(V,W)$. Let $V' \leq V$ and $W' \leq W$ be subspaces such that $\dim V' + \mathrm{codim}~W' = 1$. Let $\mathcal{S}$ be a collection of at most $ 2q^{s}$ 1-restrictions $(V',W',S)$. If 
$$|\mathcal{F}_{(V',W')\rightarrow S}| \leq \frac{1}{q^{n-\xi t}} |\mathcal{F}|$$
for every $(V',W',S) \in \mathcal{S}$, then for sufficiently large $n$, we have
$$\left |~ \mathcal{F} \setminus \bigcup_{(V',W',S) \in S} \mathcal{F}_{(V',W')\rightarrow S} ~\right | \geq | \mathcal{F} |/2$$
provided that $t \leq \alpha n \leq n/\xi^2$ and $s \leq 2n/\xi$.
\end{lem}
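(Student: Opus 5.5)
This is a union bound. We bound the total mass removed by the restrictions in $\mathcal{S}$:
\[
\Bigl|\bigcup_{(V',W',S)\in\mathcal{S}} \mathcal{F}_{(V',W')\to S}\Bigr| \le \sum_{(V',W',S)\in \mathcal{S}} |\mathcal{F}_{(V',W')\to S}| \le |\mathcal{S}|\, q^{-(n-\xi t)}|\mathcal{F}| \le 2q^{s}\,q^{-(n-\xi t)}|\mathcal{F}| .
\]
It then suffices to show $2q^{s+\xi t-n}\le 1/2$. Subtracting the union from $\mathcal{F}$ would leave at least $|\mathcal{F}|/2$ elements, as claimed.

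For the exponent we use the two parameter constraints.

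\begin{itemize}
\item From $s\le 2n/\xi$ and $\xi\ge 8$ we get $s\le n/4$.
\item From $t\le n/\xi^2$ we get $\xi t\le n/\xi\le n/8$.
\end{itemize}

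Hence
\[
s+\xi t-n\le n/4+n/8-n=-5n/8 ,
\]
so $2q^{s+\xi t-n}\le 2q^{-5n/8}\le 2\cdot 2^{-5n/8}$. This is at most $1/2$ as soon as $5n/8\ge 2$, i.e.\ $n\ge 4$. That is the meaning of ``sufficiently large $n$'' here, and it is uniform in $q\ge 2$.

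There is no real obstacle. The only points to check are, first, that the hypothesis is read with the right normalization, namely as a bound on cardinalities $|\mathcal{F}_{(V',W')\to S}|$ rather than on the measure $\mu^{(V',W')}$. Second, the union bound needs no disjointness among the restrictions, so overlaps between different dictators only help.
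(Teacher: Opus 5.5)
Your proposal is correct and follows the paper's argument: a union bound over the at most $2q^{s}$ restrictions, then the estimate $2q^{s+\xi t-n}\le 1/2$ using $s\le 2n/\xi$ and $\xi t\le n/\xi$. Your explicit exponent bound $-5n/8$, which gives $n\ge 4$ uniformly in $q$, simply makes concrete the paper's phrase ``sufficiently large $n$''.
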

\begin{proof}
By the union bound, we have 
\begin{align*}
        \left | ~\mathcal{F} \setminus \bigcup_{(V',W',S) \in \mathcal{S}} \mathcal{F}_{(V',W')\rightarrow S}~\right| 
        \geq |\mathcal{F}| -  \sum_{(V',W',S)  \in \mathcal{S}}   |\mathcal{F}_{(V',W')\rightarrow S}| &\geq \left( 1-\frac{2q^{s}}{q^{n-\xi t}} \right) |\mathcal{F}|\\
        &\geq \left( 1-\frac{q^{1 + 2n/\xi}}{q^{(1-\xi \alpha)n}} \right) |\mathcal{F}|\\
        &\geq |\mathcal{F}|/2,
\end{align*}
as desired.
\end{proof}

\begin{proof}[Proof of Proposition~\ref{prop:main step 2}]
For a contradiction, suppose there exists a $\tilde{c} > 0$ 
such that 
$$q^{-2\tilde{c}mt} \leq \E[f]\E[g].$$ 
Let $\gamma = q^{\tilde{c}\beta' \xi t}$. Lemma~\ref{lem:global_restriction} gives us a $\gamma$-global strong restriction of size $2s$, $(V',W',T')$ of $\mathcal{F}$,
and by Lemma~\ref{prop:res_size} $s \leq 2n/\xi$.  

Now consider any strong restriction $(V',W',T)$ of $\mathcal{G}$.
Henceforth, we work tacitly under the isomorphism given by Lemma~\ref{prop:change-of-basis}, and we let $v_1,\ldots,v_n \in V$ be the ordered basis given by Lemma~\ref{prop:change-of-basis} such that $v_1$ indexes the first column, $v_2$ indexes the second column, and so on.  

Define the subspaces 
$$V_0 := \spn{v_1,\ldots,v_u}, V_1 := \spn{v_{u+1},\ldots,v_s},V_2 := \spn{v_{s+1},\ldots,v_n}.$$ 
For any $v \in V$, we may write $$v = \begin{pmatrix} x\\y\\z \end{pmatrix}$$ 
where $x \in V_0$, $y \in V_1$, and $z \in V_2$. From Lemma~\ref{prop:change-of-basis} it follows that $Bv=Av$ if and only if $v$ is a solution to the following system of linear equations:
\[
            \begin{pmatrix}
            I_u & 0 & 0  \\
            0 & 0_{s-u} & X' \\
            0 & X &  * +Z
        \end{pmatrix} 
        \begin{pmatrix} x\\y\\z \end{pmatrix} = 0.
\]
Writing in terms of equations and slight rearrangement gives us the following:
\begin{align}
    x &= 0\\
    X'z  &= 0\\
    (A'-B')z &= Xy 
\end{align} 
where $A',B'$ are the submatrices of $A,B$ obtained by deleting their first $s$ rows and $s$ columns. The foregoing shows that $A,B$ agree on $v$ if and only if $x=0$, $z$ lies in the kernel of $X'$, and $(A'-B')z$ lies in the column space of $X$.

The linear system of equations above implies that 
\[
    \dim \ker (A-B) = \dim~\spn{ z \in V_2 : z \in \ker X' \text{ and } (A'-B')z \in \mathrm{colspace}~X }.
\]
Moreover, if the matrix $T$ in the strong restriction $(V',W',T)$ of $\mathcal{G}$ is chosen so that $X,X'$ both have full rank, then we deduce that $A,B$ are cross-$(t-1)$-intersection-free if and only if $A',B' \in \mathbb{F}_q^{(m-s)\times (n-s)}$ are cross-$(t-1)$-intersection-free on the subspace of vectors $z \in V_2$ such that $z \in \ker X' \text{ and } (A'-B')z \in \mathrm{colspace}~X$. 

Let $\widetilde{V} := \ker X'$ and $\widetilde{W} := \mathbb{F}_q^{m-s}/\mathrm{colspace}~X$. Let $\tilde{m} := \dim \widetilde{W}$, $\tilde{n} := \dim \widetilde{V}$, and note that 
$$\widetilde{m} = m-2s+u \geq n/2.$$ 
Define the linear transformation
\[
\Gamma : \mathbb{F}_q^{(m-s)\times (n-s)} \rightarrow \mathcal{L}(\widetilde{V},\widetilde{W}) \quad \text{ such that } \quad \Gamma(A') = Q_{\mathrm{colspace}~X}\circ A'|_{\ker X'} 
\]
for all $A' \in \mathbb{F}_q^{(m-s)\times (n-s)}$ where $\circ$ denotes composition and $Q_{\mathrm{colspace}~X} : \mathbb{F}_q^{m-s} \rightarrow \mathbb{F}_q^{m-s} / \mathrm{colspace}~X$ is the natural quotient map $x \mapsto x + \mathrm{colspace}~X$. 
Note that for $A',B'\in \mathbb F_q^{(m-s)\times(n-s)}$ we have
\[
\ker(\Gamma(A')-\Gamma(B'))
=
\{z\in \ker X' : (A'-B')z\in \operatorname{colspace}X\}.
\]
Consequently, for the maps $A,B\in \LL(V,W)$ corresponding to $A',B'$ respectively, we have
\[
\dim\ker(A-B)=\dim\ker(\Gamma(A')-\Gamma(B')).
\]
Thus the pair of restricted families is cross-$(t-1)$-intersection-free
in $\LL(V,W)$ if and only if their images under $\Gamma$ are
cross-$(t-1)$-intersection-free in $\LL(\widetilde V,\widetilde W)$.

Recall that, as \(T\) varies, the strong \(2s\)-restrictions
\((V',W',T)\) partition \(\LL(V,W)\). We now keep only those restrictions
for which the reduction above preserves the intersection parameter.
More precisely, let \(\mathcal T\) be the collection of strong
\(2s\)-restrictions \((V',W',T)\) for which the corresponding matrices
\(X\) and \(X'\) both have full rank, while \(T'\) remains fixed.
For every restriction in \(\mathcal T\), the preceding discussion shows that, for every pair
$A\in \mathcal{F}_{(V',W')\to T'}, B\in \mathcal{G}_{(V',W')\to T},$
the corresponding pair $\Gamma(A'),\Gamma(B')$ in \(\LL(\widetilde V,\widetilde W)\) have the same intersection size. Meaning, 
\[
\dim\ker(A-B)
=
\dim\ker(\Gamma(A')-\Gamma(B')).
\]
Equivalently, no additional
intersection can arise from the row and column directions that were
fixed by the restriction (or in other words the intersection does not arise from intersection in $T,T'$).

It remains to discard the part of \(\mathcal{G}\) lying in the complementary
restrictions. If \((V',W',T)\notin \mathcal T\), then either \(X\) or
\(X'\) is not full rank. In the first case, there is a nonzero vector
\(v\in\ker X\), and every element of this restricted piece of \(\mathcal{G}\)
lies in a column dictator of the form $\mathcal{G}_{v\mapsto T'v}$ (setting $Tv=T'v$),
after the harmless
translation already implicit in the normalization. In the second case,
there is a nonzero vector \(w\in\ker (X')^{\top}\), and the restricted
piece lies in a row dictator of the form $\mathcal{G}_{ w \uparrow (T')^\top w}$ (setting $T^\top w =(T')^\top w $). Let
\(\mathcal S\) denote the collection of all such dictators arising from
bad restrictions. Since there are at most \(2q^s\) of them, we may
remove their union and set
\[
\mathcal{G}' := \mathcal{G} \setminus
\bigcup_{(V',W',S)\in\mathcal S} \mathcal{G}_{(V',W')\to S}.
\]
By Lemma~\ref{prop:union bound}, this removal loses at most half of \(\mathcal{G}\), and hence
\(|\mathcal{G}'|\ge |\mathcal{G}|/2\). Moreover, every element of \(\mathcal{G}'\) belongs to one of
the good restrictions in \(\mathcal T\). 
Since the restrictions of $\mathcal{T}$ partition $\mathcal{G}'$, we have 
$$
|\mathcal{G}|/2 \leq |\mathcal{G}'| = \! \! \! \sum_{(V',W',T) \in \mathcal{T}} \! \! \! |\mathcal{G}_{(V',W') \rightarrow T}| \leq |\mathcal{T}| \max_{(V',W',T) \in \mathcal{T}} |\mathcal{G}_{(V',W') \rightarrow T}| \leq q^{sm+sn - s^2} \! \! \! \max_{(V',W',T) \in \mathcal{T}} |\mathcal{G}_{(V',W') \rightarrow T}|; 
$$
therefore, we may pick a restriction $(V',W',T) \in \mathcal{T}$ that satisfies $$\mu(\mathcal{G})/2 \leq \mu^{(V/V',W')}(\mathcal{G}_{(V',W') \rightarrow T}).$$
Since $\mathcal{F}_{(V',W') \rightarrow T'}$ is a global restriction of $\mathcal{F}$, we have $$\mu(\mathcal{F}) \leq \mu^{(V/V',W')}({\mathcal{F}}_{(V',W') \rightarrow T'}).$$
Together, we have 
\[
   \mu(\mathcal{F}) \mu(\mathcal{G})/2 \leq \mu^{(V/V',W')}({\mathcal{F}}_{(V',W') \rightarrow T'}) \cdot \mu^{(V/V',W')}(\mathcal{G}_{(V',W') \rightarrow T)}.
\]

\medskip

\noindent Unfortunately, the map $\Gamma$ is not measure-preserving, since it is not injective, hence $$\Gamma(\mathcal{F}_{(V',W') \rightarrow T'}) =: \widetilde{\mathcal{F}}_{(V',W') \rightarrow T'}$$
may not be a $\gamma$-global family of $\mathcal{L}(\widetilde{V},\widetilde{W})$. Therefore, we cannot immediately apply Theorem~\ref{thm:step1} to their respective indicator functions. 

To get around this, we instead consider \emph{fiber-average functions} $\tilde{f},\tilde{g} : \LL(\widetilde{V},\widetilde{W}) \rightarrow [0,1]$, defined below. 

For each $\tilde{A} \in \LL(\widetilde{V},\widetilde{W})$, let $\Gamma^{-1}(\tilde{A}) \subseteq \mathbb{F}_q^{(m-s) \times (n-s)}$ denote the preimage of $\tilde{A}$. Define $\tilde{f},\tilde{g} : \LL(\widetilde{V},\widetilde{W}) \rightarrow [0,1]$ such that 
\[
    \tilde{f}(\tilde A) = \frac{|\Gamma^{-1}(\tilde A) \cap {\mathcal{F}}_{(V',W') \rightarrow T'}|}{|\Gamma^{-1}(\tilde A)|} \quad  \text{ and } \quad   \tilde{g}(\tilde A) = \frac{|\Gamma^{-1}(\tilde A) \cap {\mathcal{G}}_{(V',W') \rightarrow T}|}{|\Gamma^{-1}(\tilde A)|}
\]
for all $\tilde A \in \LL(\widetilde{V},\widetilde{W})$ where $\mathcal{F}_{(V',W') \rightarrow T'}, \mathcal{G}_{(V',W') \rightarrow T} \subseteq \mathbb{F}_q^{(m-s) \times (n-s)}$.  By definition, we have $\|\tilde{f}\|_1 = \|f_{(V',W')\rightarrow T'}\|_1$, and similarly for $g$. For any $d \in \mathbb{N}$ and $d$-restriction $(V'',W'',T'')$ of $\LL(\widetilde{V},\widetilde{W})$, we have 
$$\|\tilde{f}_{(V'',W'') \rightarrow T''} \|_1 = \|f'_{(V'',W'') \rightarrow T''}\|_1;$$ 
where $f'$ is the indicator function of $\mathcal{F}_{(V',W') \rightarrow T'}$. Thus $\tilde{f}$ is $(d,\gamma^d \|\tilde{f}\|_1,L_1)$-restriction global since $f'$ is $(d,\gamma^d \|f\|_1,L_1)$-restriction global. Moreover, we have that $\langle \tilde{f}, \tilde{\mathbf{A}}_{\tilde{n}-t+1}~ \tilde{g} \rangle = 0$ if and only if $\langle f, \tilde{\mathbf{A}}_{n-t+1}~ g \rangle = 0$. 

Recall that $\tilde{m} \geq n/2$. Applying Theorem~\ref{thm:step1} (with $\beta = 2\beta'$, $\tilde{c} = \tilde{c}$, $\beta' = \beta'$, $c = \tilde{c}4\beta'$) to $\tilde{f},\tilde{g}$ with $\varepsilon_1 = \gamma^d \|\tilde{f}\|_1$ gives
\[
  q^{-2\tilde{c}\beta'n t-1} \leq q^{-2\tilde{c}\beta'n t}/2  \leq q^{-2\tilde{c}m t}/2  \leq \E[f]\E[g]/2 \leq \E[\widetilde{f}]\E[\widetilde{g}] \leq q^{-2c\tilde{m}t} \leq q^{-cnt} 
\]
 Thus
\[
1 \leq \frac{q^{2\tilde{c}\beta'n t+1}}{q^{cnt}} = q^{(2\tilde{c}\beta'-c)nt +1},
\]
which gives a contradiction, completing the proof. 
\end{proof}


\section{Step 3: Density Bump Within a Dictatorship}
In this section we complete the structural part of the argument. Step~2 (Proposition~\ref{prop:main step 2}) shows that, given a pair of cross $(t-1)$ intersection-free families $\mathcal{F},\mathcal{G}\subseteq\mathbb{F}_q^{m \times n}$, unless they are already small, one obtains a density increment inside a dictatorship. We assume that $|\mathcal{F}|\cdot|\mathcal{G}|$ is `large' and finish via the following stages:

\begin{enumerate}
    \item \emph{From a density bump to a common dictatorship.}
    We first show that a nontrivial density increase inside a dictatorship
    can be upgraded to almost containment of both families in this same
    dictatorship. This uses an induction hypothesis at codomain dimension
    $m-1$. We also use Lemma~\ref{lem:density_bump_on_smaller_side_imply_too_large}
    to rule out row restrictions when $m>n$, and prove that the relevant dictatorship
    is a column restriction.

    \item \emph{Iteration and bootstrapping.}
    We then apply the previous step repeatedly. Lemma~\ref{lem:iterative_argument}
    shows that, after $t$ iterations, both families are almost contained in
    the same $t$-umvirate (or, when $m=n$, in the same dual $t$-umvirate).
    Finally, Lemma~\ref{lem:counting_t-1_free_with_t_umvirate} and Lemma~\ref{lem:number of different t umvirate families with t-j intersection with U} 
    are used together in Section~\ref{subsec:proof of inductive lemma} to bootstrap this
    approximate structure into the exact product bound, showing that the maximum of $|\mathcal{F}|\cdot|\mathcal{G}|$ is attained
precisely when both families are the same $t$-umvirate, with the additional
possibility of the same dual $t$-umvirate in the square case.
\end{enumerate}
After we prove the previous stages for $\mathcal{F},\mathcal{G}\subseteq\mathbb{F}_q^{m \times n}$, we get almost `for free' that for large cross-$(t-1)$-intersection-free families, $\mathcal{F}$ and $\mathcal{G}$ inside other matrix groups (e.g., $\GL(n,q)$, $\SL(n,q)$), both are almost fully contained in the same $t$-umvirate. We then just have to perform a bootstrapping argument inside each matrix group (which is the same heuristic argument with different technical details), to get that the maximum $(t-1)$-intersection-free family of $\GL(n,q)$ is a $t$-umvirate.
\begin{definition}\label{def:maximal_cross_intersection_free_families}
    For parameters $t\leq n\leq m$ and field size $q$, 
    we let $\IB{q}{n}{m}{t}$ be the maximum product sizes of cross-$(t-1)$-intersection-free families over $\mathcal{L}(V,W)$ for $V=\mathbb{F}_q^n$ and $W=\mathbb{F}_q^m$:  
    \[\IB{q}{n}{m}{t}:=\max_{\mathcal{F,G}: \text{ cross-}(t-1)\text{-intersection-free}}{|\mathcal{F}||\mathcal{G}|}.\]
\end{definition}
If $\mathcal{U}\subseteq \LL(V,W)$ is a $t$-umvirate, then $|\mathcal{U}| = q^{m(n-t)}$, and in this situation $\mathcal{U}$ is also $(t-1)$-intersection-free. Consequently,
\[
\IB{q}{n}{m}{t}\geq |\mathcal{U}|^2 = q^{2m(n-t)}\ .
\]
Our goal is to  prove that the above lower bound is tight. Formally, 
\begin{thm}\label{thm:main theorem step 3}
 There exist constants $a>0$ and $b> 1$ such that for all $1\le t\le a n$ and $n\le m\le b n$ we have 
 $$\IB{q}{n}{m}{t}=q^{2m(n-t)}\ .$$   
\end{thm}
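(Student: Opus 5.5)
We argue by induction on $m$ (with $n$ and $t$ allowed to vary within the constraints $t\le an$, $n\le m\le bn$). For a cross-$(t-1)$-intersection-free pair $\mathcal{F},\mathcal{G}\subseteq\LL(V,W)$ we must show $|\mathcal{F}||\mathcal{G}|\le q^{2m(n-t)}$. If $|\mathcal{F}||\mathcal{G}|<q^{2m(n-t)}$ there is nothing to prove, so assume $\mu(\mathcal{F})\mu(\mathcal{G})\ge q^{-2mt}$. Then Proposition~\ref{prop:main step 2} applies with $\tilde c=1$ (and with the roles of $\mathcal{F},\mathcal{G}$ swapped if needed). It shows that both families must exhibit a density bump of $q^{\xi t}$ inside some $1$-restriction. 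Concretely, there is a column dictator $\mathcal{G}_{v\mapsto w}$ or a row dictator $\mathcal{G}_{w\uparrow v}$ with $\mu$ inside it at least $q^{\xi t}\mu(\mathcal{G})$, and similarly for $\mathcal{F}$.

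\textbf{Upgrading the bump to near-containment.} Assume the bump for $\mathcal{G}$ is a column dictator $v\mapsto w$. For any $w'$, the families $\mathcal{F}_{v\mapsto w'}$ and $\mathcal{G}_{v\mapsto w}$ have differences sending $v$ to $w'-w$. When $w'\neq w$, passing to the quotient $V/\langle v\rangle$ and $W/\langle w'-w\rangle$ via the fiber-average/quotient trick of Step~2 identifies them with a cross-$(t-1)$-intersection-free pair in an $(n-1)\times(m-1)$ space. When $w'=w$, we instead get a cross-$(t-2)$-intersection-free pair in $(n-1)\times m$. Applying the induction hypothesis (the $m-1$ case), we get $|\mathcal{F}_{v\mapsto w'}||\mathcal{G}_{v\mapsto w}|\le q^{2(m-1)(n-1-t)}$ for every $w'\ne w$. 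Combined with the bump $|\mathcal{G}_{v\mapsto w}|\ge q^{\xi t-m}|\mathcal{G}|$, this forces $\sum_{w'\ne w}|\mathcal{F}_{v\mapsto w'}|$ to be a $q^{-\Omega(n)}$ fraction of $|\mathcal{F}|$, and symmetrically for $\mathcal{G}$. Both families are therefore almost contained in the common dictator $v\mapsto w$. If $m>n$, a row bump must be excluded: we would show via a counting lemma (the intended Lemma~\ref{lem:density_bump_on_smaller_side_imply_too_large}) that a row-dictator concentration would force one family to exceed $q^{m(n-t)}$ in the quotient, contradicting induction. When $m=n$ we allow rows as well, which is how the dual umvirates arise.

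\textbf{Iteration and exact bound.} Restrict to $\mathcal{F}_{v\mapsto w},\mathcal{G}_{v\mapsto w}$, which live in $\LL(V/\langle v\rangle,W)$ and are cross-$(t-2)$-intersection-free there. Repeat the argument $t$ times, keeping all parameters within range since $t\le an$. This produces $t$ independent constraints $Av_i=w_i$ and shows both families are $(1-q^{-c_0n})$-contained in a common $t$-umvirate $\mathcal{U}$. This is Lemma~\ref{lem:iterative_argument}. Finally we bootstrap. Write $\mathcal{F}=\mathcal{F}_0\cup\mathcal{F}_1$ with $\mathcal{F}_0=\mathcal{F}\cap\mathcal{U}$, and define $\mathcal{G}_0,\mathcal{G}_1$ similarly. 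Each $A\in\mathcal{F}_1$ agrees with $\mathcal{U}$'s constraints on only a subspace of dimension $t-j<t$. Counting the $B\in\mathcal{U}$ with $\dim\ker(A-B)=t-1$ shows that such an $A$ excludes a constant fraction of $\mathcal{U}$ from $\mathcal{G}$. Summing this over $j$ with the counting lemmas gives $|\mathcal{G}_0|\le|\mathcal{U}|(1-\Omega(|\mathcal{F}_1|/q^{\text{small}}))$, and the products then compare to give $|\mathcal{F}||\mathcal{G}|\le|\mathcal{U}|^2$.

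The main obstacle is the upgrading step: we need the quotient reduction to preserve the forbidden intersection exactly across every $w'\neq w$, and the induction hypothesis must be strong enough (which is why we take the product form) to beat the $q^{\xi t}$ bump uniformly.
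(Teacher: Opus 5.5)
Your architecture matches the paper's: a Step~2 bump, an upgrade to a common dictator via the quotient of Lemma~\ref{lem:projection_exact_intersection}, $t$ iterations, then a bootstrap. However, the induction as you set it up cannot be run, and the bootstrap is missing its key estimate.

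\textbf{The induction does not close.} You induct on the exact identity $\IB{q}{n}{m}{t}=q^{2m(n-t)}$ only for triples in the window $t\le an$, $n\le m\le bn$. The recursive calls leave that window. The upgrade step needs the bound at $(n-1,m-1,t)$, the iteration works at $(n-\ell,m,t-\ell)$, and the bootstrap needs $(n-t,m-r,r)$ (see below). Each of these raises $t/n$ or $m/n$, so your claim that parameters stay in range `since $t\le an$' fails once $m$ is near $bn$ or $t$ is near $an$. There is also no base case: the exact identity is not trivial at the smallest admissible $m$, and Step~2 itself needs $t\le\alpha n$ with $n$ large.

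The missing idea is the paper's Lemma~\ref{lem:inductive_statment}. It proves the weaker bound $\IB{q}{n}{m}{t}\le q^{2m(n-t)+\max\{2t(Ct-m),0\}+\max\{2t(2m-4(n-t)),0\}}$ for \emph{all} $1\le t\le n\le m$. The error terms do three jobs:
\begin{itemize}
\item They vanish on the window, which gives the theorem with $a=1/C$ and $b=2(1-a)$.
\item They make the bound trivial when $m\le Ct/2$ or $m\ge 4(n-t)$; these are the base cases, and in the remaining range $n>t/\alpha$ holds automatically.
\item They move by only $O(t)$ in the exponent under $(n,m,t)\to(n-1,m-1,t)$ and $(n,m-1,t)$, absorbed by the $q^{\xi t}$ slack, and by $O(r^2)$ under $(n,m,t)\to(n-t,m-r,r)$, absorbed by the $\kappa r^2$ term.
\end{itemize}
Non-circularity comes from taking $\alpha$ from Lemma~\ref{lem:iterative_argument} independent of $C$, and only then choosing $C\ge 8/\alpha$.

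\textbf{The bootstrap is not correct as stated.} An element $A\in\mathcal{F}\setminus\mathcal{U}$ at distance $r=\dim\langle w_i-Av_i\rangle$ does not exclude a constant fraction of $\mathcal{U}$ from $\mathcal{G}$. Lemma~\ref{lem:counting_t-1_free_with_t_umvirate} guarantees only a $\tfrac14 q^{-r(t+m-n)}$ fraction, and for $r\ge 2$ the true fraction is genuinely exponentially small, of order $q^{-(r-1)(m-n+t-1)}$. You also cannot add up exclusions over different $A$, because the excluded sets overlap.

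Against this, the near-containment $|\mathcal{F}\setminus\mathcal{U}|\le tq^{\xi t-m}|\mathcal{F}|$ from the iteration is useless once $r\approx t=\Theta(n)$; you would need $n\gg t^2$. The paper closes this gap with Lemma~\ref{lem:families_almost_contained_in_U_small_intersection_other_umvirates}:
\begin{itemize}
\item For each $t$-umvirate $\mathcal{U}'$ on the same $V'$ at distance $r$, it shows $|\mathcal{F}\cap\mathcal{U}'|\le q^{-2r(m-\kappa r)}|\mathcal{U}|$.
\item This comes from projecting $\mathcal{F}\cap\mathcal{U}'$ and $\mathcal{G}\cap\mathcal{U}$ to a cross-$(r-1)$-intersection-free pair in $(n-t)\times(m-r)$ matrices and applying the induction hypothesis there, with a different intersection parameter.
\item Together with $|\mathfrak{U}_r|\le 4q^{r(m+t-r)}$, this gives $|\mathcal{F}\setminus\mathcal{U}|,|\mathcal{G}\setminus\mathcal{U}|\le q^{-r'(m-\kappa' t)}|\mathcal{U}|$, where $r'$ is the minimal distance over $(\mathcal{F}\cup\mathcal{G})\setminus\mathcal{U}$.
\item That bound beats the exclusion $\tfrac14 q^{-r'(t+m-n)}$ produced by a \emph{single} element realizing $r'$.
\end{itemize}

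\textbf{A smaller slip in the upgrade step.} You dropped the $q^{2(n-1)}$ fibre factor of the projection. Also, a bump of size $q^{\xi t-m}$ by itself only gives a $q^{-(\xi-2)t}$ loss for $\mathcal{F}$. You reach $q^{-m+O(t)}$ only after first pushing the other family into the dictator and then using a maximizing $w^\star$, as in Lemma~\ref{lem:density_increment}.
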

In the course of proving the above theorem, we also establish a stability result: 
if the product measure of two cross-$(t-1)$-intersection-free families is large, then each of these families is almost entirely contained in a $t$-umvirate (or in its dual when $n = m$). In addition, we prove that the only extremal families are $t$-umvirates (or dual $t$-umvirates when $n = m$).

Theorem~\ref{thm:main theorem step 3} is an immediate consequence of the following inductively formulated lemma.
We begin with a very crude estimate (in fact, the trivial density bound of $1$, or even worse) and then progressively sharpen it as $n$ and $m$ increase.
\begin{lem}\label{lem:inductive_statment}
There exists a constant $C>2$ such that for all $t,n,m\in \N$ with $1\le t\le n\le m $ we have
    $$\IB{q}{n}{m}{t}\le
     q^{2m(n-t)+\max\{2t(Ct-m),0\}+\max\{2t(2m-4(n-t)),0\}}  \ .$$
\end{lem}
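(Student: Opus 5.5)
Throughout, write $E(n,m,t):=\max\{2t(Ct-m),0\}+\max\{2t(2m-4(n-t)),0\}$ for the error exponent in Lemma~\ref{lem:inductive_statment}. I would prove the lemma by induction on $n+m$, keeping $t$ arbitrary.

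\textbf{Trivial regimes.} First dispose of the cases where the bound is trivial. Since $|\mathcal{F}||\mathcal{G}|\le q^{2mn}$, the claim holds whenever
\[
2m(n-t)+E(n,m,t)\ge 2mn .
\]
This happens when $Ct\ge m$ and $m\ge 2(n-t)$. Indeed, then $2t(Ct-m)+2t(2m-4(n-t))=2mt+2t(Ct-4n+4t)$, and with $C$ large this is $\ge 2mt$ in the small-$n$ range. The precise bookkeeping just fixes $C$. So the base cases (small $n$, or $t$ comparable to $n$) are free. The genuine case is $m\ge Ct$ and $m\le 2(n-t)$, where $E$ may still be positive through one of the two terms. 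It is cleanest to argue the sharp bound $q^{2m(n-t)}$ when $t\le an$ and $m\le bn$, and to let the slack terms absorb everything else.

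\textbf{Main case.} Take a cross-$(t-1)$-intersection-free pair with $|\mathcal{F}||\mathcal{G}|>q^{2m(n-t)+E}$. In measure this gives $\E[f]\E[g]\ge q^{-2mt}$, which is far above $q^{-2\tilde c mt}$. By Proposition~\ref{prop:main step 2} (applied in both orders), each of $\mathcal{F},\mathcal{G}$ therefore has a $1$-restriction with density bump at least $q^{\xi t}$. Say $g$ has a column dictator $\mathcal{G}_{v\mapsto w}$ with $\mu(\mathcal{G}_{v\mapsto w})\ge q^{\xi t}\mu(\mathcal{G})$; a row dictator is handled the same way with $m,n$ swapped.

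Now split $\mathcal{F}$ according to the value $w'=Av$. For $w'\ne w$, the pairs in $\mathcal{F}_{v\mapsto w'}\times \mathcal{G}_{v\mapsto w}$ never intersect on $v$. Passing to the quotient $V/\langle v\rangle$ (fiber-averaging as in Step~2 where needed), they form a cross-$(t-1)$-intersection-free pair in dimensions $(n-1,m)$. For $w'=w$, both pieces agree on $v$, so they form a cross-$(t-2)$-intersection-free pair in dimensions $(n-1,m)$ with parameter $t-1$.

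Apply the induction hypothesis to each piece. In the first case we get
\[
|\mathcal{F}_{v\mapsto w'}||\mathcal{G}_{v\mapsto w}|\le q^{2m(n-1-t)+E(n-1,m,t)}.
\]
Summing over the $q^m$ values of $w'$ and using the bump on $\mathcal{G}$, this shows that the part of $\mathcal{F}$ off the dictator is at most a $q^{-\xi t+O(1)}$-fraction of what the assumption requires. Hence $\mathcal{F}$ is mostly inside $\mathcal{F}_{v\mapsto w}$, and symmetrically $\mathcal{G}$ is mostly inside $\mathcal{G}_{v\mapsto w}$. For the on-dictator piece, the induction with parameters $(n-1,m,t-1)$ gives
\[
|\mathcal{F}_{v\mapsto w}||\mathcal{G}_{v\mapsto w}|\le q^{2m(n-t)+E(n-1,m,t-1)} .
\]

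\textbf{Closing the induction.} Combining the two pieces, I need
\[
E(n-1,m,t-1)<E(n,m,t) \quad\text{or}\quad E(n-1,m,t)+(\text{cross terms})\le E(n,m,t),
\]
with enough margin to absorb the factor $1+q^{-\Omega(t)}$ from the off-dictator part. This is exactly why the two max-terms have their shape. Decreasing $t$ lowers $2t(Ct-m)$. Decreasing $n$ raises $2t(2m-4(n-t))$ by $8t$, but this is compensated when $t$ also drops, since $n-t$ is unchanged.

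\textbf{Main obstacle.} The off-dictator case is the hard step. There $t$ stays fixed while $n$ drops, so the second error term grows by $8t$. This growth must be paid for by the $q^{\xi t}$ bump together with the $q^m$ summation loss. Making the cross-term estimate precise (including the use of Lemma~\ref{lem:density_bump_on_smaller_side_imply_too_large} when $m>n$, which excludes row dictators), and checking that $\xi$ and $C$ can be chosen consistently, is where I expect the real work.
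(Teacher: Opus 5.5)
Your proposal has a genuine gap at the step you call ``Closing the induction,'' and it is not the step you flag as the main obstacle. The central regime is $Ct\le m\le 2(n-t)$, which is exactly where the lemma asserts the sharp bound. There $E(n,m,t)=E(n-1,m,t-1)=0$. So the induction hypothesis for the on-dictator pair gives only $|\mathcal{F}_{v\mapsto w}||\mathcal{G}_{v\mapsto w}|\le q^{2m(n-t)}$, which is already the whole target. There is no margin to absorb the factor $1+q^{-\Omega(t)}$ from the off-dictator parts. Decreasing $t$ does not lower $\max\{2t(Ct-m),0\}$ once it is $0$.

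What is missing is an \emph{exactness} mechanism. You need to show that any element outside the structure forces the other family to lose more inside the structure than is gained outside. A bare product bound from the induction hypothesis cannot supply this. The paper gets it as follows:
\begin{itemize}
\item It iterates the dictator step $t$ times (Lemma~\ref{lem:iterative_argument}) to obtain a common $t$-umvirate $\mathcal{U}$.
\item It lets $r'$ be the minimal rank on $V'$ of $A-B$, with $A$ outside $\mathcal{U}$ and $B\in\mathcal{U}$.
\item The outside mass satisfies $|\mathcal{F}\setminus\mathcal{U}|\le q^{-r'(m-\kappa' t)}|\mathcal{U}|$. This uses Lemma~\ref{lem:families_almost_contained_in_U_small_intersection_other_umvirates} (the induction hypothesis at $(n-t,m-r,r)$) and the count in Lemma~\ref{lem:number of different t umvirate families with t-j intersection with U}.
\item A single outside element forces $|\mathcal{G}\cap\mathcal{U}|\le(1-\tfrac14 q^{-r'(t+m-n)})|\mathcal{U}|$, by Lemma~\ref{lem:counting_t-1_free_with_t_umvirate}.
\item Since $t+m-n\ll m-\kappa' t$ once $n\gg t$, the loss dominates and $|\mathcal{F}||\mathcal{G}|\le|\mathcal{U}|^2$.
\end{itemize}
A one-dictator recursion could only imitate this if you strengthened the inductive statement to include stability of near-extremal pairs. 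As written, it does not close.

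Second, your off-dictator estimate is wrong as stated, though fixable. Take $Av=w'\neq w=Bv$. Restricting to a complement of $\langle v\rangle$ (or ``passing to $V/\langle v\rangle$'') while keeping the codomain $W$ does not preserve $\dim\ker(A-B)$. A kernel vector $u+cv$ with $c\neq 0$ is lost, so a pair with kernel dimension $t$ can drop to $t-1$. The restricted pair therefore need not be cross-$(t-1)$-intersection-free, and fiber-averaging does not repair this. You must also quotient the codomain by $\langle w-w'\rangle$. This is legitimate because $w-w'$ is constant on $\mathcal{F}_{v\mapsto w'}\times\mathcal{G}_{v\mapsto w}$ (Lemma~\ref{lem:projection_exact_intersection}). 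It lands you in dimensions $(n-1,m-1)$ at a fiber cost of $q^{2(n-1)}$, and then your concentration argument goes through as in Lemma~\ref{lem:density_increment}.

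Your description of the trivial regimes is also off. Each of $m\le Ct/2$ or $m\ge 4(n-t)$ alone makes the bound trivial. The intermediate ranges $Ct/2<m<Ct$ and $2(n-t)<m<4(n-t)$ are not absorbed by slack; they must be handled by the main argument. The paper runs it on all of $Ct/2<m<4(n-t)$, because the induction hypothesis is later invoked at smaller parameters such as $(n-1,m-1,t)$ or $(n-t,m-r,r)$ that can fall in those ranges.
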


We first show how this easily implies the main theorem and the rest of this section will be devoted to proving Lemma~\ref{lem:inductive_statment}.
\begin{proof}[Proof of Theorem~\ref{thm:main theorem step 3}.]
Fix $C$ from Lemma~\ref{lem:inductive_statment}. Note that for $ Ct\le m\le 2(n-t)$ we have $$\IB{q}{n}{m}{t}\le
     q^{2m(n-t)+\max\{2t(Ct-m),0\}+\max\{2t(2m-4(n-t)),0\}}= q^{2m(n-t)}  \ .$$
   Let $a=1/C$ and $b=2(1-a)$. Since $C> 2$ we have $a< 1/2$ and $b > 1.$
   If $1\le t\le a n$ and $n\le m\le  b n$, then $m\ge n\ge Ct$ and $m\le b n=2(1-a)n=2(n-a n)\le 2(n-t)$. This gives us $\max\{2t(Ct-m),0\} = \max\{2t(2m-4(n-t)),0\} = 0$, which finishes the proof.
\end{proof}

Most of this section is dedicated to proving this main inductive estimate,
Lemma~\ref{lem:inductive_statment}. We prove that lemma by a strong induction on
the codomain dimension $m$, proving the statement simultaneously for all
triples $(t,n,m)$ with $1\le t\le n\le m$. Thus, throughout the induction
step, we may assume Lemma~\ref{lem:inductive_statment} for every parameter triple
$(t',n',m')$ with $1\le t'\le n'\le m'$ and $m'<m$.

\subsection{Density Increment Inside a Dictatorship}\label{subsec:6.1}
Our starting point for the density increment argument is the guarantee, established in Step 2, of a non-trivial increase in density within a dictatorship. What we actually require is the following result.

\begin{cor}[Corollary of Proposition~\ref{prop:main step 2} (with $c=1.5$ and $\beta'=4$)]\label{cor:main step 2 corollary}
Fix $\xi \geq 8$. Then there exists a constant $\alpha = \alpha(\xi) > 0$ such that for all $1 \le t \le \alpha n$ and for all $n \le m \le 4n$, the following holds. 

Let $\mathcal{F},\mathcal{G} \subseteq \mathcal{L}(V,W)$ be cross-$(t-1)$-intersection-free families such that
\begin{equation}
    \label{eq:step 2 counterpositive assumption}
    \card{\mathcal{F}}\card{\mathcal{G}} > q^{-mt} q^{2m(n-t)} \ .
\end{equation}
Then at least one of the following must occur:
\begin{itemize}
    \item[(i)] there is a non-zero vector $v \in V$ and some $w \in W$ for which the column-dictatorship restricted subfamily
    $$\mathcal{F}_{v\mapsto w} = \{A \in \mathcal{F} \mid Av = w\}$$
    is large, in the sense that $\card{\mathcal{F}_{v\mapsto w}} \ge q^{-(m-\xi t)} \card{\mathcal{F}}$; or
    \item[(ii)] there is a non-zero vector $w \in W$ and some $v \in V$ such that the row-dictatorship restricted subfamily
    $$\mathcal{F}_{w\uparrow v} =\{A\in\mathcal{F}\mid A^\top w=v \}$$
    is large, meaning that $\card{\mathcal{F}_{w \uparrow v}} \ge q^{-(n-\xi t)} \card{\mathcal{F}}.$
\end{itemize}
\end{cor}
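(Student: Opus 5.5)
We argue by contraposition, applying Proposition~\ref{prop:main step 2} with the roles of $\mathcal{F}$ and $\mathcal{G}$ swapped. Suppose neither (i) nor (ii) holds. Then every column dictator satisfies $\card{\mathcal{F}_{v\mapsto w}} < q^{-(m-\xi t)}\card{\mathcal{F}}$, and every row dictator satisfies $\card{\mathcal{F}_{w\uparrow v}} < q^{-(n-\xi t)}\card{\mathcal{F}}$.

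First we convert these bounds into measure statements for $1$-restrictions. A column dictator $\{Av=w\}$ is a $1$-restriction with $\dim V'=1$ and $W'=W$, so its ambient space $\LL(V/V',W)$ has $q^{m(n-1)}$ elements. Hence
\[
\E[f_{v\mapsto w}] = q^{m}\card{\mathcal{F}_{v\mapsto w}}/q^{mn} < q^{\xi t}\E[f].
\]
Similarly, a row dictator has $V'=0$ and $\codim W'=1$, with ambient space of size $q^{(m-1)n}$, so
\[
\E[f_{w\uparrow v}] = q^{n}\card{\mathcal{F}_{w\uparrow v}}/q^{mn} < q^{\xi t}\E[f].
\]
The $1$-restrictions of Definition~\ref{def:restriction} with nonzero $V'$ or proper $W'$ are exactly these dictators. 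Any translation by $T$ outside the fixed row or column only reparametrizes the domain and does not change the measure, so every $1$-restriction $(V',W',S)$ satisfies $\E[f_{(V',W')\to S}]\le q^{\xi t}\E[f]$.

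Since cross-$(t-1)$-intersection-freeness is symmetric in the pair, we may apply Proposition~\ref{prop:main step 2} to the ordered pair $(\mathcal{G},\mathcal{F})$, with $\mathcal{F}$ in the role of the non-bumping family $g$. We take $\beta'=4$, so that $m\le 4n$ is covered, and choose $\tilde c$ so that the conclusion contradicts \eqref{eq:step 2 counterpositive assumption}. The assumption converts to measures as
\[
\E[f]\E[g]=\card{\mathcal{F}}\card{\mathcal{G}}q^{-2mn} > q^{-mt}q^{-2mt}=q^{-3mt}.
\]
The Proposition gives $\E[f]\E[g]\le q^{-2\tilde c mt}$, so we need $2\tilde c\ge 3$, that is, $\tilde c=1.5$. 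This matches the parenthetical ``$c=1.5$''. With $\tilde c=1.5$, $\beta'=4$ and the fixed $\xi\ge 8$, the Proposition supplies $\alpha=\alpha(\xi)>0$, and for $t\le\alpha n$ we obtain $q^{-3mt}<\E[f]\E[g]\le q^{-3mt}$, a contradiction. Hence (i) or (ii) holds.

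The only delicate point is the bookkeeping. We must check that the normalization of the restricted measure $\mu^{(V',W')}$ matches the thresholds $q^{-(m-\xi t)}$ and $q^{-(n-\xi t)}$: a column condition fixes $m$ entries and a row condition fixes $n$ entries. We must also check that the hypothesis of Proposition~\ref{prop:main step 2}, quantified over all $1$-restrictions, is indeed covered by the two dictator types. Otherwise the argument is a direct invocation of the Proposition.
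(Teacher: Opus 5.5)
Your proposal is correct and is exactly the derivation the paper intends; the paper gives no written proof beyond the parenthetical. You swap the roles of $\mathcal{F}$ and $\mathcal{G}$ in Proposition~\ref{prop:main step 2}, convert the failure of (i) and (ii) into $\E[f_{(V',W')\to S}]\le q^{\xi t}\E[f]$ for every $1$-restriction using the normalizations $q^{m(n-1)}$ and $q^{n(m-1)}$, and then take $\tilde c=1.5$, $\beta'=4$, so that $q^{-3mt}<\E[f]\E[g]\le q^{-3mt}$ is the contradiction.
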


The next lemma states that for sufficiently large families $\mathcal{F},\mathcal{G}$, if one of them admits a column dictatorship restriction exhibiting a non-trivial density bump, then this restriction contains almost all of the family, and this holds simultaneously for both $\mathcal{F}$ and $\mathcal{G}$.
\begin{lem}\label{lem:density_increment}
Let $\mathcal{F,G}\subseteq \mathcal{L}(V,W)$ be non-empty cross-$(t-1)$-intersection-free families for some $t\in \mathbb{N}$ such that $t \le n-1 $ and $n\le m$. Let $\varepsilon>0$. 
If  
\begin{equation}\label{eq:families size to epsilon relation assumption}
 \card{\mathcal{F}}\card{\mathcal{G}}\ge  \frac{2q^{m+2(n-1)}\IB{q}{n-1}{m-1}{t}}{\varepsilon }\ .  
 \end{equation} 
and for some $0\neq v\in V$ and $w\in W$
\begin{equation}\label{eq:density_bump_assumption}
    \card{\mathcal{F}_{v\mapsto w}} \ge \varepsilon\card{\mathcal{F}}.
\end{equation} 
Then
\begin{equation}\label{eq:huge_density_bump}
    \card{\mathcal{F}_{v\mapsto w}}\geq  (1-\varepsilon)\card{\mathcal{F}},
\end{equation}
and 
\begin{equation}\label{eq:density_bump_for_G}
    \card{\mathcal{G}_{v\mapsto w}}\geq (1-\varepsilon)\card{\mathcal{G}}.
\end{equation}

 \end{lem}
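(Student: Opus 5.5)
The plan is to compare the part of each family inside the dictatorship $v\mapsto w$ with the part of the other family outside it. Each such cross pair will be shown to live, after a suitable projection, inside a smaller space $\LL(V/\spn{v},W/\spn{u})$ of dimensions $(n-1,m-1)$. There the product of the sizes is bounded by $\IB{q}{n-1}{m-1}{t}$, and the bound is well defined because $t\le n-1\le m-1$.

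\textbf{Key reduction.} Fix $w'\neq w$ and put $u:=w-w'\neq 0$. Consider $A\in\mathcal{F}_{v\mapsto w}$ and $B\in\mathcal{G}_{v\mapsto w'}$, so that $(A-B)v=u$. Fix a complement $V''$ of $\spn{v}$, which we identify with $V/\spn{v}$, and define
\[
\Gamma_u(A):=Q_{\spn{u}}\circ A|_{V''}\in\LL(V'',W/\spn{u}).
\]
The kernel of $\Gamma_u(A)-\Gamma_u(B)$ is $\{x\in V'':(A-B)x\in\spn{u}\}$. If $(A-B)x=\lambda u$, then $x-\lambda v\in\ker(A-B)$. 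Conversely, any element of $\ker(A-B)$ has the form $x-\lambda v$ with $x\in V''$, and such an $x$ lies in the set above. Hence
\[
\dim\ker(\Gamma_u(A)-\Gamma_u(B))=\dim\ker(A-B).
\]
Every pair of images comes from an actual pair $(A,B)$, so $\Gamma_u(\mathcal{F}_{v\mapsto w})$ and $\Gamma_u(\mathcal{G}_{v\mapsto w'})$ are cross-$(t-1)$-intersection-free in a space of dimensions $(n-1,m-1)$.

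On each dictatorship, $\Gamma_u$ is at most $q^{n-1}$-to-one: $A$ is determined by $A|_{V''}$, and the quotient by $\spn{u}$ forgets one coordinate per column. Therefore
\[
|\mathcal{F}_{v\mapsto w}|\,|\mathcal{G}_{v\mapsto w'}|\le q^{2(n-1)}\IB{q}{n-1}{m-1}{t}.
\]
Summing over the fewer than $q^m$ values $w'\neq w$ gives
\[
|\mathcal{F}_{v\mapsto w}|\,|\mathcal{G}\setminus\mathcal{G}_{v\mapsto w}|\le q^{m+2(n-1)}\IB{q}{n-1}{m-1}{t}=:K.
\]
The identical argument with the roles of the families swapped gives
\[
|\mathcal{G}_{v\mapsto w}|\,|\mathcal{F}\setminus\mathcal{F}_{v\mapsto w}|\le K.
\]

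\textbf{Bootstrapping.} By \eqref{eq:families size to epsilon relation assumption} we have $K\le \varepsilon|\mathcal{F}||\mathcal{G}|/2$.
First, \eqref{eq:density_bump_assumption} together with the first inequality gives $|\mathcal{G}\setminus\mathcal{G}_{v\mapsto w}|\le |\mathcal{G}|/2$, so $|\mathcal{G}_{v\mapsto w}|\ge|\mathcal{G}|/2$.
Second, the swapped inequality then gives $|\mathcal{F}\setminus\mathcal{F}_{v\mapsto w}|\le \varepsilon|\mathcal{F}|$, which is \eqref{eq:huge_density_bump}.
Third, for \eqref{eq:density_bump_for_G} there are two cases:
\begin{itemize}
\item If $\varepsilon\le 1/2$, then $|\mathcal{F}_{v\mapsto w}|\ge|\mathcal{F}|/2$. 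Feeding this into the first inequality once more gives $|\mathcal{G}\setminus\mathcal{G}_{v\mapsto w}|\le\varepsilon|\mathcal{G}|$.
\item If $\varepsilon>1/2$, the already established bound $|\mathcal{G}_{v\mapsto w}|\ge|\mathcal{G}|/2\ge(1-\varepsilon)|\mathcal{G}|$ suffices.
\end{itemize}

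\textbf{Main obstacle.} The only delicate step is the reduction: we must check that quotienting by $\spn{u}$ preserves the kernel dimension exactly, and not just up to $\pm1$. The fiber count $q^{n-1}$ must also match the $q^{2(n-1)}$ factor in the hypothesis. Everything else is a direct union bound.
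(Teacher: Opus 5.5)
Your proof is correct and follows essentially the same route as the paper. It uses the same restricted projection preserving $\dim\ker(A-B)$ (the paper's Lemma~\ref{lem:projection_exact_intersection}), the same $q^{n-1}$ fiber count, and the same union bound over the $q^m-1$ values $w'\neq w$. The only cosmetic differences are these: for the bound on $\mathcal{F}$ the paper picks a single maximizing $w^\star\neq w$ instead of summing the swapped inequality over all $w'\neq w$, which loses the same factor $q^m$; and your explicit treatment of the case $\varepsilon>1/2$ spells out a step the paper leaves implicit.
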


We will use the following lemma to prove Lemma \ref{lem:density_increment}, its proof deferred to Appendix~\ref{apx:missing_proofs}. Recall that $\spn{\cdot}$ denotes the $\mathbb{F}_q$-span of a collection of vectors.

\begin{lem} \label{lem:projection_exact_intersection}
For any $A,B \in \mathcal{L}(V,W)$, fix a nonzero $v\in V$ and denote $w=Av$ and $w'=Bv$. Assume $w\ne w'$. Let $V' < V$ be a subspace such that $V=V'\oplus\spn{v}$ and let $\pi: W \to W/\spn{w - w'}$ be the natural projection. Consider the restricted projection
\begin{equation}
    \label{eq:define restricted projection}
     p : \mathcal{L}(V,W) \to \mathcal{L}(V', W/\spn{w - w'})
\end{equation}
defined as $p(M)x = \pi(Mx)$ for all $x \in V'$. Then 
\[ \dim \ker(A-B) = \dim \ker(p(A) - p(B)). \]
\end{lem}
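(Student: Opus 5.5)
Set $D := A - B$, so that $Dv = w - w' \neq 0$. The plan is to build an explicit linear isomorphism between $\ker D$ and $\ker(p(A)-p(B))$. By linearity of $p$, $p(A)-p(B) = p(D)$, and $p(D)x = \pi(Dx)$ for $x \in V'$. Hence
\[
\ker p(D) = \{x \in V' : Dx \in \spn{Dv}\}.
\]

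First define $\phi : \ker D \to V'$ as follows. Write each $u \in \ker D$ uniquely as $u = x + \lambda v$ with $x \in V'$ and $\lambda \in \mathbb{F}_q$, and set $\phi(u) = x$.
\begin{itemize}
\item $\phi$ lands in $\ker p(D)$. Since $0 = Du = Dx + \lambda Dv$, we get $Dx = -\lambda Dv \in \spn{Dv}$.
\item $\phi$ is injective. If $\phi(u) = 0$, then $u = \lambda v$, so $0 = Du = \lambda Dv$. Because $Dv \neq 0$, this forces $\lambda = 0$, so $u = 0$. This is the one place the hypothesis $w \neq w'$ is used.
\item $\phi$ is surjective onto $\ker p(D)$. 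Given $x \in V'$ with $Dx = \mu Dv$, put $u = x - \mu v$. Then $Du = 0$ and $\phi(u) = x$.
\end{itemize}

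So $\phi$ is a linear bijection $\ker D \to \ker p(D)$, and therefore $\dim\ker(A-B) = \dim\ker(p(A)-p(B))$. The argument is elementary throughout. The only point needing care is confirming that $p$ is well defined as a map into $\mathcal{L}(V', W/\spn{w-w'})$ and is linear in $M$; this holds because $\pi$ and restriction to $V'$ are both linear.
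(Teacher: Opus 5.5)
Your proposal is correct and follows essentially the same route as the paper's proof: both use the projection $\phi$ along $\spn{v}$ onto $V'$ restricted to $\ker(A-B)$. Both check that $\phi$ lands in $\ker(p(A)-p(B))$, is injective because $(A-B)v = w-w' \neq 0$, and is surjective via the lift $x - \mu v$.
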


\begin{proof}[Proof of Lemma~\ref{lem:density_increment}]

For every  $w'\in W$ such that $w'\neq w$, let $V'\le V$ be such that $V=V'\oplus \spn{v}$ and  consider the restricted  projection $p$ as defined in \eqref{eq:define restricted projection} applied to
the families $\mathcal{F}_{v\mapsto w},\mathcal{G}_{v\mapsto w'}$ denoted by $\mathcal{F'}_{w}=p(\mathcal{F}_{v\mapsto w})$ and $\mathcal{G'}_{w'}=p(\mathcal{G}_{v\mapsto w'})$. 
By  Lemma~\ref{lem:projection_exact_intersection} the families $\mathcal{F'}_{ w},\mathcal{G'}_{ w'}\subseteq\mathcal{L}(V', W/\spn{w-w'})$ are also cross-$(t-1)$-intersection-free, thus we obtain 
\begin{equation}\label{eq:bounds_on_cross_restricitons}
     \card{\mathcal{F}_{v\mapsto w}}\card{\mathcal{G}_{v\mapsto w'}}\leq q^{2(n-1)}\card{\mathcal{F'}_{ w}}\card{\mathcal{G'}_{ w'}}\leq q^{2(n-1)}\IB{q}{n-1}{m-1}{t}.
\end{equation}
The first inequality holds since each element in the projected family might have at most $q^{n-1}$ pre-images in the dictatorship family, and the last inequality follows directly from Definition~\ref{def:maximal_cross_intersection_free_families}.

 Recall that $\bigsqcup$ denotes the disjoint union operator, and since
$\mathcal{G}=\bigsqcup_{w'\in W}\mathcal{G}_{v\mapsto w'},$
combining with \eqref{eq:bounds_on_cross_restricitons} for every $w'\neq w$, we conclude
\begin{equation}
    \label{eq:LB_g_w_density}
    |\mathcal{G}_{v\mapsto w}|= |\mathcal{G}|-\sum_{w'\neq w}|\mathcal{G}_{v\mapsto w'}|\geq |\mathcal{G}|-q^m\frac{q^{2(n-1)}\IB{q}{n-1}{m-1}{t}}{\card{\mathcal{F}_{v\mapsto w}}}.
\end{equation}

Let $\rho=\card{\mathcal{F}_{v\mapsto w}}/\card{\mathcal{F}}$. We have
$$\sum_{w'\neq w}\card{\mathcal{F}_{v\mapsto w'}}= \card{\mathcal{F}}-\card{\mathcal{F}_{v\mapsto w}}= (1-\rho)\card{\mathcal{F}}.$$
Let $w^{\star}=\mathrm{argmax}_{w'\neq w}\card{\mathcal{F}_{v\mapsto w'}}$. It follows that, 
\begin{equation}\label{eq:lb_f_w_star_density_2}
    \card{\mathcal{F}_{v\mapsto w^\star}}\geq\frac{(1-\rho)\card{\mathcal{F}}}{q^m}.
\end{equation}
Note that $w^\star\neq w$. 
Proceeding analogously to the argument used for \eqref{eq:bounds_on_cross_restricitons}, but now replacing $w,w'$ with $w^\star,w$ and defining the corresponding projection when applying Lemma~\ref{lem:projection_exact_intersection}, we obtain \begin{equation}
\label{eq:restriction_ih_w_star_small_2}
    \card{\mathcal{F}_{v\mapsto w^\star}}\card{\mathcal{G}_{v\mapsto w}}
    \leq q^{2(n-1)}\IB{q}{n-1}{m-1}{t}.
\end{equation}

On the other hand, combining \eqref{eq:LB_g_w_density} and \eqref{eq:lb_f_w_star_density_2}
\begin{align*}
 \card{\mathcal{F}_{v\mapsto w^\star}}\card{\mathcal{G}_{v\mapsto w}}&\geq \frac{(1-\rho)\card{\mathcal{F}}}{q^m}\left(|\mathcal{G}|-q^m\frac{q^{2(n-1)}\IB{q}{n-1}{m-1}{t}}{\card{\mathcal{F}_{v\mapsto w}}}\right) \\
 & \ge (1-\rho)\left(\frac{\card{\mathcal{F}}\card{\mathcal{G}}}{q^m}-\frac{q^{2(n-1)}\IB{q}{n-1}{m-1}{t}}{\varepsilon}\right) && \text{[By \eqref{eq:density_bump_assumption}]}\\
 &\geq (1-\rho)\left(\frac{2q^{2(n-1)}\IB{q}{n-1}{m-1}{t}}{\varepsilon}-\frac{q^{2(n-1)}\IB{q}{n-1}{m-1}{t}}{\varepsilon}\right)&& \text{[By \eqref{eq:families size to epsilon relation assumption}]}\\
 &=\frac{1-\rho}{\varepsilon} q^{2(n-1)}\IB{q}{n-1}{m-1}{t}.
\end{align*}

Using \eqref{eq:restriction_ih_w_star_small_2} we conclude that $\rho\ge 1-\varepsilon$, thereby establishing \eqref{eq:huge_density_bump}.

To see \eqref{eq:density_bump_for_G} we use the bound from \eqref{eq:LB_g_w_density}: 
\[|\mathcal{G}_{v\mapsto w}|\geq |\mathcal{G}|\left(1-\frac{q^{m+2(n-1)}\IB{q}{n-1}{m-1}{t}}{\card{\mathcal{F}_{v\mapsto w}}|\mathcal{G}|}\right)\geq |\mathcal{G}|(1-\varepsilon),\]
where for the last inequality we used $\card{\mathcal{F}_{v\mapsto w}}\ge \frac{1}{2}|\mathcal{F}|$  (by \eqref{eq:huge_density_bump}), then the assumption \eqref{eq:families size to epsilon relation assumption}.
\end{proof}

We now ensure that the density increase guaranteed in Step 2 consistently occurs in the same direction.
\begin{lem}\label{lem:density_bump_on_smaller_side_imply_too_large}
    Let $\mathcal{F,G}\subseteq \mathcal{L}(V,W)$ be two non-empty cross-$(t-1)$-intersection-free families for some $t\le n-1$.
If $n<m$, then all row restrictions are small. 
More precisely, for every $v\in V$ and $w\in W\setminus\{\bar{0}\}$, we have
\begin{equation*}
  \card{\mathcal{F}_{w\uparrow v}} \leq b\card{\mathcal{F}}\ .
\end{equation*}
Where
$$
b=\frac{q^n}{\card{\mathcal{F}}\card{\mathcal{G}}}\max\Bigl\{q^{2m-2}\IB{q}{n-1}{m-1}{t},\ \IB{q}{n}{m-1}{t}\Bigr\}\ .
$$
\end{lem}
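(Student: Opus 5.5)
The plan is to imitate the proof of Lemma~\ref{lem:density_increment}, with the roles of domain and codomain swapped. Fix $w\neq 0$ and $v\in V$. Identify $w$ with a nonzero linear functional $\varphi$ on $W$, so that $A^\top w=v$ reads $\varphi\circ A=v^\top$. Partition
\[
\mathcal{G}=\bigsqcup_{v'\in V}\mathcal{G}_{w\uparrow v'}.
\]
The goal is to bound $|\mathcal{F}_{w\uparrow v}|\cdot|\mathcal{G}_{w\uparrow v'}|$ for each $v'\in V$. Summing these $q^n$ bounds gives
\[
|\mathcal{F}_{w\uparrow v}|\,|\mathcal{G}|\le q^n\max\Bigl\{q^{2m-2}\IB{q}{n-1}{m-1}{t},\ \IB{q}{n}{m-1}{t}\Bigr\},
\]
and dividing by $|\mathcal{F}||\mathcal{G}|$ yields exactly $|\mathcal{F}_{w\uparrow v}|\le b|\mathcal{F}|$.

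\emph{Case $v'=v$.} Fix $A_0$ with $\varphi\circ A_0=v^\top$, and translate both $\mathcal{F}_{w\uparrow v}$ and $\mathcal{G}_{w\uparrow v}$ by $-A_0$. The translated families consist of maps $V\to\ker\varphi$. Translation is injective and preserves all differences, so the translates are cross-$(t-1)$-intersection-free families in $\mathcal{L}(V,\ker\varphi)$, where $\dim\ker\varphi=m-1$. The hypothesis $n<m$ is used exactly here: it guarantees $n\le m-1$, so Definition~\ref{def:maximal_cross_intersection_free_families} applies and gives
\[
|\mathcal{F}_{w\uparrow v}|\,|\mathcal{G}_{w\uparrow v}|\le \IB{q}{n}{m-1}{t}.
\]

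\emph{Case $v'\neq v$.} This is the transposed analogue of Lemma~\ref{lem:projection_exact_intersection}. For $A\in\mathcal{F}_{w\uparrow v}$ and $B\in\mathcal{G}_{w\uparrow v'}$, set $D=A-B$. Then $\varphi\circ D=(v-v')^\top\neq 0$. Let $V''=\ker (v-v')^\top$, a subspace of dimension $n-1$. Any $x\in\ker D$ satisfies $(v-v')^\top x=\varphi(Dx)=0$, so $\ker D\subseteq V''$. Moreover $D|_{V''}$ maps into $\ker\varphi$, and therefore
\[
\dim\ker D=\dim\ker\bigl(D|_{V''}\bigr)\quad\text{with } D|_{V''}\in\mathcal{L}(V'',\ker\varphi).
\]
To realize this as a difference of images of $A$ and $B$, fix $A_0$ with $\varphi\circ A_0=v^\top$ and $B_0$ with $\varphi\circ B_0=v'^\top$. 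Map $A\mapsto (A-A_0)|_{V''}$ and $B\mapsto (B-B_0)|_{V''}$; both images lie in $\mathcal{L}(V'',\ker\varphi)$. Choosing $B_0=A_0+E$ where $E|_{V''}=0$ (possible since $v-v'$ vanishes on $V''$), the image differences equal $D|_{V''}$. Hence the images are cross-$(t-1)$-intersection-free in $\mathcal{L}(V'',\ker\varphi)\cong\mathbb{F}_q^{(m-1)\times(n-1)}$.

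For the fiber sizes, fix a complement $V=V''\oplus\spn{x}$. An element of $\mathcal{F}_{w\uparrow v}$ is determined by its image together with $Ax$, and $Ax$ is constrained only by the single linear condition $\varphi(Ax)=v^\top x$. So each fiber has at most $q^{m-1}$ elements, and likewise for $\mathcal{G}_{w\uparrow v'}$. This gives
\[
|\mathcal{F}_{w\uparrow v}|\,|\mathcal{G}_{w\uparrow v'}|\le q^{2(m-1)}\IB{q}{n-1}{m-1}{t}.
\]

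The main obstacle is the $v'\ne v$ case: one has to choose the translations compatibly so that differences of images are exactly $D|_{V''}$, and check that $\ker D\subseteq V''$ so that no kernel dimension is lost. The remaining steps are bookkeeping.
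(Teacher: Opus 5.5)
Your proof is correct and follows the paper's argument. You partition $\mathcal{G}$ into the row restrictions $\mathcal{G}_{w\uparrow v'}$, note that $\ker(A-B)$ lies in the hyperplane annihilated by $v-v'$ and that $A-B$ maps this hyperplane into $w^\perp$, then bound by $q^{2(m-1)}\IB{q}{n-1}{m-1}{t}$ (fibers of size at most $q^{m-1}$) when $v'\neq v$ and by $\IB{q}{n}{m-1}{t}$ (an injective map) when $v'=v$. The differences are cosmetic: you sum over all $v'$ instead of arguing by contradiction with a pigeonholed $v^\star$, and you reach $\ker\varphi$ by translating with compatible base points $A_0,B_0$ rather than by the paper's projection $W\to\{w\}^\perp$ along a complement.
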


\begin{proof}
  Fix $v\in V$ and $w\in W\setminus\{\bar{0}\}$. Assume, for the sake of contradiction, that $\card{\mathcal{F}_{w\uparrow v}} > b\card{\mathcal{F}}.$
 We can express $\mathcal{G}$ as a disjoint union over all possible restrictions of $w$ in the following way:
$$\mathcal{G}=\bigsqcup_{v'\in V}\mathcal{G}_{w\uparrow v'}.$$ 
 Since $\card{V}=q^n$, it follows that there exists some $v^\star \in V$ for which $\card{\mathcal{G}_{w\uparrow v^\star}}\geq q^{-n}\card{\mathcal{G}}.$
 Therefore,
 \begin{equation} \label{eq:lb_small_side_restriction}
\card{\mathcal{F}_{w\uparrow v}}\card{\mathcal{G}_{w\uparrow v^\star}}> b q^{-n}\card{\mathcal{F}}\card{\mathcal{G}}\geq \max\{q^{2m-2}\IB{q}{n-1}{m-1}{t},\IB{q}{n}{m-1}{t}\}.
 \end{equation}
Let $V'=\spn{v-v^\star}^\perp\le V$ so\footnote{Note that when $v^\star = v$, it follows that $V' = V$, and the proof is already general enough to cover this case.} that $\langle x, v-v^\star\rangle:=x^\top (v-v^\star)=0$ for all $x\in V'$
   and let $W'=\{w\}^\perp\le W$. Fix $w''\in W\setminus W'$ so that $W=W'\oplus \spn{w''}$ and every $y\in W$ has a unique decomposition as $y=w'+cw''$ for some $w'\in W'$ and $c\in \F_q$. Let $\pi:W\to W'$ be the projection to $W'$ defined by $\pi(w'+cw'')=w'$. Consider the    restricted projection map $$p:\mathcal{L}(V,W)\to \mathcal{L}(V',W')$$ 
    defined as $p(A)u=\pi(Au)$ for every $u\in V'$. 
Consider the families $\mathcal{F}'=p(\mathcal{F}_{w\uparrow v}),\ \mathcal{G}'=p(\mathcal{G}_{w\uparrow v^\star})\subseteq  \mathcal{L}(V',W')$.
We claim that  $\mathcal{F}',\mathcal{G}'$ are cross-$(t-1)$-intersection-free. To show that, consider any pair $A\in \mathcal{F}_{w\uparrow v},\ B\in \mathcal{G}_{w\uparrow v^\star}$. We claim that
\begin{equation}\label{eq:kernel_preserved_under_restricted_projection}
    \ker(A-B)=\ker(p(A)-p(B))\ ,
\end{equation}
and therefore the dimension of the intersection of $A$ and $B$ is preserved by the map $p$.

Note that for any $x\in V$ we have
\begin{equation}
\label{eq:orthogonality_equality}
\langle(A-B)x,w\rangle=\langle x,(A-B)^\top w\rangle=\langle x,v-v^\star\rangle\ .
\end{equation}
First, if $x\in V'$, then the right-hand side is $0$, so the left-hand side shows that $(A-B)x\perp w$, hence $(A-B)x\in W'$. This implies
\[
\ker(A-B)\cap V'=\ker(p(A)-p(B))\ .
\]
Conversely, if $x\in\ker(A-B)$, then the left-hand side of \eqref{eq:orthogonality_equality} is $0$, so the right-hand side implies $x\in V'$. Thus $\ker(A-B)\subseteq V'$, which establishes \eqref{eq:kernel_preserved_under_restricted_projection}.
We now count the liftings of $\mathcal F', \mathcal G'$ back to the restricted families $\mathcal F_{w\uparrow v}$ and $\mathcal G_{w\uparrow v^\star}$ inside $\LL(V,W)$.
Consider first the case $v\ne v^\star$, in which $\dim(V')=n-1$.  
Fix a complement \(V=V'\oplus \spn{\tilde v}\).
For a fixed \(A'\in \LL(V',W')\), any lift \(A\in \mathcal F_{w\uparrow v}\)
with \(p(A)=A'\) must satisfy, for every \(x\in V'\),
\[
A x = A'x+c(x) w''
\]
for some $c(x)\in \F_q$.   
We next show that $c(x)$ is uniquely determined by the row
condition \(A^\top  w=v\), and does not depend on the exact \(A\in \mathcal F_{w\uparrow v}\). Indeed, this gives us
\[\langle Ax,w\rangle=\langle x,v\rangle\ .\]
Since \(A' x\in W'=\{w\}^\perp\), this implies
\[
\langle A' x+ c(x) w'',w\rangle=\langle x,v\rangle,
\]
and  since  \(w''\notin W'=\{w\}^\perp\), we have
\(\langle w'',w\rangle\neq 0\). Hence,
\(c(x)=\langle  w'',w\rangle^{-1} \langle x,v\rangle\) 
is uniquely
determined by \(w,w''\) and \(v\). 
The only remaining freedom is the
choice of \(A\tilde v\in W\), subject to the single nontrivial
linear constraint
\[
\langle A\tilde v,w\rangle=\langle \tilde v,v\rangle.
\]
Thus each fiber of \(p\) inside \(\mathcal F_{w\uparrow v}\) has size at most
\(q^{m-1}\). The same argument applies to \(\mathcal G_{w\uparrow v^\star}\).
Therefore
\[
|\mathcal F_{w\uparrow v}|\,|\mathcal G_{w\uparrow v^\star}|
\le q^{2m-2}|\mathcal F'|\,|\mathcal G'|\ ,
\]
 which is at most \(q^{2m-2}\IB{q}{n-1}{m-1}{t}\) by Definition \ref{def:maximal_cross_intersection_free_families} 
 
  contradicting \eqref{eq:lb_small_side_restriction}.
If \(v=v^\star\), then \(V'=V\).  For every \(x\in V\), the row
condition uniquely determines the \(w''\)-component of \(Ax\).
Hence \(p\) is injective on both \(\mathcal F_{w\uparrow v}\) and 
\(\mathcal G_{w\uparrow v}\). By Definition \ref{def:maximal_cross_intersection_free_families}, this implies
\[
|\mathcal F_{w\uparrow v}|\,|\mathcal G_{w\uparrow v}|
=|\mathcal F'|\,|\mathcal G'|
\le\IB{q}{n}{m-1}{t},
\]
which again contradicts \eqref{eq:lb_small_side_restriction}.

\end{proof}

We now combine Lemma~\ref{lem:density_increment}, Lemma~\ref{lem:density_bump_on_smaller_side_imply_too_large}, and Corollary~\ref{cor:main step 2 corollary}. In the case $m>n$, Lemma~\ref{lem:density_bump_on_smaller_side_imply_too_large} rules out density bumps coming from row-type dictatorships. When $m=n$, we may instead pass to the dual families, turning any row-type dictatorship into an equivalent column-type dictatorship. This allows us to restrict the technical analysis to column-type dictatorship density bumps only. 
We will use the following lemma to prove Lemma~\ref{lem:inductive_statment} by induction on $m$. Accordingly, in the proof we may assume that Lemma~\ref{lem:inductive_statment} already holds for $m-1$. We state the specific cases assumed in the lemma.

\begin{cor}\label{cor:large free families almost contained in a dictatorship}
    For each fixed $C>2$ and $k_0 \ge 1$, there exists $\alpha=\alpha(k_0 )>0$ such\footnote{Crucially, the value of $\alpha$ is independent of the specific choice of $C$.} that for all $1\le t\le \alpha n$ and $n\le m\le 4n$, if we assume that Lemma~\ref{lem:inductive_statment} holds for the parameters $(n-1,m-1,t)$ and, in addition, when $n<m$ it also holds for $(n,m-1,t)$ with the constant $C$, the following holds.
    
    Let $\mathcal{F,G}\subseteq \mathcal{L}(V,W)$ be two cross-$(t-1)$-intersection-free families.
If 
\begin{equation}\label{eq:families lower bound density bump corollary}
\card{\mathcal{F}}\card{\mathcal{G}} \geq  q^{-k_0 t} q^{2m(n-t)+\max\{2t(Ct-m),0\}+\max\{2t(2m-4(n-t)),0\}},
\end{equation}
  then if $n<m$ there exist $v\in V\setminus \{\bar{0}\}$ and $w\in W$ such that 
 
\begin{equation}\label{eq:density bump in both families}
    \card{\mathcal{F}_{v\mapsto w}}\geq  (1-\varepsilon)\card{\mathcal{F}}\ ,\ 
\text{ and }\ 
    \card{\mathcal{G}_{v\mapsto w}}\geq (1-\varepsilon)\card{\mathcal{G}}.
\end{equation}
Where $\varepsilon=q^{-m+\xi t}$ with  $\xi= k_0 +9$.

In the case $n = m$ (so that $V\approxeq W$), the bounds in \eqref{eq:density bump in both families} apply either directly to the families $\mathcal{F}$ and $\mathcal{G}$, or instead to their dual families $\mathcal{F}^* = \{A^\top : A \in \mathcal{F}\}$ and $\mathcal{G}^* = \{A^\top : A \in \mathcal{G}\}$.
\end{cor}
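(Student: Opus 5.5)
The plan is to combine Corollary~\ref{cor:main step 2 corollary}, Lemma~\ref{lem:density_bump_on_smaller_side_imply_too_large}, and Lemma~\ref{lem:density_increment}, using the inductive hypotheses to bound the quantities $\IB{q}{n-1}{m-1}{t}$ and $\IB{q}{n}{m-1}{t}$ that these lemmas require. Write
\[
E(n,m,t):=2m(n-t)+\max\{2t(Ct-m),0\}+\max\{2t(2m-4(n-t)),0\}.
\]
The hypothesis \eqref{eq:families lower bound density bump corollary} reads $|\mathcal F||\mathcal G|\ge q^{-k_0t}q^{E(n,m,t)}$.

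\emph{Step 1: invoke Step~2.} First check \eqref{eq:step 2 counterpositive assumption}, i.e.\ $|\mathcal F||\mathcal G|>q^{-mt}q^{2m(n-t)}$. Since $E\ge 2m(n-t)$ and $k_0t<mt$ once $n$ is large, this holds because $m\ge n\ge t/\alpha$ with $\alpha$ small in terms of $k_0$. Corollary~\ref{cor:main step 2 corollary}, applied with $\xi=k_0+9$ and $\alpha\le\alpha(\xi)$, then gives either (i) a column bump $|\mathcal F_{v\mapsto w}|\ge q^{-(m-\xi t)}|\mathcal F|=\varepsilon|\mathcal F|$, or (ii) a row bump $|\mathcal F_{w\uparrow v}|\ge q^{-(n-\xi t)}|\mathcal F|$.

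\emph{Step 2: rule out or dualize row bumps.} If $n=m$, a row bump for $\mathcal F$ is a column bump for $\mathcal F^*$. The pair $\mathcal F^*,\mathcal G^*$ is still cross-$(t-1)$-intersection-free, since $\dim\ker(A-B)^\top=\dim\ker(A-B)$ for square matrices, so we may pass to the duals.

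If $n<m$, we show (ii) is impossible via Lemma~\ref{lem:density_bump_on_smaller_side_imply_too_large}, i.e.\ that $b<q^{-(n-\xi t)}$. By induction,
\[
b\le q^{\,n+k_0t-E(n,m,t)}\max\{q^{2m-2+E(n-1,m-1,t)},\,q^{E(n,m-1,t)}\}.
\]
Compare exponents:
\begin{itemize}
\item $E(n,m-1,t)-E(n,m,t)\le -2(n-t)+2t$. Indeed, the main term drops by $2(n-t)$, the first max can rise by at most $2t$, and the second max does not rise. This gives $b\le q^{n+k_0t-2n+4t}$, which beats $q^{-n+\xi t}$ provided $\xi>k_0+4$.
\item $2m-2+E(n-1,m-1,t)-E(n,m,t)\le 2m-2-2m+2t-2(n-t)+2t+(\text{max changes})$. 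The main term becomes $2(m-1)(n-1-t)$ against $2m(n-t)$, a difference of $-2m-2(n-t)+2$. The first max changes by at most $+2t$; in the second, $2m-4(n-t)$ changes by $-2+4=+2$, so that max rises by at most $4t$. The total is $\lesssim -2(n-t)+O(t)$, so again $b\le q^{-n+O(t)}$, and $\xi=k_0+9$ absorbs the constants.
\end{itemize}
This bookkeeping is where the specific choice $\xi=k_0+9$ is tuned, and I expect it to be the main obstacle. One has to verify carefully that the changes in both $\max$ terms are bounded by constant multiples of $t$, uniformly in $C$. That uniformity is what makes $\alpha$ independent of $C$: the $C$-dependence cancels because $t(Ct-m)$ changes by only $\pm t$ per unit change of $m$.

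\emph{Step 3: upgrade the column bump.} Apply Lemma~\ref{lem:density_increment} with $\varepsilon=q^{-m+\xi t}$. We need
\[
|\mathcal F||\mathcal G|\ge 2\varepsilon^{-1}q^{m+2(n-1)}\IB{q}{n-1}{m-1}{t}.
\]
By induction the right side is at most
\[
q^{1+2m-\xi t+2n-2+E(n-1,m-1,t)}.
\]
Using the same exponent comparison as above, $E(n-1,m-1,t)\le E(n,m,t)-2m-2(n-t)+O(t)$, so the right side is at most $q^{E(n,m,t)-\xi t+O(t)}$. Taking $\xi=k_0+9$ large enough relative to the absolute $O(t)$ (here $6t$ plus slack, using $t\ge1$ for the additive constants) makes this at most $q^{E(n,m,t)-k_0t}$, which the hypothesis provides. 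The lemma then yields \eqref{eq:density bump in both families}. Finally, choose $\alpha$ small enough that $t\le n-1$ and that $\varepsilon<1$ is meaningful.
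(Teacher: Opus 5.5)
Your proposal is correct and follows the paper's proof essentially step for step. You verify the Step~2 hypothesis and use Lemma~\ref{lem:density_bump_on_smaller_side_imply_too_large} with the inductive bounds on $\IB{q}{n-1}{m-1}{t}$ and $\IB{q}{n}{m-1}{t}$ to get $b\le q^{-n+(k_0+8)t}<q^{-n+\xi t}$ when $n<m$ (dualizing when $n=m$), then check \eqref{eq:families size to epsilon relation assumption} to invoke Lemma~\ref{lem:density_increment}; one small bookkeeping fix is that in the last step the overhead is $8t+1$ rather than ``$6t$ plus slack'' (you dropped the $2n-2(n-t)=2t$ term), so the requirement is $(\xi-k_0-8)t\ge 1$, which $\xi=k_0+9$ meets exactly at $t=1$.
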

\begin{proof}
  Let $\alpha$ be a constant smaller than $\alpha$ from Corollary~\ref{cor:main step 2 corollary}, applied with $\xi=k_0 +9$. In addition, pick $\alpha$ sufficiently small so that $\alpha\le 1/k_0 $. With this choice, it follows that $m\ge n\ge t/\alpha\ge k_0$.
   Hence, the assumption
   \eqref{eq:families lower bound density bump corollary} yields 
   $$ \card{\mathcal{F}}\card{\mathcal{G}} > q^{-mt} q^{2m(n-t)} \ .$$ 
   Applying Corollary~\ref{cor:main step 2 corollary}, we conclude that at least one of the following two cases must occur: 
   \begin{itemize}
    \item[(i)] there exist $v\in V\setminus\{\bar{0}\}$ and $w\in W$ such that $\card{\mathcal{F}_{v\mapsto w}}\ge q^{-(m-\xi t)}\card{\mathcal{F}}$, \\
    or
    \item[(ii)] there exist  $w\in W\setminus\{\bar{0}\}$ and $v\in V$ such that $\card{\mathcal{F}_{w \uparrow v}}\ge q^{-(n-\xi t)}\card{\mathcal{F}}.$
\end{itemize}
If $m>n$, Lemma~\ref{lem:density_bump_on_smaller_side_imply_too_large} implies  $\card{\mathcal{F}_{w \uparrow v}}< b\card{\mathcal{F}}$ with 
$$b=\frac{q^n}{\card{\mathcal{F}}\card{\mathcal{G}} }\max\{q^{2m-2}\IB{q}{n-1}{m-1}{t},\IB{q}{n}{m-1}{t}\}\ .$$  
Since Lemma~\ref{lem:inductive_statment} holds for the parameters $(n-1,m-1,t)$ and $(n,m-1,t)$ we have
\begin{multline}\label{eq:IH on n and m}
 \IB{q}{n-1}{m-1}{t}\le q^{2(m-1)(n-1-t)+\max\{2t(Ct-(m-1)),0\}+\max\{2t(2(m-1)-4(n-1-t)),0\}}\\
 = q^{-2m+2-2n+2t+2m(n-t)+\max\{2t(Ct-m)+2t,0\}+\max\{2t(2m-4(n-t))+4t,0\}}\ ,
 \end{multline} 
and
\begin{multline*}
\IB{q}{n}{m-1}{t}\le q^{2(m-1)(n-t)+\max\{2t(Ct-(m-1)),0\}+\max\{2t(2(m-1)-4(n-t)),0\}} \\
= q^{-2n+2t+2m(n-t)+\max\{2t(Ct-m)+2t,0\}+\max\{2t(2m-4(n-t))-4t,0\}}\ .
\end{multline*}
Combining this with \eqref{eq:families lower bound density bump corollary} we have,  
$$b \le \frac{q^n q^{-2n+2t+2m(n-t)+\max\{2t(Ct-m)+2t,0\}+\max\{2t(2m-4(n-t))+4t,0\}}}{ q^{-k_0 t} q^{2m(n-t)+\max\{2t(Ct-m),0\}+\max\{2t(2m-4(n-t)),0\}}}\le q^{-n+(k_0 +8)t}< q^{-n+\xi t}\ .$$ 
Contradicting Item (ii), it follows that Item (i) must hold. 
In the case $n=m$ and we are in Item (ii), we instead consider the dual families $\mathcal{F}^*, \mathcal{G}^*$, which then fall under Item (i).

Next, we assume that Item (i) holds, that is, that $\card{\mathcal{F}_{v\mapsto w}}\ge \varepsilon\card{\mathcal{F}}$. Combining \eqref{eq:families lower bound density bump corollary} with $k_0 + 9 \le \xi$ guarantees that the condition \eqref{eq:families size to epsilon relation assumption} holds. Therefore, we may apply Lemma~\ref{lem:density_increment} to obtain \eqref{eq:density bump in both families}, completing the proof. Indeed,
\begin{multline*}
   \frac{2q^{m+2(n-1)}\IB{q}{n-1}{m-1}{t}}{\card{\mathcal{F}}\card{\mathcal{G}}}
    \le \frac{2 q^{-m+2t+2m(n-t)+\max\{2t(Ct-m)+2t,0\}+\max\{2t(2m-4(n-t))+4t,0\}}}{q^{-k_0 t} q^{2m(n-t)+\max\{2t(Ct-m),0\}+\max\{2t(2m-4(n-t)),0\}}}\\
    \le q^{-m+(k_0 +9)t}\le \varepsilon\ .\qedhere
\end{multline*}
\end{proof}
\subsection{Finding the Common Umvirate}\label{sec:6.2}
The following lemma shows that if large families are cross-$(t-1)$-intersection-free, then they are almost contained in a (dual) $t$-umvirate family.

\begin{lem}
\label{lem:iterative_argument}
For each fixed $C>2$ and $k \ge 1$, there exists $\alpha=\alpha(k )>0$ (independent of $C$) such that the following holds. 

Let $t,n,m\in\N$ such that $1\le t\le \alpha n$ and $n \leq m\le 4(n-t)$, and let $\mathcal{F}, \mathcal{G} \subseteq \mathcal{L}(V,W)$ be two cross-$(t-1)$-intersection-free families. Assume that Lemma~\ref{lem:inductive_statment} holds for all parameter triples $(n',m',t')$ satisfying $t'\le n'\le m'$, $1\le t'\le t$, and $m'<m$, with the constant $C$.
    If
    \begin{equation}\label{eq:size_lower_bound_1}
        |\mathcal{F}||\mathcal{G}| \geq q^{-k } q^{2m(n-t)+\max\{2t(Ct-m),0\}+\max\{2t(2m-4(n-t)),0\}}\ .
    \end{equation}
    Then, one of the following must hold: 
    \begin{itemize}
        \item [Case (i):] 
        There exists a $t$-umvirate family $\mathcal{U}\subseteq \LL(V,W)$ such that 
        $$\card{\mathcal{F}\cap \mathcal{U}}\geq (1-\gamma)\card{\mathcal{F}}\text{ and } \card{\mathcal{G}\cap \mathcal{U}}\geq (1-\gamma)\card{\mathcal{G}}.$$
        Or,
           \item [Case (ii):] 
        $m=n$ and there exists a dual $t$-umvirate family $\mathcal{U}^*\subseteq \LL(V,W)$ such that 
        $$\card{\mathcal{F}\cap \mathcal{U}^*}\geq (1-\gamma)\card{\mathcal{F}}\text{ and } \card{\mathcal{G}\cap \mathcal{U}^*}\geq (1-\gamma)\card{\mathcal{G}}.$$
    \end{itemize}
     For $\gamma= tq^{\xi t-m}$ with $\xi=k +10.$ 
   
\end{lem}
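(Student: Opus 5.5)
We will prove Lemma~\ref{lem:iterative_argument} by iterating Corollary~\ref{cor:large free families almost contained in a dictatorship} $t$ times, each time passing to a restriction that kills one column direction and lowering $n$ and $m$ by one while keeping the forbidden-intersection parameter $t$. The target is to find linearly independent $v_1,\ldots,v_t$ and vectors $w_1,\ldots,w_t$ such that, for every $j$, all but a $q^{\xi t-m}$-fraction (roughly) of $\mathcal{F}$ and of $\mathcal{G}$ satisfy $Av_j=w_j$. A union bound over $j\le t$ then gives Case~(i) with $\gamma=tq^{\xi t-m}$. If in the square case the corollary returns its dual alternative at the first step, we transpose everything and land in Case~(ii).

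\textbf{First step.} Since $k\ge1$ and $\xi=k+10$, we apply Corollary~\ref{cor:large free families almost contained in a dictatorship} with $k_0=k+1$ (so its $\xi$ is $k_0+9=\xi$). Its hypotheses at $(n-1,m-1,t)$ and $(n,m-1,t)$ are covered by our inductive assumption. This yields $v_1\neq 0$ and $w_1$ with $|\mathcal{F}_{v_1\mapsto w_1}|\ge(1-\varepsilon)|\mathcal{F}|$ and likewise for $\mathcal{G}$, where $\varepsilon=q^{\xi t-m}$.

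\textbf{Iteration.} Restrict to $\mathcal{F}_1=\mathcal{F}_{v_1\mapsto w_1}$ and $\mathcal{G}_1=\mathcal{G}_{v_1\mapsto w_1}$. The key point is that these two families agree on $v_1$, so for $A\in\mathcal{F}_1$ and $B\in\mathcal{G}_1$ we have $v_1\in\ker(A-B)$. Choose $V=V'\oplus\spn{v_1}$ and the map $A\mapsto A|_{V'}$ into $\LL(V',W)$, which is injective on the dictator. Then $\dim\ker(A-B)=1+\dim\ker(A|_{V'}-B|_{V'})$, so the restricted pair is cross-$(t-2)$-intersection-free in $\LL(V',W)$, with parameters $(n-1,m,t-1)$. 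Their product size is at least $(1-\varepsilon)^2 q^{-k}q^{2m(n-t)+\cdots}$, and $2m(n-t)=2m((n-1)-(t-1))$. The error terms $\max\{2t(Ct-m),0\}$ and $\max\{2t(2m-4(n-t)),0\}$ only decrease when $t$ drops to $t-1$ with $n-t$ fixed. Hence the lower-bound hypothesis~\eqref{eq:size_lower_bound_1} holds again, with $k$ replaced by $k+1$ to absorb $(1-\varepsilon)^2$.

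So we formalize the lemma by induction on $t$, applying it to the new triple $(n-1,m,t-1)$. This triple satisfies $m\le4((n-1)-(t-1))$ and $t-1\le\alpha(n-1)$. The inductive hypothesis of Lemma~\ref{lem:inductive_statment} is needed only for smaller $t'$ and $m'<m$, which is exactly what the statement allows. The recursion returns a $(t-1)$-umvirate in $\LL(V',W)$ containing almost all of $\mathcal{F}_1,\mathcal{G}_1$. Lifting back via $Av_1=w_1$ gives a $t$-umvirate $\mathcal{U}$. The losses add up to $\varepsilon$ at each of the $t$ levels, giving $\gamma=tq^{\xi t-m}$ provided $\xi$ is held fixed. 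For the base case $t=1$, a single application of the corollary suffices.

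\textbf{Main obstacle: parameter bookkeeping.} Each recursive call would naively increase $k$ by one, and hence increase $\xi$ and shrink $\alpha$, making $\alpha$ depend on $t$. To avoid this, we will not recurse on the lemma itself. Instead, at each stage $j$ we apply the corollary directly to the current families in $\LL(V_j,W)$ with $n_j=n-j$ and $t_j=t-j$, measuring the lower bound always against the original $q^{-k}$. The product loss $(1-\varepsilon)^{2j}\ge 1/2$ is harmless since $t\varepsilon\ll1$, so a single slack $k_0=k+1$ suffices throughout.

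A second point to check in the square case: once $m>n_j$ (that is, after the first step), Lemma~\ref{lem:density_bump_on_smaller_side_imply_too_large} excludes the row alternative. The dual option therefore arises only at the first step, consistent with Case~(ii).
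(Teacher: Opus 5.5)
Your proposal is correct and follows essentially the same route as the paper. After dropping the naive recursion on the lemma itself, you iterate Corollary~\ref{cor:large free families almost contained in a dictatorship} with a fixed $k_0=k+1$ on the families restricted to a complement of $\spn{v_1,\dots,v_j}$ (parameters $(n-j,m,t-j)$, so $n-t$ is unchanged and the error terms do not grow), absorb $(1-\varepsilon)^{2j}\ge 1/2$ into the slack $q^{-(k+1)t_j}\le q^{-1}q^{-k}$, telescope the losses to $\gamma=tq^{\xi t-m}$, and allow the dual alternative only at the first step when $m=n$, exactly as in the paper.
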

\noindent Assuming Lemma~\ref{lem:inductive_statment} and Lemma~\ref{lem:iterative_argument} hold, we may deduce Theorem~\ref{thm:main} as follows.
\begin{proof}[Proof of Theorem~\ref{thm:main}]
Assume Lemma~\ref{lem:inductive_statment} holds. Apply Lemma~\ref{lem:iterative_argument} with $k=200$ proves Theorem~\ref{thm:main} for $\alpha\le 1/300$ which is the minimum between $\alpha$ from Lemma~\ref{lem:iterative_argument} and $1/300$, $\beta=2$, and $c_0 = 1/2$.
\end{proof}

\begin{proof}[Proof of Lemma \ref{lem:iterative_argument}]
    To prove the lemma, we will iteratively find a sequence of $t$ dictatorship restrictions $v_1\mapsto w_1, \dots v_t\mapsto w_t$ for linearly independent $v_1,\dots v_t\in V$ and some $w_1, \dots w_t\in W$ and define 
    $$\mathcal{U}=\{A\in\LL(V,W): Av_i=w_i \text{ for all } i\in \{1,\dots t\}\}\ .$$
    We will proceed in $t$ steps iteratively. At each step $\ell \in \{1,\dots, t\}$, using Corollary~\ref{cor:large free families almost contained in a dictatorship} we will find a new restriction $v_\ell\mapsto w_\ell$ that has a large density bump into the families $\mathcal{F}$ and $\mathcal{G}$, and we then define the restricted families as
    \[
        \mathcal{F}^{(\ell)} := \{ A \in \mathcal{F} : A v_i = w_i \quad \forall i=1,\dots,\ell \}\ ,
    \]
    and
    \[
        \mathcal{G}^{(\ell)} := \{ B \in \mathcal{G} : B v_i = w_i \quad \forall i=1,\dots,\ell \}\ .
    \]
    We will show that those families are large (w.r.t. $\mathcal{F}$ and $\mathcal{G}$) and at the end we will have $\mathcal{F}^{(t)}=\mathcal{F}\cap \mathcal{U}$ and $\mathcal{G}^{(t)}=\mathcal{G}\cap \mathcal{U}$.
    
       If $m=n$, Corollary~\ref{cor:large free families almost contained in a dictatorship} might provide these restrictions for the dual families. 
       So we proceed similarly, the only difference is that we now work with the dual families $\mathcal{F}^*=\{A\in \LL(W,V):A^\top\in \mathcal{F}\}$ and $\mathcal{G}^*=\{A\in \LL(W,V):A^\top\in \mathcal{G}\}$ instead. The distinction will occur at $
      \ell=1$. 

    We begin with the base case $\ell=1$.  
Assume that \eqref{eq:size_lower_bound_1} holds.  
By applying Corollary~\ref{cor:large free families almost contained in a dictatorship} 
We have that there exists nonzero $v_1\in V$ and $w_1\in W$ such that
    \[
        \card{\mathcal{F}_{v_1\mapsto w_1}}\geq (1-q^{(k +9) t-m})\card{\mathcal{F}}\ \text{ and }\ 
        \card{\mathcal{G}_{v_1\mapsto w_1}}\geq (1-q^{(k +9) t-m})\card{\mathcal{G}}
        \ .
    \]
    Or, if $n=m$ and those bounds hold for the dual families, we will proceed with  $\mathcal{F}^*,\mathcal{G}^*$ instead  and renaming them to $\mathcal{F},\mathcal{G}$.
 
Set $\varepsilon = q^{\xi t - m}$ (where, recall, $\xi = k  + 10$). Then we obtain
$$\card{\mathcal{F}^{(1)}} \ge (1-\varepsilon)\card{\mathcal{F}} \quad\text{and}\quad \card{\mathcal{G}^{(1)}} \ge (1-\varepsilon)\card{\mathcal{G}}\ .$$
If $t=1$, the proof is finished with $\gamma=\varepsilon$. Otherwise, we proceed iteratively with $\ell\in \{2,\dots , t\}$. Assume we found appropriate restrictions up to $\ell-1$, meaning that
\begin{equation}\label{eq:iterative_argument_assumption}
 \card{\mathcal{F}^{(\ell-1)}}\ge (1-\varepsilon)^{\ell-1}\card{\mathcal{F}}\text{ and } \card{\mathcal{G}^{(\ell-1)}}\ge (1-\varepsilon)^{\ell-1}\card{\mathcal{G}}.   
\end{equation}
 Let $V'\le V$ be such that $V=V'\oplus \spn{v_1,\dots, v_{\ell-1}}$, and denote  $n':=\dim(V')$ so that $n'=n-(\ell-1)$. For a map $A\in \LL(V,W)$, denote $A|_{V'}\in \LL(V',W)$ its restriction to $V'$ defined by $(A|_{V'})x=Ax$ for all $x\in V'$.
Consider the restriction of the families from \eqref{eq:iterative_argument_assumption} to $\LL(V',W)$, defined by 
$$\widetilde{\mathcal{F}}^{(\ell-1)}:=\big\{A|_{V'}:A\in\mathcal{F}^{(\ell-1)}\big\} \text{\quad 
and \quad} \widetilde{\mathcal{G}}^{(\ell-1)}:=\big\{B|_{V'}:B\in\mathcal{G}^{(\ell-1)}\big\}.$$
Note that the intersection size of any pair $\tilde{A}\in \widetilde{\mathcal{F}}^{(\ell-1)},\tilde{B}\in \widetilde{\mathcal{G}}^{(\ell-1)}$ is $\ell-1$ less than their preimages in $\mathcal{F}^{(\ell-1)}$ and $\mathcal{G}^{(\ell-1)}$, thus, for $t'=t-(\ell-1)$ the families $\widetilde{\mathcal{F}}^{(\ell-1)}, \widetilde{\mathcal{G}}^{(\ell-1)}$ are cross-$(t'-1)$-intersection-free. Furthermore, 
\begin{multline*}
    \card{\widetilde{\mathcal{F}}^{(\ell-1)}}\card{\widetilde{\mathcal{G}}^{(\ell-1)}}=\card{\mathcal{F}^{(\ell-1)}}\card{\mathcal{G}^{(\ell-1)}}\ge (1-\varepsilon)^{2(\ell-1)}\card{\mathcal{F}}\card{\mathcal{G}}\\
    \ge (1-2t\varepsilon)q^{-k }\cdot  q^{2m(n-t)+\max\{2t(Ct-m),0\}+\max\{2t(2m-4(n-t)),0\}}\\ 
    \ge q^{-1}q^{-k t'} q^{2m(n'-t')+\max\{2t'(Ct'-m),0\}+\max\{2t'(2m-4(n'-t')),0\}}
\ .
\end{multline*}
The last inequality follows from $n'-t'=n-(\ell-1)-(t-(\ell-1))=n-t$ and $1\le t'<t$. Moreover, for sufficiently small $\alpha\le \frac{1}{\xi+3}$ we have $m\ge n\ge \frac{t}{\alpha}\ge (\xi+3)t$, and it follows that $1-2t\varepsilon\ge \frac{1}{2}\ge q^{-1}$.
Moreover, we have $m \le 4(n-t) = 4(n'-t') \le 4n'$, and
\[
t' = t-(\ell-1) \le \alpha n - (\ell-1) = \alpha n' + (\alpha-1)(\ell-1) \le \alpha n',
\]
so we can apply Corollary~\ref{cor:large free families almost contained in a dictatorship} with $k_0 = k+1$ to the families $\widetilde{\mathcal{F}}^{(\ell-1)}, \widetilde{\mathcal{G}}^{(\ell-1)} \subseteq \LL(V',W)$, using $t', n', m$ as the corresponding parameters (note that $n' < m$). This ensures the existence of a density bump inside a dictatorship.
That is, there exists a non-zero $v_\ell\in V'$ and some $w_\ell\in W$ such that
$$\card{\widetilde{\mathcal{F}}^{(\ell-1)}\big|_{v_\ell\mapsto w_\ell}}\ge (1-\varepsilon_\ell) \card{\widetilde{\mathcal{F}}^{(\ell-1)}}\ ,\text{\quad 
and \quad}
\card{\widetilde{\mathcal{G}}^{(\ell-1)}\big|_{v_\ell\mapsto w_\ell}}\ge (1-\varepsilon_\ell) \card{\widetilde{\mathcal{G}}^{(\ell-1)}}\ .$$
Here $\varepsilon_\ell = q^{\xi t' - m}$, where, as a reminder, $\xi = k  + 10 = k_0  + 9$.
Observe that, since $V'$ was chosen as a complement, $v_\ell$ is linearly independent of $\{v_1,\dots, v_{\ell-1}\}$. In addition, $\mathcal{F}^{(\ell)}$ is isomorphic to $\widetilde{\mathcal{F}}^{(\ell-1)}|_{v_\ell\mapsto w_\ell}$ and $\mathcal{G}^{(\ell)}$ is isomorphic to $\widetilde{\mathcal{G}}^{(\ell-1)}|_{v_\ell\mapsto w_\ell}$. We use $\varepsilon_\ell\le \varepsilon$ and combine with the bounds of \eqref{eq:iterative_argument_assumption} on $\card{\widetilde{\mathcal{F}}^{(\ell-1)}}=\card{\mathcal{F}^{(\ell-1)}}$ and $\card{\widetilde{\mathcal{G}}^{(\ell-1)}}=\card{\mathcal{G}^{(\ell-1)}}$ to conclude that 
$$\card{\mathcal{F}^{(\ell)}}\ge (1-\varepsilon)^\ell\card{\mathcal{F}}\text{ and } \card{\mathcal{G}^{(\ell)}}\ge (1-\varepsilon)^\ell\card{\mathcal{G}}.$$
In particular, 
$$\card{\mathcal{F}^{(t)}}\ge (1-\varepsilon)^t\card{\mathcal{F}}\ge (1-t\varepsilon)\card{\mathcal{F}}\text{ and } \card{\mathcal{G}^{(t)}}\ge (1-t\varepsilon)\card{\mathcal{G}}\ . \qedhere$$
\end{proof}
We need the following lemma to bound the portion of the families outside the $t$-umvirate provided by  Lemma~\ref{lem:iterative_argument}.
\begin{lem}\label{lem:families_almost_contained_in_U_small_intersection_other_umvirates}
   Under the assumptions of Lemma~\ref{lem:iterative_argument}, assume that we are in Case (i) with a $t$-umvirate family $\mathcal{U}$. Let $V'\le V$ be the $t$-dimensional subspace on which $\mathcal{U}$ is determined. Let $\mathcal{U}'\neq \mathcal{U}$ be another $t$-umvirate family determined by $V'$, and let $0\le j\le t-1$ be maximal such that there exists a $j$-umvirate family containing both $\mathcal{U}$ and $\mathcal{U}'$. Set $r=t-j$, so that $1\le r\le t$. Then there exists a constant $\kappa=\kappa(k)>1$, where $k$ is as in Lemma~\ref{lem:iterative_argument}, such that
\begin{equation}
    \label{eq:small intersection with other t umvirate families}
    \card{\mathcal{F}\cap\mathcal{U}'}\le q^{-2r(m-\kappa r)}q^{m(n-t)}\quad \text{and} \quad\card{\mathcal{G}\cap\mathcal{U}'}\le q^{-2r(m-\kappa r)}q^{m(n-t)}\ .
\end{equation}
The same bounds apply in Case (ii) of Lemma~\ref{lem:iterative_argument}, after passing to the dual families.
\end{lem}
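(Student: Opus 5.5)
The plan is to bound $\card{\mathcal{F}\cap\mathcal{U}'}$ by pairing it against the large set $\mathcal{G}\cap\mathcal{U}$, reduce that pair to a smaller cross-$(r-1)$-intersection-free problem, and then use the induction hypothesis (Lemma~\ref{lem:inductive_statment} for smaller $m$) together with the lower bound on $\card{\mathcal{G}\cap\mathcal{U}}$ from Lemma~\ref{lem:iterative_argument} and \eqref{eq:size_lower_bound_1}. The bound for $\mathcal{G}\cap\mathcal{U}'$ is symmetric.

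\emph{Normal form.} Write $\mathcal{U}=\{A: Av_i=w_i\}$ and $\mathcal{U}'=\{A: Av_i=w_i'\}$ for a basis $v_1,\dots,v_t$ of $V'$. The maximality of $j$ means that the set $\{x\in V': \text{both umvirates prescribe the same value at } x\}$ is a subspace of dimension exactly $j$. Change basis in $V'$ so that $w_i=w_i'$ for $i\le j$. The map $v_i\mapsto w_i-w_i'$ on the remaining $r$ basis vectors is injective modulo that common subspace, so $D:=\spn{w_i-w_i' : j<i\le t}$ has dimension $r$.

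\emph{Kernel computation.} For $A\in\mathcal{U}'$ and $B\in\mathcal{U}$, the difference $A-B$ kills $V_j:=\spn{v_1,\dots,v_j}$ and maps $\spn{v_{j+1},\dots,v_t}$ injectively onto $D$. Let $V''$ be a complement of $V'$ in $V$ (dimension $n-t$), and let $\pi:W\to W/D$. Generalizing Lemma~\ref{lem:projection_exact_intersection} to an $r$-dimensional difference, I expect
\[
\dim\ker(A-B)=j+\dim\ker\bigl(\pi\circ(A-B)|_{V''}\bigr),
\]
since any $x=y+z$ with $y\in\spn{v_{j+1},\dots,v_t}$ and $z\in V''$ lies in the kernel iff $(A-B)z\in D$, and then $y$ is uniquely determined.

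\emph{Reduction and induction.} Consequently the families $p(\mathcal{F}\cap\mathcal{U}')$ and $p(\mathcal{G}\cap\mathcal{U})$ in $\LL(V'',W/D)$, where $p(A)=\pi\circ A|_{V''}$, are cross-$(r-1)$-intersection-free. Their parameters are $n''=n-t$, $m''=m-r<m$ and $t''=r$. Each fiber of $p$ inside a $t$-umvirate has size $q^{r(n-t)}$, coming from the $D$-component on $V''$. Hence
\[
\card{\mathcal{F}\cap\mathcal{U}'}\,\card{\mathcal{G}\cap\mathcal{U}}\le q^{2r(n-t)}\,\IB{q}{n-t}{m-r}{r}.
\]
Applying Lemma~\ref{lem:inductive_statment} at $(r,n-t,m-r)$ bounds the right side by $q^{2r(n-t)+2(m-r)(n-t-r)+E}$. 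The error $E$ consists of the two max terms. The term $2r(Cr-m+r)$ is dominated by $O(r^2)$, where the constant is what forces $\kappa$ to depend on $C$; I would absorb this as in the paper, since $\alpha$ is independent of $C$ but $\kappa$ may depend on it. The term $2r(2(m-r)-4(n-t-r))$ is where the assumption $m\le 4(n-t)$ must be used.

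\emph{Dividing out.} Next I divide by $\card{\mathcal{G}\cap\mathcal{U}}\ge \tfrac12\card{\mathcal{G}}$ and use $\card{\mathcal{G}}\le q^{m(n-t)}$ together with \eqref{eq:size_lower_bound_1}. The latter gives $\card{\mathcal{G}}\ge q^{-k}q^{2m(n-t)}/\card{\mathcal{F}}\ge q^{-k}q^{m(n-t)}$, using the trivial bound $\card{\mathcal{F}}\le q^{m(n-t)}$, which I may need to argue separately or else replace by the crude inductive bound. The exponent then becomes roughly $m(n-t)-2rm+O(r^2)+O(rt)$.

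\emph{Main obstacle.} The hard part is the leftover $O(rt)$ and boundary terms, in particular the second max term when $m$ is close to $4(n-t)$. These must be controlled only by $\kappa r^2$ plus constants. Since $r\ge1$, a constant $k$ can be absorbed into $\kappa r$, but terms like $rt$ cannot. If the direct count loses too much, my fallback is to apply the reduction only to $\mathcal{F}\cap\mathcal{U}'\cap\mathcal{U}_j$, where $\mathcal{U}_j$ is the common $j$-umvirate, and then redo the bookkeeping in the smaller space $\LL(V/V_j,W)$. That space has parameters $t-j=r$ and the same $m$, so the exponent comparison involves only $r$.
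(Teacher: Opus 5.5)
Your reduction is correct and matches the paper's. Restricting to a complement $V''$ of $V'$ and projecting modulo the $r$-dimensional difference space $D$ lowers every kernel dimension by exactly $j$; your direct kernel computation is cleaner than the paper's appeal to the row-restriction argument. The fibers have size $q^{r(n-t)}$, and the induction hypothesis at $(r,n-t,m-r)$ applies.

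\textbf{The gap is in the ``dividing out'' step.} It is exactly the obstacle you flag and leave unresolved. You discard the two max terms of \eqref{eq:size_lower_bound_1}, so your lower bound on $|\mathcal{G}\cap\mathcal{U}|$ is only about $q^{-k}q^{m(n-t)}$. The error terms of the inductive bound at $(r,n-t,m-r)$ then cannot be paid for.
\begin{itemize}
\item The first term, $\max\{2r(Cr-(m-r)),0\}$, is only bounded by $2(C+1)r^2$, which forces $\kappa\ge C+2$. This is not cosmetic. In the induction step of Lemma~\ref{lem:inductive_statment} the only available bound is $n>Ct/8$, yet one needs $n\ge(2\kappa+9)t$ and $m>(2\kappa+2)t$. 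That is impossible if $\kappa\ge C$, which is why the statement insists on $\kappa=\kappa(k)$.
\item The second term equals $\max\{2r(2m-4(n-t))+4r^2,0\}$. The hypothesis $m\le 4(n-t)$ only gives $2m-4(n-t)\le m$. For $m$ near $4(n-t)$ the term is about $2rm$, which wipes out the entire gain $q^{-2rm}$.
\end{itemize}
Your fallback of working inside the common $j$-umvirate changes nothing. $\mathcal{U}'$ already lies in it, and the loss sits in the denominator, not in the reduction.

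\textbf{The missing idea is to keep the max terms in the lower bound for $|\mathcal{G}\cap\mathcal{U}|$.} Case (i) gives
\[
|\mathcal{U}|\,|\mathcal{G}\cap\mathcal{U}|\ge|\mathcal{F}\cap\mathcal{U}|\,|\mathcal{G}\cap\mathcal{U}|\ge(1-\gamma)^2|\mathcal{F}|\,|\mathcal{G}|,
\]
so with $|\mathcal{U}|=q^{m(n-t)}$ and \eqref{eq:size_lower_bound_1},
\[
|\mathcal{G}\cap\mathcal{U}|\ge q^{-(k+1)+m(n-t)+\max\{2t(Ct-m),0\}+\max\{2t(2m-4(n-t)),0\}}.
\]
This also replaces your ``trivial'' bound $|\mathcal{F}|\le q^{m(n-t)}$, which holds only up to a factor $(1-\gamma)^{-1}$, via Case (i).

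Since $r\le t$, monotonicity gives
\[
\max\{2r(Cr-(m-r)),0\}\le 2r^2+\max\{2t(Ct-m),0\},
\]
\[
\max\{2r(2(m-r)-4(n-t-r)),0\}\le 4r^2+\max\{2t(2m-4(n-t)),0\}.
\]
So the max terms cancel up to $6r^2$, and one obtains $|\mathcal{F}\cap\mathcal{U}'|\le q^{m(n-t)-2r(m-4r)+k+1}$. Because $r\ge 1$, the choice $\kappa=k+5$ then works, independently of $C$.

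Equivalently, in Case (i) one has $|\mathcal{F}|\,|\mathcal{G}|\le(1-\gamma)^{-2}q^{2m(n-t)}\le q\cdot q^{2m(n-t)}$. Hence \eqref{eq:size_lower_bound_1} forces the two max terms to total at most $k+1$, and the regime you were worried about is vacuous.
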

\begin{proof}
 We will prove that $\card{\mathcal{F}\cap\mathcal{U}'}\le q^{-2r(m-\kappa r)}q^{m(n-t)}$. By symmetry, the same bound holds for $\card{\mathcal{G}\cap\mathcal{U}'}.$
For Case (ii), an analogous argument applies to the dual families.
 We first observe that $\card{\mathcal{G}\cap \mathcal{U}}$ must be large. Indeed, 
    $$\card{\mathcal{U}}\card{\mathcal{G}\cap\mathcal{U}}\ge \card{\mathcal{F}\cap\mathcal{U}}\card{\mathcal{G}\cap\mathcal{U}}\ge (1-\gamma)^2\card{\mathcal{F}}\card{\mathcal{G}}\ .$$
    Using $(1-\gamma)^2\ge q^{-1}$ for sufficiently small $\alpha$, the bound on $\card{\mathcal{F}}\card{\mathcal{G}}$ from \eqref{eq:size_lower_bound_1} and the fact that $\card{\mathcal{U}}=q^{m(n-t)}$, this implies
    \begin{equation}\label{eq:G_in_U_is_large}
    \card{\mathcal{G}\cap \mathcal{U}}
    \ge q^{-(k +1)+m(n-t)+\max\{2t(Ct-m),0\}+\max\{2t(2m-4(n-t)),0\}}\ .    
    \end{equation}
    Let $V' = \spn{v_1,\dots,v_t}$, and recall that $\mathcal{U}$ is the $t$-umvirate family determined by $v_i \mapsto w_i$ for all $1 \le i \le t$. Let $w_i' \in W$ be vectors such that $\mathcal{U}'$ is the $t$-umvirate defined by $v_i \mapsto w_i'$ for all $1 \le i \le t$. Define $W' := \spn{u_1,\dots,u_t}$ where $u_i := w_i - w_i'$, and observe that $\dim(W') = r$. 
    Fix a subspace $V'' \le V$ that complements $V'$, so that $V = V' \oplus V''$. Likewise, choose $W'' \le W$ with $W = W' \oplus W''$, and observe that $V'' \cong \mathbb{F}^{n-t}$ and $W'' \cong \mathbb{F}^{m-r}$. Now define the families $\mathcal{F}', \mathcal{G}' \subseteq \LL(V'', W'')$ by restricting the domain of the maps to $V''$ and applying the natural projections $W \to W''$ to the elements of $\mathcal{F} \cap \mathcal{U}'$ and $\mathcal{G} \cap \mathcal{U}$, respectively.   
    Since $\mathcal{F}\cap\mathcal{U'}$ and $\mathcal{G}\cap \mathcal{U}$ are cross-$(t-1)$-intersection-free, and for any $A\in \mathcal{U}', B\in\mathcal{U}$ the dimension of their intersection on $V'$ is exactly $t-r$, we can apply an argument analogous to the proof of Lemma~\ref{lem:density_bump_on_smaller_side_imply_too_large} to get that the families $\mathcal{F}',\mathcal{G}'$ are cross-$(r-1)$-intersection-free. 
    By Definition \ref{def:maximal_cross_intersection_free_families},      we have $$\card{\mathcal{F}'}\card{\mathcal{G}'}\le \IB{q}{n-t}{m-r}{r}     \ .$$
    Lifting those families to $\LL(V'',W)$ we have at most $\card{W'}^{n-t}=q^{r(n-t)}$ original maps that project to each element. Therefore, 
    $$\card{\mathcal{F}\cap{\mathcal{U}'}}\le q^{r(n-t)}\card{\mathcal{F}'}\text{ and } \card{\mathcal{G}\cap{\mathcal{U}}}\le q^{r(n-t)}\card{\mathcal{G}'}.$$
                Using the assumption that  Lemma~\ref{lem:inductive_statment} holds for the parameters $(n-t,m-r,r)$, we conclude that 
    \begin{multline*}
        \card{\mathcal{F}\cap\mathcal{U}'}\le q^{r(n-t)}\card{\mathcal{F}'}\le q^{r(n-t)}\frac{\IB{q}{n-t}{m-r}{r}}{\card{\mathcal{G}'}}\le q^{2r(n-t)}\frac{\IB{q}{n-t}{m-r}{r}}{\card{\mathcal{G}\cap\mathcal{U}}} \\
        \le q^{2r(n-t)}\frac{q^{2(m-r)(n-t-r)+\max\{2r(Cr-(m-r)),0\}+\max\{2r(2(m-r)-4(n-t-r)),0\}}}{q^{-(k +1)+m(n-t)+\max\{2t(Ct-m),0\}+\max\{2t(2m-4(n-t)),0\}}}\\
               \le q^{2r(n-t)+2(m-r)(n-t-r)+(k +1)-m(n-t)+6r^2} = 
        q^{m(n-t)-2r(m-4r)+k +1}\ .
    \end{multline*}
    For the last inequality, we observe that 
    \begin{multline*}
        \max\{2r(Cr-(m-r)),0\}-\max\{2t(Ct-m),0\}\le \\2r^2+\max\{2r(Cr-m),0\}-\max\{2t(Ct-m),0\}\le 2r^2\ ,
    \end{multline*}
    \begin{multline*}
        \max\{2r(2(m-r)-4(n-t-r)),0\}-\max\{2t(2m-4(n-t)),0\}\le \\4r^2+\max\{2r(2m-4(n-t)),0\}-\max\{2t(2m-4(n-t)),0\}\le 4r^2\ .
    \end{multline*}
Choosing $\kappa=k +5$ completes the proof.
\end{proof}

\subsection{Proof of Main Bound}\label{subsec:proof of inductive lemma}

This section is devoted to proving the main inductive lemma; its proof relies mainly on Lemmas~\ref{lem:iterative_argument} and~\ref{lem:families_almost_contained_in_U_small_intersection_other_umvirates}.
We will need the following general lemmas which state that any map not contained in a $t$-umvirate family $\mathcal{U}$ must possess a nontrivial subset of $\mathcal{U}$ on which its intersection dimension is exactly $t-1$ and that there are not too many $t$-umvirate families with certain properties. Their standalone proofs are provided later in Appendix~\ref{apx:missing_proofs}.
\begin{lem}\label{lem:counting_t-1_free_with_t_umvirate}
For $n\le m$ and $1\le t\le \alpha n$, let  $\mathcal{U}\subseteq\LL(V,W)$ be a $t$-umvirate family defined by $Av_i=w_i$ for all $A\in\mathcal{U}$ and $i\in\{1,\dots,t\}$. 
Let $B\in \LL(V,W)\setminus \mathcal{U}$ and define
$$\mathcal{U}'=\{A\in \mathcal{U}:\dim\ker(A-B)\neq t-1\}$$
as the set of all maps in $\mathcal{U}$ that satisfy the forbidden intersection of dimension $t-1$ with $B$.
Define $u_i=w_i-Bv_i$, and let $r=\dim\spn{u_1,\dots,u_t}$. 
Then 
$$\card{\mathcal{U}'}
\le (1-\frac{1}{4}\cdot q^{-r(t+(m-n))})\card{\mathcal{U}}.$$ 
\end{lem}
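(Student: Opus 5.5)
The plan is to compute exactly the proportion of $A\in\mathcal{U}$ with $\dim\ker(A-B)=t-1$. We reduce it to the probability that a uniformly random matrix has a prescribed rank, and show this probability is at least $\tfrac14 q^{-r(t+m-n)}$. The complement of $\mathcal{U}'$ in $\mathcal{U}$ is exactly this set, so that suffices.

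\emph{Setup.} Write $V'=\spn{v_1,\dots,v_t}$ and fix a complement $V=V'\oplus V''$ with $\dim V''=n-t$. Since $B\notin\mathcal{U}$, some $u_i\neq 0$, so $r\ge 1$. Let $U_0:=\spn{u_1,\dots,u_t}$, which has dimension $r$.
\begin{itemize}
\item For $A\in\mathcal{U}$ put $D=A-B$. Then $D|_{V'}$ is fixed, with $Dv_i=u_i$. Hence $\ker(D|_{V'})$ has dimension $t-r$ and $D(V')=U_0$.
\item As $A$ ranges uniformly over $\mathcal{U}$, the restriction $A|_{V''}$ ranges uniformly over $\LL(V'',W)$. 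Therefore $M:=D|_{V''}$ is a uniformly random element of $\LL(V'',W)$, being a translate of a uniform element.
\end{itemize}

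\emph{Kernel decomposition.} A vector $x'+x''$ (with $x'\in V'$, $x''\in V''$) lies in $\ker D$ iff $Mx''=-Dx'$. Project $\ker D\to V''$. The image is $M^{-1}(U_0)$, because $Mx''\in U_0=D(V')$ is exactly solvability in $x'$. The fibre over $x''=0$ is $\ker(D|_{V'})$. Thus
\[
\dim\ker(A-B)=t-r+\dim\ker(Q\circ M),
\]
where $Q:W\to W/U_0$ is the quotient map. Since $M$ is uniform, $Q\circ M$ is a uniformly random element of $\LL(\mathbb{F}_q^{\,n-t},\mathbb{F}_q^{\,m-r})$. So $\dim\ker(A-B)=t-1$ iff this random $(m-r)\times(n-t)$ matrix has rank exactly $\rho:=n-t-r+1$.

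\emph{Rank count.} Put $a=n-t$ and $b=m-r$. We have $0\le\rho\le a\le b$, using $r\le t\le n-t$ for $\alpha$ small and $m\ge n$. The number of rank-$\rho$ matrices is
\[
{a\brack\rho}\prod_{i=0}^{\rho-1}(q^b-q^i)\;\ge\; q^{\rho(a-\rho)}\,q^{b\rho}\prod_{j\ge1}(1-q^{-j})\;\ge\;\tfrac14\, q^{\rho(a+b-\rho)},
\]
using the Gaussian binomial bound quoted in Proposition~\ref{prop:eigs} and $\prod_{j\ge1}(1-2^{-j})>0.28$. Dividing by $q^{ab}$, the probability is at least
\[
\tfrac14 q^{-(a-\rho)(b-\rho)}=\tfrac14 q^{-(r-1)(m-n+t-1)}\ge \tfrac14 q^{-r(t+m-n)}.
\]
This gives $|\mathcal{U}\setminus\mathcal{U}'|\ge\tfrac14 q^{-r(t+m-n)}|\mathcal{U}|$, which is the claim.

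The main obstacle is the kernel-dimension identity. One must check that the projection of $\ker D$ onto $V''$ has image exactly $M^{-1}(U_0)$; this uses $D(V')=U_0$ precisely. One must also check that its kernel is $\ker(D|_{V'})$. The remaining steps are routine counting.
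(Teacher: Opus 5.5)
Your proof is correct and follows essentially the same route as the paper. Both arguments reduce the condition $\dim\ker(A-B)=t-1$ to the induced map $V''\to W/\spn{u_1,\dots,u_t}$ having rank exactly $n-t-r+1$, and both count such maps to get a proportion of at least $c\,q^{-(r-1)(m-n+t-1)}$ with $c\ge 0.288$. The differences are cosmetic: you argue via the projection of the kernel and count rank-$\rho$ matrices by choosing a kernel and an injection, whereas the paper uses rank additivity and counts an image subspace, a surjection onto it, and the lifts.
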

\begin{lem}\label{lem:number of different t umvirate families with t-j intersection with U}
    Let $\mathcal{U}$ be a $t$-umvirate in $\LL(V,W)$ that is fixed on a $t$-dimensional subspace $V'\le V$. 
    For $1\le r\le t$, let $\mathfrak{U}_r$ be the set of all
        $t$-umvirate families $\mathcal{U}'$, also fixed on $V'$, that agree with $\mathcal{U}$ on exactly a $(t-r)$-dimensional subspace of $V'$. Then     $$\card{\mathfrak{U}_r}\le  4q^{r(m+t-r)}\ .$$
\end{lem}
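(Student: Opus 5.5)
We count $t$-umvirates $\mathcal{U}'$ fixed on $V'$ by the data $(w_1',\dots,w_t')$ with $w_i' = \mathcal{U}'v_i$. Set $u_i := w_i - w_i'$. The maps $A\in\mathcal{U}$ and $A'\in\mathcal{U}'$ agree on $x=\sum c_iv_i\in V'$ exactly when $\sum c_i u_i=0$. So the subspace of $V'$ on which $\mathcal{U}$ and $\mathcal{U}'$ agree is the kernel of the linear map
\[
\phi:V'\to W,\qquad v_i\mapsto u_i .
\]
It therefore has dimension $t-\rank\phi$. Since $\mathcal{U}'$ is determined by $\phi$ (given $\mathcal{U}$), the correspondence $\mathcal{U}'\leftrightarrow\phi$ is a bijection. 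Hence $\mathfrak{U}_r$ is in bijection with the set of linear maps $\phi\in\LL(V',W)\cong\F_q^{m\times t}$ of rank exactly $r$.

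We then bound the number of rank-$r$ matrices in $\F_q^{m\times t}$. Such a map is determined by its kernel $K\le V'$, a subspace of codimension $r$, together with an injective linear map $V'/K\to W$. The number of choices of $K$ is ${t\brack r}\le 4q^{r(t-r)}$, by the Gaussian binomial bound quoted in the proof of Proposition~\ref{prop:eigs}. The number of injective maps $\F_q^r\to\F_q^m$ is at most $q^{rm}$. Multiplying gives
\[
|\mathfrak{U}_r|\le 4q^{r(t-r)}q^{rm}=4q^{r(m+t-r)},
\]
as claimed.

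There is no real obstacle. The only point needing care is the identification of ``agree on exactly a $(t-r)$-dimensional subspace'' with $\ker\phi$, and checking that it is exactly $\ker\phi$ rather than something larger. This holds because both umvirates are defined on the same basis $v_1,\dots,v_t$ of $V'$, and agreement at $x$ is linear in the coordinates $c_i$.
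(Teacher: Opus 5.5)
Your proof is correct and follows the paper's argument: the same bijection between $\mathfrak{U}_r$ and the rank-$r$ maps in $\LL(V',W)$, followed by a count of those maps. The only difference is cosmetic. You choose the kernel $K$ and then an injection $V'/K\to W$, using ${t\brack r}\le 4q^{r(t-r)}$. The paper chooses the $r$-dimensional image in $W$ and then a surjection from $V'$, getting the factor $4$ from $\prod_{i\ge 1}(1-q^{-i})\ge 1/4$.
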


We conclude this section by combining all the above lemmas into a proof for the inductive statement.

\begin{proof}[Proof of Lemma~\ref{lem:inductive_statment}]
   As mentioned before, we prove the lemma by a strong induction on
the codomain dimension $m$, proving the statement simultaneously for all
triples $(t,n,m)$ with $1\le t\le n\le m$. 

Let $\mathcal{F},\mathcal{G}\subseteq \LL(V,W)$ be cross-$(t-1)$-intersection-free families with $1\le t\le n\le m$. Let $\alpha'$ be a sufficiently small constant to be determined later. We will prove the lemma for any constant $C\ge 8/\alpha'$. Note that, in particular, $C>2$ as required by Lemma~\ref{lem:iterative_argument}. 
   Recall that our goal is to show that 
   \begin{equation}
       \label{eq:goal of induction}|\mathcal{F}||\mathcal{G}|\le q^{2m(n-t)+\max\{2t(Ct-m),0\}+\max\{2t(2m-4(n-t)),0\}}\ .
   \end{equation}
Equation~\eqref{eq:goal of induction} holds trivially whenever $m\le \frac{Ct}{2}$ because $\IB{q}{n}{m}{t}\le |\LL(V,W)|^2=q^{2nm}$ and in this case $\max\{2t(Ct-m),0\}\ge 2mt$. It also 
holds trivially if $m\ge 4(n-t)$ since then $\max\{2t(2m-4(n-t)),0\}\ge 2mt$. These two cases cover the base case and we will proceed by induction narrowing the range of the parameters. For the induction step, we may assume for the rest of the proof that $\frac{Ct}{2}<m<4(n-t)$ and it follows that, in this range, $n>\frac{Ct}{8}\ge \frac{t}{\alpha'}$. 
If 
$$
 |\mathcal{F}||\mathcal{G}| < q^{-1} q^{2m(n-t)+\max\{2t(Ct-m),0\}+\max\{2t(2m-4(n-t)),0\}}\ ,
$$
then we are done. Otherwise, we can apply Lemma~\ref{lem:iterative_argument} with $k=1$. Let
$\alpha=\alpha(1)$ be the resulting constant from Lemma~\ref{lem:iterative_argument} and pick  $\alpha'\le \alpha$. Recall that $\alpha$ is independent of $C$, therefore
this choice is non-circular. Without loss of generality assume that we are in Case (i) of Lemma~\ref{lem:iterative_argument}, otherwise move to the dual families and proceed. Let $\mathcal{U}$ be the $t$-umvirate family that almost contains both $\mathcal{F}$ and $\mathcal{G}$. If $\mathcal{F},\mathcal{G}$ are contained in $\mathcal{U}$ then we are done, so without loss of generality assume $\mathcal{F}\not\subseteq\mathcal{U}$. 
let $V'\le V$ be the $t$-dimensional subspace on which $\mathcal{U}$ is fixed. Define
$$r_0:=\min\{\rank((A-B)|_{V'}): A\in \mathcal{F}\setminus\mathcal{U},B\in\mathcal{{U}}\}\ .$$
Observe that $1\le r_0\le t$. We now use the definition of $\mathfrak{U}_r$ from Lemma~\ref{lem:number of different t umvirate families with t-j intersection with U} and the corresponding bound obtained. Let $\kappa$ be the constant from Lemma~\ref{lem:families_almost_contained_in_U_small_intersection_other_umvirates}, which shows that for every $t$-umvirate $\mathcal{U}'\in \mathfrak{U}_r$ we have $\card{\mathcal{F}\cap\mathcal{U}'}\le q^{m(n-t)-2r(m-\kappa r)}$. Then,
    \begin{multline*}
    \card{\mathcal{F}\setminus\mathcal{U}}\le \sum_{r=r_0}^t\sum_{\mathcal{U}'\in\mathfrak{U}_r} \card{\mathcal{F}\cap\mathcal{U}'}\le \sum_{r=r_0}^t\sum_{\mathcal{U}'\in\mathfrak{U}_r}q^{-2r(m-\kappa r)}\card{\mathcal{U}}\le \sum_{r=r_0}^t\card{\mathfrak{U}_r}q^{-2r(m-\kappa r)}\card{\mathcal{U}} \\
        \le\sum_{r=r_0}^t 4q^{r(m+t-r)}q^{-2r(m-\kappa r)}\card{\mathcal{U}}\le \sum_{r=r_0}^t q^{-r(m-(2\kappa+2)t)}\card{\mathcal{U}}\le 2 q^{-r_0(m-(2\kappa+2)t)}\card{\mathcal{U}}\ .
    \end{multline*}
The second to last inequality is by $r\le t$, and the last inequality uses $m>(2\kappa+2)t$. This condition can be guaranteed by choosing $\alpha'$ sufficiently small.
Let $\kappa' =2\kappa+3$ we can bound further
\begin{equation}
    \label{eq:bound on total size outside the umvirate U}
    \card{\mathcal{F}\setminus\mathcal{U}}\le q^{-r_0(m-\kappa' t)}\card{\mathcal{U}}\ .
\end{equation}
 Similarly, if $\mathcal{G}\not\subseteq\mathcal{U}$, we have the following bound: 
 $$\card{\mathcal{G}\setminus\mathcal{U}}\le  q^{-r_0'(m-\kappa' t)}\card{\mathcal{U}}\ .$$
 Where $$r_0':=\min\{\rank((A-B)|_{V'}): A\in \mathcal{G}\setminus\mathcal{U},B\in\mathcal{{U}}\}\ .$$
We next show that 
$$\card{\mathcal{F}}\card{\mathcal{G}}\le (1-2^{-3}\cdot q^{-t(t+(m-n))})\card{\mathcal{U}}^2\ .$$ 
Let  
$$r':=\min\{\rank((A-B)|_{V'}): A\in (\mathcal{F}\cup\mathcal{G})\setminus\mathcal{U},B\in\mathcal{{U}}\}\ .$$
    Without loss of generality (since $\mathcal{F}$ and $\mathcal{G}$ play symmetric roles), assume that the minimal value $r'$ is achieved by some element $A \in \mathcal{F}$, and thus $r' = r_0.$ By  \eqref{eq:bound on total size outside the umvirate U} we have $$\card{\mathcal{F}\setminus \mathcal{U}}\le q^{-r'(m-\kappa' t)}\card{\mathcal{U}}\ .$$ 
    Now, if $\mathcal{G}\subseteq \mathcal{U}$ then $\card{\mathcal{G}\setminus \mathcal{U}}=0$.
    Otherwise, 
    $\card{\mathcal{G}\setminus \mathcal{U}}\le q^{-r_0'(m-\kappa' t)}\card{\mathcal{U}}$ with $r_0'\ge r'$. In both cases, we can bound
    $$\card{\mathcal{G}\setminus \mathcal{U}}\le q^{-r'(m-\kappa' t)}\card{\mathcal{U}}\ .$$
   By Lemma \ref{lem:counting_t-1_free_with_t_umvirate}, taking $A\in \mathcal{F}\setminus\mathcal{U}$ that realizes the minimal rank above and recalling that $\rank((A-B)|_{V'})=r'$ for every $B\in\mathcal{U}$, it follows that
    $$\card{\mathcal{G}\cap \mathcal{U}}\le \left(1-\frac{1}{4}q^{-r'(t+m-n)}\right)\card{\mathcal{U}}\ .$$
    We will use the trivial bound $\card{\mathcal{F\cap U}}\le\card{\mathcal{U}}$ and conclude the following: 

    \begin{align*}
        \card{\mathcal{F}}\card{\mathcal{G}}&=(\card{\mathcal{F\cap U}}+\card{\mathcal{F}\setminus\mathcal{U}})(\card{\mathcal{G\cap U}}+\card{\mathcal{G}\setminus\mathcal{U}})\\
        &\le \left(1+q^{-r'(m-\kappa' t)}\right)\left(1-\frac{1}{4} q^{-r'(t+(m-n))}+q^{-r'(m-\kappa' t)}\right)\card{\mathcal{U}}^2\\
        &\le\left(1-\frac{1}{4} q^{-r'(t+(m-n))}+2 q^{-r'(m-\kappa' t)}+ q^{-2r'(m-\kappa' t)}\right)\card{\mathcal{U}}^2\\
        &\le \left(1-\frac{1}{8} q^{-r'(t+(m-n))}\right)\card{\mathcal{U}}^2\\
        &\le \left(1-\frac{1}{8} q^{-t(t+(m-n))}\right)\card{\mathcal{U}}^2\ .
    \end{align*}
    The second to last inequality is by $2 q^{-r'(m-\kappa' t)}+ q^{-2r'(m-\kappa' t)}\le 3 q^{-r'(m-\kappa' t)}\le \frac{1}{8} q^{-r'(t+(m-n))}$ for all $n\ge (\kappa' +6)t$, which holds by taking $\alpha'$ to be small enough, and the last is by $r'\le t$.
\end{proof}

\subsection{Forbidden intersection in $\SL(n,q)$ and $\GL(n,q)$}
The goal of this section is to prove Theorem~\ref{thm:main2}.
We actually present here the proof for the analogous theorem in $\SL(n,q)$, which needs slightly stronger bounds, and deduce Theorem~\ref{thm:main2} by a simple argument (or via the same proof \emph{mutatis mutandis}).

We shall use Lemma~\ref{lem:iterative_argument} and the two lemmas below to conclude it is almost contained in a $t$-umvirate.

\begin{lem}[The \(\SL(n,q)\) version of Lemma~\ref{lem:counting_t-1_free_with_t_umvirate}]\label{lem:counting_t-1_free_with_t_umvirate-2}

There is an absolute constant \(c_0>0\) such that the following holds.
Assume \(n\ge 3t\). Let  $U\subseteq \SL(n,q)$ be a non-empty $t$-umvirate family defined by $Av_i=w_i$ for all $A\in U$ and $i\in\{1,\dots,t\}$. 
Let $B\in \SL(n,q)\setminus U$ and define
$$U'=\{A\in U:\dim\ker(A-B)\neq t-1\}$$
as the set of all maps in $U$ that satisfy the forbidden intersection of dimension $t-1$ with $B$.
Define $u_i=w_i-Bv_i$, and let $r=\dim\spn{u_1,\dots,u_t}$. 
Then 
$$\card{U'}
\le (1-c_0\cdot q^{-rt})\card{U}.$$ 

\end{lem}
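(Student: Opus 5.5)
Our plan is to show directly that a positive proportion, at least $c_0 q^{-rt}$, of $A\in U$ satisfy $\dim\ker(A-B)=t-1$, by exhibiting an explicit sufficient condition on the free part of $A$. Choose a basis $v_1,\dots,v_n$ of $V=\mathbb{F}_q^n$ extending $v_1,\dots,v_t$, and reorder so that $u_{1},\dots,u_r$ span $\spn{u_1,\dots,u_t}$. By row reduction, there is a subspace $K\le V'=\spn{v_1,\dots,v_t}$ of dimension $t-r$ with $(A-B)|_K=0$ for every $A\in U$; after a change of basis, $K=\spn{v_{r+1},\dots,v_t}$, while $(A-B)v_i=u_i$ for $i\le r$ with $u_1,\dots,u_r$ independent. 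Thus $\ker(A-B)\supseteq K$ always. A sufficient condition for $\dim\ker(A-B)=t-1$ is the following: there is exactly one further kernel direction, and it is produced in a controlled way. Concretely, write $D=A-B$, and require that $D$ restricted to the complement $V''=\spn{v_{t+1},\dots,v_n}$ behaves as follows. First, we fix $r-1$ of the columns $Dv_{t+1},\dots,Dv_{t+r-1}$ to equal $-u_2,\dots,-u_r$ up to the kernel shift; more simply, we demand $D(v_{t+j}+v_{j+1})=0$ for $j=1,\dots,r-1$. Second, we require that $D$ maps the remaining $n-t-r+1$ columns so that the total rank is exactly $n-t+1$.

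Counting goes as follows. Imposing $r-1$ prescribed columns costs a factor of about $q^{-n(r-1)}$, which is far too expensive. Instead we use a softer count. Let $\Pi$ be the projection of $W$ onto $W/\spn{u_1,\dots,u_r}$. Then $\dim\ker D = (t-r)+\dim\{x\in \spn{v_1,\dots,v_r}\oplus V'' : Dx=0\}$, and the second term equals $\dim\ker(\Pi D|_{V''})$ minus the rank defect coming from the $u$-components. So we first reduce to a statement about a uniformly random matrix $M=\Pi D|_{V''}$, an $(n-r)\times(n-t)$ matrix over a space whose distribution is close to uniform. We will require $\ker M$ to have dimension exactly $r-1$, and the induced map $\ker M\to\spn{u_1,\dots,u_r}$ (the $u$-coordinates of $Dx$), combined with $v_1,\dots,v_r\mapsto u_i$, to produce a kernel of dimension exactly $r-1$. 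Since $M$ is $(n-r)\times(n-t)$ with $n-r\ge n-t$, the event $\dim\ker M=r-1$ occurs with probability about $q^{-(r-1)(t-r+r-1)}\approx q^{-(r-1)t}$, which is comparable to $q^{-rt}$. Given this event, the $r\times(r-1)$ block of $u$-coordinates on $\ker M$ is nearly uniform, and the joint kernel has the right dimension with constant probability. This yields the claimed probability of at least $c_0q^{-rt}$ when $D$ is uniform over $\mathcal{L}(V,W)$ subject to the umvirate constraints, which is exactly Lemma~\ref{lem:counting_t-1_free_with_t_umvirate} with $m=n$.

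The genuinely new step, and the main obstacle, is transferring this to $U\subseteq\SL(n,q)$, that is, to $A=B+D$ conditioned on $\det A=1$. We would handle this by equidistribution of the determinant. Condition on all columns of $A$ except one free column $Av_n$, chosen outside the fixed and prescribed directions. The determinant is then an affine-linear function of that column, nonconstant whenever the other columns are independent. Meanwhile, the event above depends on $Av_n$ only through a condition that is also affine or codimension-bounded. We would arrange the event to be defined by constraints on the columns $v_{t+1},\dots,v_{n-1}$, plus a genericity condition on $Av_n$ that excludes at most an $O(q^{-1})$ fraction. Since $n\ge 3t$, we have room to place all prescribed structure among the first $n-1$ columns. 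Each nonzero value of $\det$ is then hit equally often within each fiber, so the $\GL$ proportion transfers to $\SL$ up to a constant factor. This gives $|U\setminus U'|\ge c_0q^{-rt}|U|$. The care needed is in verifying that the fiber of the last column meeting both $\det=1$ and the genericity condition still has size $\Omega(q^{n-1})$; the hypothesis $n\ge 3t$ ensures this.
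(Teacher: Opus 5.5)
Your full-space computation is fine, and slightly cleaner than you state it. Because $u_1,\dots,u_r$ are independent, $\ker D\cap(\spn{v_1,\dots,v_r}\oplus V'')\cong\ker M$ exactly, so no extra condition on the $u$-coordinates is needed, and $\dim\ker M=r-1$ has probability about $q^{-(r-1)(t-1)}$. However, that computation is about $A$ ranging over the umvirate of $\LL(V,V)$, where the free columns of $D=A-B$ are independent and uniform. \textbf{The gap is the passage from there to invertible matrices.}

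Your last-column device handles only the step from ``first $n-1$ columns independent'' to a fixed determinant. For such columns, each value of $\det$ is attained on an affine hyperplane of $q^{n-1}$ choices of $Av_n$, at most $q^{n-t}$ of which violate your genericity condition. So it is $t\ge 2$, not $n\ge 3t$, that makes this harmless; $t=1$ needs an extra condition.

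What the device needs as input is exactly the ``$\GL$ proportion'' you say transfers. That is, a fraction of order $q^{-t(r-1)}$ of the tuples $(Av_{t+1},\dots,Av_{n-1})$ must satisfy the rank condition on $M'$ \emph{and} be linearly independent together with $w_1,\dots,w_t$. Nothing in the proposal proves this, and it does not follow from the full-space count. The good event has probability about $q^{-(r-1)(t-1)}$, while independence has probability only a constant (about $\prod_{i\ge 1}(1-q^{-i})$). So no union bound helps, and a priori the rare event could sit mostly on singular configurations. Indeed, conditioning on it forces $Ay\in By+R$ for every $y$ in the $(r-1)$-dimensional space $Y_0=\ker M'$. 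You then have to control whether this pushes some vector of $A(Y_0)$ into $W_0=\spn{w_1,\dots,w_t}=A(\spn{v_1,\dots,v_t})$, which would make $A$ singular.

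The missing estimate can be supplied within your framework, roughly as follows:
\begin{enumerate}
\item Conditionally on $Y_0$, $D|_{Y_0}$ is uniform in $\Hom(Y_0,R)$ and independent of $D$ on a complement $Y_1$, which is uniform subject to $\Pi D|_{Y_1}$ being injective.
\item Since $B^{-1}(W_0+R)=B^{-1}W_0+\spn{v_1,\dots,v_t}$, one gets $\dim\bigl(Y_0\cap B^{-1}(W_0+R)\bigr)\le r-\dim(R\cap W_0)$.
\item On this subspace, $y\mapsto Ay+W_0$ is a uniformly random linear map into $(W_0+R)/W_0$, which has dimension $r-\dim(R\cap W_0)$. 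Hence it is injective with constant probability.
\item The columns on $Y_1$ are then chosen one at a time. The $j$-th of them, counted from the last, must avoid a linear and an affine subspace, each of codimension at least $j+1$, so the success probabilities multiply to an absolute constant.
\end{enumerate}
This is the actual content of the lemma, not a side verification.

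The paper organizes this step differently. After reducing to $B=I$, it takes $X$ to be a common complement of $\spn{v_1,\dots,v_t}$ and $W_0$. It encodes $A$ by a uniform $L\in\Hom(X,W_0)$ and a $Q\in\GL(X)$ with fixed determinant, so invertibility and the determinant constraint live on $Q$ alone. It then splits on $a=\dim(R\cap W_0)$, with $P$ the projection of $R$ to $X$ along $W_0$:
\begin{itemize}
\item For $a=0$, the fixed-determinant derangement count (Lemma~\ref{lem:fixed-det-derangements}) gives a constant fraction of $Q$ with $Q-I$ invertible. Then $(Q-I)^{-1}P$ is $r$-dimensional, and the problem reduces to a rank-one condition on $L$.
\item For $a\ge 1$, it explicitly constructs $Q$ with $\dim\ker(Q-I)=a-1$, so that $(Q-I)^{-1}P$ has dimension exactly $r-1$.
\end{itemize}
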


The following lemma follows from Lemma~\ref{lem:number of different t umvirate families with t-j intersection with U}.
\begin{lem}[The \(\SL(n,q)\) version of Lemma~\ref{lem:number of different t umvirate families with t-j intersection with U}]\label{lem:number of different t umvirate families with t-j intersection with U-3}
Let $U$ be an $\SL(n,q)$ $t$-umvirate fixed on a $t$-dimensional subspace $V'\le V$.  
For $1\le r\le t$, let $\mathcal U_r$ be the set of $\SL(n,q)$ $t$-umvirates $U'$, also fixed on $V'$, that agree with $U$ on exactly a $(t-r)$-dimensional subspace of $V'$. Then
\[
|\mathcal U_r|\le 4q^{r(n+t-r)}.
\]
\end{lem}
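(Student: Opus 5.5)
The plan is to reduce directly to Lemma~\ref{lem:number of different t umvirate families with t-j intersection with U} with $m=n$. An $\SL(n,q)$ $t$-umvirate $U'$ fixed on $V'$ is determined by the prescribed images $U'v_i=w_i'$, where $v_1,\dots,v_t$ is a fixed basis of $V'$ and $w_1',\dots,w_t'$ are linearly independent. Independence is forced because the members are invertible, and the family is non-empty; the determinant condition adds no extra parameter to this data. We therefore map
\[
U'\longmapsto \widehat{U'}:=\{A\in\LL(V,W): Av_i=w_i' \text{ for all } i\},
\]
which is the $\LL(V,W)$ $t$-umvirate with the same defining data, so that $U'=\widehat{U'}\cap\SL(n,q)$. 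This map is injective on non-empty $\SL(n,q)$ $t$-umvirates fixed on $V'$, since the data $(w_i')$ can be recovered from any member of $U'$.

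Next we check that the agreement subspace is preserved. The subspace of $V'$ on which $U$ and $U'$ agree is $\{x\in V': Ux=U'x\}$, where $Ux$ denotes the common value of $Ax$ for $A\in U$. This is the kernel of the linear map $V'\to W$ sending $v_i\mapsto w_i-w_i'$, and it depends only on the defining data. Hence $U'\in\mathcal U_r$ implies $\widehat{U'}\in\mathfrak U_r$ relative to $\widehat U$. Equivalently, $\dim\spn{w_i-w_i'}=r$, which is the parameter used in Lemma~\ref{lem:families_almost_contained_in_U_small_intersection_other_umvirates}.

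Combining injectivity with Lemma~\ref{lem:number of different t umvirate families with t-j intersection with U} for $m=n$ gives $|\mathcal U_r|\le|\mathfrak U_r|\le 4q^{r(n+t-r)}$. The only step needing care is the bookkeeping of injectivity and emptiness, namely that every $\SL(n,q)$ umvirate in question is non-empty and so has well-defined data. No real obstacle is expected.

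Should that lemma be unavailable, a direct count gives the same bound. Choose the $(t-r)$-dimensional agreement subspace $K\le V'$ in $\sqbinom{t}{t-r}\le 4q^{r(t-r)}$ ways. Then the injective map $V'/K\to W$ given by $x\mapsto U'x-Ux$ is chosen in at most $q^{nr}$ ways. Together these give $4q^{r(n+t-r)}$.
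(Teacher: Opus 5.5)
Your proof is correct and is essentially the paper's argument: the paper obtains this lemma directly from Lemma~\ref{lem:number of different t umvirate families with t-j intersection with U} with $m=n$. Your injection $U'\mapsto\widehat{U'}$ is injective because $U'=\widehat{U'}\cap\SL(n,q)$, and you observe that the agreement subspace is the kernel of $v_i\mapsto w_i-w_i'$, so it depends only on the defining data; together these are exactly the bookkeeping that reduction needs. Your fallback direct count (choose the kernel $K$, then an injective map $V'/K\to W$) is also valid; it counts the same rank-$r$ maps $V'\to W$ as the appendix proof, just by fixing the kernel rather than the image first.
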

\begin{thm}[Decoupled version of Theorem~2 for \(\mathrm{SL}(n,q)\)]\label{thm:main:sln}
There exists an absolute constant \(C_{\mathrm{SL}}>0\) such that the following holds. 
If \(n\ge C_{\mathrm{SL}}t\) and
\(\mathcal F,\mathcal G\subseteq \mathrm{SL}(n,q)\) are
cross-\((t-1)\)-intersection-free, then
\[
|\mathcal F|\,|\mathcal G|\le \frac{1}{(q-1)^2}\prod_{i=1}^{n-t}\bigl(q^n-q^{i+t-1}\bigr)^2.
\]
Moreover, equality holds only if \(\mathcal F=\mathcal G\), and $\mathcal F$ is an
\(\mathrm{SL}(n,q)\) \(t\)-umvirate or a dual \(\mathrm{SL}(n,q)\) \(t\)-umvirate.
\end{thm}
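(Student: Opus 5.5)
We regard $\mathcal F,\mathcal G\subseteq \SL(n,q)$ as cross-$(t-1)$-intersection-free families inside $\LL(V,W)$ with $m=n$, and compare their sizes with the bilinear-forms scale. An $\SL(n,q)$ $t$-umvirate $U$ has size $N:=\frac{1}{q-1}\prod_{i=1}^{n-t}(q^n-q^{i+t-1})$. Since $\prod_{i\ge1}(1-q^{-i})\ge 1/4$, we get $N\ge c\,q^{-1}q^{n(n-t)}$ for an absolute $c>0$. We may assume $|\mathcal F||\mathcal G|\ge N^2$, since otherwise there is nothing to prove. Then $|\mathcal F||\mathcal G|\ge q^{-k}q^{2n(n-t)}$ for a small absolute $k$ (say $k=6$).

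In the range $n\ge C_{\mathrm{SL}}t$ with $C_{\mathrm{SL}}\ge C$, the correction terms in Lemma~\ref{lem:inductive_statment} vanish when $m=n$. Also, Lemma~\ref{lem:inductive_statment} itself supplies the inductive hypothesis needed for Lemma~\ref{lem:iterative_argument}. Hence Lemma~\ref{lem:iterative_argument} applies with this $k$. It yields a $t$-umvirate $\mathcal U$ of $\LL(V,W)$, or a dual one (in which case we pass to transposes, which preserve $\SL$), such that $|\mathcal F\setminus\mathcal U|\le\gamma|\mathcal F|$ and $|\mathcal G\setminus\mathcal U|\le \gamma|\mathcal G|$ with $\gamma=tq^{\xi t-n}$. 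The intersection $U:=\mathcal U\cap\SL(n,q)$ is an $\SL$ $t$-umvirate, provided it is nonempty, which holds because $\mathcal F\cap\mathcal U\neq\emptyset$.

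Next we rerun the bootstrapping from the proof of Lemma~\ref{lem:inductive_statment} inside $\SL$. First, for every other $\SL$ $t$-umvirate $U'$ fixed on the same $V'$ and agreeing with $U$ on exactly a $(t-r)$-dimensional subspace, we bound $|\mathcal F\cap U'|$ by Lemma~\ref{lem:families_almost_contained_in_U_small_intersection_other_umvirates} applied to the ambient umvirate: $|\mathcal F\cap U'|\le q^{-2r(n-\kappa r)}q^{n(n-t)}\le q^{-2r(n-\kappa r)+O(1)}N$. Summing over $r\ge r_0$ with the count $4q^{r(n+t-r)}$ of Lemma~\ref{lem:number of different t umvirate families with t-j intersection with U-3} gives
\[
|\mathcal F\setminus U|\le q^{-r_0(n-\kappa' t)}N .
\]
The same bound holds for $\mathcal G$ with the overall minimal rank $r'$. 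Second, we take $A\in(\mathcal F\cup\mathcal G)\setminus U$ achieving $r'$, say $A\in\mathcal F$. By Lemma~\ref{lem:counting_t-1_free_with_t_umvirate-2} with $B=A$, we have $|\mathcal G\cap U|\le(1-c_0q^{-r't})N$. Multiplying out as before gives
\[
|\mathcal F||\mathcal G|\le \bigl(1-c_0q^{-r't}+O(q^{-r'(n-\kappa' t)})\bigr)N^2<N^2 ,
\]
because $n\ge C_{\mathrm{SL}}t$ makes the error term smaller than $\tfrac{c_0}{2}q^{-r't}$. This contradicts the assumption unless $\mathcal F,\mathcal G\subseteq U$. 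In that case $|\mathcal F||\mathcal G|\le |U|^2=N^2$, with equality only if $\mathcal F=\mathcal G=U$, or its dual in the transposed case. This gives both the bound and the characterization.

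The main obstacle is Lemma~\ref{lem:counting_t-1_free_with_t_umvirate-2}. It requires showing that a fixed $B\notin U$ has exactly $(t-1)$-dimensional intersection with a $\ge c_0q^{-rt}$ fraction of an $\SL$ coset, rather than of the full affine umvirate. The cited Lemma~\ref{lem:counting_t-1_free_with_t_umvirate} handles the affine case with the same exponent when $m=n$. For $\SL$ we must also control the determinant, and I would try a character-sum or equidistribution argument. Concretely, the set of good $A$ is described by conditions on $A|_{V''}$ modulo $\spn{u_i}$, and the determinant map restricted to this set should be close to uniform over $\F_q^*$ when $n\ge 3t$. A secondary issue is the bookkeeping of the constants $q^{O(1)}$ that separate $N$ from $q^{n(n-t)}/(q-1)$; these are absorbed into $k$.
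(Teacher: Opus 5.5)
Your proposal is correct and follows essentially the same route as the paper. You reduce to $|\mathcal F||\mathcal G|\ge M_{\SL}^2\ge q^{-6}q^{2n(n-t)}$, apply Lemma~\ref{lem:iterative_argument} with $k=6$ (transposing in the dual case), bound $|\mathcal F\setminus U|$ and $|\mathcal G\setminus U|$ via Lemmas~\ref{lem:families_almost_contained_in_U_small_intersection_other_umvirates} and~\ref{lem:number of different t umvirate families with t-j intersection with U-3}, and finish with Lemma~\ref{lem:counting_t-1_free_with_t_umvirate-2}. The lemma you flag as the main obstacle is already stated before the theorem and proved in the appendix, by counting fixed-determinant derangements via M\"obius inversion plus explicit constructions rather than by character sums, so citing it as you do is legitimate.
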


\begin{proof}
Assume \(\mathcal F,\mathcal G\subseteq \mathrm{SL}(n,q)\)
is a cross-\((t-1)\)-intersecting free pair
whose product
\(|\mathcal F|\,|\mathcal G|\) is maximum.

Notice that a (dual) \(\mathrm{SL}(n,q)\) \(t\)-umvirate has size
$M_{\mathrm{SL}}
 :=
 \frac{1}{q-1}\prod_{i=1}^{n-t}\bigl(q^n-q^{i+t-1}\bigr)
 \ge \frac14 q^{n(n-t)-1} .$

Since such a family is itself cross-\((t-1)\)-intersection-free, maximality gives
$
|\mathcal F|\,|\mathcal G|\ge M_{\mathrm{SL}}^2.$

Let \(C_{\mathrm{SL}}\) so that \(n\ge C_{\mathrm{SL}}t\) gives that all applications of Lemmas~\ref{lem:iterative_argument}, \ref{lem:families_almost_contained_in_U_small_intersection_other_umvirates}, and \ref{lem:counting_t-1_free_with_t_umvirate-2} below are valid.

We apply Lemma~\ref{lem:iterative_argument} with \(m=n\) and \(k=6\). Indeed, the preceding lower bound gives
\[
M_{\mathrm{SL}}^2
 \ge \frac1{16}q^{2n(n-t)-2}
 \ge q^{-6}q^{2n(n-t)}
\]
for all \(q\ge2\). Since \(n\ge C_{\mathrm{SL}}t\), the error terms in the lower bound~\eqref{eq:size_lower_bound_1} of
Lemma~\ref{lem:iterative_argument} vanish when \(m=n\). Therefore Lemma~\ref{lem:iterative_argument} gives a \(t\)-umvirate
\(\widetilde U\subseteq L(V,V)\), or a dual \(t\)-umvirate
\(\widetilde U^*\subseteq L(V,V)\), such that
\[
|\mathcal F\cap \widetilde U|\ge (1-\gamma)|\mathcal F|,
\qquad
|\mathcal G\cap \widetilde U|\ge (1-\gamma)|\mathcal G|,
\]
or the analogous conclusion with \(\widetilde U^*\), where
\[
\gamma:=tq^{\xi t-n},\qquad \xi:=k+10.
\]
After increasing \(C_{\mathrm{SL}}\), we may assume \(\gamma<1/2\). After replacing \((\mathcal F,\mathcal G)\) by
\((\mathcal F^\top,\mathcal G^\top)\) if necessary, we may assume that \(\widetilde U\) is an ordinary
\(\mathrm{SL}(n,q)\) \(t\)-umvirate.

Write $U:=\widetilde U\cap \mathrm{SL}(n,q)$, and 

\[
U=\{A\in \mathrm{SL}(n,q): Av_i=w_i\text{ for }1\le i\le t\},
\]
where \(v_1,\ldots,v_t\in V\) and \(w_1,\ldots,w_t\in V\) are linearly independent. Let
$V_0:=\langle v_1,\ldots,v_t\rangle.$

Since \(|\mathcal F\cap U|\le |U|=M_{\mathrm{SL}}\) and
\(|\mathcal G\cap U|\le |U|=M_{\mathrm{SL}}\), we have
\[
|\mathcal F\cap U|\,|\mathcal G\cap U|
 \ge (1-\gamma)^2 |\mathcal F|\,|\mathcal G|
 \ge (1-\gamma)^2 M_{\mathrm{SL}}^2.
\]
Consequently,
\[
|\mathcal F\cap U|\ge (1-\gamma)^2M_{\mathrm{SL}},
\qquad
|\mathcal G\cap U|\ge (1-\gamma)^2M_{\mathrm{SL}}.
\]

Assume that at least
one of \(\mathcal F,\mathcal G\) is not contained in \(U\), otherwise we are done.

For \(1\le r\le t\), let \(\mathcal U_r\) be the collection of all
\(\mathrm{SL}(n,q)\) \(t\)-umvirates fixed on \(V_0\) that agree with \(U\) on exactly a
\((t-r)\)-dimensional subspace of \(V_0\). By Lemma~\ref{lem:number of different t umvirate families with t-j intersection with U-3},
$|\mathcal U_r|\le 4q^{r(n+t-r)}.$

Let \(\kappa>1\) be the constant supplied by Lemma~\ref{lem:families_almost_contained_in_U_small_intersection_other_umvirates} for the fixed value
\(k=6\). Suppose first that \(\mathcal F\setminus U\neq\varnothing\), and define
\[
r_{\mathcal F}
 :=
 \min\{\operatorname{rank}((A-B)|_{V_0}): A\in\mathcal F\setminus U,\ B\in U\}.
\]
Since all elements of \(U\) agree on \(V_0\), this rank is independent of the particular
choice of \(B\in U\). Clearly \(1\le r_{\mathcal F}\le t\).

Divide \( \mathcal F\setminus U\) according to the
\(\mathrm{SL}(n,q)\) \(t\)-umvirates fixed on \(V_0\) to get:
\[
\mathcal F\setminus U
 \subseteq
 \bigcup_{r=r_{\mathcal F}}^t
 \bigcup_{U'\in\mathcal U_r}
 (\mathcal F\cap U').
\]
Applying Lemma~\ref{lem:families_almost_contained_in_U_small_intersection_other_umvirates} with \(m=n\), we get
$|\mathcal F\cap U'|
 \le q^{n(n-t)}q^{-2r(n-\kappa r)}.$
 Therefore
\[
\begin{aligned}
|\mathcal F\setminus U|
&\le
 \sum_{r=r_{\mathcal F}}^t
 \sum_{U'\in\mathcal U_r}
 |\mathcal F\cap U'|  \le
 4q^{n(n-t)}
 \sum_{r=r_{\mathcal F}}^t
 q^{r(n+t-r)-2r(n-\kappa r)} \\
&=
 4q^{n(n-t)}
 \sum_{r=r_{\mathcal F}}^t
 q^{-rn+rt+(2\kappa-1)r^2}
 \le 8q^{n(n-t)}q^{-r_{\mathcal F}(n-2\kappa t)}
 \le C_1 q^{-r_{\mathcal F}(n-Kt)}M_{\mathrm{SL}} .
\end{aligned}
\]
where \(C_1>0\) and \(K>0\) are absolute constants  depending only on \(\kappa\), and in the second to last inequality we assumed \(n\ge (2\kappa+1)t\).

The same argument gives the following: if \(\mathcal G\setminus U\neq\varnothing\) and
$r_{\mathcal G}$ defined respectively, then
\[
|\mathcal G\setminus U|
 \le C_1 q^{-r_{\mathcal G}(n-Kt)}M_{\mathrm{SL}}.
\]

Define $r' = \min(r_{\mathcal F},r_{\mathcal G})$.
By symmetry between \(\mathcal F\) and \(\mathcal G\), assume that this minimum is attained
by some \(B_0\in\mathcal F\setminus U\). Then \(r'=r_{\mathcal F}\). The preceding bounds
imply
\[
|\mathcal F\setminus U|
 \le \varepsilon M_{\mathrm{SL}},
\qquad
|\mathcal G\setminus U|
 \le \varepsilon M_{\mathrm{SL}},
\]
where $\varepsilon:=C_1q^{-r'(n-Kt)}.$

Notice that every element of
\(\mathcal G\cap U\) must avoid having intersection dimension exactly \(t-1\) with \(B_0\). Apply Lemma~\ref{lem:counting_t-1_free_with_t_umvirate-2} to \(\mathcal G\cap U\) and
\(B_0\) to get:
\[
|\mathcal G\cap U|
 \le (1-c_0q^{-r't})M_{\mathrm{SL}}.
\]
Set
$\delta:=c_0q^{-r't}.$
Using the trivial bound \(|\mathcal F\cap U|\le M_{\mathrm{SL}}\), we obtain
\[
|\mathcal F|\,|\mathcal G|
 \le
 (1+\varepsilon)(1-\delta+\varepsilon)M_{\mathrm{SL}}^2.
\]
Notice that by increasing \(C_{\mathrm{SL}}\) if needed, we get that $\varepsilon\le \frac{\delta}{8}$ for all \(n\ge C_{\mathrm{SL}}t\). Then
\[
(1+\varepsilon)(1-\delta+\varepsilon)
 \le 1-\delta+2\varepsilon+\varepsilon^2
 <1.
\]
Thus
$|\mathcal F|\,|\mathcal G|<M_{\mathrm{SL}}^2$,
contradicting the maximality of \(|\mathcal F|\cdot|\mathcal G|\).

Therefore \(\mathcal F,\mathcal G\subseteq U\), and hence
$|\mathcal F|\,|\mathcal G|\le |U|^2=M_{\mathrm{SL}}^2.$
Equality forces \(|\mathcal F|=|\mathcal G|=|U|\), so
\(\mathcal F=\mathcal G=U\). In the dual case, transposing back gives
\(\mathcal F=\mathcal G=U^*\). This proves the theorem.
\end{proof}

We now deduce the \(\GL(n,q)\) version from the \(\SL(n,q)\) version by decomposing
\(\GL(n,q)\) into determinant fibers.

\begin{proof}[Proof of Theorem~\ref{thm:main2}]
For \(\delta\in\mathbb F_q^\times\), write
$\GL_\delta(n,q)=\{A\in\GL(n,q):\det A=\delta\}.$

Each \(\GL_\delta(n,q)\) is a left coset of \(\SL(n,q)\). Fixing \(M_\delta\in
\GL_\delta(n,q)\), the map \(A\mapsto M_\delta^{-1}A\) is a bijection
$\GL_\delta(n,q)\to \SL(n,q)$
and preserves \(\dim\ker(A-B)\) for all pairs \(A,B\).

Let \(\mathcal F\subseteq\GL(n,q)\) be \((t-1)\)-intersection-free, and put
$\mathcal F_\delta=\mathcal F\cap \GL_\delta(n,q).$

Then \(M_\delta^{-1}\mathcal F_\delta\subseteq\SL(n,q)\) is also
\((t-1)\)-intersection-free. Hence, by Theorem~\ref{thm:main:sln},
$|\mathcal F_\delta|\le M_{\SL}$
for every \(\delta\in\mathbb F_q^\times\), where \(M_{\SL}\) is the size of an
\(\SL(n,q)\) \(t\)-umvirate. Summing over \(\delta\in\mathbb F_q^\times\) gives
\[
|\mathcal F|\le (q-1)M_{\SL} = \prod_{i=1}^{n-t} (q^n-q^{i+t-1}).
\]
which is the size of a \(\GL(n,q)\) \(t\)-umvirate. This proves the desired upper
bound in \(\GL(n,q)\).

It remains to discuss equality. If equality holds, then equality holds in every $\mathcal F_{\delta}$, meaning for each \(\delta\in\mathbb F_q^\times\),
$\mathcal F\cap\GL_\delta(n,q)$ is either the \(\delta\)-slice of a full \(t\)-umvirate in \(\GL(n,q)\), or the
\(\delta\)-slice of a full dual \(t\)-umvirate in \(\GL(n,q)\).

We claim that all slices come from the same full extremal (dual) umvirate. Fix
\(\delta_0\). There exists a full
extremal family \(U^{\mathrm{full}}\subseteq \GL(n,q)\), either a \(t\)-umvirate or a dual
\(t\)-umvirate, such that
$\mathcal F_{\delta_0}=U^{\mathrm{full}}\cap\GL_{\delta_0}(n,q).
$

 If for some \(\delta\) we had \(\mathcal F_\delta\not\subseteq U^{\mathrm{full}}\), then by
Lemma~\ref{lem:outside-point-fixed-det-slice} there exist
\(A\in\mathcal F_{\delta_0}\) and \(B\in\mathcal F_\delta\) with
\(\dim\ker(A-B)=t-1\), contradicting the forbidden-intersection condition. Hence
\(\mathcal F_\delta\subseteq U^{\mathrm{full}}\) for every \(\delta\). Therefore \(\mathcal F\subseteq U^{\mathrm{full}}\).
\end{proof}
We defer the proof of Lemma~\ref{lem:outside-point-fixed-det-slice} to the Appendix.
\begin{lem}\label{lem:outside-point-fixed-det-slice}
Assume \(n\ge 3t\). Let
$U_\eta=\{A\in \GL(n,q):\det A=\eta,\ Av_i=w_i \text{ for }1\le i\le t\}
$
be a determinant slice of a \(t\)-umvirate, and let
$U=\{A\in \GL(n,q): Av_i=w_i \text{ for }1\le i\le t\}$
be the corresponding full \(t\)-umvirate.

Then for every \(B\in \GL(n,q)\setminus U\), there exists
\(A\in U_\eta\) such that
$\dim\ker(A-B)=t-1.$
\end{lem}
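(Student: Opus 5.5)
We build $A$ explicitly by choosing its values on a complement of $V_0:=\spn{v_1,\ldots,v_t}$. Write $u_i:=w_i-Bv_i$ and $r:=\dim\spn{u_1,\ldots,u_t}$. Since $B\notin U$, we have $r\ge 1$. Let $K_0:=\{x\in V_0:(A-B)x=0\}$; this space is the same for every $A\in U$, and it has dimension $t-r$.

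The plan is to make $\ker(A-B)$ equal to $K_0\oplus K_1$ for a chosen $(r-1)$-dimensional subspace $K_1$ of a complement of $V_0$. That gives total dimension $t-1$. Concretely, fix a basis $v_{t+1},\ldots,v_n$ of a complement $V_1$ of $V_0$. Set $Av_{t+j}:=Bv_{t+j}$ for $1\le j\le r-1$, which puts $\spn{v_{t+1},\ldots,v_{t+r-1}}$ inside $\ker(A-B)$. On the remaining basis vectors $v_{t+r},\ldots,v_n$, choose $A$ so that $A-B$ has no further kernel and $A$ is invertible with $\det A=\eta$.

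Let $D:=A-B$. By construction $D$ vanishes on $K_0\oplus\spn{v_{t+1},\ldots,v_{t+r-1}}$, and $D(V_0)=\spn{u_i}$ has dimension $r$. Choose $D v_{t+r},\ldots,D v_n$ to be linearly independent vectors, spanning a complement of $\spn{u_i}$ in a space of dimension $r+(n-t-r+1)=n-t+1\le n$. Then $\operatorname{rank}D=n-t+1$, so $\dim\ker D=t-1$ exactly. This needs $n-t-r+1\ge 0$, which holds since $n\ge 3t$.

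So we need to choose $x_j:=Av_{t+j}$ for $j\ge r$ subject to three conditions:
\begin{itemize}
\item[(a)] the vectors $x_j-Bv_{t+j}$ are independent modulo $\spn{u_i}$;
\item[(b)] the columns $w_1,\ldots,w_t$, $Bv_{t+1},\ldots,Bv_{t+r-1}$, $x_r,\ldots,x_{n-t}$ form a basis;
\item[(c)] the determinant equals $\eta$.
\end{itemize}
Condition (b) needs $w_1,\ldots,w_t,Bv_{t+1},\ldots,Bv_{t+r-1}$ to be independent. We cannot always guarantee this for an arbitrary fixed complement. Instead we choose the first $r-1$ vectors of $V_1$ with $Bv$ outside $\spn{w_i}+{}$(previous choices). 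This is possible because $B$ is invertible, so $B^{-1}(\spn{w_1,\ldots,w_t,\ldots})$ has dimension less than $t+r-1\le 2t<n$. Hence it cannot contain $V_1+V_0$, and we can pick vectors of $V_1$, after adjusting the complement, avoiding it.

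After that, choose $x_r,\ldots,x_{n-t-1}$ greedily. At each step the forbidden set is a union of two proper subspaces: the span of the current columns, and $B v_{t+j}+{}$(the span of the $u_i$ and the previous differences). Both have dimension at most $n-2$ during the greedy phase, and the union of two proper subspaces never covers $\F_q^n$.

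The main obstacle is the last column $x_{n-t}$, where all three constraints must hold at once. The determinant is an affine function of $x_{n-t}$ on the nonzero coset. Fix the other columns, forming an $(n-1)$-dimensional hyperplane $H$. Then $\det$ depends only on $x_{n-t}$ modulo $H$, and takes the value $\eta$ exactly on an affine hyperplane $y_\eta+H$ with $q^{n-1}$ points. Condition (a) forbids $x_{n-t}$ from lying in $Bv_n+S$, where $S:=\spn{u_i}+{}$(previous differences) has dimension $n-t$. The intersection $(y_\eta+H)\cap(Bv_n+S)$ has size at most $q^{n-t}<q^{n-1}$ when $t\ge 2$, so a valid $x_{n-t}$ exists. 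When $t=1$ we have $r=1$; we then use the freedom in the earlier greedy choices (or rescale $x_{n-t-1}$ by a scalar) to move $H$ so that the two affine spaces do not coincide. I expect checking this degenerate case, and the independence bookkeeping in (b), to be the fiddly part. With these choices, $A\in U_\eta$ and $\dim\ker(A-B)=t-1$.
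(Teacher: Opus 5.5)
For $t\ge 2$ your argument is correct. It is a genuinely different route from the paper's.

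\textbf{Comparison with the paper.} The paper never builds $A$ by hand. It reads the lemma off the quantitative Lemma~\ref{lem:counting_t-1_free_with_t_umvirate-fixed-det}, which:
\begin{itemize}
\item reduces to $B=I$;
\item writes $A$ in block form over a common complement of $\langle v_i\rangle$ and $\langle w_i\rangle$;
\item splits according to $\dim(R\cap W_0)$ and uses the fixed-determinant derangement bound (Lemma~\ref{lem:fixed-det-derangements}, proved by M\"obius inversion) to show that a proportion at least $c_0q^{-rt}$ of $U_\eta$ has $\dim\ker(A-B)=t-1$.
\end{itemize}
Your greedy column-by-column construction yields only existence. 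In exchange it uses nothing beyond counting points in subspaces and cosets, plus the fact that $\{\det A=\eta\}$ is an affine hyperplane in the last free column. For $t\ge2$ it in fact only needs $n\ge t+r$. The paper pays for the heavier machinery because the positive-proportion bound is needed anyway in the $\mathrm{SL}(n,q)$ bootstrapping (Lemma~\ref{lem:counting_t-1_free_with_t_umvirate-2} inside Theorem~\ref{thm:main:sln}), so there this lemma is a one-line corollary.

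One small slip: in the greedy phase one of your two forbidden sets is a coset, not a subspace. Justify that step by the count $2q^{n-2}<q^n$, not by ``two proper subspaces never cover'', which is false for cosets over $\mathbb{F}_2$.

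\textbf{The gap at $t=1$.} This case is a genuine gap, not just bookkeeping. There $\dim S=n-1$, and legal greedy choices can lead to a dead end. Take $q=2$, $n=3$, $B=I$, $v_i=e_i$, $w_1=e_1+e_2$, $\eta=1$. The choice $x_1=e_1$ passes your greedy test, since only $\{0,e_1+e_2,e_2\}$ is forbidden. But then $H=S=\langle e_1,e_2\rangle$, and both $\{y:\det[w_1,x_1,y]=1\}$ and $Bv_3+S$ equal $e_3+\langle e_1,e_2\rangle$. So no valid last column exists.

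Your proposed rescaling of $x_{n-2}$ does not move $H$ (the span is unchanged), and it is unavailable over $\mathbb{F}_2$. So it cannot be the general remedy. Any remedy must also use $n\ge 3$. Indeed, for $n=2$ and $q\ge3$ the statement fails: take $B=I$ and $w_1=\eta v_1$ with $\eta\neq1$. Then every $A\in U_\eta$ has second eigenvalue $1$, so $\ker(A-I)\neq 0$.

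\textbf{A clean fix.} Treat $t=1$ directly. Put $C=B^{-1}A$. You need $Cv_1=w':=B^{-1}w_1\neq v_1$, $\det C=\delta:=\eta/\det B$, and $C-I$ invertible.
\begin{itemize}
\item If $w'\notin\langle v_1\rangle$: let $C$ be the companion matrix, in a basis $v_1,w',f_3,\dots,f_n$, of a monic $p$ of degree $n$ with $(-1)^np(0)=\delta$ and $p(1)\neq 0$ (choose the linear coefficient suitably). Then $Cv_1=w'$, $\det C=\delta$, and $\det(C-I)=\pm p(1)\neq 0$.
\item If $w'=av_1$ with $a\neq 0,1$: take $C=a\oplus C'$ on $\langle v_1\rangle\oplus Y$, where $C'$ is such a companion matrix on $Y$ with $\dim Y=n-1\ge2$ and $\det C'=\delta/a$. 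This is exactly where $n\ge3$ is used.
\end{itemize}
Your ``use the earlier freedom'' idea can also be pushed through by a case analysis on how the choice of $x_{n-2}$ determines $H$ and $S$. But that analysis has to actually be carried out.
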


\section{Frankl-R\"odl-Type Constructions}\label{sec:FR} We conclude the paper with several constructions of large $(t-1)$-intersection-free families of $\mathbb{F}_q^{n\times n}$ in the $t = \Omega(n)$ regime that are larger than the $t$-umvirates and also larger than the following variant of the Frankl families $F_{n,k,t,r}$ for sets discussed in Section~\ref{sec:new_intro}.

Let $v = v_1,\ldots,v_{t+2r}$ be a list of linearly independent vectors of $\mathbb{F}_q^n$, and let $w = w_1,\ldots,w_{t+2r}$ be a list of vectors of $\mathbb{F}_q^n$. We define the \emph{Frankl families} of $\mathbb{F}_{q}^{n \times n}$ to be
\[
\mathfrak{F}(v,w) := \left \{ A \in \mathbb{F}_q^{n \times n} : Av_i = w_i \text{ for at least $t+r$ of the ordered pairs $(v_i,w_i) \in \{(v_j,w_j)\}_{j=1}^{t+2r}$}  \right \}.
\]
It is easy to see that Frankl families are $(t-1)$-intersection-free and largest when $r=0$. 

 By intersecting the constructed families below with $\GL(n,q)$, we also get large $(t-1)$-intersection-free families of $\GL(n,q)$. We keep the size analysis inside $\mathbb{F}_q^{n\times n}$ for simplicity.

The first family that we construct will be a $(t-1)$-intersection-free family that is larger than a $t$-umvirate (or any Frankl family) for $t \geq n/2+1$. 

\subsubsection*{Polynomial Construction}
Our strategy is to force all intersection dimensions to be strictly less than $t$. We identify the vector space $\mathbb{F}_q^n$ with the field extension $\mathbb{F}_{q^n}$. We let $v_\theta \in \mathbb{F}_q^n$ denote the vector corresponding to the field element $\theta \in \mathbb{F}_{q^n}$ under this isomorphism.
Define $\mathcal{F}_{\text{alg}}$ as the set of linearized polynomials with $q$-degree less than $t-1$, that is,
    \[ \mathcal{F}_{\text{alg}} := \left\{ \sum_{i=0}^{t-2} c_i x^{q^i} \;\middle|\; c_i \in \mathbb{F}_{q^n} \right\} \subseteq \mathbb{F}_{q^n}[x]. \]
Note that $p(x) \in \mathbb{F}_{q^n}[x]$ is a linear transformation, i.e., $\alpha \in \mathbb{F}_q, a,b \in \mathbb{F}_{q^n}$, $p(\alpha a+b) = \alpha p(a)+p(b)$. 
Thus, we may identify each $p(x) \in \mathcal{F}_{\text{alg}}$ with a matrix $A_p \in \mathbb{F}_q^{n \times n}$ acting linearly on $\mathbb{F}_q^n$, so that $p(\theta) = A_p v_\theta$. Moreover, if $p(x),p'(x) \in \mathcal{F}_{\text{alg}}$ are two distinct polynomials, then $A_p \neq A_{p'}$. In particular, the set $\mathcal{F}_{\text{alg}}$ corresponds to a subspace $M_{\text{alg}} \subseteq \mathbb{F}_q^{n \times n}$ such that $| \mathcal{F}_{\text{alg}}| = |M_{\text{alg}}|$.

The roots of any $p(x) \in \mathcal{F}_{\text{alg}}$ generate the subspace $\{v \in \mathbb{F}_q^n : A_pv = 0\} \leq \mathbb{F}_q^n$, which has dimension at most $t-2$; therefore, we have 
\[
\dim \ker(A-B) \le t-2
\]
for any two distinct $A,B \in M_{\text{alg}}$. Furthermore, we have $|M_{\text{alg}}| = (q^n)^{t-1} = q^{n(t-1)}$, which is larger than the $t$-umvirate of size $q^{n^2-nt}$, provided that $t\ge n/2 +1$.

\subsubsection*{Parity Construction}
We now give a simple construction that forces the parity of the intersection of any two matrices to be divisible by 2, which will show the family must be $(t-1)$-intersection-free for all even $t$.

Let $n=2a$. Let $\mathcal{F}$ be the set of $\mathbb{F}_{q^2}$-linear transformations on $\mathbb{F}_{q^2}^a$. Note that such transformations can be written as the set of all $a \times a$ matrices over the vector space $\mathbb{F}_{q^2}^a$, or as a subset of $n \times n$ matrices over $\mathbb{F}_q^{n}$. For any two linear transformations $A,B \in \mathcal{F}$ represented as $a \times a$ matrices over $\mathbb{F}_{q^2}$, we have $\ker(A-B) \cong \mathbb{F}_{q^2}^{a'} \leq \mathbb{F}_{q^2}^a$. Now if we let $A',B'$ be representations of $A,B$ as $n \times n$ matrices with entries in $\mathbb{F}_q$, then we have $\ker (A'-B') \cong  \mathbb{F}_{q}^{2a'} \leq \mathbb{F}_{q}^{2a} = \mathbb{F}_{q}^{n}$, which shows that any two linear transformations of $\mathcal{F}$ even have intersection as $n \times n$ matrices. Therefore, if $t$ is even, then $\mathcal{F} \subseteq \mathbb{F}_q^{n \times n}$ is $(t-1)$-intersection-free. Moreover, we have $|\mathcal{F}| = (q^2)^{a \cdot a} = q^{2a^2}$, which is larger than the $t$-umvirate of size $q^{(2a)^2-2at}$ when $t \geq (n+1)/2$.

\subsubsection*{Probabilistic Construction (Alteration)} We now construct a large family that is $(t-1)$-intersection-free and larger than any $t$-umvirate or Frankl family when $t \approx ((\sqrt{5}-1)/2)n$, for example.

Let $a,k > 0$ be parameters to be chosen later. Draw a family $\mathcal{A} \subseteq \mathbb{F}_q^{n \times n}$ of size $aq^k$ uniformly at random. 
Using the standard Gaussian coefficient estimates in Section~\ref{sec:step1} and basic counting, the number of rank $n-t+1$ matrices of $\mathbb{F}_q^{n \times n}$ is $v_{n-t+1} = \Theta(q^{(n+t-1)(n-t+1)})$, so the probability that the difference of any two matrices of $\mathcal{A}$ has kernel dimension exactly $t-1$ is $c_q/q^{(t-1)^2}$ for some constant $c_q$ depending only on $q$. Now for each pair of matrices of $\mathcal{A}$ that has kernel dimension exactly $t-1$, we remove one of the matrices from $\mathcal{A}$. Let $\mathcal{A}' \subseteq \mathcal{A}$ be the resulting family. Note that $\mathcal{A}'$ is $(t-1)$-intersection-free, and in expectation, we remove at most $\binom{a q^k}{2} c_q q^{-(t-1)^2}$ matrices from $\mathcal{A}$ to obtain $\mathcal{A}'$. Thus, there exists a choice of $\mathcal{A}$ requiring at most this many removals.

Now set $k = (t-1)^2$ and take $a>0$ to be sufficiently small. Then there exists a choice of $\mathcal{A}$ where we remove at most
\[
\frac{a q^{(t-1)^2}(a q^{(t-1)^2}-1)}{2} \cdot \frac{c_q}{ q^{(t-1)^2}} \leq c_q a^2q^{(t-1)^2} 
\]
matrices from $\mathcal{A}$ to make it $(t-1)$-intersection-free. So we have that $\mathcal{A}'$ is a family of size $a_q \cdot q^{(t-1)^2}$ for some constant $a_q$ depending only on $a,q$, which is larger than a $t$-umvirate or Frankl family when, for example, $t \approx ((\sqrt{5}-1)/2)n$, and $n$ is sufficiently large.

\subsection*{Acknowledgements}

We would like to thank Noam Lifshitz, Nathan Keller, and Guy Kindler for their comments and suggestions, which substantially improved the paper. We also thank the Simons Institute for the Theory of Computing and the organizers of the \emph{Analysis and TCS: New Frontiers} program, where this work was conceived. We used AI-based writing tools to assist with proofreading, correcting grammatical and typographical mistakes, rephrasing text, and proposing simplifications for specific parts of the exposition.

\bibliographystyle{alpha}
\bibliography{master}

\appendix

\section{Comparison of~\cite{EllisKL24} and~\cite{ernst2023intersection}}\label{sec:app}

The techniques of Ellis et al.~are analytic whereas Ernst et al.~follow an algebraic approach. From an algebraic point of view, it is convenient to think of the finite general linear group as a $q$-analogue of the symmetric group, as many of the representation-theoretical properties of $S_n$ have natural analogues in $\GL(n,q)$. This analogy was indeed instrumental in~\cite{ernst2023intersection} for deriving the $q$-analogue of the Erd\H{o}s--Ko--Rado theorem for $t$-intersecting permutations~\cite{EllisFP11}. However, from an analytic perspective, the finite general linear groups and symmetric groups are remarkably different. For instance, by Stirling's approximation
\[
    n! \sim \sqrt{2\pi n}\left( \frac{n}{e} \right)^n \ll 2^{n^2},
\]
one observes that the embedding of $S_n \subseteq \mathbb{Z}_2^{n \times n}$ as permutation matrices or the embedding of $S_n \subseteq [n]^n$ as words in a product space incurs an exponential loss of measure, whereas it is a well-known fact that a matrix drawn uniformly at random from $\mathbb{F}_q^{n \times n}$ will be invertible with probability at least $1/4$ for any prime power $q$.
The latter bodes well for analytic approaches, and we believe that other extremal combinatorial problems for matrix groups are perhaps more amenable to analytical methods, despite their algebraic appearances.   

It is plausible that algebraic and representation-theoretic methods can be used to prove $t$-intersecting results for other matrix groups, but it seems this would require ad-hoc algebraic arguments for each group, and the particulars of their representation theories can be rather baroque. On the other hand, the analytical approaches not only give more general results, but they also seem more resilient to slight changes in the structure of the domain (e.g., $\GL(n,q)$ versus $\SL(n,q)$). Indeed, the current state-of-the-art algebraic techniques appear to be incapable of proving $(t-1)$-intersection-free results for arbitrary $t \in \mathbb{N}$. This is because non-trivial edge-weightings of the `disjointness graph' that models the $(t-1)$-intersection-free property (where two vertices are non-adjacent if their intersection is $(t-1)$-intersection-free) do not respect the `natural' symmetries of the graph's automorphism group, making it too difficult to control the eigenvalues of the associated weighted adjacency matrix. The latter is needed in order to bound the graph's independence number via the Delsarte--Hoffman bound. 

In principle, the algebraic approach is capable of proving exact (i.e., non-asymptotic) $t$-intersecting results for matrix groups, but it seems this will require significantly new insights.

\section{Missing Proofs}\label{apx:missing_proofs}

\begin{proof}[Proof of Lemma~\ref{lem:projection_exact_intersection}]
Let $T = \ker(A-B)$ and $T' = \ker(p(A)-p(B))$. Given the decomposition $V = V' \oplus \spn{v}$, every vector $x \in V$ can be uniquely written as $x = u + cv$ for some $u \in V'$ and $c \in \mathbb{F}_q$. We consider the projection map $\phi: V \to V'$ defined by $\phi(u + cv) = u$ and show that its restriction to $T$ is a linear isomorphism onto $T'$.

First, we verify that $\phi(T) \subseteq T'$. For any $t \in T$, we write $t = t' + cv$ where $t' = \phi(t) \in V'$. Since $(A-B)t = \bar{0}$, linearity implies $(A-B)t' = -c(A-B)v = -c(w-w')$. Applying the projection $\pi$ to both sides, we see that $(p(A)-p(B))t' = \pi(-c(w-w')) = \bar{0}$ because $w-w'$ is in the kernel of $\pi$. This confirms that $t' \in T'$.

To see that this mapping is injective, suppose $\phi(t) = \bar{0}$ for some $t \in T$. This implies $t \in \spn{v}$. However, the assumption $w \neq w'$ implies $(A-B)v \neq \bar{0}$, so $v \notin T$. Consequently, $T \cap \spn{v} = \{\bar{0}\}$, and the restriction $\phi|_T$ is injective.

Finally, we establish surjectivity by showing that every $u \in T'$ has a pre-image in $T$. By the definition of $T'$, the vector $(A-B)u$ must lie in the kernel of $\pi$, meaning $(A-B)u = k(w-w')$ for some scalar $k \in \mathbb{F}_q$. Substituting $(A-B)v$ for $w-w'$, we find that $(A-B)(u - kv) = \bar{0}$, which identifies $t = u - kv$ as an element of $T$. Since $\phi(t) = u$ by construction, $\phi|_T$ is surjective. Having shown that $\phi|_T: T \to T'$ is an isomorphism, we conclude that $\dim T = \dim T'$.
\end{proof}

\begin{proof}[Proof of Lemma~\ref{lem:counting_t-1_free_with_t_umvirate}]
   Let $v_1,\dots, v_t, v_{t+1},\dots, v_n$ be a basis of $V$ formed by extending the sequence $v_1,\dots, v_t$.
Then any linear map in $\LL(V,W)$ is determined by the images of the vectors $v_i$. To begin, observe that $\card{\mathcal{U}}=\card{W}^{n-t}=q^{m(n-t)}$.

 We now proceed to bound the number of elements in $\bar{\mathcal{U}}'=\mathcal{U}\setminus\mathcal{U}'$. For any fixed $A\in \mathcal{U}$, define $D=A-B$. 
 The action of $D$ on $v_1, \dots, v_t$ is completely determined by the following relations
$$Dv_i = Av_i - Bv_i = w_i - Bv_i = u_i,\quad i=1,\dots,t.$$
In contrast, since $A$ varies over all mappings in $\mathcal{U}$, the
action of $D$ on $v_{t+1},\dots , v_n$ 
can be chosen arbitrarily.

Let $U=\spn{u_1,\dots,u_t}\le W$  and $r=\dim(U)$. Since $B\notin\mathcal{U}$, we have  $r\ge 1$, so that $1\le r\le t$. 
The condition $\dim\ker(D)=t-1$ is equivalent to $\rank(D)=n-t+1$. Since $\dim(U)=r$, we must complete the definition of $D$ by extending $U$ to a subspace of $W$ whose dimension is $n-t+1$. Put differently, our goal is to increase the rank of $D$ by exactly $n-t+1-r$ by exploiting the freedom to map $v_{t+1},\dots,v_n$ arbitrarily into $W$.
Let $k=n-t$, and let $z_1,\dots, z_k\in W$ denote the images under $D$ of $v_{t+1}, \dots ,v_n$, that is,  $z_j=Dv_{t+j}$. 
Now, 
$$\rank(D)=\dim(\spn{u_1,\dots, u_t,z_1,\dots,z_k})=\dim(U + \spn{z_1,\dots,z_k})=r+\dim(\spn{z_1,\dots,z_k}/U).$$
The number of elements in $\bar{\mathcal{U}}'$ is exactly the number of $k$-tuples $z_1,\dots,z_k\in W$ for which
$$\dim(\spn{z_1,\dots,z_k}/U)=n-t+1-r.$$
To choose a target image subspace of dimension $n-t+1-r$ in $W/U$ there are 
$${m-r \brack n-t+1-r}_q\ge q^{(n-t+1-r)(m-n+t-1)}$$ 
possible choices. 

Next, mapping the $k$ vectors $v_{t+1},\dots,v_{n}$ surjectively onto the chosen subspace is equivalent to having a full-rank matrix of size $(n-t+1-r)\times (n-t)$. There are 
$$\prod_{j=0}^{n-t-r}(q^{n-t}-q^j)=q^{(n-t)(n-t-r+1)}\prod_{j=0}^{n-t-r}(1-q^{j-n+t})\ge c\cdot q^{(n-t)(n-t-r+1)}$$
many options for this (using the fact that $\prod_{j=0}^{n-t-r}(1-q^{j-n+t})>\prod_{k=1}^\infty (1-q^{-k}) = c(q)$ and $c(q)$ is minimized for $q=2$ with $c(2)\ge 0.288$). Finally, lifting the mapped vectors from $W/U$ back to $W$, we have $q^{\dim(U)}=q^r$ options for each and $q^{r(n-t)}$ in total.
Putting everything together and observing that the dependence on $r$ in the surjection and lifting steps cancels out, we obtain
$$\card{\bar{\mathcal{U}}'}\ge  q^{(n-t+1-r)(m-n+t-1)}\cdot c\cdot q^{(n-t)(n-t+1)}=c\cdot q^{m(n-t)-(r-1)(m-n+t-1)}.$$ 
\end{proof}
\begin{proof}[Proof of Lemma~\ref{lem:number of different t umvirate families with t-j intersection with U}]
    Suppose $\mathcal{U}$ is defined by the mapping $v_i \mapsto w_i$ for $i \in \{1,\dots, t\}$. Any other $t$-umvirate $\mathcal{U}'$ defined on $V'$ is uniquely determined by some mapping $v_i \mapsto w_i'$. 
    
    Consider the difference map $D \in \LL(V',W)$ defined by $D(v_i) = w_i - w_i'$. The umvirates $\mathcal{U}$ and $\mathcal{U}'$ agree exactly on a $(t-r)$-dimensional subspace of $V'$ if and only if the kernel of $D$ has dimension $t-r$. By the rank-nullity theorem, this is equivalent to $D$ having rank exactly $r$. Since shifting by the fixed target vectors $w_i$ is a bijection, counting the number of such umvirates $\mathcal{U}'$ is precisely equivalent to counting the number of rank-$r$ linear maps from $V'$ to $W$.
    
    To construct a rank-$r$ map $D \in \LL(V',W)$, we first choose an $r$-dimensional target subspace $W' \le W$, and then choose a surjective linear map from $V'$ to $W'$. 
    The number of ways to choose $W'$ is given by the Gaussian binomial coefficient:
    $$ \begin{bmatrix} m \\ r \end{bmatrix}_q = \prod_{i=0}^{r-1}\frac{q^m-q^i}{q^r-q^i}\ . $$
    The number of surjective linear maps from a $t$-dimensional space to an $r$-dimensional space is:
    $$ \prod_{i=0}^{r-1}(q^t-q^i)\ . $$
    Multiplying these together, the number of such $t$-umvirates is:
    $$ \prod_{i=0}^{r-1}\frac{(q^m-q^i)(q^t-q^i)}{q^r-q^i}= q^{r(m+t-r)}\prod_{i=0}^{r-1}\frac{(1-q^{i-m})(1-q^{i-t})}{1-q^{i-r`}} \le 4q^{r(m+t-r)}\ . $$
    The last inequality holds since $(1-q^{i-m})(1-q^{i-t}) \le 1,$ and note that for any $q\ge 2$ we can bound  $\prod_{i=0}^{r-1}(1-q^{i-r})=\prod_{i=1}^{r}(1-q^{-i})\ge \prod_{i=1}^{\infty}(1-q^{-i})\ge 1/4.$
    
\end{proof}

The proof of Lemma~\ref{lem:counting_t-1_free_with_t_umvirate-2} is longer than the proof
of Lemma~\ref{lem:counting_t-1_free_with_t_umvirate} because the same geometric argument
has to be carried out inside \(\SL(n,q)\). In the full linear space \(L(V,W)\), once the values
of a map on \(v_1,\ldots,v_t\) are fixed, the remaining values can be chosen independently.
In \(\SL(n,q)\), this is no longer true: the remaining choices must satisfy a determinant
constraint. For this we need the following new lemma:
\begin{lem}[Derangements in a fixed determinant fiber]\label{lem:fixed-det-derangements}
Let \(n\ge 2\), and let
\(\delta\in \mathbb F_q^\times\). Then
\[
\bigl|\{Q\in \GL(n,q): \det Q=\delta,\ Q-I \text{ is invertible}\}\bigr|
\ge
\frac{2}{7}\cdot
\bigl|\{Q\in \GL(n,q): \det Q=\delta\}\bigr|.
\]

\end{lem}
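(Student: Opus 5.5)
We plan to count the fibre column by column, exposing the last column of $Q$ last. Write $Q=[Q'\,|\,x]$, where $Q'$ is the $n\times(n-1)$ matrix of the first $n-1$ columns and $x\in\mathbb F_q^n$. Since $\det Q\neq 0$, $Q'$ has full column rank. Then $\det Q=\ell(x)$ for a nonzero linear functional $\ell=\ell_{Q'}$ vanishing exactly on the column space of $Q'$. Likewise $Q-I=[Q'-I'\,|\,x-e_n]$, where $I'$ is the first $n-1$ columns of $I$. If $Q'-I'$ has full rank, then $\det(Q-I)=\ell'(x)-\ell'(e_n)$ for a nonzero functional $\ell'$ vanishing on the column space of $Q'-I'$. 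If $Q'-I'$ is rank deficient, then $Q-I$ is singular for every $x$.

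For each full-rank prefix $Q'$, the set of admissible $x$ in the fibre is the affine hyperplane $\{\ell(x)=\delta\}$, which has $q^{n-1}$ points. So the fibre has size $q^{n-1}$ times the number of full-rank prefixes, and the good set is a union over prefixes. We split into three cases.
\begin{itemize}
\item[(a)] If $Q'-I'$ is rank deficient, the prefix contributes nothing.
\item[(b)] If $\ell$ and $\ell'$ are linearly independent, then $\ell'(x)$ is equidistributed on the hyperplane $\{\ell=\delta\}$. The prefix then contributes exactly $(1-1/q)q^{n-1}$ good $x$.
\item[(c)] If $\ell'=c\ell$, i.e.\ the column spaces of $Q'$ and $Q'-I'$ coincide, the prefix contributes either all $q^{n-1}$ of its $x$ or none, according as $c\delta\neq\ell'(e_n)$ or not.
\end{itemize}
Hence the ratio in the lemma is at least $(1-1/q)$ times the probability that a uniformly random full-rank $Q'$ has $Q'-I'$ of full rank with $\ell,\ell'$ independent.

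It then remains to lower-bound that probability. The column space of $Q'-I'$ equals that of $Q'$ only when $I'$ maps into the column space of $Q'$, a codimension-type event. We expect its probability to be $O(q^{-1})$, which we would bound by a union bound over the first coordinate vectors. The probability that $Q'-I'$ has full rank is at least $\prod_{k\ge 2}(1-q^{-k})$ after conditioning on $Q'$ having full rank, since $Q'-I'$ is a uniformly random $n\times(n-1)$ matrix shifted by a fixed one.

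The main obstacle is the constant $2/7$. For $q=2$ the crude bound $(1/2)\cdot\prod_{k}(1-2^{-k})\approx 0.14$ is too weak. To recover the loss we would refine case (c): when the spans coincide, $c$ is a fixed nonzero scalar and the condition $c\delta\neq\ell'(e_n)$ holds for at least a $(1-1/q)$ proportion of such prefixes, so (c) should not be discarded. We would also refine the factor $1-1/q$ by averaging over $\delta$: the identity $\sum_\delta N_\delta=|\{Q\in\GL(n,q):Q-I\in\GL(n,q)\}|$ gives the average of $N_\delta$ over $\delta$, and the column argument shows the deviation across different $\delta$ is small. If this bookkeeping still falls short for $q=2$ and small $n$, the fallback is the exact generating-function count (cycle index of $\GL(n,q)$) of matrices with no eigenvalue $1$ in a fixed determinant class, checked directly for $n\le 4$, with the asymptotic argument above used for larger $n$.
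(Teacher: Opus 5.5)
Your column-exposure setup and the trichotomy (a)--(c) are correct, and so is the reduction to a ratio of at least $(1-1/q)\Pr[\text{(b)}]$. The gap is the lower bound on $\Pr[Q'-I'\text{ full rank}\mid Q'\text{ full rank}]$, and it fails exactly where the lemma is hardest, at $q=2$.

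First, your justification is invalid. Conditioned on $Q'$ having full rank, $Q'-I'$ is uniform on a translate of the full-rank set, not on all $n\times(n-1)$ matrices. Conditioning can push the probability below the unconditional value $\prod_{k=2}^n(1-q^{-k})$: for $n=q=2$ it is $2/3<3/4$.

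Second, and more seriously, at $q=2$ there is no slack to absorb any loss:
\begin{itemize}
\item There is only one fibre ($\delta=1$), so averaging over $\delta$ gives nothing.
\item Case (c) contributes exactly zero. Equality of the two column spaces forces both to be $\langle e_1,\dots,e_{n-1}\rangle$. Then $Q$ is block upper triangular with an $(n-1)\times(n-1)$ block $A$, and the prefix is good iff $\det A\neq\delta$, which is impossible over $\mathbb F_2$.
\item For every $q$, $\Pr[\text{(c)}]\le (q-1)/(q^n-1)$ is negligible, so refining (c) cannot recover a constant factor.
\end{itemize}
Hence at $q=2$ your decomposition is an identity: the ratio equals $\tfrac12\Pr[\text{(b)}]$, and you need $\Pr[\text{(b)}]\ge 4/7$.

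But the lemma is tight at $(n,q)=(3,2)$: the derangements in $\GL(3,2)$ are its $48$ elements of order $7$, a proportion of exactly $2/7$. As $n\to\infty$, $\Pr[\text{(b)}]\to\prod_{k\ge2}(1-2^{-k})\approx 0.5776$, only about $0.006$ above $4/7$. So a union bound costing $\Theta(q^{-1})$ for the coincidence event cannot give $2/7$, and neither can any estimate of the conditional rank probability that is not essentially exact. In fact, at $q=2$ that conditional probability equals $2D_n+D_{n-1}/(2^n-1)$, where $D_n$ is the very derangement proportion you are trying to bound, so the reduction is circular. For the same reason your fallback does not close the gap: after hand-checking $n\le 4$, ``the asymptotic argument above'' never reaches $2/7$ at $q=2$ for any $n$.

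What is missing is an exact count. The paper obtains one by M\"obius inversion on the subspace lattice. Write $A_U^\delta$ for the elements of the fibre fixing $U$ pointwise. Then $|D_n^\delta|=\sum_U(-1)^{\dim U}q^{\binom{\dim U}{2}}|A_U^\delta|$, where $|A_U^\delta|=q^{d(n-d)}|\GL(n-d,q)|/(q-1)$ for $d=\dim U<n$, and $A_V^\delta$ is $\{I\}$ or empty according as $\delta=1$ or not. After cancellation, the proportion is exactly $\sum_{d=0}^{n-1}(-1)^d\alpha_d+\mathbf 1_{\delta=1}(-1)^n(q-1)\alpha_n$, with $\alpha_d=\prod_{i=1}^d(q^i-1)^{-1}$. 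Since $\alpha_1>\alpha_2>\cdots$, an alternating-series bound gives $\alpha_2-\alpha_3=2/7$ for $q=2$, and at least $1-\alpha_1\ge\tfrac12$ for $q\ge 3$.

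If you prefer to keep your route, it does work for $q\ge 3$, where there is room. For example, $\Pr[Q'-I'\text{ full rank}\mid Q'\text{ full rank}]\ge 2-1/p$ with $p=\prod_{k=2}^n(1-q^{-k})$, and $\Pr[\text{(c)}]\le 1/(q+1)$. For $q=2$, however, you need an exact formula valid for all $n$, not just $n\le 4$. The cycle-index generating function $\frac{1}{1-u}\prod_{r\ge1}(1-u/2^r)$ would serve, since it yields the same alternating series.
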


\begin{proof}
Let
\[
G_n^\delta=\{Q\in \GL(n,q):\det Q=\delta\} \quad \text{and} \quad D_n^\delta=\{Q\in G_n^\delta:\ker(Q-I)=0\}.
\]
For any subspace \(U\le \mathbb F_q^n\), let \(A_U^\delta\) be the set of matrices in \(G_n^\delta\) that fix \(U\) pointwise. By M\"obius inversion on the subspace lattice,
\[
|D_n^\delta|=\sum_{U\le \mathbb F_q^n} \mu(0,U)\cdot |A_U^\delta|, \qquad \text{where } \mu(0,U)=(-1)^{\dim U} q^{\binom{\dim U}{2}}.
\]

We evaluate this sum by grouping the terms according to the dimension \(d = \dim U\). When \(d < n\), picking a complement to \(U\) allows us to write any matrix in \(A_U^\delta\) in the block form
\[
\begin{pmatrix}
I_d & C\\
0 & B
\end{pmatrix},
\]
where \(C\) is arbitrary and \(B\in \GL^{\delta}_{n-d}.\) This gives \(|A_U^\delta| = q^{d(n-d)}|\GL(n-d,q)|/(q-1)\). 

To find the fractional contribution of all \(d\)-dimensional subspaces to \(|D_n^\delta|/|G_n^\delta|\), we multiply the term $\mu(0,U)\cdot |A_U^\delta|$ by the Gaussian binomial coefficient
\[
\begin{bmatrix} n \\ d \end{bmatrix}_q = \frac{|\GL(n,q)|}{q^{d(n-d)} |\GL(d,q)| |\GL(n-d,q)|}
\]
which corresponds to the number of subspaces $U$ of dimension $d$, and divide by \(|G_n^\delta| = |\GL(n,q)| / (q-1)\). A straightforward cancellation shows that the normalized contribution for a fixed \(d < n\) is exactly
\[
(-1)^d \frac{q^{\binom d2}}{|\GL(d,q)|}.
\]

The only remaining case is \(d=n\). The identity matrix is the only one fixing all of \(\mathbb F_q^n\), and its determinant is \(1\). Thus, the \(d=n\) term appears if and only if \(\delta=1\). 

Setting \(\alpha_0=1\) and
\[
\alpha_d = \frac{q^{\binom d2}}{|\GL(d,q)|} = \prod_{i=1}^d \frac{1}{q^i-1}
\]
for \(d \ge 1\), summing over all dimensions yields an exact formula for the proportion of derangements:
\[
\frac{|D_n^\delta|}{|G_n^\delta|}
=
\sum_{d=0}^{n-1} (-1)^d\alpha_d
+
\mathbf 1_{\delta=1} (-1)^n (q-1)\alpha_n.
\]
Observe that \(\alpha_d / \alpha_{d-1} = 1/(q^d-1)\), meaning the sequence \((\alpha_d)\) is strictly decreasing for \(d \ge 1\). We now bound this sum from below.

If \(q=2\), we must have \(\delta=1\), and the sum becomes \(\sum_{d=0}^{n} (-1)^d\alpha_d\). Since \(\alpha_0=\alpha_1=1\), the first two terms cancel. For \(n=2\), the sum evaluates exactly to \(\alpha_2 = 1/3 \ge 2/7\). For \(n \ge 3\), the alternating sum of decreasing terms is bounded below by its first two non-vanishing terms:
\[
\alpha_2 - \alpha_3 = \frac{1}{3} - \frac{1}{21} = \frac{2}{7}.
\]

Next, assume \(q \ge 3\). If \(\delta \ne 1\), the sum truncates at \(n-1\). As an alternating sum of decreasing positive terms, it is bounded below by
$1 - \alpha_1 = 1 - \frac{1}{q-1} \ge \frac{1}{2}.$

If \(\delta = 1\) and \(n\) is even, the extra term \((q-1)\alpha_n\) is positive, meaning the same \(1/2\) lower bound applies. Finally, if \(\delta = 1\) and \(n\) is odd, we must have \(n \ge 3\), which implies \(\alpha_n \le \alpha_3\). We can lower-bound the proportion by truncating the sum:
\[
\frac{|D_n^1|}{|G_n^1|} \ge 1 - \alpha_1 + \alpha_2 - \alpha_3 - (q-1)\alpha_n \ge 1 - \alpha_1 + \alpha_2 - q\alpha_3.
\]
Since \(\alpha_3 = \alpha_2 / (q^3-1)\), we see that
$\alpha_2 - q\alpha_3 = \alpha_2 \left(1 - \frac{q}{q^3-1}\right) > 0.$

This ensures the proportion remains strictly greater than \(1 - \alpha_1 \ge 1/2\). So in all cases, the fraction of derangements is bounded below by \(2/7\).
\end{proof}
Instead of proving Lemma~\ref{lem:counting_t-1_free_with_t_umvirate-2} directly, we prove
the following slightly stronger fixed-determinant version. The original lemma is the special
case \(\eta=1\), and the stronger form will be used in
Lemma~\ref{lem:outside-point-fixed-det-slice}.

\begin{lem}
\label{lem:counting_t-1_free_with_t_umvirate-fixed-det}
There is an absolute constant \(c_0>0\) such that the following holds.
Assume \(n\ge 3t\). Let \(\eta\in\mathbb F_q^\times\), and let
\[
U_\eta=\{A\in \GL(n,q): \det A=\eta,\ Av_i=w_i \text{ for }1\le i\le t\}
\]
be non-empty. Let
\[
U=\{A\in \GL(n,q): Av_i=w_i \text{ for }1\le i\le t\}
\]
be the corresponding full \(t\)-umvirate. Let \(B\in\GL(n,q)\setminus U\), and define
\[
U_\eta'=\{A\in U_\eta:\dim\ker(A-B)\neq t-1\}.
\]
Put
\[
u_i=w_i-Bv_i,\qquad r=\dim\langle u_1,\ldots,u_t\rangle.
\]
Then
\[
|U_\eta'|\le (1-c_0q^{-rt})|U_\eta|.
\]
\end{lem}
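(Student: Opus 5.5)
We want to show that a proportion at least \(c_0q^{-rt}\) of \(U_\eta\) has \(\dim\ker(A-B)=t-1\). The plan is to exhibit an explicit structured subfamily of \(U_\eta\) with this property, and to use Lemma~\ref{lem:fixed-det-derangements} to handle the determinant constraint. Extend \(v_1,\dots,v_t\) to a basis \(v_1,\dots,v_n\) and set \(D=A-B\). Then \(Dv_i=u_i\) for \(i\le t\), and the values \(z_j=Dv_{t+j}=Av_{t+j}-Bv_{t+j}\) are free apart from the requirement \(\det A=\eta\). Since \(B\) is invertible, it is convenient to write \(A=BQ\). Then \(D=B(Q-I)\), so \(\ker D=\ker(Q-I)\), and the constraint becomes \(\det Q=\eta/\det B=:\delta\). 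Moreover \(Qv_i=v_i+B^{-1}u_i\) for \(i\le t\). Thus we reduce to counting \(Q\) in the fixed-determinant fiber \(G_n^\delta\), with \(Q\) prescribed on \(V_0=\langle v_1,\dots,v_t\rangle\), such that \(\dim\ker(Q-I)=t-1\). The set \(U_\eta\) corresponds bijectively to \(\{Q\in G_n^\delta: Qv_i=v_i+u_i'\}\), where \(u_i'=B^{-1}u_i\) span an \(r\)-dimensional space.

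\emph{Step 1 (good event).} Let \(K=\ker\bigl((Q-I)|_{V_0}\bigr)\), which has dimension \(t-r\). We want \(\ker(Q-I)\) to have dimension exactly \(t-1\). The plan is to force \(\ker(Q-I)=K\oplus\langle y_1,\dots,y_{r-1}\rangle\) for suitable extra vectors, and to make \(Q-I\) injective on a complement. Concretely, choose \(r-1\) further basis vectors \(v_{t+1},\dots,v_{t+r-1}\) outside \(V_0+\operatorname{span}(u_i')\) and require \(Q v_{t+j}=v_{t+j}\) for \(j<r\). This costs about \(q^{-(r-1)n}\) in probability, which is too much.

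A better choice of costs: require instead that \((Q-I)v_{t+j}\in \operatorname{span}(u_i')+\dots\), which costs only \(q^{-(r-1)(n-r)}\)-ish per constraint. That is still too expensive. Since the target is \(q^{-rt}\), not \(q^{-rn}\), the cheaper route is to make kernel vectors lie inside \(V_0+\langle\text{few}\rangle\): pick \(x_j=v_{t+j}+a_j\) with \(a_j\in V_0\) and \((Q-I)v_{t+j}\in\operatorname{Im}\bigl((Q-I)|_{V_0}\bigr)\). The preimages then give kernel vectors. Each constraint forces the \(n\)-dimensional image into an \(r\)-dimensional subspace, so the cost is again \(q^{-(n-r)}\). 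This shows the lemma is only plausible because, in Lemma~\ref{lem:counting_t-1_free_with_t_umvirate}, the bound \(q^{-r(t+m-n)}\) comes from \emph{counting}, not from forcing coordinates.

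So I will mimic that proof: the condition is \(\dim\bigl(\langle z_1,\dots,z_k\rangle+U\bigr)=n-t+1\), where \(U=\langle u_1,\dots,u_t\rangle\) and \(k=n-t\). In \(\mathbb{F}_q^{n\times n}\) this has probability about \(q^{-(r-1)(t-1)}\ge q^{-rt}\), by the computation already done there with \(m=n\).

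\emph{Step 2 (transfer to the determinant fiber).} This is the main obstacle. I will condition on the images \(z_j\) modulo an appropriate structure: decompose \(A\) via a block form in which the entries controlling \(\operatorname{rank}(D)\) are chosen freely, while a complementary invertible block carries the determinant. Specifically, given a good configuration of \(\operatorname{Im}D\), the remaining freedom contains a full copy of a derangement-type problem on a complement of dimension \(\ge n-2t\ge t\), using \(n\ge 3t\). Lemma~\ref{lem:fixed-det-derangements} then guarantees that at least a \(2/7\) fraction of each determinant fiber of that block keeps the rank exactly \(n-t+1\) (the block must satisfy \(Q'-I\) invertible). Combining this with the unconstrained probability estimate from Step 1 for the remaining coordinates, and the uniform distribution of \(\det\) across the fibers, gives \(|U_\eta\setminus U'_\eta|\ge \tfrac{2}{7}c\,q^{-rt}|U_\eta|\). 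The final inequality needed is that the \(GL\)-density losses are absolute constants, which follows from the \(\prod(1-q^{-k})\ge 0.288\) bound.
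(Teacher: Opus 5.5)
There is a genuine gap, and it sits exactly where you say the main obstacle is. Your reduction $A=BQ$ (so $\ker(A-B)=\ker(Q-I)$ and $\det Q=\delta$) is fine, and it matches the paper's reduction to $B=I$. After that, however, the argument is missing its main step.

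\emph{Step 1 does not transfer.} It only reproduces the count of Lemma~\ref{lem:counting_t-1_free_with_t_umvirate} in the affine space $\{A: Av_i=w_i\}\subseteq\mathbb{F}_q^{n\times n}$. That count cannot be moved to $U_\eta$ by a density argument. $U_\eta$ is only a fraction of order $1/(q-1)$ of that affine space, while the good event has density about $q^{-(r-1)(t-1)}$, so subtracting the complement gives nothing. Moreover, $\operatorname{rank}(A-B)$ and $\det A$ are both functions of the same free columns $z_j$. Nothing you wrote justifies ``uniform distribution of $\det$'' on the good event.

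\emph{Step 2 is not supplied.} It asserts that a derangement in some unspecified complementary block of dimension $\ge n-2t$ forces rank exactly $n-t+1$. You never identify the blocks or the ``good configuration'', and you give no count. That assertion is the whole content of the lemma.

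\emph{The missing idea: a decoupling plus a case split.} Take $T=\langle v_i\rangle$, $W_0=\langle w_i\rangle$, and a common complement $X$, so that $V=T\oplus X=W_0\oplus X$.
\begin{enumerate}
\item Write $A(\tau+x)=A\tau+Lx+Qx$ with $L\in\operatorname{Hom}(X,W_0)$ and $Q\in\GL(X)$. The matrix is block triangular, so $\det A$ depends only on $Q$ and $L$ is completely free. You can therefore fix an admissible $Q$ and count good $L$.
\item Put $R=\langle u_i\rangle$, let $P=\pi_X(R)$ with $\pi_X$ the projection along $W_0$, and let $a=\dim(R\cap W_0)$. The requirement becomes $\dim\ker\overline{D}_X=r-1$ for the induced map $\overline{D}_X:X\to V/R$. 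Moreover $\ker\overline{D}_X\subseteq H=(Q-I)^{-1}P$.
\item This already shows that a derangement on all of $X$ fails when $a\ge 2$. In that case $\dim H=r-a<r-1$. In fact every $A$ with $\dim\ker(A-I)=t-1$ must have $\dim\ker(Q-I)\ge a-1$.
\item For $a=0$, the paper takes a derangement $Q$ from Lemma~\ref{lem:fixed-det-derangements}. Then $\dim H=r$, and $L|_H-\lambda$ must have rank one, which holds for a proportion at least $cq^{-(r-1)(t-1)}$ of the choices of $L$.
\item For $a\ge1$, the paper builds $Q$ with $\ker(Q-I)=K$ of dimension $a-1$, transversal to $P$. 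It uses a derangement $Q_C$ on a complement $C\supseteq P$ of $K$, plus a shear $M:C\to K$ vanishing on $(Q_C-I)^{-1}P$. Then $\dim H=r-1$, and $L|_H$ ranges over a coset of $\operatorname{Hom}(H,R\cap W_0)$.
\end{enumerate}

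This also explains why your Step~1 cost estimates came out too large. You fixed the candidate kernel vectors in advance, which costs about $q^{-(n-r)}$ per direction. Taking them as the $Q$-dependent preimage $(Q-I)^{-1}P$ leaves only a constraint in the $t$-dimensional space $W_0$ per extra kernel direction.
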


\begin{proof}
Throughout, \(c>0\) denotes an absolute constant that may change from line to line.

We may assume without loss of generality that \(B=I\). Indeed, the map \(A \mapsto B^{-1}A\) is a bijection from \(U_\eta\) to the determinant slice
\[
U_{\eta/\det B,0} = \{C\in \GL(n,q) : \det C=\eta/\det B,\ Cv_i=B^{-1}w_i \text{ for }1\le i\le t\}.
\]
The relevant dimensions are invariant under this change, since 
\[\dim\ker(A-B)=\dim\ker(B^{-1}A-I)\]
and 
\[
\dim\langle B^{-1}w_1-v_1,\ldots,B^{-1}w_t-v_t\rangle = \dim\langle w_1-Bv_1,\ldots,w_t-Bv_t\rangle=r.
\]
Thus, after substituting \(B^{-1}w_i\) for \(w_i\) and \(\eta/\det B\) for \(\eta\), we reduce to \(B=I\) and \(u_i=w_i-v_i\).

Define the subspaces
\[
T=\langle v_1,\ldots,v_t\rangle,\qquad
W_0=\langle w_1,\ldots,w_t\rangle, \qquad
R=\langle u_1,\ldots,u_t\rangle.
\]
By assumption \(I\notin U\), so \(r = \dim R \ge 1\). Let \(X\) be a common complement to \(T\) and \(W_0\), giving \(V=T\oplus X=W_0\oplus X\). Any \(A\in U_\eta\) takes the block form
\[
A(t+x)=A(t)+Lx+Qx \qquad(t\in T,\ x\in X),
\]
for uniquely determined \(L\in\operatorname{Hom}(X,W_0)\) and \(Q\in \GL(X)\). The condition \(A\in U_\eta\) is equivalent to \(\det Q=\delta\), where \(\delta\in\mathbb F_q^\times\) is a fixed value depending on \(\eta\), the assignment \(v_i\mapsto w_i\), and our choice of bases. Setting \(k=\dim X=n-t\), this yields
\[
|U_\eta|=q^{tk}\bigl|\{Q\in \GL(X):\det Q=\delta\}\bigr|.
\]

To analyze the dimension of \(\ker(A-I)\), write \(D_A=A-I\). Observe that \(D_Av_i=w_i-v_i=u_i\), which implies \(D_A(T)=R\). For \(x\in X\), we have \(D_Ax=Lx+(Q-I)x\). Since \(D_A(T)=R\), it is natural to measure the rank of \(D_A\) restricted to \(X\) modulo \(R\). Define \(\overline D_{A,X}:X\to V/R\) by
\[
\overline D_{A,X}(x)=Lx+(Q-I)x+R.
\]
Then \(\operatorname{rank}(D_A)=r+\operatorname{rank}(\overline D_{A,X})\). Since \(k=n-t\), the condition \(\dim\ker(A-I)=t-1\) is equivalent to \(\operatorname{rank}(D_A)=k+1\), which occurs if and only if \(\dim\ker\overline D_{A,X}=r-1\).

Let \(\pi_X:V \to X\) denote the projection onto \(X\) with kernel \(W_0\). Set \(P=\pi_X(R)\) and \(a=\dim(R\cap W_0)\). Since the kernel of \(\pi_X|_R\) is \(R\cap W_0\), we have \(\dim P=r-a\).

If \(x\in\ker\overline D_{A,X}\), then \(Lx+(Q-I)x\in R\). Projecting onto \(X\) gives \((Q-I)x\in P\). Thus, for a fixed \(Q\), any vector in the kernel of \(\overline D_{A,X}\) must lie in the subspace
\[
H=(Q-I)^{-1}P = \{x\in X:(Q-I)x\in P\}.
\]
Since \(n\ge 3t\), all relevant spaces have dimension at least \(2\), so we may safely apply Lemma~\ref{lem:fixed-det-derangements}. We consider two cases depending on \(a\).

\medskip
\noindent\textbf{Case 1: \(a=0\).}
Here \(R\cap W_0=\{0\}\) and \(\dim P=r\). By Lemma~\ref{lem:fixed-det-derangements}, a positive proportion of matrices \(Q\in \GL(X)\) with \(\det Q=\delta\) are derangements, meaning \(Q-I\) is invertible. Fix such a \(Q\). The candidate space \(H=(Q-I)^{-1}P\) has dimension \(r\). The map \(L\) must be chosen to reduce the kernel dimension by one.

Since \(\pi_X|_R:R\to P\) is an isomorphism, each \(x\in H\) lifts to a unique \(\rho(x)\in R\) satisfying \(\pi_X(\rho(x))=(Q-I)x\). Writing \(\rho(x)=\lambda(x)+(Q-I)x\) with \(\lambda(x)\in W_0\), we obtain a linear map \(\lambda:H\to W_0\). For \(x\in H\), \(x\in\ker\overline D_{A,X}\) if and only if \(Lx=\lambda(x)\). Because \(\ker\overline D_{A,X}\subseteq H\), it follows that
\[
\ker\overline D_{A,X}=\ker(L|_H-\lambda).
\]
We therefore need \(L|_H-\lambda:H\to W_0\) to have rank \(1\). Because \(\dim H=r\) and \(\dim W_0=t\), the number of rank-one maps is
\[
\frac{(q^r-1)(q^t-1)}{q-1}\ge cq^{r+t-1}.
\]
As \(|\operatorname{Hom}(H,W_0)|=q^{rt}\), a proportion of at least \(cq^{-(r-1)(t-1)}\) choices for \(L|_H\) result in a rank of \(1\). The restriction map \(\operatorname{Hom}(X,W_0)\to \operatorname{Hom}(H,W_0)\) has equal-size fibers, so the same proportion holds for the choices of \(L\). Multiplying by the proportion of admissible \(Q\)'s given by Lemma~\ref{lem:fixed-det-derangements}, we find at least \(c q^{-(r-1)(t-1)}|U_\eta|\) matrices \(A\in U_\eta\) with \(\dim\ker(A-I)=t-1\).

\medskip
\noindent\textbf{Case 2: \(a\ge 1\).}
Set \(s=a-1\) and \(p=\dim P=r-a\). Then \(s+p=r-1\). Here, we bypass the need for \(Q-I_X\) to be invertible by directly choosing \(Q\) so that the candidate space \((Q-I_X)^{-1}P\) has the desired dimension \(r-1\).

First, let \(K\le X\) be an \(s\)-dimensional subspace with \(K\cap P=\{0\}\). Fixing a complement \(Y\) to \(P\) in \(X\), any graph \(K=\{y+\phi(y):y\in K_0\}\) of a linear map \(\phi:K_0\to P\) from an \(s\)-dimensional subspace \(K_0\le Y\) is disjoint from \(P\). The number of such choices for \(K\) is bounded below by
\[
q^{sp}\begin{bmatrix} k-p\\ s \end{bmatrix}_q \ge c q^{sp}q^{s(k-p-s)} = c q^{s(k-s)}.
\]
Fix \(K\), and let \(C\) be a complement of \(K\) in \(X\) containing \(P\), so that \(X=K\oplus C\). Since \(\dim C=k-s\ge n-2t+1\ge 2\), Lemma~\ref{lem:fixed-det-derangements} yields a positive proportion of matrices \(Q_C\in \GL(C)\) satisfying \(\det Q_C=\delta\) with \(Q_C-I_C\) invertible. 

Let \(C_P=(Q_C-I_C)^{-1}P\), which is a \(p\)-dimensional subspace. Choose a linear map \(M:C\to K\) that vanishes on \(C_P\), and define \(Q\in\GL(X)\) by
\[
Q(y+z)=y+Mz+Q_Cz \qquad(y\in K,\ z\in C).
\]
Then \(Q\in\GL(X)\), \(\det Q=\det Q_C=\delta\), and \((Q-I_X)(y+z)=Mz+(Q_C-I_C)z\). Because \(Q_C-I_C\) is invertible and \(X=K\oplus C\), the kernel of \(Q-I_X\) is exactly \(K\). 

A vector \(y+z\) belongs to \((Q-I_X)^{-1}P\) if and only if \(Mz+(Q_C-I_C)z\in P\le C\). This requires the \(K\)-component \(Mz\) to vanish, meaning \(z\in C_P\), and gives
\[
(Q-I_X)^{-1}P=K\oplus C_P.
\]
The dimension of this preimage is therefore \(s+p=r-1\). Multiplying the number of choices for \(K\), \(Q_C\), and \(M\), and applying the standard bounds \(cq^{d^2-1}\le |\{A\in \GL_d(q):\det A=\delta\}| \le q^{d^2-1}\), the number of valid \(Q\)'s is at least
\[
cq^{-(a-1)(r-1)} \bigl|\{Q\in \GL(X):\det Q=\delta\}\bigr|.
\]

Set \(H=(Q-I_X)^{-1}P\), which has dimension \(r-1\). Choose a linear section \(\sigma:P\to R\) for the projection \(\pi_X|_R\), meaning $\pi_X|_R\circ \sigma = \Id$. For \(x\in H\), we uniquely write \(\sigma((Q-I_X)x)=\lambda(x)+(Q-I_X)x\) for some \(\lambda(x)\in W_0\), defining a linear map \(\lambda:H\to W_0\). The condition \(Lx+(Q-I_X)x\in R\) for all \(x\in H\) is then equivalent to \(L|_H-\lambda\in \operatorname{Hom}(H,R\cap W_0)\). 
The number of valid restrictions \(L|_H\) is thus \(|\operatorname{Hom}(H,R\cap W_0)| = q^{a(r-1)}\). The values of \(L\) on a complement to \(H\) in \(X\) are unconstrained, leaving exactly
\(
q^{tk-(t-a)(r-1)}
\)
choices for \(L\). 

For these choices, every \(x\in H\) lies in \(\ker\overline D_{A,X}\). Conversely, if \(x\notin H\), then \((Q-I_X)x\notin P=\pi_X(R)\), meaning \(Lx+(Q-I_X)x\notin R\) for any choice of \(Lx\). Hence \(\ker\overline D_{A,X}=H\), giving \(\dim\ker\overline D_{A,X}=r-1\). Combining the counts for \(Q\) and \(L\) yields at least
\[
cq^{-(a-1)(r-1)}q^{-(t-a)(r-1)}|U_\eta| = cq^{-(r-1)(t-1)}|U_\eta|
\]
matrices \(A\in U_\eta\) with \(\dim\ker(A-I)=t-1\).

\medskip
In both cases, we find that
\[
\bigl|\{A\in U_\eta:\dim\ker(A-I)=t-1\}\bigr| \ge c q^{-(r-1)(t-1)}|U_\eta|.
\]
Since \((r-1)(t-1)\le rt\), the fraction is bounded below by \(c_0q^{-rt}\) for some absolute constant \(c_0>0\). Recalling our initial reduction \(B=I\), the set of matrices in question is exactly \(U_\eta\setminus U_\eta'\). We conclude that
\[
|U_\eta'|\le (1-c_0q^{-rt})|U_\eta|,
\]
completing the proof.
\end{proof}
\begin{proof}[Proof of Lemma~\ref{lem:outside-point-fixed-det-slice}]
Apply Lemma~\ref{lem:counting_t-1_free_with_t_umvirate-fixed-det} to the determinant slice
\(U_\eta\) and the point \(B\notin U\). It gives
\[
\bigl|\{A\in U_\eta:\dim\ker(A-B)=t-1\}\bigr|
\ge
c_0q^{-rt}|U_\eta|>0.
\]
Hence there exists \(A\in U_\eta\) such that
\(
\dim\ker(A-B)=t-1,
\)
as required.
\end{proof}

\end{document}